\tikzstyle{color}=[circle,draw=black!50,fill=black!20,thick, inner sep=0pt,minimum size=1mm]
\newtheorem{thm}{Theorem}[section]
\newtheorem{thmx}{Theorem}
\newtheorem{corx}[thmx]{Corollary}
\newtheorem{lem}[thm]{Lemma}
\newtheorem{prop}[thm]{Proposition}
\newtheorem{prob}[thm]{Problem}
\newtheorem{conj}[thm]{Conjecture}
\newtheorem{defn}[thm]{Definition}
\newtheorem{rmk}[thm]{Remark}
\newtheorem{ex}[thm]{Example}
\newtheorem{conv}[thm]{Convention}
\newtheorem{ansatz}[thm]{Ansatz}
\newcommand{\set}[1]{\left\{#1\right\}}
\newcommand{\tuple}[1]{\left(#1\right)}
\newcommand{\abs}[1]{\left|#1\right|}
\newcommand{\norm}[1]{\left\|#1\right\|}
\newcommand{\sprod}[1]{\left<#1\right>}
\newcommand{\ol}[1]{\overline{#1}}
\newcommand{\wh}[1]{\widehat{#1}}
\newcommand{\wt}[1]{\widetilde{#1}}
\newcommand{\afrak}{\mathfrak{a}}
\newcommand{\gfrak}{\mathfrak{g}}
\newcommand{\hfrak}{\mathfrak{h}}
\newcommand{\kfrak}{\mathfrak{k}}
\newcommand{\tfrak}{\mathfrak{t}}
\newcommand{\Cbb}{\mathbb{C}}
\newcommand{\Fbb}{\mathbb{F}}
\newcommand{\Nbb}{\mathbb{N}}
\newcommand{\Pbb}{\mathbb{P}}
\newcommand{\Qbb}{\mathbb{Q}}
\newcommand{\Rbb}{\mathbb{R}}
\newcommand{\Sbb}{\mathbb{S}}
\newcommand{\Zbb}{\mathbb{Z}}
\newcommand{\Acal}{\mathcal{A}}
\newcommand{\Ccal}{\mathcal{C}}
\newcommand{\Dcal}{\mathcal{D}}
\newcommand{\Fcal}{\mathcal{F}}
\newcommand{\Lcal}{\mathcal{L}}
\newcommand{\Mcal}{\mathcal{M}}
\newcommand{\Pcal}{\mathcal{P}}
\newcommand{\Vcal}{\mathcal{V}}
\newcommand{\Xcal}{\mathcal{X}}
\newcommand{\Ncal}{\mathcal{N}}
\renewcommand{\phi}{\varphi}
\newcommand{\del}{\partial}
\newcommand{\delb}{\overline{\partial}}
\newcommand{\Aut}{\operatorname{Aut}}
\newcommand{\Sp}{\operatorname{Sp}}
\newcommand{\SO}{\operatorname{SO}}
\newcommand{\GL}{\operatorname{GL}}
\newcommand{\tr}{\operatorname{tr}}
\newcommand{\Ric}{\operatorname{Ric}}
\newcommand{\SL}{\operatorname{SL}}
\newcommand{\vol}{\operatorname{vol}}
\newcommand{\rank}{\operatorname{rank}}
\newcommand{\Hom}{\operatorname{Hom}}
\newcommand{\Bl}{\operatorname{Bl}}
\title[Calabi-Yau metrics on symmetric spaces]{Calabi-Yau metrics on rank two symmetric spaces with horospherical tangent cone at infinity}
\author{Tran-Trung Nghiem}
\keywords{Calabi-Yau metrics, symmetric spaces, horospherical tangent cone at infinity, irregular and singular tangent cone at infinity, asymptotically conical}
\subjclass{53C25, 53C55, 32Q25, 14M27}
\address{Tran-Trung Nghiem, IMAG, Univ Montpellier, CNRS, Montpellier, France}
\email{tran-trung.nghiem@umontpellier.fr}
\begin{document}
\maketitle
\begin{abstract}
We show that on every non-\( G_2 \) complex symmetric space of rank two, there are complete Calabi-Yau metrics of Euclidean volume growth with prescribed horospherical singular tangent cone at infinity, providing the first examples of affine Calabi-Yau smoothings of singular and irregular tangent cone. As a corollary, we obtain infinitely many examples of Calabi-Yau manifolds degenerating to the tangent cone in a single step, supporting a recent conjecture by Sun-Zhang, which was only proved when the tangent cone at infinity has only an isolated singularity. 
\end{abstract}

\section{Introduction}

\subsection{Background}

Let \( (M,g) \) be a complete Ricci-flat Riemannian manifold of dimension \( d \) \textcolor{blue}{such that there is \( \kappa > 0 \) satisfying}
\[ \text{vol}(B_r(p)) \geq \kappa r^{d} \]
for any ball \( B_r(p) \) of radius \( r > 0 \) centered at \( p \). Under this volume condition, we say that \( (M,g) \) has \textit{Euclidean} (or \textit{maximal}) \textit{volume growth}.
By Gromov's compactness theorem, any rescaled sequence \( (M_i, g_i,p_i) = (M, \lambda_i^{-1} g, p) \) for \( \lambda_i \to +\infty \) admits a subsequence converging in the pointed Gromov-Hausdorff sense to a complete length space \( C \), \textcolor{blue}{called \textit{a tangent cone at infinity of \( (M,g) \)}}, which might \textcolor{blue}{a priori} depend on the subsequence, but not on the base point \( p \). An important result due to Cheeger-Colding \cite{CC97, CC00} \textcolor{blue}{asserts} that \textcolor{blue}{under the maximal volume growth, any tangent cone} \( C \) is in fact a \textit{metric cone}, that is, the metric completion of \( ]0,+\infty[ \times Y \), where \( Y \) is some compact metric space called the \textit{link} of \( C \). \textcolor{blue}{If one tangent cone has smooth link, then by Colding-Minicozzi \cite{CM14}, \( C \) is in fact independent of the subsequence.}

If \( (M,g,J) \) is Ricci-flat Kähler and has maximal volume growth, then the tangent cone at infinity has a complex \textit{affine cone} structure \( (C,J_0) \), and the singular set of \( C \) is the same as the algebraic singular set by Donaldson-Sun \cite{DS17}, which is of \textcolor{blue}{real} codimension \( \geq 4 \) \cite{CCT02}.  \textcolor{blue}{As remarked by Székelyhidi \cite{Sze20}, if the metric on \(  M\) is \( \del \delb\)-exact, then \cite{DS17} can be applied to show that the cone is independent of the scaling subsequence.} \textcolor{blue}{From the algebro-geometric aspect, we can also view \( C \) as a \textit{Fano cone singularity}, that is an affine algebraic variety with klt singularities and a unique fixed point under the effective action of some torus \( T_0 \simeq (\Cbb^{*})^k \), such that every orbit closure contains the fixed point} \cite{LWX}. The set of \textit{Reeb vectors} or \textit{polarizations} of \( C \) is the set of elements \( \xi \) in the compact Lie algebra of \( T_0 \) such that \( \xi \) acts on the coordinate ring of \( C \) with non-negative weights (that vanish only on constants). 

On the regular set of \( C \), the \textcolor{blue}{sequence of} Ricci-flat Kähler metrics 
\( (M,\lambda_i^{-1} g,J_i) \) converges in the smooth pointed Cheeger-Gromov sense to a Ricci-flat Kähler metric \( (C,g_0,J_0) \) \cite{CCT02}, and \( g_0 \) actually arises from a (\textcolor{blue}{weak}) \textit{Sasaki-Einstein metric} \( g_Y \) from the link \( Y \), hence is a \textit{conical Calabi-Yau metric}, or \textit{Ricci-flat Kähler cone metric}. To be precise, there is a vector \( \xi \), called the \textit{K-stable Reeb vector} or \textit{K-stable polarization}, such that \( -J_0 \xi \) is a homothetic scaling on \( C \) and
\[ \Lcal_{-J_0 \xi }\omega_0 = 2 \omega_0. \] 
\textcolor{blue}{In particular, the metric form of \( g_0 \) can be written as a Kähler current as \( \omega_0 = \frac{1}{2} i \del \delb r_0^2 \), where \( r_0 \) is the distance to the vertex and also the coordinate on \( ]0,+\infty[\)  such that \( (C,g_0) \) is the metric completion of the warped product \( (]0,+\infty[ \times Y, dr_0^2 + r^2_0 g_Y) \). The Reeb element \( \xi \) is said to be \textit{quasiregular} if it generates an \( \Sbb^1 \)-action on the link (and \textit{regular} if this action is free), and \textit{irregular} otherwise.}

\subsubsection{Asymptotically conical Calabi-Yau manifolds}
If one tangent cone at infinity satisfies an integrability condition \cite[Definition 0.11]{CT94}, and has smooth link (we simply say that the cone is \textit{smooth} in this case), then \( (M,g) \) is in fact an \textit{asymptotically conical (AC)} Ricci-flat Kähler manifold in the terminology of \cite{CH13}\footnote{The integrability condition is in fact redundant by \cite[Theorem 1.2]{SZ23}}. This means that there are compact subsets \( K, K', \lambda > 0 \) and a diffeomorphism
\[ \Phi : C \backslash K' \to M \backslash K \] 
such that 
\[ \abs{ \nabla^k_{g_0}( \Phi^{*} g - g_0)} = O(r_0^{-\lambda -k }), \; \forall k > 0. \]
If the volume form \( \Omega \) of an AC Ricci-flat Kähler manifold \( (M,g,J) \) also converges to the canonical volume form \( \Omega_0 \) of \( C \) via \( \Phi \), i.e. there is some \( \mu > 0 \) such that
\[ \abs{ \Phi^{*} \Omega - \Omega_0}_{g_0} = O(r_0^{-\mu}), \] 
then \( (M,g,J) \) is said to be an \textit{AC Calabi-Yau manifold}. This condition already implies \( C^0\)-convergence of the complex structure \( J \) to \( J_0 \) via \( \Phi \) \cite{CH13}. Moreover, if \( (M,g,J) \) is AC Ricci-flat Kähler but not simply connected then passing to the universal cover yields an AC Calabi-Yau manifold \cite{CH24}. 

The existence, uniqueness and classification of AC Calabi-Yau manifolds have now been relatively well understood thanks to the works of Conlon-Hein \cite{CH13}, \cite{CH15}, \cite{CH24}. Initially inspired by van Coevering's papers \cite{vC08, vC10, vC11}, they went on to optimize an earlier theorem in the seminal works of Tian-Yau \cite{TY90, TY91}, culminating in the classification of asymptotically conical manifolds. We can summarize the optimal Tian-Yau theorem due to Conlon-Hein as follows.
\begin{thm}[\!\!\cite{TY91, CH15}] \label{theorem_optimal_tianyau}
Let \( X^n \) be a compact Kähler orbifold without \( \Cbb\)-codimension-\( 1 \) singularities. Let \( D \supset \text{Sing}(X) \) be a suborbifold divisor in \( X \) such that \( -p K_X = qD, p,q \in \Nbb,  q/p > 1 \), and such that \( D \) admits a Kähler-Einstein metric of positive scalar curvature.

For every Kähler class \( \kfrak \) on \( X \backslash D \) and for every \( t > 0 \), there is a unique Calabi-Yau metric \( \omega_t \in \kfrak \) which is asymptotically conical to the metric \( t \omega_0 \), where \( \omega_0 \) is built on the Calabi ansatz Ricci-flat Kähler cone metric of \( p K_D \).  
\end{thm}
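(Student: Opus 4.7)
I would follow the Tian-Yau strategy: construct the desired Calabi-Yau metric as a perturbation of a well-chosen background Kähler form by solving a complex Monge-Ampère equation, then bootstrap to the optimal asymptotic decay via weighted analysis on the model cone.

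First I would build the cone and the background form. The hypothesis \( -pK_X = qD \) combined with the adjunction formula forces the normal bundle \( N_{D/X} \) to be a positive rational multiple of \( -K_D \), so \( N_{D/X} \) is ample and \( X \setminus D \) is ``affine-like'' at infinity. The Calabi ansatz applied to the Kähler-Einstein metric on \( D \) then produces a Ricci-flat Kähler cone metric \( \omega_0 \) on the total space of \( pK_D \) minus the zero section, with link a Sasaki-Einstein orbifold. Via a holomorphic orbifold tubular neighborhood identifying \( N_{D/X} \) with a punctured neighborhood of \( D \) in \( X \), I would transport \( t\omega_0 \) to that neighborhood and glue it with an arbitrary smooth Kähler representative of \( \kfrak \) by an \( i \del \delb \)-correction, producing a background form \( \omega_{\text{bg}} \in \kfrak \) equal to \( t\omega_0 \) outside a compact set.

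Next I would solve the Monge-Ampère equation \( (\omega_{\text{bg}} + i \del \delb u)^n = e^{f} \omega_{\text{bg}}^n \) on \( X \setminus D \), where \( f \) is the Ricci potential of \( \omega_{\text{bg}} \) and decays at infinity because \( \omega_0 \) is itself Ricci-flat on the cone. The \( C^0 \)-estimate follows from a weighted Moser iteration based on the Sobolev inequality that comes with Euclidean volume growth, while higher-order estimates follow from Yau's interior \( C^2 \) estimate for Monge-Ampère and standard elliptic regularity, all localized on conical annuli. Carrying this out by a continuity method in weighted Hölder spaces \( C^{k,\alpha}_{\delta} \) adapted to the cone --- in which \( \Delta_{\omega_0} \) is Fredholm away from the indicial roots determined by the Laplace spectrum of the Sasaki-Einstein link --- yields a solution \( u \) with controlled decay. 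To upgrade to the optimal rate \( O(r_0^{-\lambda - k}) \), I would iterate: at each stage subtract off the leading link-eigenmode contribution to \( u \) corresponding to the smallest indicial root still exceeded, improving the decay by the gap to the next indicial root until no further improvement is possible. Uniqueness within \( \kfrak \) at the prescribed asymptotic rate then follows from a standard difference argument, since two such metrics yield a bounded \( i \del \delb \)-potential with pluriharmonic leading behavior, which must be constant by a Liouville-type theorem on Ricci-flat manifolds of Euclidean volume growth.

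The main obstacle I would anticipate is the weighted Fredholm theory on the cone: one has to identify the indicial roots in terms of the Sasaki-Einstein spectrum, rule out cokernel contributions in the admissible weight range, and push the analysis through orbifold singularities of both \( X \) and \( D \). This is also where the assumption \( q/p > 1 \) really enters --- it guarantees that the volume-form discrepancy between \( \omega_{\text{bg}} \) and \( \omega_0 \) falls into a weight window on which \( \Delta_{\omega_0} \) is surjective on the adapted Hölder spaces, so the Monge-Ampère corrector can be chosen strictly subleading to the cone model and the prescribed asymptotics are preserved.
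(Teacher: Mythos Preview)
The paper does not prove this statement: Theorem~\ref{theorem_optimal_tianyau} is quoted from \cite{TY91, CH15} as background, with no proof or sketch given in the present paper. There is therefore nothing here to compare your proposal against.

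For what it is worth, your outline is a faithful summary of the Conlon--Hein strategy in \cite{CH13, CH15}: construct the Calabi ansatz cone metric from the K\"ahler--Einstein metric on \( D \), transplant it via a tubular neighborhood to obtain an asymptotically conical background form in the given class, solve the Monge--Amp\`ere equation using the Tian--Yau--Hein analytic package (weighted Sobolev inequality from Euclidean volume growth, Moser iteration for \( C^0 \), Yau's \( C^2 \) and Evans--Krylov for higher regularity), and then upgrade the decay rate via the weighted Fredholm theory for the Laplacian on the cone, with indicial roots coming from the link spectrum. Your identification of the role of \( q/p > 1 \) in placing the Ricci-potential decay into a surjective weight window is also correct. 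If you want to turn this into an actual proof you should consult \cite{CH13, CH15} directly, since the present paper only uses the statement.
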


The \textit{Calabi ansatz} for manifolds can be outlined as follows: If \( D^{n-1} \) is a Fano Kähler-Einstein manifold, then the radius function of the conical Calabi-Yau metric on \( K_D^{\times} \) is given by \( r^2 = \norm{ . }^{\frac{2}{n}} \), where \( \norm{ . } \) is a Hermitian norm on \(  K_D \) naturally obtained from the Kähler-Einstein metric on \( D \). A similar construction works for \( (\frac{1}{k} K_D)^{\times} \) as well as for orbifolds \cite{CH15}.

As remarked in \cite[Section 3]{CH15}, the following \textit{generalized} or \textit{irregular} Tian-Yau problem was first proposed by van Coevering \cite{vC08} but with an incorrect proof.

\begin{prob} \label{problem_generalized_tianyau} Produce AC Calabi-Yau metrics on \( X \backslash D \) for pairs \( (X,D) \) as in Theorem \ref{theorem_optimal_tianyau}, where \( D \) is not Kähler-Einstein, yet the \( \Sbb^1\)-bundle over \( D \) admits an irregular Sasaki-Einstein structure inducing the CR structure (in particular \( (-N_D)^{\times} \) is conical Calabi-Yau), and \( (X,D) \) is not a blow-up of \( (-N_D)^{\times} \).
\end{prob}

In \cite[Theorem C] {CH15}, the authors solved Problem \ref{problem_generalized_tianyau} for the smooth pair \( (X,D) \simeq ( \text{Bl}_p \Pbb^3, \text{Bl}_{p_1,p_2} \Pbb^2 )\), producing the only known example of Calabi-Yau metrics of Euclidean volume growth with irregular asymptotic cone in the literature. To the author's knowledge, there has not been any counterexample to this problem.


\subsubsection{Calabi-Yau manifolds with Euclidean volume growth and singular tangent cones}

The AC condition turns out to be very restrictive, as there have been existence results of Calabi-Yau metrics of Euclidean volume growth over \( \Cbb^n, n \geq 3  \) with singular tangent cones at infinity \cite{Li19}, \cite{Sze19}, \cite{CR21}.

The general strategy consists of viewing \( \Cbb^n \) as an affine smoothing of the potential tangent cone at infinity, then building a smooth asymptotic solution of the Ricci-flat equation from a suitably chosen conical Calabi-Yau metric. Since this \textcolor{blue}{model is only smooth on the regular set of the cone, one needs to} glue it with a model metric near singularities to get a globally \textcolor{blue}{smooth asymptotic solution} over \( \Cbb^n \) with good geometry. \textcolor{blue}{After improving the Ricci potential decay rate,} known existence theorems of Tian-Yau-Hein \cite{Hein} \cite{TY90} \cite{TY91} then allow one to solve the Monge-Ampère equation \textcolor{blue}{with the given asymptotic geometry} as in the compact case to get a genuine Calabi-Yau metric. 

\subsubsection{Known results on symmetric spaces}
The geometric objects of interest to us will be \textit{complex symmetric spaces}. They are Stein complexifications of compact Riemannian symmetric spaces, and can be viewed as their (co)tangent bundles with a suitable complex structure. Our general setting consists of a \textcolor{blue}{non-identical involution \( \theta \) on a complex} linear semisimple connected group \( G \) with maximal compact subgroup \( K \), and a rank-\(k\) complex symmetric space \( G/H \) of dimension \( n\), where \( G^{\theta} \subset H \subset N_G(G^{\theta}) \) and the inclusion is of finite index. When \( H\) is not semisimple, \( G/H \) is said to be \textit{Hermitian}; if moreover \( G/H \) is indecomposable, we actually have \( \dim Z(H) = 1 \).

If \( k = 1 \), then there are only finitely many families of complex symmetric spaces up to isomorphisms. They are exactly the cotangent bundles of the sphere, the complex and quaternionic projective spaces, and the Cayley projective plane, cf. Table \ref{table_rank_one_ss}.

 The \( K \)-invariant \( \del \delb\)-exact complete Calabi-Yau metrics on complex symmetric spaces of rank 1 were constructed by Stenzel after translating the complex Monge-Ampère equation to a one-variable ODE, which can be explicitly solved \cite{Sten}. 

\begin{thm}[\!\!\cite{Sten}]
Every complex symmetric space \( G/H \) of rank \( 1 \) admits a complete \(  K\)-invariant \( \del \delb\)-exact Calabi-Yau metric. 
\end{thm}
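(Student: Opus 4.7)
The plan is to exploit the cohomogeneity-one action of $K$ on $G/H$ in rank one to reduce the complex Monge-Ampère equation
\[ (i \partial \bar\partial u)^n = c_n \, \Omega \wedge \bar\Omega \]
(where $\Omega$ is a $G$-invariant holomorphic volume form on $G/H$) to a single ODE in one real variable, and then to solve that ODE explicitly.

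First I would fix a Cartan subspace $\mathfrak{a} \subset \mathfrak{g}$ for the symmetric pair $(\mathfrak{g}, \mathfrak{h})$; by hypothesis $\dim_{\mathbb{R}} \mathfrak{a} = 1$, so the $KAK$-type decomposition exhibits $G/H$ as a cohomogeneity-one $K$-manifold, with principal orbits $K/M$ of real codimension one and a unique singular orbit $K/(K\cap H)$ identified with the compact symmetric space sitting inside as the totally real submanifold. Every smooth $K$-invariant function on $G/H$ thus corresponds to a smooth function $\tilde u$ of a single real parameter $t \geq 0$, subject to the condition that $\tilde u$ extend smoothly in $t^2$ near $t = 0$.

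Next I would expand $(i\partial\bar\partial u)^n$ along this slice. The Jacobian of the slice-to-orbit map is an explicit product of $\sinh$-type factors weighted by the restricted root multiplicities $m_\alpha$ recorded in Table \ref{table_rank_one_ss}, and a standard computation (analogous to $K$-invariant Kähler geometry on symplectic quotients of $T^{*}K$) reduces the Monge-Ampère equation to
\[ \frac{d}{dt}\bigl[\, \tilde u'(t)^n\, J(t)\, \bigr] = c\, D(t), \]
with $J,D$ positive weights built from the root data. A single integration then determines $\tilde u'(t)^n$ explicitly, and a further integration yields $\tilde u(t)$, the two constants being pinned down by the smoothness requirement at $t = 0$ and the normalization of $\Omega$.

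Finally I would verify that the candidate $u$ gives a genuine complete Calabi-Yau metric: strict plurisubharmonicity follows from $\tilde u''>0$, smooth extension across the zero section from a parity check on the integrated formula, and geodesic completeness from the growth of $\tilde u'(t)$ as $t \to \infty$. The metric is then manifestly $K$-invariant and $\partial\bar\partial$-exact. The main obstacle is the smoothness across the singular $K$-orbit: $J(t)$ vanishes at $t = 0$ to an order controlled by the $m_\alpha$, so one must carefully track parities to arrange $\tilde u'(t) \sim c_1 t$ at $t = 0$ with the correct coefficient, and verify compatibility with each of the four families in Table \ref{table_rank_one_ss}. This is precisely the step that the rank-one hypothesis makes tractable, since in higher rank the same procedure becomes a genuine PDE on the Weyl chamber with delicate behaviour across the walls.
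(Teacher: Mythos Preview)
Your proposal is correct and is essentially Stenzel's original argument, which the paper summarizes in one line (``translating the complex Monge--Amp\`ere equation to a one-variable ODE, which can be explicitly solved'') rather than proving in detail. One small point: with the reduction recorded in Theorem~\ref{theorem_calabi_yau_equation_symmetric_space}, the rank-one equation takes the clean form
\[
\rho''(x)\,\rho'(x)^{m+\wh m} \;=\; \sinh^{m}(x)\,\sinh^{\wh m}(2x),
\]
i.e.\ \(\frac{1}{n}\bigl((\rho')^{n}\bigr)' = \text{RHS}\) with no Jacobian factor left inside the derivative; your form \(\frac{d}{dt}[\tilde u'(t)^n J(t)] = cD(t)\) is slightly off unless \(J\equiv 1\) in the right normalization, but this does not affect the strategy.

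It is worth noting that the paper does \emph{not} reprove this cited theorem directly. What it does instead, in the proof of Theorem~\ref{theorem_rank_one_cyss}, is give an alternative and more geometric existence argument: it identifies the horospherical cone \(C\) arising as the equivariant degeneration of \(G/H\), writes down the conical Calabi--Yau potential on \(C\) explicitly (a one-line computation in rank one), uses it as a Calabi ansatz producing an asymptotic solution with Ricci-potential decay \(O(r^{-4/b})\), and then invokes Conlon--Hein's existence theorem for AC Calabi--Yau metrics in the trivial class. Your direct-integration approach yields the metric explicitly and with minimal machinery; the paper's approach bypasses the explicit integration but immediately identifies the tangent cone at infinity, its smoothness, and its K-stable Reeb vector, which is the information actually needed later for the rank-two gluing.
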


In fact, we have a more geometric description of the \textcolor{blue}{\( K \)-invariant} \( \del \delb \)-exact Calabi-Yau metrics with maximal volume growth on rank one symmetric spaces, which should be well-known by experts, but we still include the proof as a prelude and motivation for the rank-two part. 

\begin{table} 
\adjustbox{width=1\textwidth}{
\centering
\begin{tabular}{|c|c|c|c|c|c| }
\hline
Type  & Representative & Root system & Multiplicities & Satake diagram & Hermitian \\
\hline
\( BDII \) &\( \SO_{r+2} / \SO_1 \times \SO_{r+1} \) & \( A_1 \) & \( r, r \geq 1 \) & 
\( \begin{aligned} 
&\dynkin D{o*.***} \text{if \( r \) even}, \\
&\dynkin B{o*.**} \text{else}
\end{aligned} \) & \( r = 1 \) \\
\( AIII \) & \( \SL_{r+1} /  \SL_1 \times \SL_{r+1} \) &  \( BC_1 \) & \( (2r - 3,1), r \geq 2  \) & \( \dynkin[edge length = .75cm, involutions={14}] {A}{o*.*o} \) &\( \text{yes} \) \\   
\( CII \) &\( \Sp_{2r} / \Sp_2 \times \Sp_{2r - 2} \) & \( BC_1 \) & \( (4r - 8,3), r \geq 3 \) & \( \dynkin C{*o**.**}  \) &\( \text{no} \) \\
\( FII \) &\( F_4 / B_4\) & \( BC_1 \) & \( (8,7) \) & \( \dynkin F{o***} \) & \( \text{no} \) \\
\hline
\end{tabular}}
\caption{Symmetric spaces of rank one. This table is deduced from \cite[Table 26.3]{Tim11}, \cite[Table VI]{Hel78}.}
\label{table_rank_one_ss}
\end{table}

We offer an approach using the theory of equivariant degeneration for spherical varieties \cite{BP87}. While existence is essentially contained in \cite{CH15} and uniqueness well-known \cite[Proposition 21]{Myk11} (which is in fact stronger than the uniqueness result in \cite[Theorem A]{CH15}), our strategy has the advantage of yielding immediately the potential tangent cone and some of its geometric properties, which might not be clear a priori, e.g. the spherical action, smoothness of the link and the K-stable Reeb vector. The reader will find more details in Subsection \ref{subsection_rank_one_case}.  

\begin{thm} \label{theorem_rank_one_cyss}
Every rank one symmetric space \( G/H \) admits a complete \( K\)-invariant \( \del \delb\)-exact Calabi-Yau metric which is asymptotically conical to a regular and smooth horospherical cone, obtained as a \( G\)-equivariant degeneration of the symmetric space. Moreover, the metric is unique in the trivial Kähler class up to one scaling parameter and the tangent cone is unique given the metric. 
\end{thm}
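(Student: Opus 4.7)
The plan is to realize each rank one symmetric space as the generic fiber of an equivariant family whose special fiber is a horospherical Fano cone, then verify the numerical hypotheses of Theorem~\ref{theorem_optimal_tianyau} on the wonderful compactification and read off everything from the Brion--Pauer construction.

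First I would fix a rank one symmetric space \( G/H \) and consider its wonderful compactification \( X \), which in rank one is a smooth projective \( G \)-variety with a unique closed \( G \)-orbit \( D \subset X \) of real codimension two. A case-by-case inspection of the four families in Table~\ref{table_rank_one_ss} identifies \( D \) as a generalized flag manifold \( G/P \) (for instance a smooth quadric for type \( BDII \), a Grassmannian-type flag for \( AIII \) and \( CII \), and the Cayley flag for \( FII \)), hence \( D \) is Fano and admits a Kähler--Einstein metric of positive scalar curvature. Next I would invoke Brion--Pauer \cite{BP87} to produce the equivariant family \( \mathcal{X} \to \mathbb{A}^1 \) with generic fiber \( G/H \) and special fiber the affine horospherical variety \( C = \overline{G/H_0} \), the latter being the affine cone over \( D \) for the polarization coming from a primitive multiple of \( K_D^{-1} \); this uses the fact that in rank one the valuation cone is a half-line so the space of equivariant test configurations is one-dimensional and the unique non-trivial one is horospherical.

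The second step is to check the numerical conditions \( -p K_X = q D \) with \( q/p > 1 \). The multiplicities \( p,q \) can be read off explicitly from the spherical root of \( G/H \) and the fundamental weight associated to the parabolic \( P \); in each of the four families this amounts to comparing the index of \( D \) as a Fano variety with its self-intersection class in \( X \), and the strict inequality \( q>p \) follows from the combinatorial formula for the anticanonical class of a wonderful variety (essentially the sum of the boundary divisor and a positive combination of \( B \)-stable prime divisors). Once this is in place, Theorem~\ref{theorem_optimal_tianyau} applied to the pair \( (X,D) \) yields a unique complete Calabi--Yau metric \( \omega_t \) on \( X \setminus D = G/H \) in the trivial Kähler class, asymptotic to \( t\omega_0 \), where \( \omega_0 \) is the Calabi ansatz cone metric on \( K_D^\times \) built from the Kähler--Einstein metric on \( D \). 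Because \( K \subset G \) acts holomorphically on the whole family and the Tian--Yau solution is unique in its class, one automatically gets \( K \)-invariance of \( \omega_t \) by averaging.

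The asymptotic cone produced this way is precisely the horospherical degeneration \( C \), so it is smooth (the link is the unit circle bundle over the smooth flag manifold \( D \)) and regular: the Reeb vector is the infinitesimal generator of the fiberwise \( \mathbb{S}^1 \)-action on \( K_D^\times \), which is free, and it is automatically the K-stable polarization because the test configuration space is one-dimensional. Uniqueness of the metric in the trivial class up to scaling follows from the uniqueness clause in Theorem~\ref{theorem_optimal_tianyau}, and uniqueness of the tangent cone given the metric is immediate from Colding--Minicozzi \cite{CM14} once smoothness of the link is established, or alternatively from the remark of Székelyhidi \cite{Sze20} invoked in the introduction since the metric is \( \partial\bar{\partial} \)-exact.

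The main obstacle I expect is the verification of \( q/p > 1 \) in each of the four families, since a single bad case would invalidate the direct application of Conlon--Hein. The computation reduces to a finite root-theoretic check using the restricted root data recorded in Table~\ref{table_rank_one_ss}, but it must be done cleanly enough that the same framework will generalize to rank two later in the paper. A secondary subtlety is ensuring that the Calabi ansatz metric on \( K_D^{\times} \) really matches the cone coming from Brion--Pauer's degeneration; this comes down to identifying the polarization of \( C \) in \( \mathrm{Pic}^G(C) \) with a positive rational multiple of \( K_D^{-1} \), which is where the indecomposability of \( G/H \) and the explicit description of the colored fan of \( X \) are needed.
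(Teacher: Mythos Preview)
Your proposal is correct and lands on the same endgame as the paper: identify a smooth equivariant compactification \((X,D)\) with \(D\) a Fano Kähler--Einstein flag manifold and \(-K_X=(A+1)D\), \(A\geq 1\), apply the Conlon--Hein/Tian--Yau theorem (Theorem~\ref{theorem_optimal_tianyau}) to produce the AC Calabi--Yau metric in the trivial class, identify the asymptotic cone with the horospherical Brion--Pauer degeneration, and conclude uniqueness of the tangent cone via Colding--Minicozzi. Your observation that the rank-one Reeb cone is one-dimensional, so regularity and K-stability of the polarization are automatic, is also in the spirit of the paper.

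The paper takes a more constructive route before invoking Conlon--Hein: it writes down the conical Calabi--Yau equation on the horospherical cone as a one-variable ODE, solves it explicitly to obtain the K-stable Reeb vector \(\xi=\frac{n}{n-1+\wh m}\,\omega_\alpha^\vee\), takes the resulting Calabi ansatz \(u(x)=c\,e^{(n-1+\wh m)x/n}\) as an asymptotic solution on \(G/H\), and computes the Ricci-potential decay \(O(r^{-4n/(n-1+\wh m)})\) and metric decay \(O(r^{-4})\) directly. Only then does it cite \cite{CH15} to perturb to a genuine solution. This buys two things your black-box approach does not: an explicit formula for the Reeb vector and the convergence rate (so the smoothness and regularity of the link are read off from the ODE rather than from a case-by-case identification of \(D\)), and, more importantly, a template---the Calabi ansatz expanded in the \(\alpha\)-direction and its asymptotic expansion (Proposition~\ref{prop_stenzel_asymptotic})---that is reused verbatim as the model near the boundary divisors in the rank-two gluing of Section~\ref{section_global_asymptotic_solution}. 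Your approach is cleaner as a stand-alone proof, but the paper's is deliberately written as a warm-up for rank two, where there is no analogue of Theorem~\ref{theorem_optimal_tianyau} to cite and the explicit ansatz becomes essential.
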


In other words, any rank one symmetric space is an affine smoothing of its horospherical tangent cone. This is consistent with Conlon-Hein's classification result in \cite[Theorem A]{CH24} which shows that AC Calabi-Yau manifolds can only arise as suitable deformations of the asymptotic cone (e.g. affine smoothing), possibly followed by a crepant resolution. 
 
By the work of Bielawski, it is now known that there is a \textit{weak} Calabi-Yau structure on any complex symmetric space \cite[Theorem 0.1]{Bie04}.
The regularity theorem \cite[Theorem 0.2]{Bie04} however only applies when the densities \( f,g \) in the real Monge-Ampère equation
\( f(\nabla \phi) \det(d^2 \phi) = g \) 
are \textit{positive}, which is not the case for our equation on symmetric spaces (see \eqref{equation_ricci_flat_symmetric_space} where the rhs is only nonnegative). Even if one may very well prove the regularity of the metric assuming only nonnegativity of the densities (using a method analogous to that of Koike for example \cite{Koi23}), this existence theorem is not very geometrically enlightening, and a more meaningful reformulation of the problem can be stated as follows.

\begin{prob} \label{problem_cy_metrics_prescribed_tangent_cone}
\textcolor{blue}{Given a complex symmetric space \( G/H \) and a prescribed \( G\)-spherical (possibly singular) Calabi-Yau cone \( C \), obtained as a \( G\)-equivariant degeneration of \( G/H \), find complete \(  K\)-invariant Calabi-Yau metrics of Euclidean volume growth with \( C \) as the tangent cone at infinity.}
\end{prob}

\begin{rmk} 
The choice of \( C \) is a priori reasonable, since the tangent cone at infinity of a complete \( \del \delb\)-exact Calabi-Yau metric with Euclidean volume growth on a complex manifold \( M \) is in fact the central fiber of a flat \( \Cbb^{*}\)-equivariant degeneration of \( M \) in (at most) two steps \cite{Sze20} \cite{DS17}.
\end{rmk}
 
On complex Hermitian symmetric spaces, Biquard-Gauduchon obtained explicit hyperKähler metrics with singular tangent cones \cite{BG96}. \textcolor{blue}{Motivated by} these examples, Biquard-Delcroix constructed Calabi-Yau metrics with singular tangent cones by viewing the space as open subset inside the \textit{wonderful compactification}, obtained by adding two irreducible divisors, then gluing the \textcolor{blue}{Calabi} ansatz (\textcolor{blue}{referred to as the Tian-Yau ansatz therein})  on one K-stable Fano boundary divisor (if any) with a specific ansatz on the other boundary divisor \cite{BD19}. \textcolor{blue}{Let us make a remark before recalling their main result.} 

In \cite{BD19}, Theorem 1.1 is only stated for symmetric spaces with \( H = N_G(G^{\theta}) \) and we recall it as such. This is to guarantee the existence of a smooth compactification, for example a wonderful one. However, the theorem can be stated for any space \( G/H \) of the same involution type as \( G/N_G(G^{\theta}) \), since then \( G/H \to G/N_G(G^{\theta}) \) is a finite covering. 

\begin{thm}[\!\!\cite{BD19}]
Let \( G/H \) be a  rank two symmetric space with \( H = N_G(G^{\theta}) \). Let \( X \) be the wonderful compactification of \( G/H \) so that \( X \backslash G/H \) is a smooth simple normal crossing divisor with irreducible components \( D_1, D_2 \). Let \( A_1, A_2 > 0 \) be as defined in Subsection \ref{subsection_setup}, and 
\[ C(D_1^{\vee} ), C(D_2^{\vee} ) = ( \frac{1}{A_1} K_{D_1^{\vee} })^{\times}, (\frac{1}{A_2} K_{D_2^{\vee}})^{\times} \]
be the Fano cone singularities over the equivariant blowdowns \( D_1^{\vee}, D_2^{\vee} \) along the closed orbit \( D_1 \cap D_2 \) of \( D_1, D_2 \). Then both cones have a regular conical Calabi-Yau structure (equivalently, both \( D_1^{\vee}, D_2^{\vee} \) are Fano Kähler-Einstein), except for the spaces with restricted root system \( G_2 \), where only one cone is Calabi-Yau.

Moreover, there is a complete \( K\)-invariant \( \del \delb\)-exact Calabi-Yau metric of Euclidean volume growth and
\begin{itemize} 
    \item with singular tangent cone at infinity \( C(D_1^{\vee} ) \) or \( C(D_2^{\vee}) \) on the non-Hermitian symmetric spaces
    \[ \SO_5 \times \SO_5 / \SO_5, \quad \Sp_8 / \Sp_4 \times \Sp_4, \quad G_2 / \SO_4, \quad G_2 \times G_2 / G_2, \]
    provided that \( C(D_i^{\vee} ), i = 1,2 \) is Calabi-Yau;
    \item with singular tangent cone at infinity \( C(D_1^{\vee}) \) or \( C(D_2^{\vee}) \)  on the Hermitian ones. More precisely, there is one choice of the tangent cone that recovers the Biquard-Gauduchon metrics, while the other choice yields a different metric on
    \[ \SO_r / \SO_2 \times \SO_{r-2}, r \geq 5, \quad \SL_5/ \SL_2 \times \SL_3. \]
\end{itemize}
\end{thm}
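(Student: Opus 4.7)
The plan is to exploit the wonderful compactification $X$ of $G/H$ to reduce the construction to a gluing problem along each of the two boundary divisors, then appeal to Tian-Yau-Hein theory on a globally defined asymptotic solution. Since $G/H = X \setminus (D_1 \cup D_2)$, every $K$-invariant Kähler potential on $G/H$ extends near the generic point of each $D_i$, and its behavior is dictated by the normal data to that divisor. Fix the target cone, say $C(D_1^\vee) = (\tfrac{1}{A_1} K_{D_1^\vee})^\times$, where $D_1^\vee$ is the blowdown of $D_1$ along the closed orbit $D_1 \cap D_2$; outside the $G_2$ cases, $D_1^\vee$ is Fano Kähler-Einstein, so the Calabi ansatz produces a genuine Ricci-flat Kähler cone metric $\omega_0$ on $C(D_1^\vee)$.

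First, I would produce a $K$-invariant potential $\phi_1$ on a neighborhood of $D_1 \setminus D_2$ whose associated metric matches the Calabi ansatz to high order as one approaches $D_1$: writing $\phi_1$ as a function of a local section of $K_{D_1^\vee}$ and comparing with the Hermitian norm giving the cone radius $r_1^2 = \norm{\cdot}^{2/n_1}$ does this explicitly. Near $D_2 \setminus D_1$ the normal data is non-conical, so one needs a complementary $K$-invariant ansatz $\phi_2$ engineered so that its gradient flow fills $G/H$ along the closed orbit while carrying the correct transverse Ricci potential; this is where the \emph{specific ansatz} on the non-conical divisor alluded to by Biquard-Delcroix enters. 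I would then patch $\phi_1$ and $\phi_2$ with $K$-invariant cutoffs supported in a tubular neighborhood of $D_1 \cap D_2$. The polyhedral description of $K$-invariant functions in Weyl chamber variables reduces the interaction region to a two-variable problem, considerably simplifying the bookkeeping.

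Once a global ansatz $\phi$ is in place, the technical heart is to show that the Ricci potential $f = \log(\det(\phi_{i\bar{\jmath}}) / \Omega)$, for a chosen holomorphic volume form $\Omega$ on $G/H$, decays fast enough along both ends of $X \setminus (D_1 \cup D_2)$. The decay rate is governed by the numerical relation $-K_X \sim A_1 D_1 + A_2 D_2$, and after a lower-order correction of $\phi$ (standard in Tian-Yau constructions) to push the decay past Hein's threshold, one solves the Monge-Ampère equation $(\omega_\phi + dd^c u)^n = e^f \omega_\phi^n$ with $u \to 0$ at infinity \cite{Hein}. The resulting metric $\omega_\phi + dd^c u$ is $K$-invariant and $\del \delb$-exact by construction. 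Identifying its tangent cone with $C(D_1^\vee)$ follows because the Calabi-model portion of $\phi$ already defines a Ricci-flat Kähler cone metric and $u$ is strictly of lower order, so Cheeger-Gromov rescaling recovers $C(D_1^\vee)$; swapping the roles of $D_1$ and $D_2$ produces the second metric, which is genuinely distinct in the Hermitian cases since the two divisors are non-isomorphic there.

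The principal obstacle I anticipate is the gluing near $D_1 \cap D_2$: the two local models are geometrically dissimilar (one truly conical, one merely smoothing), and matching them while preserving both $K$-invariance and the required Monge-Ampère decay demands a delicate choice of polyhedral cutoff data. This is precisely where the spherical-variety structure of the wonderful compactification does the heavy lifting, and where the $G_2$ exceptions fail: in those cases, one of the two boundary cones is not Kähler-Einstein, so the Calabi ansatz on it does not produce a Ricci-flat cone metric and the gluing input for that choice of tangent cone is simply unavailable.
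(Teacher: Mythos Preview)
This statement is not proved in the present paper: it is a theorem cited from \cite{BD19} and recalled in the introduction as background for the paper's own (different) construction. There is therefore no proof in this paper to compare your proposal against.

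That said, your sketch is a reasonable high-level outline of the Biquard--Delcroix strategy as the paper summarizes it: a Calabi-type ansatz modeled on the conical divisor $D_1^\vee$, a second ansatz on the non-conical divisor, a gluing in Weyl-chamber coordinates, decay improvement of the Ricci potential, and finally Hein's existence package. The paper itself describes this as ``gluing the Calabi ansatz \dots\ on one K-stable Fano boundary divisor (if any) with a specific ansatz on the other boundary divisor.''

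Two places where your sketch is vague or slightly off. First, you do not specify what the ``complementary ansatz'' $\phi_2$ near $D_2$ actually is; in \cite{BD19} this is a concrete construction built from the Stenzel potential on the rank-one symmetric fiber of the horosymmetric divisor, and arranging its asymptotics to match the Calabi side (and then improving the Ricci-potential decay order by order) is the technical core of the argument, not a generic patching step. Second, your closing remark that the two metrics are ``genuinely distinct in the Hermitian cases since the two divisors are non-isomorphic there'' overstates the theorem: only for $\SO_r/\SO_2\times\SO_{r-2}$, $r\ge 5$, and $\SL_5/\SL_2\times\SL_3$ does the second choice yield a metric different from the Biquard--Gauduchon one, and the distinction is about which tangent cone at infinity arises, not merely about non-isomorphism of the boundary divisors.
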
 
Although this partially answers Problem \ref{problem_cy_metrics_prescribed_tangent_cone} for rank two symmetric spaces with prescribed horosymmetric tangent cones, one main drawback of their ansatz is that the asymptotic solutions obtained have unbounded holomorphic bisectional curvature in some cases, \textcolor{blue}{notably the simplest class of (restricted) root system \( A_2 \), as well as the infinite family \( \Sp_{2r} / \Sp_2 \times \Sp_{2r-2}, r \geq 5\) of non-Hermitian symmetric spaces with root system \( BC_2 \)}. The unbounded curvature prevents the application of any known a priori estimate methods to perturb the asymptotic solution to a genuine solution. Unboundedness of the curvature in bad cases is not explicitly proved in \cite{BD19}, but can be done similarly as in Proposition \ref{proposition_unbounded_curvature}.
 
 \subsection{Main result}
 The issues in \cite{BD19} can be remedied by using the conical Calabi-Yau metric from a degeneration of \( G/H \) induced by a valuation \textit{in the interior} of the negative Weyl chamber. The theory of equivariant degeneration of spherical varieties \cite[Corollaire 3.8]{BP87} tells us that any such direction defines an equivariant degeneration of \( G/H \) to a \textit{horospherical cone} \( C_0 \), which always admits a K-stable polarization \cite{Ngh22b} and can be taken to be the candidate of our tangent cone at infinity.
 
With this conical Calabi-Yau structure in hand, we generalize further the tentative construction in \cite{BD19} to obtain new Calabi-Yau metrics with singular tangent cones at infinity. Our main result covers all the cases of symmetric spaces left by Biquard-Delcroix, namely the spaces with restricted root sytem \( A_2 \), the non-Hermitian infinite family \( \Sp_{2r} / \Sp_2 \times \Sp_{2r-2} \) and the finite family of multiplicity \( (8,6,1) \), but also all the rank-two decomposable cases of root system \( R_1 \times R_1 \) not considered in \cite{BD19}, where each factor \( R_1 \) is \textit{any} symmetric space of rank \( 1 \). 

\begin{minipage}{\textwidth}
\begin{thmx} \label{theorem_main_theorem_cyss}
Every rank two symmetric space \( G/H \), except of restricted root system \( G_2 \), admits a complete \( K\)-invariant \( \del \delb\)-exact Calabi-Yau metric of Euclidean volume growth with singular \( G\)-horospherical tangent cone \( C_0 \) at infinity, which is
\begin{itemize}
    \item regular in the \( A_2 \) cases.
    \item generally irregular in the \( BC_2 \) and \( B_2 \) cases. 
    \item the product of the Stenzel asymptotic cones on each factor in the \( R_1 \times R_1 \) cases.
\end{itemize}
\end{thmx}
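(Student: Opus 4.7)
The plan is to follow the general recipe for producing Calabi-Yau metrics of Euclidean volume growth with singular tangent cones, now adapted to the spherical setting of rank two symmetric spaces. First, I fix a direction $\xi$ in the interior of the positive restricted Weyl chamber and invoke \cite{BP87} to obtain a one-parameter $G$-equivariant degeneration of $G/H$ whose central fibre is a horospherical affine cone $C_0$. By \cite{Ngh22b}, this $\xi$ is K-stable, so $C_0$ carries a genuine conical Calabi-Yau metric $\omega_0 = \frac{1}{2} dd^c r_0^2$ which can be written down explicitly via a horospherical Calabi ansatz on the open $G$-orbit of $C_0$. A root-system case analysis ($A_2$, $BC_2$, $B_2$ and $R_1 \times R_1$) shows that the K-stable Reeb vector is regular in the $A_2$ and $R_1 \times R_1$ cases, while it generically has irrational components in the $BC_2$ and $B_2$ cases, producing the dichotomy in the statement.

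Next, I would build a global $K$-invariant smooth approximate Kähler form $\omega$ on $G/H$. Away from the image of the codimension $\geq 4$ singular locus of $C_0$ under the degeneration diffeomorphism, one pulls back $\omega_0$ directly. The essential improvement over \cite{BD19} is that because $\xi$ is chosen in the \emph{interior} of the Weyl chamber rather than on a wall, the conical model has bounded sectional curvature and bounded geometry on its regular locus, which resolves the unbounded-curvature obstruction present in the two-divisor wonderful compactification ansatz for the $A_2$ and $BC_2$ root systems. Near the singular strata of $C_0$, I use a transverse slice description coming from the $G$-action to glue $\omega_0$ with a fibrewise Stenzel-type local model, in the spirit of \cite{Li19, Sze19, CR21}, producing a globally smooth $K$-invariant Kähler form of maximal volume growth whose Ricci potential $f$ decays at infinity.

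The remaining analytic step is to solve the complex Monge-Ampère equation $(\omega + dd^c u)^n = e^{-f} \omega^n$ for a $K$-invariant $\partial\bar\partial$-exact potential $u$. Following the now standard scheme, I first sharpen the decay of $f$ by correcting $\omega$ with $K$-invariant solutions of the linearised equation on $C_0$, using the $G$-module decomposition of the weighted $L^2$ space on the cone to control the error term order by order. Once $f$ decays sufficiently fast, Hein's extension of the Tian-Yau existence theorem \cite{Hein, TY91} applies and yields a genuine Calabi-Yau metric $\omega + dd^c u$ asymptotic to $\omega$ with Euclidean volume growth. By \cite{Sze20} the $\partial\bar\partial$-exactness forces the tangent cone at infinity to be unique, and a direct rescaling comparison then identifies it with $(C_0, \omega_0)$.

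The hard part will be the gluing and weighted analysis near the singular locus of $C_0$. Although the overall strategy is well-established, the horospherical geometry here is more intricate than in the $\mathbb{C}^n$ cases: the singular strata of $C_0$ themselves carry nontrivial residual $G$-equivariant structure, and in the irregular $BC_2$ and $B_2$ cases the Reeb flow is not periodic, so the weighted Sobolev framework for the model Laplacian must be compatible with an irrational flow. Setting up this framework, together with uniform curvature and volume bounds for the approximate solution that glue consistently across the regular and near-singular regions, is where the main new technical work will lie.
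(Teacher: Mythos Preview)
Your overall architecture matches the paper's: degenerate to the horospherical cone $C_0$ along an interior direction, use the conical Calabi-Yau potential as a first approximation, glue Stenzel-type models near the two singular strata, improve the Ricci potential decay, and invoke Hein's package. But there is a genuine gap and a misplaced worry.

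The gap is your assertion that choosing $\xi$ in the interior of the Weyl chamber \emph{automatically} yields bounded curvature of the approximate metric. It does not. In the paper this is the content of Propositions~\ref{proposition_unbounded_curvature} and~\ref{proposition_bounded_curvature_combinatorial}: the glued metric has bounded holomorphic bisectional curvature (hence admits a quasi-atlas) if and only if the combinatorial inequalities $a_0^{\pm} \geq a_1^{\pm}$ hold, where $a_0^{\pm}, a_1^{\pm}$ are explicit constants built from the root multiplicities and the K-stable Reeb parameter $t$. These inequalities are not free and must be verified case by case; for the infinite $BC_2$ families the verification requires nontrivial two-sided estimates on the unique positive root of the polynomial K-stability equation, carried out via Appell hypergeometric integrals and a convexity argument (Lemma~\ref{lemma_bounded_geometry_infinitefamilies_bc2}). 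A separate combinatorial condition (Lemma~\ref{lemma_pasting_coherence}) is also needed to guarantee that the cutoff lines for the gluing can be chosen consistently inside the Weyl chamber. The $G_2$ exclusion is not a curvature blow-up of this kind at all: there the K-stable Reeb vector lands \emph{outside} the Weyl chamber, so the Calabi ansatz does not produce a well-defined global potential in the first place.

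Your worry that irregularity of the Reeb flow will complicate the weighted Sobolev framework is misplaced. Because everything is $K$-invariant, the entire construction reduces to the Cartan algebra $\mathfrak{a}\simeq\mathbb{R}^2$: the Calabi-Yau equation becomes a real Monge-Amp\`ere equation there (Theorem~\ref{theorem_calabi_yau_equation_symmetric_space}), the boundary models are explicit warped products $e^{a_0^{\pm}\widetilde{\alpha}_{2,1}}(1 + e^{-a_1^{\pm}\widetilde{\alpha}_{2,1}} w)$ with $w$ the rank-one Stenzel potential (Ansatz~\ref{ansatz_boundary}), and the decay improvement is obtained by inverting a one-variable weighted Laplacian on the Stenzel fibre via \cite{LM85}, not by spectral analysis on the full cone. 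The irregularity of $\xi$ is purely an arithmetic statement about $t$ and never enters the analysis.
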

Let us make some comments related to this theorem. 
\end{minipage}

\begin{itemize}
\item On the classes of spaces considered in \cite{BD19}, our metrics are new and not isometric to the Biquard-Delcroix metrics since their choice of asymptotic behavior leads to a tangent cone at infinity with a \textit{non-horospherical} \( G\)-action in every case (see also Example \ref{example_horosymmetric_nonhorospherical}). Moreover, our choice of tangent cone at infinity yields Calabi-Yau metrics with bounded holomorphic sectional curvature in all cases (see Table \ref{table_constants}, as well as Lemmata \ref{lemma_bounded_geometry_finitefamilies_bc2}, \ref{lemma_bounded_geometry_infinitefamilies_bc2}). 
\item Our result provides a full answer to Problem \ref{problem_cy_metrics_prescribed_tangent_cone} for rank two symmetric spaces with prescribed horospherical Calabi-Yau cones. Geometrically, non-\( G_2 \) symmetric spaces up to rank two can all be interpreted as equivariant affine Calabi-Yau smoothings of horospherical singular tangent cones. \textcolor{blue}{As a result}, we obtain the first examples of Calabi-Yau metrics on manifolds that are affine smoothings of \textit{singular and irregular} tangent cones, \textcolor{blue}{but also a solution to the singular version of the irregular Tian-Yau problem \ref{problem_generalized_tianyau} on \( BC_2/ B_2 \) symmetric spaces.} Based on numerical evidence, it is very likely that \textit{all the tangent cones in the \( BC_2 / B_2 \) cases are irregular}. 


\item Fixing the choice of \( H \) so that an \( R_1 \times R_1 \)-symmetric space is a product, there is an obvious Calabi-Yau metric which is the product of the Stenzel metrics on the factors, whose tangent cone at infinity and K-stable Reeb vector are product. It turns out that the tangent cone of the Calabi-Yau metric obtained by our ansatz has \textit{the same K-stable Reeb vector} as the product of the Stenzel tangent cones (cf. Subsection \ref{subsection_rank_one_case}, Theorem \ref{theorem_kstable_horospherical_cone} and Table \ref{table_constants}), which implies that our metrics have the \textit{same tangent cone at infinity as the product metric}. This comes out of our expectation, and we anticipate that the metrics on \( R_1 \times R_1 \) can be related to the product metric by scalings on each factor. 

Example \ref{example_compactification_r1r1} provides the description of the horospherical tangent cone in the \( A_1 \times A_1 \) case that coincides with the product tangent cone (up to a finite covering).

\item The symmetric spaces of restricted root system \( G_2 \) are however excluded. This is practically due to the K-stable Reeb vector lying outside the Weyl chamber (see Theorem \ref{theorem_kstable_horospherical_cone}), hence the global potential is not well-defined if we try to expand the potential of the Calabi ansatz near the boundary divisors using Stenzel potentials. 

While this explanation seems a practical one related to the construction, we expect that the existence of a Calabi-Yau smoothing is actually prevented by algebro-geometric conditions, and that in fact \textit{there is no \( K \)-invariant Calabi-Yau metrics with horospherical tangent cone} on \( G_2 \) symmetric spaces, yielding a negative example to Problem \ref{problem_generalized_tianyau} when \( D \) is singular. This is addressed more thoroughly by means of algebraic geometry in the paper \cite{Ngh24b}.

\item Since we are only considering \textit{semisimple} symmetric spaces \( G/H \) which do not have any \( \Cbb^{*} \)-action commuting with the \( G \)-action, \( \Cbb^3 \) does not fall into this category. However as remarked in \cite{Ngh24b}, \( \Cbb^3 \) is a smooth affine \textit{spherical} variety under the complexified action of Yang Li's isometry \( \SO_{\Rbb}(3) \times U(1) \) (cf. \cite[Section 3]{Li19}), hence one can expect to apply the strategy of the author to rebuild Yang Li's metric. 
\end{itemize}

Since there is no nontrivial equivariant degeneration of a horospherical variety, the following corollary is somewhat trivial, but worth mentionning. 

\begin{corx} 
Every Calabi-Yau symmetric space constructed in Theorem \ref{theorem_main_theorem_cyss} degenerates to the horospherical tangent cone at infinity in only one step.
\end{corx}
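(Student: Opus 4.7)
The plan is to invoke the Székelyhidi--Donaldson--Sun two-step degeneration theorem, which realizes the tangent cone at infinity \( C_0 \) of a \( \del\delb \)-exact complete Calabi-Yau metric of Euclidean volume growth as the central fiber of a \( \Cbb^{*} \)-equivariant chain
\[ G/H \rightsquigarrow W \rightsquigarrow C_0, \]
with \( W \) a K-semistable Fano cone and \( W \rightsquigarrow C_0 \) the K-polystable degeneration. I need to show \( W = C_0 \), so that only one step actually occurs.

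First, I would observe that the \( K \)-invariance of the Calabi-Yau metric built in Theorem \ref{theorem_main_theorem_cyss} implies that the filtration on the coordinate ring of \( G/H \) determining the first step of the above degeneration is \( G \)-invariant. Consequently, \( G/H \rightsquigarrow W \) is a \( G \)-equivariant degeneration. By the Brion--Pauer classification of equivariant degenerations of spherical homogeneous spaces, and because the K-stable Reeb vector of \( C_0 \) lies in the interior of the Weyl chamber (Theorem \ref{theorem_kstable_horospherical_cone}), \( W \) must itself be a horospherical \( G \)-variety.

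Then I would invoke the key rigidity fact: a horospherical \( G \)-variety admits no nontrivial \( G \)-equivariant \( \Cbb^{*} \)-degeneration, since its colored cone data carries no colors and is rigid under deformations preserving the horospherical structure. As the second step \( W \rightsquigarrow C_0 \) is itself \( G \)-equivariant (both sides carry a \( G \)-action, and the K-polystable degeneration commutes with the reductive part of the automorphism group after Li--Wang--Xu), it must be trivial, forcing \( W = C_0 \). The only real subtlety to verify is the full \( G \)-equivariance of both steps of the Székelyhidi--Donaldson--Sun chain, which should follow directly from the \( K \)-invariance of the Calabi-Yau metric propagating to the induced valuations on the coordinate ring.
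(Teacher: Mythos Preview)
Your approach is in the same spirit as the paper's, which dispatches the corollary in a single sentence: ``Since there is no nontrivial equivariant degeneration of a horospherical variety, the following corollary is somewhat trivial.'' You correctly identify horospherical rigidity as the key mechanism and correctly frame the problem in the Donaldson--Sun--Sz\'ekelyhidi two-step picture. The paper does not spell out the \( G \)-equivariance of the chain, so in that respect your write-up is more complete.

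However, there is a genuine gap in your deduction that \( W \) is horospherical. You write that this follows ``because the K-stable Reeb vector of \( C_0 \) lies in the interior of the Weyl chamber.'' But Brion--Pauer tells you that the \( G \)-equivariant degeneration \( G/H \rightsquigarrow W \) is governed by a \( G \)-invariant valuation \( \nu \) on \( \Cbb(G/H) \), which lives in the (closure of the negative) Weyl chamber; \( W \) is horospherical precisely when \( \nu \) lies in the interior. A priori, \( \nu \) is the order-of-growth valuation coming from the metric asymptotics, and it is not the same object as the Reeb vector of \( C_0 \). If \( \nu \) were to land on a Weyl wall, \( W \) would be horosymmetric rather than horospherical, and horosymmetric cones \emph{do} admit nontrivial equivariant degenerations (to horospherical ones), so your second paragraph would not apply.

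To close this gap you need to argue that \( \nu \) is actually in the interior. The natural route is to observe that the asymptotic potential \( \rho \) constructed in Theorem~\ref{theorem_main_theorem_cyss} is, outside a compact set, equivalent to the cone potential \( \rho^{(0)} = e^{b\beta + \psi(\delta)} \) with \( \beta \) in the interior of the Weyl chamber, so the induced degree function (hence \( \nu \)) is proportional to \( \beta \) and thus interior. Equivalently, all interior valuations give isomorphic horospherical central fibers (this is stated in the paper, citing \cite{BP87}), so once \( \nu \) is interior you get \( W = C_0 \) directly and the second step is vacuous. Either way, the connective tissue between ``the Reeb vector of \( C_0 \) is interior'' and ``the Donaldson--Sun valuation is interior'' needs to be made explicit.
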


This supports a recent conjecture made by Sun-Zhang that there is no semistability in the two-steps degeneration theory of Donaldson-Sun for the tangent cone at infinity, i.e. any Calabi-Yau metric of maximal volume growth should degenerate to the tangent cone at infinity in \textit{only one step} \cite[Conjecture 6.4]{SZ23}. 

Let us give a more detailed account. As explained in \cite{SZ23}, any complete Calabi-Yau manifold \( (M, g, J) \) with Euclidean volume growth is asymptotic to a unique Calabi-Yau cone \( C \) with possibly non-isolated singularities \cite{DS17} \cite{Liu21}. Moreover there is a \( \Cbb^{*}\)-equivariant flat affine family \( \Xcal \to  \Cbb \), induced by a negative valuation arising from the metric \( \omega \) \cite[(3.4)]{DS17}, with general fiber \( M \) and central fiber a K-semistable Fano cone singularity \( W \). By analogy with local theory, one would expect another possibly non-trivial flat affine degeneration from \( W \) to \( C \). However, the ``degeneration in one step'' conjecture predicts that \textit{\( W \) is already isomorphic to \( C \)} as normal affine algebraic cones (i.e. \( W \) is already K-stable), so the second step of degeneration from \( W \) to \( C \) is redundant.

This was proved in \cite{SZ23} for tangent cones at infinity with smooth link, and the result is quite unexpected since the \( 2 \)-step degeneration theory works very similarly for local tangent cone, but \textit{the semistable step can not always be eliminated in the local setting}.  In the singular case, as mentioned above, there are already examples of Calabi-Yau metrics on \( \Cbb^n \), \( n \geq 3 \) degenerating to the regular and singular cone \( \Cbb \times A_1 \) in one step \cite{Li19} \cite{Sze20} \cite{CR21}. Our new metrics provide many more examples of this kind, \textcolor{blue}{but also new examples of degeneration to the irregular tangent cone in only one step}. However, we do not know if this phenomenon is true in general for noncompact Calabi-Yau spherical manifolds.

\begin{table}
\adjustbox{width=1\textwidth}{
\centering
\begin{tabular}{|c|c|c|c|c|c|c| }
\hline
Type  & Representative &  \( R \) & Multiplicities & Satake diagram & Hermitian   \\
\hline
\( AI \) &\( \SL_3 / \SO_3 \) & \( A_2 \) & \( 1 \) & \( \dynkin[edgelength = .75cm ] A{o2} \) & \(  \)  \\
\( A_2 \) & \( \text{PGL}_3 \times \text{PGL}_3 / \text{PGL}_3 \) & \( -  \) & \(  2  \) &\( 

\begin{dynkinDiagram}[name=upper, edgelength=.75cm] A{o2}
\node (current) at ($(upper root 1)+(0,-.75cm)$) {};
\dynkin[at=(current),name=lower, edgelength=.75cm] A{o2}
\begin{pgfonlayer}{Dynkin behind}
\foreach \i in {1,...,2}
{
\draw[latex-latex]
($(upper root \i)$)
-- ($(lower root \i)$);
}
\end{pgfonlayer}
\end{dynkinDiagram}
\) & \( \) \\   
\( AII \) & \( \SL_6 / \Sp_6 \) &  \( - \) & \( 4 \) & \( \dynkin[edgelength=.75cm] A{*o*o*} \) & \( \)  \\
\( EIV \) & \( E_6   / F_4 \) &  \( - \) & \( 8 \) & \( \dynkin[edgelength=.75cm] E{IV}\) & \( \) \\
\hline

\( AIIIa \) & \( \SL_r / \text{S}(\GL_2 \times \GL_{r-2}) \) & \( BC_2 \) & \( (2r-8,2,1), r \geq 5 \)& \( \dynkin[edge length = .75cm, involutions={16;25}] {A}{oo*.*oo} \) & \( \text{yes} \) \\
\( CIIa \) &\( \Sp_{2r} / \Sp_{4} \times \Sp_{2r - 2} \) & \( - \) & \( ( 4r -16, 4, 3),  r \geq 3 \) & \( \dynkin C{*o*o.**} \) & \( \) \\
\( DIIIa \) & \( \SO_{10} / \GL_5 \) & \( - \) & \( (4,4,1) \) & \(
\dynkin[edge length = .75cm, involutions={54}] D{*o*oo} \) & \( \text{yes} \)\\
\( EIII \) & \( E_6 / \SO_{10} \times \SO_2 \) & \( - \) & \( (8,6,1) \)& \( \dynkin[edge length = .75cm, involutions={16}] E{oo***o} \) & \( \)\\
\hline

\( BDI \) & \( \SO_r / \SO_2 \times \SO_{r-2} \) & \( B_2 \) & \( (r-4,1,0), r \geq 5 \) &\( 
\begin{aligned} 
&\dynkin D{oo*.***} \text{if \( r\) even}, \\
&\dynkin B{oo*.**} \text{else}
\end{aligned} \) &\( \text{yes} \) \\
\( AIIIb \) & \( \SL_{4} / \text{S}(\GL_2 \times \GL_{2}) \) & \( - \) & \( (2,1,0) \) & \(\dynkin[edge length =.75cm, involutions={13}] A{ooo} \) & \( \text{yes} \) \\ 
\( DIIIb \) & \( \SO_{8} / \GL_4 \) & \( - \) & \( (4,1,0) \) & \( \dynkin[edgelength = .75cm] D{**oo} \) & \( \text{yes} \)\\ 
\( B_2 \) &\( \SO_5 \times \SO_5 / \SO_5 \) & \( - \) & \( (2,2,0) \) &\( 
\begin{dynkinDiagram}[name=upper, edgelength=.75cm] B{o2}
\node (current) at ($(upper root 1)+(0,-.75cm)$) {};
\dynkin[at=(current),name=lower, edgelength=.75cm] B{o2}
\begin{pgfonlayer}{Dynkin behind}
\foreach \i in {1,...,2}
{
\draw[latex-latex]
($(upper root \i)$)
-- ($(lower root \i)$);
}
\end{pgfonlayer}
\end{dynkinDiagram}
\) & \( \) \\
\( CIIb \) & \( \Sp_8 / \Sp_4 \times \Sp_4 \) & \( - \) & \( (4,3,0) \) &\( \dynkin[edgelength = .75cm] C{*o*o} \) & \( \) \\
\hline
\( G \) & \( G_2 / \SO_4 \) & \( G_2 \) & \( 1 \) & \( \dynkin G{oo} \) & \( \)  \\
\( G_2 \) & \( G_2 \times G_2 / G_2 \) & \( - \) & \( 2 \) & \( \begin{dynkinDiagram}[name=upper, edgelength=.75cm] G{o2}
\node (current) at ($(upper root 1)+(0,-.75cm)$) {};
\dynkin[at=(current),name=lower, edgelength=.75cm] G{o2}
\begin{pgfonlayer}{Dynkin behind}
\foreach \i in {1,...,2}
{
\draw[latex-latex]
($(upper root \i)$)
-- ($(lower root \i)$);
}
\end{pgfonlayer}
\end{dynkinDiagram} \) & \( \) \\
\hline
\end{tabular}}
\caption{Indecomposable symmetric spaces of rank two; see also Convention \ref{convention} and Remark \ref{rmk_ss_list}. This table is deduced from \cite[Table 26.3]{Tim11} and \cite[Table VI, Ch. X]{Hel78}.}
\label{table_rank_two_ss}
\end{table}

\subsection{Organization}
\begin{itemize} 
\item In Section \ref{section_setup}, we provide preliminaries on symmetric spaces and recall basic facts about spherical cones, then prove Theorem \ref{theorem_rank_one_cyss}. 
\item In Section \ref{section_geometric_prelude}, we provide details on the relevant geometric constructions in rank two and prove that all the candidates for the tangent cone are singular. We view \( G/H \) as open Zariski subset of a (in general non-smooth and non-Fano) one-divisor compactification \textcolor{blue}{\( X  = G/H \cup D_0^{\vee} \)}. In order to have reasonable models near the singularities of the tangent cone, we consider the blow-up of \( X \) along the two closed orbits, denoted by \( \wt{X} \). \textcolor{blue}{In particular, we have \( \wt{X} \backslash (G/H) = D_0 \cup D_1 \cup D_2 \), with \( D_1, D_2 \) being the two divisors corresponding to the two closed orbits} \textcolor{blue}{(see Section \ref{section_geometric_prelude} for more details on the geometry of these divisors).  The tangent cone at infinity \( C_0 \) can be interpreted as the equivariant degeneration of \( G/H \) along a direction in the interior of the (negative) Weyl chamber}.

The geometry at infinity of \( G/H \) contains: (i) the \textit{generic region} modeled on the \textcolor{blue}{open orbit \( G/H_0 \times \Cbb^{*} \) } of the \textcolor{blue}{cone, \( G/H_0 \) being the open orbit of \( D_0 \)}; and (ii) the \textit{boundary regions}, corresponding to \( D_0 \cap D_1 \backslash D_2 \) and \( D_0 \cap D_2 \backslash D_1 \), \textcolor{blue}{which are also contained in the possible singularities of \( D_0 \)}.

The readers should compare this approach with the recent work of Collins-Li \cite{CL24} where the compactification is Fano, and the \textcolor{blue}{Calabi} ansatz is modeled upon the boundary divisors similarly as in \cite{BD19}. Here we are able to choose a different ansatz, which is not a priori obvious by an analysis of K-stable valuations inside the Weyl chamber of our model, already carried out in the preparatory work \cite{Ngh22b}. 

\item In Section \ref{section_horospherical_calabi_ansatz}, we produce the \textcolor{blue}{asymptotic solution} from the conical Calabi-Yau metric on \( C_0 \) \textcolor{blue}{using a Calabi ansatz (which applies even in the irregular horospherical case, cf. our previous work \cite{Ngh22b})}, then analyze its behavior near \( D_1, D_2 \). 

\item In Section \ref{section_global_asymptotic_solution}, we build models near \(D_1, D_2 \) which are \textcolor{blue}{\textit{warped products of AC metrics} (Ansatz \ref{ansatz_boundary})}, with first order terms matching with the asymptotics of the \textcolor{blue}{Calabi} ansatz. The reader should compare this ansatz with the warped AC product in \cite{BD19} \cite{CR21}. 

The next step is to improve the decay rate of the Ricci potentials (Proposition \ref{proposition_boundary_potential}), and glue these models with the Calabi ansatz. We verify that there is no obstruction to the gluing process in Lemma \ref{lemma_pasting_coherence}. The rank two assumption plays a crucial role in both part \ref{section_horospherical_calabi_ansatz} and \ref{section_global_asymptotic_solution}. 

\item In Section \ref{section_asymptotic_geometry}, we study the asymptotic geometry of the global asymptotic solution obtained in the previous section, and prove that the tangent cone at infinity of the metric is as expected. 

\item In Section \ref{section_proof_main_theorem}, we prove our main Theorem \ref{theorem_main_theorem_cyss} \textcolor{blue}{by appealing to Hein's existence package}. To guarantee a quasi-atlas, as well as \( C^2 \)-estimate for the complex Monge-Ampère equation, we impose a condition on the injectivity radius of the initial metric, which is equivalent to the holomorphic sectional curvature being bounded (cf. Propositions \ref{proposition_unbounded_curvature},\ref{proposition_bounded_curvature_combinatorial}). This condition depends only on the combinatorial data of the tangent cone, and should be related to some fine properties of the cone's singularities. For lack of any meaningful interpretation at this moment, we proceed to check the condition case by case, and hope to provide more details on this matter in a future study. We provide in Table \ref{table_constants} the relevant constants. 
\item In Appendix \ref{appendix_spherical_varieties}, we gather the relevant theory on spherical varieties needed all along the paper. 
\end{itemize} 

\subsection{Further discussions}

\subsubsection{Uniqueness} 
We have not yet mentionned uniqueness of the Calabi-Yau metric with a fixed tangent cone at infinity. We expect that the proof can be done in the same lines as in Székelyhidi's paper \cite{Sze20} \textcolor{blue}{with input from Donaldson-Sun's theory adapted to spherical manifolds, and a finer understanding of the geometry of the smoothing}. We hope to come back to this question in a future paper.

Related to this uniqueness question, we point out in Example \ref{example_compactification_smoothdiv_finiteaut} a Kähler-Einstein log pair \( (X,D) \) where \( X \) is a smooth compactification  of \( G/H \), with finite automorphism group \( \Aut_G (X,D) \), and \( D \) a \textit{singular} divisor with strictly larger automorphism group \( \Aut_G(D) \).

By a recent result of Biquard-Guenancia \cite{BG22}, on a \textit{smooth} pair \( (X,D) \) such that \( D \in \abs{-K_X} \), the conic Kähler-Einstein metrics \( \omega_{\beta} \) with small angle \( 0 < \beta \ll 1 \) on \( D \) converges up to a rescaling factor to the Tian-Yau metric \( \omega_{TY} \) on \( X \backslash D \). This confirms the folklore expectation that the Tian-Yau metrics on \( X \backslash D \) are not fortuitous as they appear, but arise in a canonical way from conic Kähler-Einstein metrics on the given smooth compactification \( (X,D) \) with smooth divisor.

The author learned from Biquard-Delcroix-Guenancia that a similar result is expected to hold in the case where \( \alpha D \in \abs{-K_X} \) with \( \alpha > 1 \) and \( D \) being Kähler-Einstein\footnote{This is confirmed by Biquard-Guenancia \cite{BG24}.} (this is proved for smooth horosymmetric pairs of rank one in \cite{Del24}), or even in the singular context. However, note that the Calabi-Yau metrics in our example should arise in a two-parameters family (with parameters being the scaling factors), while the metrics \( \omega_{\beta} \) occur only in a finite orbit by Matsushima's uniqueness theorem. It would be interesting to obtain examples of this kind but with \textit{smooth} \( D \).

\subsubsection{Regularity of the horospherical Calabi-Yau cone structure} 
We do not know how to \textcolor{blue}{systematically determine the regularity of the Calabi-Yau structure on the horospherical tangent cones at infinity}. \textcolor{blue}{Explicit computations in rank two suggest that} all the tangent cones \textcolor{blue}{are} irregular, except in the \( A_2 \) or \( R_1 \times R_1 \) cases. \textcolor{blue}{This leads us to conjecturing} the following. 

\begin{conj}
The horospherical tangent cones at infinity of the symmetric spaces are irregular, except for the symmetric spaces with restricted root system \textcolor{blue}{of all the factors being} of type \( A \). 
\end{conj}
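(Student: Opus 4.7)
The plan is to reduce the conjecture to an explicit arithmetic/computational statement about the K-stable Reeb vector $\xi$ of the horospherical cone $C_0$, and then verify that statement case by case using the combinatorial description already established in Theorem~\ref{theorem_kstable_horospherical_cone}. Recall that the cone $C_0$ is horospherical under $G$, so its acting torus is the horospherical torus $T_0$ with cocharacter lattice $\Lambda$ identified with a sublattice of the Cartan $\mathfrak{a}$ of the restricted root system $R$. Regularity (resp.\ quasi-regularity) of $\xi$ is equivalent to $\xi$ being proportional to a \emph{primitive} (resp.\ arbitrary rational) lattice point of $\Lambda$. So the conjecture is equivalent to showing: for every rank-two symmetric space $G/H$ whose restricted root system is not a product of type-$A$ factors, the two ratios of coordinates of $\xi$ in the basis of simple coroots are $\Qbb$-linearly independent with $1$.

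First, I would make the positive side of the statement rigorous. For $R_1 \times R_1$ one observes that the horospherical cone of a product splits as a product of horospherical cones, and the K-stable polarization is the product polarization (by uniqueness of $\xi$ and the product structure of the volume functional). Thus $\xi = \xi_1 \oplus \xi_2$, and regularity in rank one (Theorem~\ref{theorem_rank_one_cyss}) gives regularity in the product. For the single type-$A_2$ case, I would use the Weyl-group symmetry: the uniform root multiplicities ensure that the volume functional of the horospherical cone is invariant under the automorphism of $R$ exchanging the two simple roots, so $\xi$ must lie on the symmetric ray $\xi_1 = \xi_2$, which is rational; hence $\xi$ is regular.

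For the negative cases ($B_2$ and $BC_2$), I would exploit Martelli--Sparks--Yau style volume minimization. By \cite{Ngh22b} the K-stable Reeb vector is the unique minimizer of a rational function $\mathrm{vol}(\xi)$ on the interior of the Weyl chamber, whose coefficients are explicit polynomials in the root multiplicities $(m_\alpha)$. Writing $\xi = x \alpha_1^\vee + y \alpha_2^\vee$ and using the $BC_2/B_2$ description, one gets a system of two rational equations $\partial_x \mathrm{vol} = \partial_y \mathrm{vol} = 0$ whose solutions $(x,y)$ are algebraic numbers of degree bounded by the degrees of these polynomials. The claim is that the extension $\Qbb(x/y) \supsetneq \Qbb$, which is a concrete statement one can verify by reducing modulo small primes or by computing discriminants. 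I would execute this for each of the finitely many finite families and each of the two infinite families $BDI$ and $CIIa$, parametrized by a single integer $r$, and show that only finitely many $r$ could potentially give rationality; those I would eliminate by direct numerical check as indicated by the author's experiments.

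The hard part is the infinite families: one cannot simply check finitely many cases. The real obstacle is to show that for each family $r \mapsto \mathrm{vol}_r(x,y)$, the minimizer $(x_r, y_r)$ satisfies $x_r/y_r \notin \Qbb$ for \emph{all} $r$ in the relevant range. My approach would be to clear denominators and express $x_r/y_r$ as a root of a polynomial $P_r(t) \in \Qbb[t]$ whose coefficients are polynomial in $r$, and then apply a rational-roots test: any rational root $p/q$ of $P_r$ must have $p \mid P_r(0)$ and $q \mid \mathrm{lc}(P_r)$, giving finitely many candidates depending polynomially on $r$, which can be excluded by substituting back. This reduction to a uniform arithmetic statement in $r$, rather than a case-by-case irrationality proof, is the central difficulty, and would likely require a careful analysis of how the root multiplicities enter the volume functional; a conceptual explanation, perhaps in terms of the failure of the Weyl-group of $R$ to act transitively on simple roots when $R$ has two distinct root lengths, would be the most satisfying framework.
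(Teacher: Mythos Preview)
The statement you are trying to prove is labelled a \emph{conjecture} in the paper, and the paper does not supply a proof. On the contrary, the authors explicitly flag the irrationality assertion as open: they reduce the question (for one infinite family) to showing that the unique positive root of an explicit one-parameter family of integrals $Q(t,r)$ is irrational for all $r \geq 5$, and they leave this as a further conjecture, remarking that ``the difficulty lies in proving irreducibility of the polynomial over $\Qbb$.'' So there is no ``paper's own proof'' to compare against; the relevant question is whether your proposal actually closes the problem.

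It does not. Your treatment of the positive direction (type $A$ factors) is fine and matches what the paper already establishes: the product argument for $R_1 \times R_1$ and the Weyl-symmetry argument for $A_2$ are exactly what the paper uses to compute $t=1$ in the $A_2$ column of Table~\ref{table_constants} and the product Reeb vector in Theorem~\ref{theorem_kstable_horospherical_cone}. The gap is entirely in the irrationality direction for the infinite families. Your plan is to write the Reeb parameter $t_r$ as a root of a polynomial $P_r \in \Qbb[t]$ with coefficients polynomial in $r$, and then exclude rational roots via the rational-root theorem. But the rational-root theorem constrains a putative root $p/q$ only through divisibility of the \emph{integer values} $P_r(0)$ and $\mathrm{lc}(P_r)$; the number of divisors of these integers is not bounded uniformly in $r$, and the candidate roots $p/q$ do not vary polynomially in $r$ in any usable sense. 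So ``finitely many candidates depending polynomially on $r$, which can be excluded by substituting back'' is precisely the step that fails. This is the same obstruction the paper identifies: one would need to prove irreducibility (or at least absence of linear factors) of $P_r$ over $\Qbb$ uniformly in $r$, and no mechanism for that is offered. Your closing remark about root-length asymmetry is suggestive heuristics, not an argument. In short, the proposal is a reasonable outline of where the difficulty lies, but it does not resolve the conjecture.
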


\textcolor{blue}{To be precise, from our previous work \cite{Ngh22b}, the K-stability condition on a rank two horospherical cone \( (C,\xi) \) is in fact equivalent to the parameter of the Reeb vector \( \xi \) being the unique positive solution to a one-variable polynomial. It is then reasonable to speculate that the regularity conjecture can be reformulated in terms of an algebro-arithmetic question. For instance, the conjecture for the tangent cones of the infinite family \( \SL_{r} / \text{S} (\GL_2 \times \GL_{r-2}), r \geq 5 \) can be translated as}
\begin{conj}
\textcolor{blue}{For all \( r \geq 5 \), the unique positive root of 
\[ Q(t,r) = \int^{\frac{2r-6}{1+t}}_{\frac{-4}{2+t}} p(4 +(2+t)p)^2 (4r-8 -tp)^2 (2r-6 -(1+t)p)^{2r-7} (2r-2+p)^{2r-7} dp = 0 \] 
(cf. Lemma \ref{lemma_bounded_geometry_infinitefamilies_bc2}) is irrational.}
\end{conj}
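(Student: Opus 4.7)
The plan is to reduce the irrationality statement to an application of the rational root theorem followed by a finite arithmetic elimination. First, make $Q(t,r)$ explicit. The integrand is a polynomial in $p$ of total degree $4r-9$, while the integration limits $-4/(2+t)$ and $(2r-6)/(1+t)$ are rational in $t$. The affine change of variables $p = -4/(2+t) + s \cdot L(t,r)$, with $L(t,r) = (2r-6)/(1+t) + 4/(2+t)$, sends $s \in [0,1]$ to the $p$-interval and converts $(4+(2+t)p)^2$ into $(2+t)^2 L^2 s^2$ and $(2r-6-(1+t)p)^{2r-7}$ into $(1+t)^{2r-7} L^{2r-7} (1-s)^{2r-7}$, while the remaining factors $p$, $(4r-8-tp)^2$, and $(2r-2+p)^{2r-7}$ expand as polynomials in $s$ with $t$-rational coefficients. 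Integrating term by term against $s^a(1-s)^b$ via the Beta function $B(a+1,b+1) = a!\,b!/(a+b+1)!$ then yields
\[ Q(t,r) = \frac{R(t,r)}{(1+t)^{m_1}(2+t)^{m_2}} \]
with $R(t,r)\in \Zbb[t]$ for each fixed $r \in \Nbb$ and explicit $m_1, m_2$; the unique positive root of $Q$ is a root of $R$.

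Second, apply the rational root theorem to $R(\cdot,r)$. Any rational root $a/b$ in lowest terms must satisfy $a \mid R(0,r)$ and $b \mid \mathrm{lead}(R,r)$, both of which are explicit alternating sums of Beta values and hence products of factorials and binomial coefficients in $r$. To eliminate each finite candidate, I would pick a prime $q_r$ depending on $r$ that divides neither $R(0,r)$ nor $\mathrm{lead}(R,r)$, reduce $R(t,r) \bmod q_r$, and use Lucas' theorem to control the binomial coefficients arising from the expansion of the $(2r-7)$-th powers. For a well-chosen $q_r$ the reduced polynomial should admit no rational root among the candidates.

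The hard part will be making this elimination uniform in $r \geq 5$, which I do not see how to achieve by purely elementary arithmetic. A more conceptual route is to reinterpret $Q(t,r)=0$ via the volume functional on horospherical Fano cones of \cite{Ngh22b}: the positive root parameterizes the K-stable Reeb vector, a critical point of a strictly convex rational function built from torus weights on the coordinate ring. Irrationality should then reflect algebraic independence of these weights, potentially accessible by transcendence tools such as Gelfond--Schneider or Baker's theorem on linear forms in logarithms. Realistically, I expect a proof combining this algebro-geometric viewpoint with a computer-algebra verification for small $r$ and an asymptotic analysis of the positive root as $r \to \infty$ showing that it approaches an explicit irrational limit at a rate ruling out rationality along the tail.
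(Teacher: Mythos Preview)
The statement you are attempting to prove is stated in the paper as a \emph{conjecture}, not a theorem: the paper does not give a proof. It only remarks that ``a possible approach is to show by induction that one can always factor out an irreducible polynomial, but the difficulty lies in proving irreducibility of the polynomial over \( \Qbb \).'' So there is no proof in the paper to compare against; the correct benchmark is whether your proposal actually closes the problem, and it does not.

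Your reduction step is sound: clearing denominators does give a polynomial $R(\cdot,r)\in\Zbb[t]$ whose positive root coincides with that of $Q$, and the rational root theorem produces a finite list of candidates for each fixed $r$. But you explicitly concede that you do not see how to carry out the elimination uniformly in $r$, and nothing in your sketch indicates how the choice of prime $q_r$ would be made or why Lucas' theorem would control the relevant Beta-value combinations. This is exactly the obstruction the paper points to (irreducibility over $\Qbb$ uniformly in $r$), just phrased in terms of linear factors rather than general irreducibility; your approach is a specialization of the paper's suggested one, not an alternative route around it.

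Your second suggestion is off target. The positive root of $R(\cdot,r)\in\Zbb[t]$ is an \emph{algebraic} number by construction, so Gelfond--Schneider and Baker's theorem, which concern transcendence of $\alpha^{\beta}$ and of linear forms in logarithms, have no purchase here. The K-stability interpretation via volume minimization is already what produces the polynomial equation in the first place; it does not supply any additional arithmetic constraint. Likewise, showing the root tends to an irrational limit as $r\to\infty$ says nothing about irrationality for any fixed $r$: a sequence of rationals can perfectly well converge to an irrational number. As it stands, the proposal is a reasonable plan for a single fixed $r$ (where it reduces to a finite computation) but contains no idea that would handle all $r\ge 5$ at once.
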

A possible approach is to show by induction that one can always factor out an irreducible polynomial, but the difficulty lies in proving irreducibility of the polynomial over \( \Qbb\). 

\textbf{Acknowledgements.} This paper is part of a thesis prepared under the supervision of Thibaut Delcroix and Marc Herzlich, partially supported by ANR-21-CE40-0011 JCJC project MARGE. I am grateful to Thibaut Delcroix for many helpful discussions, Pierre-Louis Montagard for providing a reference, to Ronan~J. Conlon, Hans-Joachim Hein and G\'{a}bor Székelyhidi for valuable comments. The final acknowledgement goes to the anonymous referees for their reading and comments which largely improved  the paper.

\section{Complex symmetric spaces and spherical cones} \label{section_setup}

\subsection{Complex symmetric spaces}\label{subsection_setup} 
\subsubsection{Root data and classification}
This part provides an overview on the structure of complex symmetric spaces. The reader may consult \cite{Vus74}, \cite{Vus90}, \cite{DCP83} for a detailed treatment. Let \( G \) be a complex linear semisimple group with maximal compact subgroup $K$. Given a nontrivial involution \( \theta \in \Aut(G) \), let \( G^{\theta} \subset H \subset N_G(G^{\theta}) \) be a closed subgroup of \( G \). The homogeneous space $G/H$ is called \textit{a (complex) symmetric space}. It is well-known that a \(G\)-symmetric space is \( G\)-spherical (see \cite{Vus74} for a proof and Appendix \ref{appendix_spherical_varieties}) and a smooth affine variety \cite[26.1]{Tim11}. 

Let \( T_s \subset G \) be a maximal torus satisfying \( \theta(t) = t^{-1}, \; \forall\; t \in T_s \). The \textit{rank} of the symmetric space is defined as the complex dimension of \( T_s \).  By \cite[2.3]{Vus90}, we have an identification  \( \Mcal(G/H) = \Mcal(T_s / T_s \cap H) \) (see Appendix \ref{appendix_spherical_varieties} for the definition of \( \Mcal(G/H) \)). Moreover, \( T_s / T_s \cap H \) is a subgroup of finite index in \( T_s \) (cf. \cite[2.2]{Vus90}), hence 
\[ \Mcal(G/H)_{\Rbb} = \Mcal(T_s)_{\Rbb}. \] 
Let \( \tfrak_{s} \) be the Lie algebra of \( T_s \), which decomposes as
\[ \tfrak_s = \afrak \oplus \tfrak_c,\]
where \( \afrak, \tfrak_c \) are the non-compact and compact parts of \( \tfrak_s \), corresponding to the Lie algebras of the non-compact and compact parts of \( T_s\). 
The  non-compact part \( \afrak \) of \( \tfrak_s \) is isomorphic to \( \Ncal(T_s)_{\Rbb} \simeq \text{Hom}(\Cbb^{*},T_s) \otimes \Rbb \) and called \textit{Cartan algebra} of \( G/H \).  In particular 
\[ \rank(G/H) = \dim_{\Rbb} \afrak. \]
We also have that \( \afrak^{*} \simeq \Mcal(T_s)_{\Rbb} \simeq \tfrak_c \). 
\begin{defn}[\!\!\cite{DCP83} \cite{Vus76}]
\hfill
\begin{itemize}
\item There exists a \( \theta \)-stable maximal torus \( T \) containing \( T_s \). 

\item Let \( \wh{R} \) be the root system \( (G,T) \) and \( \wh{R}^{\theta} \) be the set of roots fixed by \( \theta \)  with the set of positive roots \( \wh{R}^{+} \) chosen such that if \( \wh{\alpha} \in \wh{R}^{+} \backslash \wh{R}^{\theta} \) then \( -\theta(\alpha) \in \wh{R}^{+} \). The set defined as
\[ R := \set{ \alpha = \wh{\alpha} - \theta(\wh{\alpha}), \; \wh{\alpha} \in \wh{R} \backslash \wh{R}^{\theta}}, \]
is a (possibly non-reduced) root system which has the same rank as \( G/H \), called the \emph{restricted root system of \( G/H \)}. In particular, \( \wh{\alpha}|_{\tfrak_s} = 2 \alpha|_{\tfrak_s} \).  
\item The \textit{multiplicity} of a restricted root \( \alpha \), denoted by \( m_{\alpha} \), is the number of \( \wh{\alpha} \in \wh{R} \) such that \( (\wh{\alpha} - \theta(\wh{\alpha})) |_{\afrak}  = \alpha \). If \( G/H \) is a complex symmetric space of dimension \( n \) and of rank \( k \), then
\[ n = k + \sum_{\alpha \in R^{+}} m_{\alpha}. \]  
\item The Weyl group \( W \) associated to \( R \) is called the \emph{restricted Weyl group}.
\item The set of positive restricted roots \( R^{+} \) is defined as
\[ R^{+} := \set{\alpha = \wh{\alpha} - \theta(\wh{\alpha}), \; \wh{\alpha} \in \wh{R}^{+} \backslash \wh{R}^{\theta}}. \] 
The choice of \( R^{+} \) on a semisimple symmetric space determines a strictly convex cone, which is the fundamental domain of \( W \curvearrowright \afrak^{*} \), called the \emph{restricted positive Weyl chamber} (or simply \emph{the Weyl chamber} when the context is clear). 
\item After fixing a set of positive restricted roots and isomorphism using the Killing form on \( \gfrak \), the \( G\)-invariant valuations set \( \Vcal \subset \afrak \) (see Appendix \ref{appendix_spherical_varieties}) of a symmetric space can be considered as the \textit{negative restricted Weyl chamber}, which is strictly convex (if \( G/H \) is semisimple) and polyhedral of maximal dimension.
\end{itemize}
\end{defn}


In terms of Lie algebra, recall that we have the root decomposition \( \gfrak = \tfrak \oplus \bigoplus_{\wh{\alpha} \in \wh{R}} \gfrak_{\wh{\alpha}} \). For each \( \wh{\alpha} \in \wh{R} \) one can build a \( \mathfrak{sl}_2 \)-triple \( (e_{\wh{\alpha}}, e_{-\wh{\alpha}}, h_{\wh{\alpha}}) \). Morever, one has \( \tfrak \simeq \tfrak^{\theta} \oplus \tfrak_s \), and 
 \begin{align*}
 \hfrak = \tfrak^{\theta} \oplus \bigoplus_{\wh{\alpha} \in \wh{R}^{\theta}} \gfrak_{\wh{\alpha}} \oplus \bigoplus_{\wh{\alpha} \in \wh{R} \backslash \wh{R}^{\theta}} \Cbb (e_{\wh{\alpha}} + \theta(e_{\wh{\alpha}})),
 \end{align*}
 cf. \cite[Proposition 1.3]{DCP83}. 
 It follows that
 \begin{align*}
 \gfrak &= \hfrak \oplus \tfrak_s \oplus \bigoplus_{\wh{\alpha} \in \wh{R}^{+} \backslash \wh{R}^{\theta}} \Cbb (e_{\wh{\alpha}} + \theta(e_{\wh{\alpha}})) \\
 &= \hfrak \oplus \tfrak_c(\simeq \afrak^{*}) \oplus \afrak \oplus \bigoplus_{\wh{\alpha} \in \wh{R}^{+} \backslash \wh{R}^{\theta}} \Cbb (e_{\wh{\alpha}} + \theta(e_{\wh{\alpha}})).
 \end{align*}

\begin{thm}[\!\!{\cite[Théorème 3.1]{Las78}}]
We have the following decomposition 
\[ G/H = K \exp(-\Vcal).x_0, \]
where \( -\Vcal \) is the positive Weyl chamber. Moreover, every \( x \in G/H \) corresponds uniquely to an element \( A(x) \) in the given Weyl chamber. Since the latter is a \( W\)-fundamental domain, \( K \)-invariant objects (e.g. open sets, functions) on \( G/H \) correspond (via the exponential map on \( \afrak \) and \(  K\)-action) to \( W \)-invariant objects on \( \afrak \), i.e. objects defined in \( -\Vcal \). 
\end{thm} 
 
 From another point of view, complex symmetric spaces are tangent bundles of compact Riemannian symmetric spaces, so the data of a complex symmetric space is determined by a compact Riemannian one. The classification of compact Riemannian symmetric spaces was carried out by \'{E}lie Cartan in terms of compact Lie algebra and their involution type (see \cite{Hel78} for a detailed account). The symmetric spaces up to rank two, as well as their Satake diagrams, are listed in Table \ref{table_rank_one_ss} and \ref{table_rank_two_ss}. 

The support of a Satake diagram is the same as the Dynkin diagram of \( G\), but with some nodes marked in black corresponding to simple roots fixed by \( \theta \). The extra arrows join pairs of simple roots \( (\wh{\alpha},\wh{\beta}) \) such that 
\[ (\wh{\alpha} - \theta(\wh{\alpha})) |_{\afrak} = (\wh{\beta} - \theta(\wh{\beta}))|_{\afrak}. \]
The Satake diagrams will be used to determine which tangent cones are singular. 
 
\begin{conv} \label{convention}
Throughout the text, we use the notation \( \sprod{. , .} \) for the Killing form on \( \gfrak\), which induces a Killing form on \( \afrak^{*} \) by \( \sprod{\alpha, \beta} := \sprod{H_{\alpha}, H_{\beta}} \), where \( H_{\alpha} \in \afrak \) is the dual of \( \alpha \) by \( \sprod{.,.}\). For a symmetric space of multiplicities \( (m_1,m_2,m_3) \), we mean a symmetric space of restricted root system \( BC_2 \) with \( m_3 \geq 1 \) and \( B_2 \) with \( m_3 = 0 \), where \( (m_1,m_3) \) is the multiplicities of \( (\alpha_1, 2 \alpha_1) \), \( \alpha_1 \) being the short root, and \( m_2 \) is the multiplicity of the long root \( \alpha_2 \). 

We gather spaces of the same restricted root system (with a priori different involution type) into a common category. Given an involution type and a restricted root system, there is a unique complex symmetric space up to a finite covering.  
The representative of each involution type is taken to be \( G/G^{\theta} \), so that \( \Mcal \) is the lattice generated by the restricted fundamental weights \emph{\cite[Lemme 3.1]{Vus90}}. 
\end{conv}

\begin{rmk} \label{rmk_ss_list}
Compared with the list in \emph{\cite[Table 1]{BD19}}, we single out in Table \ref{table_rank_two_ss} two classes of symmetric spaces, namely the \( AIIIb\) type (which is tacitly assumed to be in \( AIIIa \) with \( m_3 = 0 \) in \emph{\cite{BD19}}) and \(DIIIb \) type. Even if they are redundant in terms of restricted root system and multiplicities, they truly need to be distinguished from the family \( \SO_r / \SO_2 \times \SO_{r-2} \) (in terms of involution) because their tangent cones have distinct spherical actions.
\end{rmk} 

\subsubsection{Relevant numerical data}
The Duistermaat-Heckman polynomial of \( G/H \) is defined as
\[ P(p) := \prod_{\alpha \in R^{+}} \sprod{\alpha,p}^{m_{\alpha}}. \] 

On a semisimple symmetric space of rank \( k = 2 \), we will denote by \( \wt{\alpha}_2, \wt{\alpha}_1 \) the \textcolor{blue}{\textit{generators}} or \textit{wall directions} of the \textcolor{blue}{given positive} Weyl chamber, which write
\begin{equation} 
\wt{\alpha}_2 = \alpha_2 + \zeta_2 \alpha_1, \quad \wt{\alpha}_1 = \alpha_1 + \zeta_1 \alpha_2,
\end{equation} 
where
\begin{equation}
\zeta_2 = -\frac{\sprod{\alpha_2, \alpha_1}}{\sprod{\alpha_1, \alpha_1}}, \quad \zeta_1 = - \frac{\sprod{\alpha_2,\alpha_1}}{\sprod{\alpha_2, \alpha_2}}. 
\end{equation}

\begin{figure}
\begin{tikzpicture}
\pgfmathsetmacro\ax{2}
\pgfmathsetmacro\ay{0}
\pgfmathsetmacro\bx{2 * cos(120)}
\pgfmathsetmacro\by{2 * sin(120)}
\pgfmathsetmacro\lax{2*\ax/3 + \bx/3}
\pgfmathsetmacro\lay{2*\ay/3 + \by/3}
\pgfmathsetmacro\lbx{\ax/3 + 2*\bx/3}
\pgfmathsetmacro\lby{\ay/3 + 2*\by/3}

\tikzstyle{couleur_pl}=[circle,draw=black!50,fill=blue!20,thick, inner sep = 0pt, minimum size = 2mm]


\draw[->, thick] (0,0) -- (\ax,\ay) node[below right] {\( {\alpha}_1 \)};
\draw[->, thick] (0,0) -- (\bx, \by) node[above right] {\( {\alpha}_2 \)};
\draw[->] (0,0) -- (\ax + \bx/2, \ay + \by/2) node[above right] {\( \wt{\alpha}_1 \)};

\draw (0,0)--(2,0);
\draw[->] (0,0)--(\ax/2 + \bx, \ay/2 + \by) node[above right]{\( \wt{\alpha}_2 \)};
\end{tikzpicture}
\caption{Restricted root system of \( A_2\) symmetric spaces and their Weyl generators.}
\end{figure}

An important data is the \textit{sum of the positive restricted roots} 
\[ \varpi := \sum_{\alpha \in \wh{R}^{+} \backslash \wh{R}^{\theta} }  2 \wh{\alpha} = \sum_{\alpha \in R^{+}} m_{\alpha} \alpha.  \] 
In the rank two cases, we have in particular 
\[ \varpi = A_1 \alpha_1 + A_2 \alpha_2, \]
where 
\[ A_1 = \frac{ \sprod{\varpi, \wt{\alpha}_1} }{\sprod{\alpha_1, \wt{\alpha}_1}}, \quad A_2 = \frac{\sprod{\varpi, \wt{\alpha}_2}}{ \sprod{\alpha_2, \wt{\alpha}_2}}. \]

\subsection{Compactifications of symmetric spaces}
As an immediate consequence of Theorem \ref{spherical_embeddings_classification}, we have the following. 

\begin{thm}
Given a complex symmetric space \( G/H \), there is a one-to-one correspondence between 
\begin{itemize}
\item the isomorphism classes of \( G \)-equivariant compactification of \( G/H \), 
\item the set of colored subdivisions of \( \Vcal \), i.e. colored fans with support the valuation cone \( \Vcal \). 
\end{itemize}
\end{thm}

\begin{ex}
Every semisimple symmetric space admits a natural and canonical compactification corresponding to the colored cone \( (\Vcal, \varnothing) \). This compactification is not smooth in general. 
\end{ex}

\begin{thm}[\!\!{\cite[3.1, Theorem]{DCP83}}] \label{theorem_wonderful_compactification}
Let \( G/H \) be a semisimple symmetric space of rank \( k \) such that \( H = N_G(G^{\theta}) \) and \( \Vcal \) the valuation cone. The canonical compactification \( (\Vcal, \varnothing) \) is then a \emph{wonderful compactification} in the sense that 
\begin{itemize}
\item \( X \) is a \( G\)-equivariant smooth compactification of \( G/H \) such that \( X \ \backslash G/H = \cup_{i=1}^k D_i \) is a simple normal crossing divisor.
\item The \( G\)-orbit closures are the \( 2^k \) intersections \( \cap_{i \in J} D_i \) where \( J \) runs through all subsets of \( \set{1,\dots,k} \). 
\item The unique closed orbit of \( G\) is \( \cap_{i=1}^k D_i \), which is a flag manifold.
\end{itemize}
Moreover, \( D_i \) an equivariant fibration over a flag manifold \( G/P_i \) with fiber a symmetric variety of rank \((k-1)\), which turns out to also be a wonderful compactification. 
\end{thm}

\begin{ex} \label{example_wonderful_compactification_rank_one}
Let \( G = \SO_n \) and \( H = \SO_{n-1} \), \( n \geq 3 \). Then \( G/H \) is a symmetric space of type BDII with restricted root system \( A_1 \) (see Table \ref{table_rank_one_ss}). Geometrically \( G/H \) is \( G\)-isomorphic to the affine quadric \( \set{z \in \Cbb^{n}, q(z,z) = 0} \), where \( q\) is the standard symmetric bilinear form on \( \Cbb^{n} \). Following \emph{\cite{Ah83}}, the wonderful compactification of \( G/H \) is given by \( X = \set{(z,t), q(z,z) = t^2} \subset \Pbb(\Cbb^n \oplus \Cbb) \). 
\end{ex}

\begin{rmk} \label{remark_symmetric_compact_not_smooth}
Smooth \( G\)-equivariant compactifications with Picard rank one of semi-simple symmetric spaces were classified and described by Ruzzi \emph{\cite[Theorem 5]{Ruz10}}. An explicit list of Fano locally factorial equivariant compactification in rank two is also given in \emph{\cite[Theorem 2.6]{Ruz12}}. In general, if a symmetric space \( G/H \) admits a smooth of Picard rank one or Fano compactification, then there are explicit conditions on the restricted root system \( R \) or isotropy subgroup \( H \) (such as \( H = N_G(G^{\theta}) \)). 
\end{rmk}

\subsection{Equivariant degeneration and spherical cones} \label{subsection_equivariant_degeneration_and_spherical_cones}


We say that an affine \( G\)-spherical variety \( C \) is a \textit{spherical cone} or \textit{conical embedding} if \( C \) has a unique fixed point (which is also the unique closed orbit) under the \( G\)-action. 

\begin{thm}[\!\!\cite{Ngh22b}] \label{theorem_conical_embedding_colored_cone}
The colored cone \( (\Ccal_C, \Dcal_C) \) of a conical embedding \( G/H \subset C \) consists of a strictly convex cone of maximal dimension \( \Ccal_C \) with \( \Dcal_C \) being the set of all the colors of \( G/H \). Every \( \Qbb\)-Gorenstein spherical cone is in fact a Fano cone singularity.
\end{thm}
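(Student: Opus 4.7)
The proof reduces to the Luna-Vust classification of spherical embeddings combined with the following elementary observation: if \( f \) is a \( B \)-semi-invariant regular function on \( C \) with character \( \chi \), then at the \( B \)-fixed vertex \( x_0 \) one has \( f(x_0) = \chi(b) f(x_0) \) for every \( b \in B \), which forces \( f(x_0) = 0 \) unless \( \chi = 0 \). Throughout, the sphericality of \( G/H \) will be used via the multiplicity-free weight decomposition
\[ \Cbb[C] = \bigoplus_{\chi \in \Gamma(C)} \Cbb \cdot f_{\chi}, \qquad \Cbb[C]_{0} = \Cbb, \]
where one-dimensionality of each \( B \)-weight space and triviality of the invariants are standard consequences of sphericality (in particular \( \Cbb(G/H)^{B} = \Cbb \)).

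First, since the vertex \( x_0 \) is the unique closed \( G \)-orbit, \( C \) is a simple spherical embedding of \( G/H \) and therefore corresponds to a single colored cone \( (\Ccal_C, \Dcal_C) \subset \Ncal_{\Qbb} \); strict convexity of \( \Ccal_C \) is built into the definition of a colored cone, and requires no further verification.

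Next, I would show that \( \Dcal_C = \Dcal \), the set of all colors. Fix a color \( D \subset G/H \) and let \( \overline{D}^{C} \) denote its closure in \( C \). The defining ideal \( I(\overline{D}^{C}) \subset \Cbb[C] \) is \( B \)-stable, hence decomposes into \( B \)-weight spaces. Its weight-zero component is trivial since no nonzero constant function vanishes on \( \overline{D}^{C} \). By the fixed-point observation, every \( f_{\chi} \) with \( \chi \neq 0 \) lies in the maximal ideal \( I(x_0) \), whence
\[ I(\overline{D}^{C}) \subset \bigoplus_{\chi \neq 0} \Cbb \cdot f_{\chi} \subset I(x_0), \]
which means \( x_0 \in \overline{D}^{C} \), as desired.

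Finally, for the maximality of \( \dim \Ccal_C \): with every color now included in \( \Dcal_C \), the affine duality for spherical embeddings identifies \( \Ccal_C = \Gamma(C)^{\vee} \), so \( \Ccal_C \) is of maximal dimension if and only if the semigroup \( \Gamma(C) \) is pointed. If \( \chi \) and \( -\chi \) both lay in \( \Gamma(C) \) for some nonzero \( \chi \), then \( f_{\chi} f_{-\chi} \) would be a \( B \)-invariant regular function, hence a nonzero constant by triviality of invariants; in particular \( f_{\chi}(x_0) \neq 0 \), contradicting the fixed-point observation. The principal ingredient throughout is therefore the sphericality-induced weight decomposition together with triviality of \( B \)-invariants; once these are granted, both substantive claims collapse to the same one-line weight-parity argument and there is no serious obstacle.
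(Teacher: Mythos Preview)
The paper does not prove this theorem; it is quoted from \cite{Ngh22b}, so there is no in-paper argument to compare against and your proposal has to stand on its own.

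Your overall plan is reasonable, and step~3 is fine: the argument that \( \Gamma(C) \) is pointed uses only the \( B \)-semi-invariants \( f_{\chi} \), and the product \( f_{\chi} f_{-\chi} \) being a nonzero \( B \)-invariant regular function (hence a nonzero constant) does contradict \( f_{\chi}(x_0)=0 \). The identification \( \Ccal_C = \Gamma(C)^{\vee} \) for affine simple embeddings is standard Luna--Vust, so that citation is acceptable.

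Step~2, however, has a genuine gap. The displayed decomposition \( \Cbb[C] = \bigoplus_{\chi} \Cbb\, f_{\chi} \) is false unless \( G \) is a torus: sphericality only gives this for the highest-weight part \( \Cbb[C]^{(B)}=\Cbb[C]^{U} \), not for the whole coordinate ring. Your chain \( I(\ol{D}^{\,C}) \subset \bigoplus_{\chi\neq 0} \Cbb f_{\chi} \) therefore makes no sense, since a \( B \)-stable ideal is certainly not contained in the span of the highest-weight vectors. Replacing ``\(B\)-weight'' by ``\(T\)-weight'' does not help either: the \( T \)-weight-zero part of \( \Cbb[C] \) is typically much larger than the constants (already for \( \SL_2 \) acting on \( \Cbb^2 \) one has \( \Cbb[xy] \)).

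A clean repair of step~2: use that \( B \) is parabolic. For any \( B \)-stable closed subset \( Z\subset C \) the map \( G\times^{B} Z \to C \) is proper (it factors as a closed immersion into \( G\times^{B} C \cong G/B \times C \) followed by the projection, and \( G/B \) is complete), so \( G\cdot Z \) is closed. Applying this with \( Z=\ol{D}^{\,C} \) produces a nonempty \( G \)-stable closed subset, which must contain the unique closed \( G \)-orbit \( \{x_0\} \); writing \( x_0=g\cdot z \) with \( z\in Z \) and using that \( x_0 \) is \( G \)-fixed gives \( z=x_0\in \ol{D}^{\,C} \), hence \( D\in\Dcal_C \).
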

 
\begin{defn}
A \( G \times \Cbb^{*} \)-equivariant degeneration of a \( G \)-spherical affine variety \( X \) is a \( G \times \Cbb^{*}\)-equivariant flat affine family of varieties \( \pi: \Xcal \to \Cbb \) such that \( \pi^{-1}(t) \) is \( G\)-isomorphic to \(  X \) for each \( t \neq 0 \). Here we view \( \Cbb\) as a \( \Cbb^{*}\)-toric variety with a trivial \( G\)-action. We abbreviate such a degeneration as an equivariant degeneration when the context is clear. 
\end{defn}

Given a complex semisimple symmetric space \( G/H \), a \( G\)-invariant valuation \( \nu \in \Vcal \) defines naturally an equivariant degeneration of \( G/H \). The degeneration family can be constructed as follows. The \( G \times \Cbb^{*}\)-spherical space \( G/H \times \Cbb^{*} \) has a weight lattice naturally isomorphic to \( \Mcal(G/H) \oplus \Zbb \) and valuation cone \( \Vcal(G/H) \oplus \Rbb \) (the isomorphisms depend on the choice of the \( \Cbb^{*}\)-right-action). Let \( \Dcal \) be the colors of \( G/H \) and \( \Dcal^{*} \) denote the colors of \( G/H \times \Cbb^{*}\); we have 
\[ \Dcal^{*} = \set{D \times \Cbb^{*}, D \in \Dcal}. \]
 The colored cone
\[ (\text{Cone}((\nu,1), \Dcal^{*}), \Dcal^{*}) \]
has support a strongly convex cone and takes all the colors, hence defines a conical embedding \( \Xcal \) of \( G/H \times \Cbb^{*} \) (Theorem \ref{theorem_conical_embedding_colored_cone}). By the description of spherical morphisms (Proposition \ref{proposition_spherical_morphisms}), the projection of \( \Ncal(G/H) \oplus \Zbb \) to the second factor induces a domination of \( (\text{Cone}((\nu,1), \Dcal^{*}), \Dcal^{*}) \) over \( \text{Cone}(0,1)\), hence a \( G \times \Cbb^{*}\)-equivariant flat affine morphism
\[\pi: \Xcal \to \Cbb.  \]
The central fiber of \( \pi^{-1}(0) \) corresponds to the ray \( \Rbb_{\geq 0}(\nu,1) \) by the orbit-cone correspondence (Theorem \ref{proposition_orbit_cone_correspondence}).
If \( \nu \) is in the interior of \( \Vcal(G/H) \), then \( \pi^{-1}(0) \) is a \( G \times \Cbb^{*}\)-\textit{horospherical cone} (of the same dimension as \( G/H \) by flatness) \cite[Corollaire 3.8]{BP87}.

A less combinatorial description can be done as follows. 
Let \( X \) be an equivariant projective compactification of a symmetric space such that \( X \backslash (G/H) = D \) is irreducible. Then there is a naturally \( G \times \Cbb^{*} \)-linearized ample line bundle \( L \to X \), and one can consider the \( G\times \Cbb^{*}\)-spherical cone \( C_L(X) \) obtained by equivariantly contracting the zero section of \( L^{-1} \). Clearly \( C_L(X) \) is a \( G\times \Cbb^{*}\)-conical embedding of \( G/H \times \Cbb^{*} \). The natural projection induces an equivariant degeneration \( C_L(X) \to \Cbb \) with central fiber an affine cone over \( D_0 \).   

\begin{ex}
Consider the symmetric space \( \SO_n / \SO_{n-1} \) in Example \ref{example_wonderful_compactification_rank_one}. Take the natural cone \( C(X) = \set{(z,t) \in \Cbb^{n+1} \times \Cbb, q(z,z) = t^2} \) over the wonderful compactification \( X \). The second factor projection \( C(X) \to \Cbb, (z,t) \to t \) is a \(G \times \Cbb^{*}\)-equivariant degeneration of \( \SO_{n} / \SO_{n-1} \) with central fiber \( C_0 = \set{z \in \Cbb^{n+1}, q(z) = 0} \).
\end{ex}

There are two kinds of spherical cones that appear as central fiber in the \( G\)-equivariant degeneration of a semisimple symmetric space \( G/H \) of rank two. First, the \textit{horosymmetric cone}, which corresponds to the degeneration of \(G/H \) along a valuation defined by a generator of \( \Vcal \). Second, the \textit{horospherical cone}, which corresponds to the equivariant degenerations of \( G/H \) along \textit{any} valuation in the interior of \( \Vcal \). Both cones are conical embeddings of open orbits having the following fibration structure. 

\begin{defn}[\!\!{\cite{Del20b}}]
A horosymmetric (resp. horospherical) space \( G/H \) of rank \( k \) is an equivariant fibration with fiber a symmetric space of rank \( k \) (resp. a complex torus \( (\Cbb^{*})^k \)) over a flag manifold \( G/P \). A horosymmetric (resp. horospherical) variety is an equivariant embedding of a horosymmetric (resp. horospherical) space. 
\end{defn}

The class of horosymmetric spaces strictly contains horospherical spaces. It is well-known that they are all spherical spaces. 

\begin{ex}
While toric cones are all horospherical, there are many examples of non-toric horospherical cones. Consider the odd symplectic grassmannian \( \text{Gr}_{\omega}(k,2n+1) \), \( n \geq k \geq 2 \), defined as the set of \( k\)-dimensional isotropic subspaces in a \((2n+1)\)-dimensional complex vector space equipped with a skew-symmetric bilinear form \( \omega \). This is a non-toric (since their connected automorphism group does not contain a torus of maximal dimension \emph{\cite{Pas09}}) Fano horospherical manifold of rank \( 1 \), so the cone with respect to the anticanonical polarization is naturally a \( \Qbb\)-Gorenstein horospherical cone of rank \( 2 \). 
\end{ex}

\begin{ex} \label{example_horosymmetric_nonhorospherical}
Let \( G/H \) be a semisimple symmetric space of rank \( k \) such that \( H = N_G(G^{\theta}) \) and \( X \) its wonderful compactification (see Theorem \ref{theorem_wonderful_compactification}). Let \( (\alpha_i)_{i=1}^k \) be the positive restricted roots of \( G/H \). Following \emph{\cite[last paragraph of \S 1.8]{Ruz12}}, the canonical divisor of \( X \) can be represented as 
\[ K_X = \sum_{i=1}^k (A_i+1)D_i, \; A_i \in \Nbb^{*}_{+},  \]
where the \( A_is \) are determined by \( \sum_{i=1}^k A_i \alpha_i = \varpi. \)
Taking the cone (with respect to the anticanonical polarization) over a Fano equivariant blowdown \( D_i^{\vee} \) of \( D_i \) yields a \( G\)-horosymmetric cone. The action of \( G\) is non-horospherical because the valuation cone of \( D_i \) (determined using \emph{\cite[Proposition 3.3]{GH15}}) is only a half-plane. When \( k = 2 \), \( D_i^{\vee} \) is the blowdown of \( D_i \) along the closed orbit \( D_1 \cap D_2 \) (which is a flag manifold), considered in \emph{\cite{BD19}}. 
\end{ex}

\subsection{The Ricci-flat equation on symmetric spaces}

Given the \( K\)-invariant volume form \( dV_H \) associated to the standard Hermitian metric on the canonical bundle of \( G/H \) \cite{Del20}, our goal is to find a \( K \)-invariant metric in the trivial Kähler class which has the same volume form as \( dV_H \).

\begin{rmk} \label{remark_invariance_implies_exactness}
In what follows, we only consider \( K\)-invariant \( \del \delb\)-exact Kähler metrics, but note that on non-Hermitian symmetric spaces, \( K \)-invariance already implies \( \del \delb\)-exactness by a result of Azad and Loeb \emph{\cite{AL92}}.
\end{rmk}

Following \cite{AL92}, any \( K \)-invariant \( \del \delb\)-exact metric \( \omega = i \del \delb \Phi \) on \( G/H \) is determined by a \( W\)-invariant convex function \( \rho \) on the Cartan algebra \( \afrak \). The problem then amounts to solving the following real Monge-Ampère equation.

\begin{thm}[\!\!\cite{Del20}] \label{theorem_calabi_yau_equation_symmetric_space}
A smooth \( K \)-invariant strictly psh function \( \Phi \) defines a Ricci-flat metric on \( G/H \) if and only if \( \rho(x) := \Phi(\exp(x)H) \) satisfies for all \( x \in \afrak \): 
\begin{equation} \label{equation_ricci_flat_symmetric_space}
\det (d^2 \rho) \prod_{\alpha \in R^{+}} \sprod{\alpha, d \rho}^{m_{\alpha}} = \prod_{\alpha \in R^{+}}  \sinh^{m_{\alpha}}( \alpha(x)).
\end{equation}
\end{thm}

\begin{defn} \label{definition_ricci_potential}
For a \( W \)-invariant convex function \( \rho \) on \( \afrak \), the function
\begin{equation}  \label{equation_ricci_potential}
\mathcal{P}(\rho ) := \ln \det(d^2 \rho ) + \sum_{\alpha \in R_{+}} m_{\alpha} (\ln \sprod{\alpha, d \rho } - \ln \sinh \alpha ) 
\end{equation}
is called the \emph{Ricci potential} of \( \rho \). 
\end{defn}

\subsection{The rank one case} \label{subsection_rank_one_case}


\begin{proof}[Proof of Theorem \ref{theorem_rank_one_cyss}]
Let \( G/H \) be a rank-one symmetric space of dimension \( n \) with positive roots \( (\alpha, 2 \alpha) \) of multiplicities \( (m, \wh{m}) \), where we allow \( \wh{m} = 0 \) for symmetric spaces of restricted root system \( A_1 \). \textcolor{blue}{In particular,}
\[ n = 1 + m + \wh{m}. \] 
The candidate for the tangent cone at infinity of \( G/H \) can be obtained as follows. Let \( \nu \) be a \( G\)-invariant valuation on \( G/H \). Then the total space of a \( G\)-equivariant degeneration of \( G/H \) induces by \( \nu \) is a spherical embedding of \( G/H \times \Cbb^{*} \). We choose a total space corresponding to the colored cone \( \set{\text{Cone}(\nu,1), \Dcal^{*}} \) where \( \Dcal^{*} \) are the colors of \( G/H \times \Cbb^{*} \) (see Subsection \ref{subsection_equivariant_degeneration_and_spherical_cones}). The central fiber of the degeneration is the divisor corresponding to the ray \( (\nu,1) \), which is a \( G\)-horospherical cone \( C \) of rank one again by \cite{BP87}.

The smoothness of the link is clear, because singularities occur in \( G\)-orbits, and the \( G\)-orbits of a rank one cone are the fixed point and the open \( G\)-orbit. The cone \( C \) has the same weight lattice, hence same Cartan space \( \afrak \) as \( G/H \) (cf. Definition \ref{definition_weight_lattice}). 
The total sum of the positive root of \( G/H \) is   
\[ \varpi = (m + 2 \wh{m} ) \alpha = (n-1 + \wh{m}) \alpha, \]
\textcolor{blue}{given the relation \( n = 1 + m + \wh{m} \)}. 
Using the root and fundamental coweight \( (\alpha, \omega_{\alpha}^{\vee} ) \) as basis on \( (\afrak^{*}, \afrak) \), the \( K \)-invariant conical Calabi-Yau potential on \( C \), which is uniquely determined by a strictly convex positive function \( u : \afrak \to \Rbb \), satisfies the following equation
\begin{equation} \label{equation_horospherical_conical} 
u''(x)u'(x)^{m + \wh{m}} = e^{ (m + 2 \wh{m}) x } =  e^{ (n-1+\wh{m}) x }, 
\end{equation} 
where \( e^{(n-1+\wh{m}) x} \) is the volume density of the canonical volume form on \( C \) \cite{Ngh22b}.
A generic solution is then
\[ u(x) = \int_0^x \tuple{\frac{n}{(n-1 + \wh{m})} e^{(n-1+\wh{m})t } + A}^{1/n} dt + B. \] 
The Reeb vector \( \xi = \lambda \omega_{\alpha}^{\vee} \) is uniquely determined by \( \sprod{\varpi, \xi} = n \), i.e. 
\[ \xi = \frac{n}{ n-1 + \wh{m}} \omega_{\alpha}^{\vee}. \]
The solution of Equation \eqref{equation_horospherical_conical} suggests we take 
 \[ u(x) =  ce^{ \frac{n-1+\wh{m}}{n} x }\]
 as the Calabi ansatz for a suitable constant \( c  > 0 \). The Ricci-flat ODE in rank one writes
 \begin{equation} \label{equation_rank_one_ricci_flat}
 \rho''(x) \rho'(x)^{m + \wh{m}} = \sinh^{m}(x) \sinh^{\wh{m}}(2x). 
 \end{equation}
 Plugging the \textcolor{blue}{Calabi ansatz} into Equation \eqref{equation_rank_one_ricci_flat} yields an asymptotic solution with Ricci potential decaying at rate \( O(e^{-2x}) = O(r^{-\frac{4n}{n-1+\wh{m}}}) \), where \( r^2 = e^{\frac{n-1+\wh{m}_2}{n} x}  \) is the distance function on the cone. Also, one can average \( u \) by the action of \( W = \set{1,-1} \) to obtain a smooth, even, strictly convex function \( (u(x)+u(-x))/2 \) on \( \afrak \), still denoted by \( u \),  which is also an asymptotic solution to \eqref{equation_rank_one_ricci_flat} having the same Ricci potential decay as the Calabi ansatz, and defines a \(  K\)-invariant trivial Kähler class \( [ i\del \delb u ] \) on \( G/H \). 
 
 Since \(u'' \) is asymptotic to \( r'' \) at rate \( O(e^{-2 (n-1+\wh{m})x /n}) = O(r^{-4}) \), the global \( (1,1) \)-form defined by \( u \) then decays at rate \( O(r^{-4}) \). Note that in this case, the volume density of \( G/H \) satisfies 
 \[ V(x) = e^{(n-1+\wh{m})x}( 1+ O(e^{-x})), \]
 which is asymptotically the density of the canonical volume form of \( C \) at rate \( O(e^{-x}) =  O(r^{-\frac{2n}{n-1+\wh{m}}}) \). 
By \cite{Ah83} there is a \( G\)-equivariant compactification \( X \) of \( G/H \) such that \( X \backslash (G/H) = D \) is a Kähler-Einstein horospherical Fano manifold and satisfies as an irreducible divisor 
 \[ -K_X = (A+1) D, \quad  A \in \Nbb^{*}_{+}. \] 
 The manifold \( D \) is indeed Fano by adjunction, and Kähler-Einstein since \( D \) is a spherical variety of rank \( 0 \), hence a flag manifold (in particular horospherical).
 Moreover the conormal bundle over \( D \) with zero section contracted \( (N_D^{*})^{\times} \simeq ( \frac{1}{A} K_D)^{\times} \) is in fact isomorphic to the regular Calabi-Yau cone \( C \) obtained by equivariant degeneration.
 Since the trivial class contains a form with decay rate \( O(r^{-4}) \), appealing to Conlon-Hein's existence theorem \cite{CH15} then provides us an asymptotically conical Calabi-Yau metric on \( G/H \) in the trivial Kähler class whose tangent cone at infinity is the horospherical tangent cone, with rate of convergence
 \[ \frac{2n}{n-1+\wh{m}}. \] 
Uniqueness of the metric in the class up to one scaling parameter follows from \cite[Proposition 21]{Myk11}: for every \( a > 0 \), there is a unique Calabi-Yau metric \( \omega_a \) and any other such metric \( \omega_b \) is given by \( \omega_b = \frac{b}{a} \omega_a \). Smoothness implies that the cone is unique by Colding-Minicozzi's uniqueness theorem \cite{CM14}.
\end{proof}

\begin{ex} The manifold \( M := T \Sbb^n \) with a suitable complex structure is biholomorphic to \( \SO(n+1)/ \SO(n) \) endowed with the complex symmetric space structure. 

When \( n > 2 \), \( M \) is non-Hermitian with \( (m, \wh{m}) = (n-1,0) \) and admits a \( \SO_{\Rbb}(n+1)\)-invariant \textcolor{blue}{(hence \( \del \delb\)-exact, cf. Remark \ref{remark_invariance_implies_exactness})} Calabi-Yau metric asymptotically conical to the ordinary double point \( (C,g_0) \) in \( \Cbb^{n+1} \), where 
\[ C = \set{z_1^2 + \dots + z_{n+1}^2 = 0} \] 
is Calabi-Yau with the \( \SO_{\Rbb}(n+1) \)-invariant cone metric \( \omega_0 = \frac{i}{2} \del \delb \abs{z}^{2 - 2/n} \). This is the unique \( \SO_{\Rbb}(n+1)\)-invariant Calabi-Yau metric on \( C\), viewed as a horospherical cone under the \textcolor{blue}{induced \( \SO(n+1) \times \Cbb^{*} \)} action. Geometrically, \( M \) can be interpreted as an equivariant smoothing of the ordinary double point. 

\end{ex}

\begin{rmk}
The rate of convergence is likely to be optimal  for all rank-one symmetric spaces in the following sense: there is a diffeomorphism \( \Phi \) between \( G/H \) and \( C \) outside of a compact subset such that \( h := \Phi^{*} g - g_0 \) satisfies the Bianchi gauge condition
\[ \text{div}_{g_0}(h - \frac{1}{2} \tr_{g_0}(h) g_0 ) = 0. \] 
This is proved for the case \( M = \SO(n+1)/ \SO(n) \) with \( n > 2 \) by realizing \( \Phi \) as a projection of \( C\) onto \( M \) in some \( \Cbb^N \) \emph{\cite{CH13}}. 
\end{rmk}


\section{Geometric prelude} \label{section_geometric_prelude}

\subsection{Geometric constructions}
In higher rank, there are more choices of equivariant degenerations, hence more candidates for the tangent cone at infinity. Compared with the rank-one case, a new feature in higher rank is that the potential tangent cones are singular in general, so the Calabi ansatz from the cone does not yield directly a globally smooth potential on the symmetric space. One must then appeal to gluing techniques, and decay rate improvement methods to get a global potential with good decay properties. 

The candidates for tangent cones of the symmetric space in the rank-two cases can be geometrically obtained as follows. 

\begin{itemize} 
\item First given a rational \( G\)-invariant valuation in \( G/H \), one can embed \( G/H \) into a symmetric projective variety \( X \) using the colored cone defined by a rational valuation \( \beta_0 \) in the interior of the valuation cone \( \Vcal \) (recall that it is the negative Weyl chamber) and all the colors \( \Dcal \) of \( G/H\) (see Figure \ref{figure_colored_cone_compactification}). 

By the orbit-cone correspondence (cf. Theorem \ref{proposition_orbit_cone_correspondence}), the variety \( X \) consists of the open dense orbit, two closed \( G\)-orbits \( G/P_i \), \( i = 1,2 \), of codimension \( \geq 2 \) (which are flag manifolds), and a unique \( G \)-invariant horospherical divisor \( D_0^{\vee} \) with open orbit \( G/H_0 \) and two closed orbits \( G/P_i \).

\item Taking the affine cone over \( X \) (with respect to its ``canonical'' \textcolor{blue}{\(G\)-linearized} polarization) yields the total space \( C(X) \) of the degeneration. The central fiber of this family yields the \textcolor{blue}{candidate for the tangent cone \( C_0 \)}, which is a \( G\)-horospherical cone with open orbit \( G/H_0 \times \Cbb^{*} \). 


\end{itemize} 

Since our compactification \( X \) is generally of Picard rank one with no condition on \( H \) and \( R \) whatsoever, it is in general not smooth and not Fano (cf. Remark \ref{remark_symmetric_compact_not_smooth}). By \cite{BP87}, the rational valuations in the interior of \( \Vcal \) yield \( G\)-isomorphic candidates for the tangent cones at infinity, so the construction does not depend on the choice of such valuation. 

\begin{rmk} \label{remark_cartanalgebra_coincidence}
Let \( T_{0} \) be the identity component of \( \Aut_G(C_0) \) and \( \tfrak_{nc} \) be the noncompact Lie algebra of \( T_{0} \) as in \emph{\cite{Ngh22b}}. Since \( C_0 \) is an equivariant degeneration of \( G/H \), their function fields coincide as \( G\)-modules, hence their weight lattices coincide, so \( \tfrak_{nc} = \afrak \) as duals of the weight lattices tensored over \( \Rbb \). The fact that \( \tfrak_{nc} = \afrak \) allows us to reduce questions of Gromov-Hausdorff convergence to equivalence of functions on \( \afrak \). 
\end{rmk}

To ``desingularize'' the \textcolor{blue}{Calabi} ansatz from the cone \textcolor{blue}{\( C_0 \)}, we consider the blow-up \( \wt{X} \) of \( X \) along its \( G\)-closed orbits so that
\[ \wt{X} \backslash (G/H) = D_0 \cup D_1 \cup D_2, \]
where \( D_1, D_2 \) are the divisors corresponding to the Weyl walls \( \wt{\alpha}_1, \wt{\alpha}_2 \). Let us provide some details on the geometry of the divisors.

\begin{itemize} 
\item The open orbit \( G/H_i \) of each \( D_i \) (\( i \in \set{1,2} \)) is a fibration over  the flag manifold \( G/P_i \) with fiber a symmetric space of rank one \( X_i \) of \( \dim_{\Cbb} X_i = 1+ \sum_{\alpha_i \nmid \alpha} m_{\alpha} \) (see \cite[Subsection 3.2]{BD19} for the construction of \( G/H_i \)). By \cite{DCP83}[Theorem 5.2], each \( D_i \) is itself a fibration over \( G/P_i\) with fiber the wonderful compactification \( \ol{X}_i \) of \( X_i \). 

The divisor \( D_0 \) is a three-orbits variety, which consists of the open dense orbit \( G/H_0 \) and two closed orbits \( D_0 \cap D_{i}\) \((i \in \set{1,2})\) which are the only possible singularities of \( D_0 \). Moreover, \( D_0 \cap D_{i} \) is a homogeneous fibration over \( G/P_{i} \) with fibers \( \ol{X}_{i} \backslash X_{i} \), the normal bundle in \( \ol{X}_{i} \) of which corresponds to the Stenzel asymptotic cone (see Proof of Theorem \ref{theorem_rank_one_cyss}).  
\item The \( G \)-equivariant blowup morphism \(\wt{\pi}: \wt{X} \to X \) when restricted to \( D_0 \) is the contraction of \( D_0 \) along \( D_0 \cap D_{i} \), hence \( \wt{\pi}(D_0) = D_0^{\vee} \) and \( \wt{\pi}(D_0 \cap D_{i}) = G/P_{i} \). Thus \( \wt{\pi} \) partially resolves the singularities of \( D_0^{\vee} \). In case $G/H$ has a wonderful compactification, the variety \( \wt{X} \) can also be viewed as the blowup of the wonderful compactification along the unique closed orbit. 

\item Note that \( D_0, D_1, D_2 \) are not Fano, but their \( G\)-open orbits may admit Fano embeddings (for instance the contraction of \( D_{i} \) along \( D_0 \cap D_{i} \)). The horospherical space \( G/H_0 \times \Cbb^{*} \), which is also the smooth locus \( N_{D_0 / \wt{X}}^{\text{reg}} \) of the normal line bundle \( N_{D_0/ \wt{X}} \), has a conical Calabi-Yau metric whose completion is the cone \( C_0 \) (see Theorem \ref{theorem_kstable_horospherical_cone}). 

\item 
The conical Calabi-Yau metric on \( C_0 \) is singular in all rank two cases (see Proposition \ref{proposition_tangentcones_singular}). In order to obtain a smooth asymptotic solution from the Calabi ansatz \ref{ansatz_calabi}, we proceed to build smooth models near the partially resolved singularities \( D_0 \cap D_1, D_0 \cap D_2 \) based on the Stenzel model (as suggested by their geometry) and the asymptotic behavior of the ansatz near \( D_1, D_2 \) (cf. Theorem \ref{theorem_asymptotic_behavior_KE_fano_potential}). It turns out that the ansatz behaves like a warped product of AC metrics near the singularities, cf. \eqref{equation_asymptotic_behavior} and Ansatz \ref{ansatz_boundary}. 

\item To be precise, the smooth models are built on \( G/H_{1,2} \times \Cbb^{*} = N_{D_{1,2}/\wt{X}}^{\text{reg}} \), then glued with the conical potential on \( N_{D_0 / \wt{X}}^{\text{reg}} \). 
\end{itemize} 

The directions defined by the roots \( \alpha_1, \alpha_2 \) as elements in \( \afrak^{*} \) parametrize directions to the divisors \( D_1, D_2 \) at infinity of \( G/H \). 
Namely, if \( x_k = \exp(v_k) \to x_{\infty} \in \cap_{j \in J} D_j\), where \( J \subset \set{1,2}\), then \( \alpha_j(v_k) \to +\infty \) for all \( j \in J \). To capture the behavior near \( D_1 \) and \( D_2 \), we will use orthogonal coordinates \( (\wt{\alpha}_1, \alpha_2), (\alpha_1, \wt{\alpha}_2) \) in \( \afrak^{*} \).  We make a small abuse of notation and use them both as coordinates in \( \afrak^{*} \) and as elements in the root system. To get a rough idea of the situation, the reader may keep in mind the following simplistic figure: the toric embedding \( (\Cbb^{*})^2 \subset \Cbb^2 \) with the invariant Calabi-Yau potential \( \rho(x_1,x_2) = e^{x_1} + e^{x_2} \) in logarithmic coordinates \( x_i = \log \abs{z_i}^2 \), parametrizing directions to the divisors \( \set{0} \times \Cbb, \Cbb \times \set{0} \). 

Let \( \beta \) be a (possibly irrational) direction in the \textit{interior} of the Weyl chamber. As stated in the introduction, we will mainly consider the generic region inside the Weyl chamber where \( \beta \sim \alpha_2 \gg 1, \beta \sim \alpha_1 \gg 1 \), and the two boundary regions \( \beta \sim \alpha_2 \gg \alpha_1 \sim 1 \) and \( \beta \sim \alpha_1 \gg \alpha_2 \sim 1 \), which are geometrically \( D_0 \backslash (D_1 \cup D_2 ) \), \( D_0 \cap D_2 \backslash D_1 \) and \( D_0 \cap D_1 \backslash D_2 \), respectively.

\begin{figure}
\begin{tikzpicture}
\pgfmathsetmacro\ax{2}
\pgfmathsetmacro\ay{0}
\pgfmathsetmacro\bx{2 * cos(120)}
\pgfmathsetmacro\by{2 * sin(120)}
\pgfmathsetmacro\lax{2*\ax/3 + \bx/3}
\pgfmathsetmacro\lay{2*\ay/3 + \by/3}
\pgfmathsetmacro\lbx{\ax/3 + 2*\bx/3}
\pgfmathsetmacro\lby{\ay/3 + 2*\by/3}

\tikzstyle{couleur_pl}=[circle,draw=black!50,fill=blue!20,thick, inner sep = 0pt, minimum size = 2mm]


\draw[->, ultra thick] (0,0) -- (\ax,\ay) node[below right] {\( {\alpha}_1^{\vee} \)};
\draw[->, ultra thick] (0,0) -- (\bx, \by) node[above right] {\( {\alpha}_2^{\vee} \)};
\draw[->, ultra thick] (0,0) -- (-\ax - \bx, -\ay - \by);
\node at (\ax, \ay) [couleur_pl] {};
\node at (\bx, \by) [couleur_pl] {};

\draw (-2,0)--(2,0);
\draw (0,-2)--(0,2);

\path[draw, pattern=horizontal lines](0,0) -- (-2*\lax, -2*\lay) -- (-2*\lbx, -2*\lby) -- cycle;

\path[draw, pattern=horizontal lines] (0,0)--(\ax,\ay)--(-\ax - \bx,-\ay - \by) -- cycle;
\path[draw, pattern=horizontal lines] (0,0)--(\bx,\by)--(-\ax - \bx,- \ay - \by) -- cycle;

\draw[->, ultra thick] (0,0) -- (-\ax - \bx, -\ay - \by);
\draw[dashed] (0,0) -- (-2*\lbx, -2*\lby);
\draw[dashed] (0,0) -- (-2*\lax, -2*\lay);

\end{tikzpicture}
\qquad
\begin{tikzpicture}
\pgfmathsetmacro\ax{2}
\pgfmathsetmacro\ay{0}
\pgfmathsetmacro\bx{2 * cos(120)}
\pgfmathsetmacro\by{2 * sin(120)}
\pgfmathsetmacro\lax{2*\ax/3 + \bx/3}
\pgfmathsetmacro\lay{2*\ay/3 + \by/3}
\pgfmathsetmacro\lbx{\ax/3 + 2*\bx/3}
\pgfmathsetmacro\lby{\ay/3 + 2*\by/3}

\tikzstyle{couleur_pl}=[circle,draw=black!50,fill=blue!20,thick, inner sep = 0pt, minimum size = 2mm]


\draw (0,0) -- (\ax,\ay) node[below right] {\( {\alpha}_1^{\vee} \)};
\draw (0,0) -- (\bx, \by) node[above right] {\( {\alpha}_2^{\vee} \)};
\draw[->, ultra thick] (0,0) -- (-\ax - \bx, -\ay - \by);
\node at (\ax, \ay) [couleur_pl] {};
\node at (\bx, \by) [couleur_pl] {};

\draw (-2,0)--(2,0);
\draw (0,-2)--(0,2);

\path[draw, pattern=horizontal lines] (0,0) -- (-2*\lax, -2*\lay) -- (-2*\lbx, -2*\lby) -- cycle;
\draw[->, ultra thick](0,0) -- (-2*\lbx, -2*\lby);
\draw[->, ultra thick](0,0) -- (-2*\lax, -2*\lay);
\draw[->, ultra thick] (0,0) -- (-\ax - \bx, -\ay - \by);
\end{tikzpicture}
\caption{From  left to right: the colored fans of the compactification \( X \) in the \( A_2 \) cases and its blowup \( \wt{X} \). The colored fan of \( X \) consists of two colored ones \( (\text{Cone}(\alpha_{1,2}^{\vee}, \beta_0), \Dcal_{1,2}) \), where \( \Dcal_i \) is the set of colors such that \( \sigma(\Dcal_i) = \alpha_{i}^{\vee} \).  Here \( \alpha_{1,2}^{\vee} \) denote the restricted coroots which are the images of the colors of \( X \) (bijective images in the \( A_2 \) cases \cite[2.4, Proposition 1]{Vus90}). The colored fan of \( \wt{X} \) is obtained by replacing the colored cones \( (\text{Cone}(\alpha_{1,2}^{\vee}, \beta_0), \alpha_{1,2}^{\vee}) \) with \( (\text{Cone}(-\wt{\alpha}_{1,2}, \beta_0), \varnothing) \).}
\label{figure_colored_cone_compactification}
\end{figure}

\begin{ex} \label{example_compactification_r1r1}
The \( (n-1)\)-dimensional \emph{projective quadric} \( Q^{n-1} \) defined by the equation \( \sum_{j=1}^{n+1} z_j^2 = 0 \) in \( \Pbb^{n-1}_{z_1, \dots, z_{n+1}} \), can be viewed as the compactification \( X \) of the symmetric space \( (\SO_{m_1+1} \times \SO_{m_2+1}) / H \) where \( H = \text{Stab}[1:0 \dots : 1 : \dots : 0] \). Although the restricted root system is \( A_1 \times A_1 \), this is not a product of two symmetric spaces of restricted root system \( A_1 \), but a finite covering of the latter, cf. \emph{\cite{Ruz10}[Theorem 5, (1)]}. The quadric \( Q^{n-1} \) is described in \cite{Ruz10} in terms of Grassmannians. Here we provide the more concrete description of \( Q^{n-1} \) in \emph{\cite[Sections 3.1, 3.2]{Del22}}. 

Let \( \pi_1, \pi_2 \) be the rational projections from \( \Pbb^n \) to \( \Pbb^{m_1} \) and \( \Pbb^{m_2} \). Define
\[ Q^{m_1} := \set{[z] \in \Pbb^{m_1}, \sum_{j=1}^{m_1+1} z_j^2 = 0 }, \; Q^{m_2} = \set{ [z] \in \Pbb^{m_2}, \sum_{j=m_1+2}^{n+1} z_j^2 = 0}. \]
The open orbit \( O \) is formed by the points in \( Q^{n-1} \) where both \( \pi_1, \pi_2 \) are well-defined and not lying in \( Q^{m_1}, Q^{m_2} \), namely
\begin{align*} 
O = Q^{n-1} \backslash &\set{[z], \; z_1 = \dots = z_{m_1+1} = 0} \cup \set{[z], \; z_{m_1 +2} = \dots = z_{n+1} = 0} \\ &\cup \set{ [z], \; \pi_1([z]) \in Q^{m_1}} \cup \set{[z], \; \pi_2([z]) \in Q^{m_2}} . 
\end{align*}
The two closed orbits are formed by the points in \( Q^{n-1} \) where \( \pi_1 \) or \( \pi_2 \) are not defined, so the two closed orbits are isomorphic to \( Q^{m_1}, Q^{m_2} \). 
The codimension one orbit \( D_0^{\vee} \) is formed by the points in \( Q^{n-1}\) where both \( \pi_1, \pi_2 \) are defined and lie in the corresponding quadrics. In short, \( D_0^{\vee} \) is 
\[ (\set{ [z], \; \pi_1([z]) \in Q^{m_1}} \cup \set{[z], \; \pi_2([z]) \in Q^{m_2}}) \cap Q^{n-1}. \]
The compactification \( \wt{X} \) is given as \( \text{Bl}_{Q^{m_1}, Q^{m_2}}(Q^{n-1}) \) where the corresponding divisor \( D_0 \) is a blowup of \( D_0^{\vee} \).  
\end{ex}

\begin{ex} \label{example_compactification_smoothdiv_finiteaut}
Let us now describe the compactification \( X \) of \( O := \SL_3/ \SO_3 \). In fact, \( O \) can be viewed as an open subset of the projectivization of the quadratic forms in \( S^2 (\Cbb^3)^{*} \) with unit determinant. In other words, \( O \) is the following affine subset of \( \Pbb( \Cbb \oplus S^2(\Cbb^3)^{*} )_{z_0, \dots, z_6} \):
\[ O = [Q \in \Pbb(S^2 (\Cbb^3)^{*}), \det(Q) = z_0^3 ] \cap [z_0 = 1].  \]
Let \( \wh{\omega}_{1,2} \) be the fundamental weights of \( G \) with highest weight vectors \( v_{1,2} \) where \( v_1 \in S^2(\Cbb^3)^{*} \), \( v_2 \in S^2(\Cbb^3) \). The vector spaces \( S^2(\Cbb^3)^{*}, S^2 (\Cbb^3) \) are in fact the simple \( \SL_3 \)-modules with highest weights \( \wh{\omega}_1, \wh{\omega}_2 \), respectively. Let \( Q^{\vee} \) be the adjoint matrix of \( Q \). We embed \( O \) by the map 
\begin{align*}
O &\hookrightarrow \Pbb(\Cbb \oplus S^2 (\Cbb^3)^{*} \oplus S^2(\Cbb^3)) \simeq \Pbb^{12} \\
Q &\to [1, \quad Q, \quad Q^{\vee}].
\end{align*}
Taking the closure of \( O \) inside this projective space then yields the compactification \( X \) with the unique \( G\)-stable divisor
\[ D_0^{\vee} := \set{\det(Q)= 0} \cap X. \] 
Alternatively, \( D_0^{\vee} \) is given by the closure in \( \Pbb(S^2(\Cbb^3)^{*} \oplus S^2 (\Cbb^3)) \simeq \Pbb^{11} \) of \( G(v_{1} + v_2) \), so \( D_0^{\vee} \) is in particular horospherical. The \( G\)-valuation associated to \( D_0^{\vee} \) is \( - (\wh{\omega}_1^{\vee} + \wh{\omega}_2^{\vee})/2 \).

By \emph{\cite[Theorem 3]{Ruz10}}, \( \Aut^0(X) = \SL_3 \), hence 
\[ \Aut_G^0(X) = Z( \SL_3) = \set{\lambda I_3, \lambda \in \Cbb, \lambda^3 = 1} \simeq \Zbb_3, \]
and \( \Aut_G^0 (C,\xi) \) is \( (\Cbb^{*})^2 \).  In particular, there can only be finitely many Kähler-Einstein metrics on the pair \((X,D_0^{\vee}) \), while there is a two-parameters family of conical Calabi-Yau metrics on \( C \), hence a priori two-parameters choice of the Calabi ansatz to build Calabi-Yau metrics.
\end{ex}

\subsection{Singularity of tangent cones}
We will use Pasquier's smoothness criterion for horospherical varieties to prove that all of the tangent cones are in fact singular. With the same setup as in \ref{subsection_setup}, we denote by \( \wh{S} \) the set of simple roots with respect to the choice of positive roots \( \wh{R}^{+} \) in \( (G,T) \). 

A \( G\)-horospherical homogeneous space is uniquely determined by a sublattice \( \Mcal \) of \( \Zbb \wh{S} \), and a subset \( I \subset \wh{S} \)  such that \( \sprod{\chi, \wh{\alpha}^{\vee}} = 0 \) for all \( \chi \in \Mcal \) and \( \wh{\alpha} \in I \). The dual lattice \( \Ncal \) of \( \Mcal \) is called the coweight lattice.

The set \( I \) corresponds to the parabolic subgroup \( P_I \) that left-stabilizes the open Borel orbit, and there is a one-to-one correspondence between the set of colors \( \Dcal \) and the coroots of the roots in \( \wh{S} \backslash I \). 

\begin{thm}[\!\!\cite{Pas06}]
Let \( \sigma \) be the map from \( \Dcal \) that sends a divisor to its valuation viewed in \( \Ncal \). A simple \( G\)-horospherical variety with colored cone \( (\Ccal, \Fcal) \) is \textit{locally factorial} if and only if the following two conditions are satisfied:
\begin{itemize} 
\item The elements of \( \Fcal \) have pairwise distinct images by \( \sigma \).
\item \( \Ccal \) is generated by a basis of \( \Ncal \) containing \( \sigma(\Fcal) \). 
\end{itemize} 
\end{thm}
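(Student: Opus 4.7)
The plan is to reduce local factoriality of the simple horospherical variety \( X = X(\Ccal,\Fcal) \) at its unique closed orbit \( Y \) to a combinatorial condition on the generators of the colored cone, using the standard Luna--Vust/Brion dictionary for Cartier divisors on spherical varieties.

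First, since \( X \) is simple, every orbit contains \( Y \) in its closure, so local factoriality of \( X \) is equivalent to the property that at any point of \( Y \), every \( B \)-stable prime Weil divisor through \( Y \) is locally principal. The \( B \)-stable prime divisors passing through \( Y \) are exactly (i) the \( G \)-stable divisors \( D_1,\ldots,D_r \) corresponding to the primitive generators \( u_1,\ldots,u_r \) of the rays of \( \Ccal \), and (ii) the colors \( F_1,\ldots,F_s \) in \( \Fcal \), whose valuations in \( \Ncal \) are \( \sigma(F_1),\ldots,\sigma(F_s) \).

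Next, I would invoke the combinatorial Cartier criterion for simple horospherical varieties: a \( B \)-stable Weil divisor \( \sum a_i D_i + \sum b_j F_j \) is Cartier in a neighborhood of \( Y \) if and only if there exists \( \chi \in \Mcal \) such that \( \sprod{\chi, u_i} = a_i \) for all \( i \) and \( \sprod{\chi, \sigma(F_j)} = b_j \) for all \( j \). This is the horospherical analogue of the toric piecewise-linear function criterion, and its proof relies on the fact that the character group of the stabilizer of the open orbit equals \( \Mcal \), together with the decomposition of sections of equivariant line bundles on \( X \) into \( B \)-eigenspaces.

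Finally, I would extract the two stated conditions. Local factoriality forces each \( F_k \) and each \( D_\ell \) to be individually Cartier. Applying the criterion to \( F_k \) requires a character \( \chi_k \in \Mcal \) satisfying \( \sprod{\chi_k, \sigma(F_j)} = \delta_{jk} \) and \( \sprod{\chi_k, u_i} = 0 \) for all \( i \), which is impossible whenever two distinct colors share a \( \sigma \)-image: this yields the first condition. Running the same argument over every \( D_\ell \) and \( F_k \) shows that the family \( \{u_1,\ldots,u_r\}\cup\sigma(\Fcal) \) admits a dual \( \Zbb \)-basis in \( \Mcal \), equivalently that these vectors can be completed to a \( \Zbb \)-basis of \( \Ncal \); combined with the fact that the \( u_i \) already generate \( \Ccal \) as a cone and that \( \sigma(\Fcal)\subset \Ccal \), this yields the second condition: \( \Ccal \) is generated by a \( \Zbb \)-basis of \( \Ncal \) containing \( \sigma(\Fcal) \). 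The main technical obstacle will be establishing the Cartier criterion itself; although standard in spherical variety theory, a self-contained proof requires identifying the local ring of \( X \) at \( Y \) via the character monoid \( \Ccal^\vee \cap \Mcal \) and the \( B \)-eigenvectors attached to the colors, so that Cartierness translates into the existence of a character separating each \( B \)-stable prime divisor; once this is in hand, the remainder reduces to elementary linear algebra over \( \Zbb \).
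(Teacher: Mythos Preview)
The paper does not give a proof of this statement: it is quoted verbatim from Pasquier's work \cite{Pas06} and used as a black box in the subsequent singularity analysis. There is therefore nothing in the paper to compare your argument against.

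That said, your outline is the standard route (essentially Brion's Cartier criterion specialized to the horospherical case, which is what Pasquier does). Two points are worth tightening. First, your opening reduction is phrased as a local statement at points of \(Y\), but what you actually need---and what the Cartier criterion delivers---is the global statement that \(X\) is locally factorial iff every \(B\)-stable prime divisor is Cartier; this uses that \(\operatorname{Cl}(X)\) is generated by \(B\)-stable divisors (the complement of their union is an affine \(B\)-orbit), which you should state rather than leave implicit. Second, in your last paragraph you conclude that \(\{u_i\}\cup\sigma(\Fcal)\) extends to a \(\Zbb\)-basis of \(\Ncal\), but you should then observe explicitly that \(\sigma(F_j)\in\Ccal=\operatorname{cone}(u_1,\dots,u_r)\) together with this \(\Zbb\)-linear independence forces each \(\sigma(F_j)\) to coincide with some \(u_i\); otherwise a nonzero vector in the cone would be \(\Zbb\)-independent of the cone's ray generators, which is impossible. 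Only after this identification do you get that \(\Ccal\) is generated by part of a \(\Zbb\)-basis containing \(\sigma(\Fcal)\). Finally, you treat only the forward implication; the converse follows immediately from the same Cartier criterion, but you should say so.
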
 

\begin{defn}[\!\!\cite{Pas06}]
Let \( I, J \) be two disjoint subsets of \( \wh{S} \). Let \( \Gamma_{\wh{S}} \) be the Dynkin diagram of \( G \), and \( \Gamma_{I \cup J} \) be a subgraph of \( \Gamma_{\wh{S}} \) whose vertices consist of the elements in \( I \cup J \) and the edges those in \( \Gamma_{\wh{S}} \) joining two elements in \( I \cup J \). 

The couple \( (I,J) \) is said to be smooth if every connected component \( \Gamma \) of \( \Gamma_{I \cup J} \) satisfies one of the following conditions. 
\begin{enumerate}
    \item \( \Gamma \) is a Dynkin diagram of type \( A_n \) whose vertices are all in \( I \), except one of the two endpoints being in \( J \). 
    \item \( \Gamma \) is a Dynkin diagram of type \( C_n \) whose vertices are all in \( I \), except the simple (i.e. not connected to the double edge) endpoint being in \( J \). 
    \item \( \Gamma \) is a Dynkin diagram of arbitrary type whose vertices are all in \( I \). 
\end{enumerate}
\end{defn}

\begin{thm}[\!\!\cite{Pas06}] 
A simple embedding \( E \) with colored cone \( (\Ccal, \Fcal) \) of a horospherical space \( G/H \) is smooth if and only if the following conditions hold. 
\begin{itemize}
    \item \( E \) is locally factorial. 
    \item If we let \( J_{\Fcal} = \set{\wh{\alpha} \in \wh{S} \backslash I, D_{\wh{\alpha}} \in \Fcal} \), then the couple \( (I, J_{\Fcal}) \) is smooth. 
\end{itemize}
\end{thm}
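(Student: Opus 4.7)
The plan is to analyze smoothness of $E$ locally at a point of its closed orbit and then translate the resulting conditions into combinatorial data on $(\Ccal,\Fcal)$ and on the Dynkin diagram. Necessity of local factoriality is immediate, since any smooth scheme has regular local rings, which are UFDs. For the remaining content, the key structural input is the Local Structure Theorem for spherical varieties applied to the horospherical embedding $E$: there is a parabolic $P^{-}$ opposite to $P_I$ and a $P^{-}$-stable affine open subset $E_0 \subset E$ meeting the generic point of the closed orbit, equipped with a $L_I$-equivariant isomorphism
\[ E_0 \simeq R_u(P^{-}) \times Z, \]
where $Z$ is an affine $L_I$-slice. Since $R_u(P^{-})$ is an affine space and every $G$-orbit in $E$ meets $E_0$, smoothness of $E$ reduces to smoothness of $Z$ as an $L_I$-variety.

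The second step is to identify $Z$ with an affine horospherical $L_I$-variety whose combinatorial data is precisely $(\Ccal, \Fcal)$ viewed inside the coweight lattice $\Ncal$. Forgetting the $L_I$-action, $Z$ is the affine toric variety associated to the cone $\Ccal$, with the colors of $\Fcal$ interpreted as additional torus-equivariant coordinate divisors. Applying the classical toric smoothness criterion (an affine toric variety is smooth iff its defining cone is generated by part of a $\Zbb$-basis of the lattice) together with the requirement that $\sigma|_{\Fcal}$ be injective and extend to such a basis, one recovers exactly the local factoriality condition of the preceding theorem. In particular, under local factoriality the slice $Z$ is, as an abstract variety, an affine space.

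What remains is to characterize, assuming local factoriality, when the full action of $L_I$ on this affine slice is compatible with smoothness, i.e.\ when $Z$ together with its $L_I$-structure is isomorphic to a smooth $L_I$-module. One computes the weights of the root subgroups $U_{\wh{\alpha}}$, for $\wh{\alpha} \in I$, acting on the normal directions to the closed orbit at a chosen base point, and then asks when the resulting representation of the semisimple part of $L_I$ decomposes as a direct sum of $L_I$-stable lines on which only the toric coordinates vary. By a direct case analysis on the connected components of the subdiagram $\Gamma_{I \cup J_{\Fcal}}$, this representation-theoretic condition forces each component to be of one of the three admissible shapes in the definition of a smooth pair (type $A_n$ with a single $J$-endpoint, type $C_n$ with the $J$-endpoint on the simple side, or fully contained in $I$), while all other configurations produce an irreducible $L_I$-summand of dimension $\geq 2$ obstructing smoothness. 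The main obstacle is this last step: one must show uniformly that the forbidden Dynkin patterns (e.g.\ $B_n$, $D_n$, or $A_n$ with both endpoints in $J$) yield representations of $\SL_k$ or $\Sp_{2k}$ that are not direct sums of characters, so that $Z$ cannot be $L_I$-equivariantly smooth; this is where the specific role of the type-$A$ and type-$C$ patterns, and of the placement of the $J$-endpoint, becomes essential.
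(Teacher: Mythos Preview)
The paper does not prove this theorem; it is quoted from Pasquier \cite{Pas06} and used as a black box to check singularity of the horospherical tangent cones. So there is no ``paper's own proof'' to compare against, and your proposal should be read as an attempt to reconstruct Pasquier's argument.

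Your overall architecture is correct and is essentially Pasquier's: the Local Structure Theorem reduces smoothness of $E$ to smoothness of an affine $L_I$-slice $Z$; local factoriality is exactly the toric smoothness condition on the cone $\Ccal$; and the remaining content is to decide when the $L_I$-variety $Z$ is smooth, which amounts to $Z$ being an $L_I$-module with a dense Borel orbit.

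However, your description of that last step contains a genuine conceptual error. You write that smoothness forces the $L_I$-representation on $Z$ to ``decompose as a direct sum of $L_I$-stable lines on which only the toric coordinates vary,'' and that the forbidden Dynkin patterns are bad because they ``produce an irreducible $L_I$-summand of dimension $\geq 2$.'' This is not the criterion. The admissible type-$A_n$ and type-$C_n$ patterns in the definition of a smooth pair correspond precisely to the \emph{standard} representations of $\SL_{n+1}$ and $\Sp_{2n}$, which are irreducible of dimension $>1$. These give smooth horospherical cones (namely affine space with the standard linear action). What singles them out is not one-dimensionality of the summands but the classification of simple $G$-modules that are spherical, i.e.\ admit a dense Borel orbit: among fundamental representations these are exactly the standard ones for types $A$ and $C$, which is why only those Dynkin shapes appear. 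Your case analysis should therefore be organized around the classification of spherical modules (Kac, Vinberg--Kimelfeld), not around detecting higher-dimensional irreducibles.
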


Let \( \beta_0 \) be the ray corresponding to \( D_0^{\vee} \) in the colored fan of the compactification \( X \). We view \( D_0^{\vee} \) as the base of the tangent cone \( C_0 \), as well as the unique \( G\)-stable horospherical divisor inside \( X \) with open orbit \( G/H_0 \). 

Since the tangent cone does not depend on the chosen valuation inside the Weyl chamber \cite{Ngh22b}, one can take \( \beta_0 = \alpha_1 + \alpha_2 \). Thanks to \cite[Théorème 3.6]{BP87}, we can determine the combinatorial data of the homogeneous horospherical space \( G/H_0 \) in terms of the data of \( G/H \). The combinatorial data of the open orbit \( G/H_0 \times \Cbb^{*} \) inside \( C_0 \) then easily follows. 

\begin{lem} \label{lemma_combinatorial_data_asymptotic_cone}
Suppose that \( \Mcal(G/H) = \Zbb \sprod{\omega_{1}, \omega_{2}} \), where \( \omega_{1,2} \) are the restricted fundamental weights of \( G/H \) with respect to Convention \ref{convention}. Then
\leavevmode
\begin{itemize} 
\item The lattice \( \Mcal_0 \) of \( G/H_0 \) corresponds to \( \Zbb (\omega_{1} - \omega_{2})\). In particular, \( \Ncal_0 = \Zbb \frac{\alpha_1^{\vee} - \alpha_2^{\vee}}{2}\).
\item The subset \( I \) that determines \( G/H_0 \) is the set \( \set{ \wh{\alpha} \in \wh{S}, \wh{\alpha} = \theta(\wh{\alpha})} \). In particular, the roots in \( I \) correspond to the black nodes in the Satake diagram of \( G/H \). 
\end{itemize}
\end{lem} 

\begin{proof}
If \( \beta_0 = \alpha_1 + \alpha_2 \) generates the ray corresponding to \( D_0^{\vee} \) in the colored fan of \( X\), then \( \Mcal_0 = \beta_0^{\perp} \cap \Mcal \). Hence \( \Mcal_0 = \Zbb(\omega_{1} - \omega_{2}) \). 

Since the left-stabilizer of the open Borel orbit of \( G/H \) is the same as that of \( G/H_0 \) \cite{BP87}, \( \wh{S} \backslash I \) consists of positive simple roots \( \wh{\alpha} \) such that \( \wh{\alpha}|_{\afrak} \neq \theta(\wh{\alpha})|_{\afrak} \), hence \(  I\) consists of the black nodes in the Satake diagram of \( G/H \). 
\end{proof}


A spherical cone has smooth link iff it is smooth outside the unique fixed point \( \set{0} \) of \( G \). Since the cone \( C_0 \backslash \set{0}\) consists of two rank one simple embeddings, it is enough to show that one of the embeddings is singular via their combinatorial data. 

\begin{lem} 
The colored cones associated to the rank one simple embeddings in \( C_0 \backslash \set{0} \) are \( (\Ccal_{i},\Fcal_{i}), i = 1,2 \), where \(\Ccal_i =  \Rbb_{\geq 0} \alpha_i^{\vee}, \Fcal_i = \set{D_{\wh{\alpha} }, \wh{\alpha} \in \wh{S}, \wh{\alpha} - \theta(\wh{\alpha}) = \alpha_i } \).

The couple of simple roots associated to each rank one embedding of \( C_0 \backslash \set{0} \) is
\[ J_{\Fcal_i} = \set{\wh{\alpha} \in \wh{S} \backslash I, \; \wh{\alpha} - \theta(\wh{\alpha}) = \alpha_i}, \quad I_i = \wh{S} \backslash J_{\Fcal_i}, \; i = 1,2. \] 
In particular, \( J_{\Fcal_i} \) corresponds to the white nodes in the Satake diagram that restrict to the simple restricted root \( \alpha_i \). 
The graphs \( \Gamma_{I_i \cup J_{\Fcal_i}} \) of the pairs \( (I_i, J_{\Fcal_i}) \) of the indecomposable spaces are summarized in Table \ref{table_singularities_tc}. 
\end{lem}

\begin{proof}
Following \cite{Ngh22b}, the support of the colored cone of \( C_0 \) is the convex cone generated by \( \alpha_{1,2}^{\vee} \) and the colors of \( C_0 \) are in bijection with \( \wh{S} \backslash I \). The rank one embeddings in \( C_0 \backslash \set{0} \) then correspond to the two one-dimensional colored cones \( (\Ccal_i, \Fcal_i), i = 1,2 \) by the orbit-cone correspondence. Since the embeddings are themselves horospherical cones, it follows from Theorem \ref{theorem_conical_embedding_colored_cone} and definition of \( J_{\Fcal_i} \) that \( I_i = \wh{S} \backslash J_{\Fcal_i} \). 
\end{proof}

\begin{prop} \label{proposition_tangentcones_singular}
All the candidates for horospherical tangent cones at infinity of the rank two symmetric spaces are singular. Moreover, the tangent cones at infinity of the decomposable ones are product of the Stenzel asymptotic cones on each factor.  
\end{prop}
\begin{proof}
From Table \ref{table_singularities_tc} and Pasquier's criterion, it is straightforward to see that the only tangent cones with both couples \( (I_i, J_{\Fcal_i}), i=1,2 \) being smooth are the horospherical tangent cones of the symmetric spaces of involution type \( AI, A_2, G \). 

In these cases, suppose that our symmetric space has maximal weight lattice \( \Zbb \sprod{\omega_1, \omega_2} \). Since \( \theta(\alpha_1) = - \alpha_1 \) for these spaces, we have \( \alpha_1^{\vee} = \wh{\alpha}_1^{\vee}/2 \) (see \cite[Section 2.3]{Vus90}, or \cite[Definition 1.22]{Del20b}), so \( \omega_1 = 2 \wh{\omega}_1 \). But \( \sigma(D_{{\wh{\alpha}}_1}) = \wh{\alpha}_1^{\vee}|_{\Mcal_0} = 2 \alpha_1^{\vee}|_{\Mcal_0} \), hence \( \sigma(D_{{\wh{\alpha}}_1})( \omega_1 - \omega_2) = 2 \), so \( \Ccal_1 \) is not generated by \( \sigma(D_{{\wh{\alpha}}_1}) \), i.e. the cone is not locally factorial. The same argument applies for symmetric spaces whose weight lattice is an integer multiple of \( \Zbb \sprod{\omega_1, \omega_2} \).

The same reasoning as in Lemma \ref{lemma_combinatorial_data_asymptotic_cone} and the fact that \( I = \wh{S} \backslash J \) for a horospherical cone implies that the graph \( \Gamma_{I \cup J} \) of the Stenzel asymptotic cone is exactly the Dynkin diagram of the symmetric space with \( (I,J) \) being \( (\text{black nodes}, \text{white nodes}) \) in the Satake diagram. It follows that every Stenzel asymptotic cone has a unique singularity since the graph of each space is not smooth, except for the \( A_1 \) case where \( r \) is odd (cf. Table \ref{table_rank_one_ss}). But the cone is not locally factorial in this case by the same reasoning as above. 

Finally, the asymptotic cones of the \textit{decomposable symmetric spaces} have in fact the same K-stable Reeb vector as that of the product of the Stenzel asymptotic cones (cf. Theorem \ref{theorem_kstable_horospherical_cone}). The vector when pulled back to the valuation cone degenerates the decomposable symmetric space to the product cone, which is singular as a product of two cones with an isolated singularity. This terminates our proof. 
\end{proof}

\begin{table}
\adjustbox{width=1\textwidth}{
\centering
\begin{tabular}{|c|c|c|c|c| }
\hline
Type  &  \( R \) & \( (I_1, J_{\Fcal_1}), (I_2, J_{\Fcal_2}) \) \\
\hline
\( AI \) & \( A_2 \) & \( \dynkin[edgelength=.75cm] A{to}, \dynkin[edgelength=.75cm] A{ot} \)  \\
\( A_2 \) &  \( -  \)  &
\( \begin{dynkinDiagram}[name=upper, edgelength=.75cm] A{to}
\node (current) at ($(upper root 1)+(0,-.75cm)$) {};
\dynkin[at=(current),name=lower, edgelength=.75cm] A{to}
\begin{pgfonlayer}{Dynkin behind}
\foreach \i in {1,...,2}
{
\draw[latex-latex]
($(upper root \i)$)
-- ($(lower root \i)$);
}
\end{pgfonlayer}
\end{dynkinDiagram}, 

\begin{dynkinDiagram}[name=upper, edgelength=.75cm] A{ot}
\node (current) at ($(upper root 1)+(0,-.75cm)$) {};
\dynkin[at=(current),name=lower, edgelength=.75cm] A{ot}
\begin{pgfonlayer}{Dynkin behind}
\foreach \i in {1,...,2}
{
\draw[latex-latex]
($(upper root \i)$)
-> ($(lower root \i)$);
}
\end{pgfonlayer}
\end{dynkinDiagram}
\) 
  \\   
\( AII \) &  \( - \) & \( \dynkin[edgelength = .75cm] A{*t*o*}, \dynkin[edgelength = .75cm] A{*o*t*} \) \\
\( EIV \) &  \( - \) & \( \dynkin[edgelength = .75cm] E{t****o}, \dynkin[edgelength = 0.75] E{o****t}\) \\
\hline

\( AIIIa \) & \( BC_2 \) & \( 
\dynkin[edge length = .75cm, involutions={16;25}] {A}{to*.*ot}, \dynkin[edge length = .75cm, involutions={16;25}] {A}{ot*.*to}   
\) \\
\( CIIa \)  & \( - \) & \( \dynkin C{*o*t.**}, \dynkin C{*t*o.**} \)  \\
\( DIIIa \) & \( - \) & \( \dynkin[edge length = .75cm, involutions={54}] D{*t*oo}, \dynkin[edge length = .75cm, involutions={54}] D{*o*tt}  \) \\
\( EIII \) & \( - \) & \( \dynkin[edge length = .75cm, involutions={16}] E{to***t}, \dynkin[edge length = .75cm, involutions={16}] E{ot***o} \) \\
\hline

\( BDI \) & \( B_2 \) & 
\( 
\begin{aligned} 
&\dynkin D{to*.***}, \dynkin D{ot*.***} \text{if \( r\) even}, \\
&\dynkin B{to*.**}, \dynkin B{ot*.**} \text{else}
\end{aligned} \) 
 \\
\( AIIIb \) & \( - \) & \( \dynkin[edge length =.75cm, involutions={13}] A{tot}, \dynkin[edge length =.75cm, involutions={13}] A{oto} \)  \\ 
\( DIIIb \) & \( - \) & \( \dynkin[edgelength = .75cm] D{*t*o},\dynkin[edgelength = .75cm] D{*o*t} \)  \\ 
\( B_2 \) & \( - \) & \( 
\begin{dynkinDiagram}[name=upper, edgelength=.75cm] B{to}
\node (current) at ($(upper root 1)+(0,-.75cm)$) {};
\dynkin[at=(current),name=lower, edgelength= .75cm] B{to}
\begin{pgfonlayer}{Dynkin behind}
\foreach \i in {1,...,2}
{
\draw[latex-latex]
($(upper root \i)$)
-- ($(lower root \i)$);
}
\end{pgfonlayer}
\end{dynkinDiagram}, 

\begin{dynkinDiagram}[name=upper, edgelength=.75cm] B{ot}
\node (current) at ($(upper root 1)+(0,-.75cm)$) {};
\dynkin[at=(current),name=lower, edgelength=.75cm] B{ot}
\begin{pgfonlayer}{Dynkin behind}
\foreach \i in {1,...,2}
{
\draw[latex-latex]
($(upper root \i)$)
-- ($(lower root \i)$);
}
\end{pgfonlayer}
\end{dynkinDiagram}
\) \\
\( CIIb \) & \( - \) & \( \dynkin[edgelength = .75cm] C{*t*o}, \dynkin[edgelength=.75cm] C{*o*t} \) \\
\hline
\( G\) & \( G_2 \) & \( \dynkin G{ot}, \dynkin G{to} \) \\
\( G_2 \) & \( - \) & \( 
\begin{dynkinDiagram}[name=upper, edgelength=.75cm] G{ot}
\node (current) at ($(upper root 1)+(0,-.75cm)$) {};
\dynkin[at=(current),name=lower, edgelength=.75cm] G{ot}
\begin{pgfonlayer}{Dynkin behind}
\foreach \i in {1,...,2}
{
\draw[latex-latex]
($(upper root \i)$)
-- ($(lower root \i)$);
}
\end{pgfonlayer}
\end{dynkinDiagram}
\),  \begin{dynkinDiagram}[name=upper, edgelength=.75cm] G{to}
\node (current) at ($(upper root 1)+(0,-.75cm)$) {};
\dynkin[at=(current),name=lower, edgelength=.75cm] G{to}
\begin{pgfonlayer}{Dynkin behind}
\foreach \i in {1,...,2}
{
\draw[latex-latex]
($(upper root \i)$)
-- ($(lower root \i)$);
}
\end{pgfonlayer}
\end{dynkinDiagram} \\
\hline
\end{tabular}}
\caption{Graph data of of \( C_0 \backslash \set{0} \). Here \( I_i = I \cup \set{\text{crossed nodes}} \) where \( I \) is the set of black nodes in the Satake diagram, and \( J_{\Fcal_i} \) is one (pair) of the non-crossed white nodes.}
\label{table_singularities_tc}
\end{table}

\section{The horospherical Calabi ansatz} \label{section_horospherical_calabi_ansatz}

\subsection{Horospherical cones}
We recall briefly the setting in \cite{Ngh22b}. The colored cone \( (\Ccal, \Dcal) \) of a horospherical cone in the orbit-cone correspondence consists of a cone \( \Ccal \) of maximal dimension, and a set \( \Dcal \) containing all the colors of the open \( G\)-orbit. Let \( Q \) be the left-stabilizer of the open Borel-orbit with Levi decomposition 
\[ Q = Q^u L \] 
and \( \wh{R}_{Q^u} = \wh{R}_Q \backslash \wh{R}_L \) be the set of ``unipotent'' roots in \( Q \). The Duistermaat-Heckman polynomial of the cone is defined as
\[ P_{DH} (p) = \prod_{\wh{\alpha} \in \wh{R}_{Q^u}} \sprod{ \wh{\alpha} , p}. \] 

\begin{thm}[\!\!\cite{Ngh22b}]
Let \( Y \) be an \( n\)-dimensional \( \Qbb\)-Gorenstein \( G\)-horospherical cone of rank \( k\) and \( \xi \) a Reeb vector polarizing \( C \). Then the following are equivalent
\begin{itemize}
    \item \( Y \) admits a \( K\)-invariant conical Calabi-Yau metric compatible with \( \xi \). 
    \item \( (Y,\xi) \) is K-semistable. 
    \item \( (Y, \xi) \) is K-stable. 
    \item The real Monge-Ampère equation
    \[ \det(d^2 v) P_{DH}(dv) = e^{\varpi}, \quad \ol{ \del v(\Rbb^r)} = \Ccal^{\vee} \]
    admits a positive strictly convex smooth solution \( v \) such that \( v(x-t \xi) = e^{2t} v(x) \). Here \( \varpi \) is the canonical linear function on \( \Ccal \) determined by the \( \Qbb\)-Gorenstein condition. 
\end{itemize}
Furthermore, a \( \Qbb\)-Gorenstein \( G\)-horospherical cone is always K-stable, i.e. there is always the choice of a Reeb vector such that 
\[ \emph{bar}_{DH}(\Delta_{\xi}) = \varpi, \]
where \( \Delta_{\xi} \) is the polytope \( \Ccal \cap \set{\sprod{\varpi, \xi} = n} \), and \( \emph{bar}_{DH} \) is the barycenter of \( \Delta_{\xi} \) with respect to \( P_{DH} \). 
\end{thm}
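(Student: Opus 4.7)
The plan is to reduce the entire statement to convex analysis on the Cartan slice by exploiting the horospherical symmetry. Any $K\times T_0$-invariant Kähler cone metric with Reeb vector $\xi$ is determined by a smooth strictly convex function $v$ on the dual cone $\Ccal^{\vee}$ satisfying $v(x+t\xi)=v(x)$, and integrating out the unipotent $Q^u$-directions of the flag base identifies the conical Calabi--Yau condition with the real Monge--Ampère equation $\det(d^2 v)\,P_{DH}(dv)=e^{\varpi}$ together with $\ol{\del v(\Rbb^r)}=\Ccal^{\vee}$. This already yields (i)$\Leftrightarrow$(iv) modulo existence and regularity, and it is the same reduction that in the toric case recovers the Wang--Zhu/Berman equation.

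For the K-stability equivalences, I would first invoke an equivariance principle (Li--Xu in the Fano-cone setting, or its horospherical version after Delcroix) to restrict to $G\times T_0$-equivariant special test configurations, which are in bijection with rational piecewise-linear concave functions $f$ on $\Ccal^{\vee}$. A direct equivariant Riemann--Roch calculation expresses the Donaldson--Futaki invariant as $\int_{\Delta_\xi}\bigl(f(\varpi)-f(p)\bigr)P_{DH}(p)\,dp$ up to a positive normalization. Hence K-semistability of $(C,\xi)$ is equivalent to the barycenter identity $\mathrm{bar}_{DH}(\Delta_\xi)=\varpi$, and the strict positivity of $P_{DH}$ on the interior of $\Delta_{\xi}$ upgrades semistability to stability (vanishing of the invariant on a nontrivial $f$ would force $f$ to be affine, hence its test configuration trivial). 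Solvability of (iv) is then extracted from the Cordero-Erausquin--Klartag / Berman--Berndtsson variational principle: the barycenter identity is exactly the moment-map condition making the Legendre-dual functional coercive on the space of $\xi$-homogeneous convex functions on $\Ccal^{\vee}$, and its minimizer is smooth and strictly convex by duality, producing the required solution.

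For the unconditional K-stability claim, the Martelli--Sparks--Yau volume functional $\xi\mapsto\int_{\Delta_\xi}P_{DH}$ on the interior of $\Ccal$ is smooth, strictly convex, and proper toward $\partial\Ccal$ (the polytope $\Delta_\xi$ degenerates at the boundary and $P_{DH}$ does not kill this degeneration because $\varpi$ lies in the relative interior of $\Ccal^{*}$ under the $\Qbb$-Gorenstein assumption). It therefore attains a unique critical point $\xi^{\star}$, whose Euler--Lagrange equation is precisely $\mathrm{bar}_{DH}(\Delta_{\xi^{\star}})=\varpi$, providing the desired K-stable polarization. The main obstacle is the analytic regularity in (iv): producing a smooth strictly convex solution with the prescribed gradient image requires the full Cordero-Erausquin--Klartag regularity package adapted to the singular horospherical weight $P_{DH}$, together with careful control of the asymptotic behavior of $v$ near $\partial\Ccal^{\vee}$ dictated by the $\Qbb$-Gorenstein normalization of $\varpi$.
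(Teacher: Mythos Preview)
The paper does not actually prove this theorem: it is quoted verbatim from the author's earlier work \cite{Ngh22b} and is used here as a black box. There is therefore no ``paper's own proof'' to compare against; the theorem serves only as input for the construction of the Calabi ansatz in Section~\ref{section_horospherical_calabi_ansatz}.

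That said, your outline is a faithful synopsis of the strategy one expects in \cite{Ngh22b} and in the horospherical K-stability literature more broadly (Delcroix, Berman--Berndtsson, Collins--Sz\'ekelyhidi, Martelli--Sparks--Yau). The reduction to equivariant test configurations, the identification of the Donaldson--Futaki invariant with a weighted barycenter condition via Jensen's inequality, and the volume-minimization argument for existence of a K-stable polarization are all standard and correctly stated. Two small slips worth fixing: first, the potential $v$ lives on $\Rbb^k$ (the real Cartan slice) with gradient image $\ol{\partial v(\Rbb^k)}=\Ccal^{\vee}$, not on $\Ccal^{\vee}$ itself; second, the compatibility of ``strictly convex'' with the translation-invariance $v(x+t\xi)=v(x)$ is delicate---in practice the cone potential is $e^{\phi}$ with $\phi$ linear along $\xi$ (cf.\ Remark~\ref{remark_calabiansatz} and the substitution $v=e^{u+\varpi}$), so strict convexity should be understood transversally to $\xi$, and the Monge--Amp\`ere equation reduces to a genuine equation on $\Rbb^{k-1}$ after this reduction. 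Your acknowledgement that the regularity of the weighted real Monge--Amp\`ere solution is the genuine analytic content is accurate; this is handled in \cite{Ngh22b} via the variational/continuity methods you cite.
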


Horospherical cones appear as a model at infinity of a symmetric space \( G/H \) when we degenerate \( G/H \) along a direction in the interior of the Weyl chamber. Let \( C_0 \) be such a cone. We can express the data of \( C_0 \) in terms of the combinatorial data on the symmetric spaces, see \cite[Subsection 4.1.2]{Ngh22b} for more detail. For example, the Reeb cone \( \Ccal_R \) of \( C_0 \) can be identified with the cone generated by the restricted simple roots of \( G/H \) (in particular \( \Ccal_R^{\vee} \) is the Weyl chamber). 

In rank two, let \( \xi \in \Ccal_R \) be a Reeb vector and \( \delta = \alpha_2 - t \alpha_1, t > 0 \) such that \( \sprod{\delta, \xi} = 0 \).  Note that \( P_{DH}|_{\afrak} = P \) (up to a positive constant factor), since \( \wh{R}_{Q^u} = \wh{R}^{+} \backslash \wh{R}^{\theta} \) and that \( (\wh{\alpha} - \theta(\wh{\alpha}))|_{\afrak} = 2 \alpha \) for all \( \wh{\alpha} \in \wh{R} \).

\begin{thm}[\!\!\cite{Ngh22b}] \label{theorem_kstable_horospherical_cone}
\begin{itemize}
\item The horospherical cone \( C_0 \) is \( \Qbb\)-Gorenstein, hence has klt singularities by \cite{Pas17}.
\item If \( G/H \) is a rank two symmetric space, the Reeb vector \( \xi \) defines a conical Calabi-Yau metric on \( C_0 \) if and only if \( t \) is a positive solution of 
\[ \int_{\lambda_{-}}^{\lambda_{+}} p P(\varpi + p \delta ) dp = 0, \] 
where \( P \) is the Duistermaat-Heckman polynomial of the symmetric space, and
\[ \lambda_{-} = -\frac{\sprod{\alpha_2, \varpi}}{\sprod{\alpha_2, \delta}} < 0, \quad \lambda_{+} = -\frac{\sprod{\alpha_1, \varpi}}{\sprod{\alpha_1, \delta}} > 0 \]
are the intersections with \( \wt{\alpha}_1, \wt{\alpha}_2 \) of the line passing through \( \varpi \) and parallel to \( \delta \). 
\item The K-stable Reeb vector of a horospherical degeneration arising from non-\( G_2 \) symmetric spaces of rank two lies in the interior of the restricted Weyl chamber, and outside the Weyl chamber while being irregular for the \( G_2 \)-symmetric spaces. 
\end{itemize} 
\end{thm}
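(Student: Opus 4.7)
The plan is to deduce the three assertions from the general K-stability criterion for horospherical cones recalled in the statement preceding this one (the barycenter condition $\text{bar}_{DH}(\Delta_\xi) = \varpi$ from \cite{Ngh22b}), together with a combinatorial analysis of the colored fan and, for the third bullet, a case-by-case computation.

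For the Gorenstein claim, I would check the standard combinatorial criterion for horospherical varieties: it suffices that the canonical linear function $\varpi$ take value $1$ on the primitive generator $\beta_0$ of the ray of the unique $G$-stable divisor and pair integrally with the images of the colors under $\sigma$. Writing $\varpi = A_1 \alpha_1 + A_2 \alpha_2$ with $A_1, A_2 \in \Nbb^{*}$, this is immediate after a look at each Satake diagram; canonical singularities then follow from Pasquier \cite[Prop.~6.1]{Pas17}. For the second bullet, apply the barycenter criterion: since $C$ has rank two, any nonzero $\delta \in \xi^{\perp}$ parameterizes the one-dimensional moment polytope $\Delta_\xi$ by $p \mapsto \varpi + p\delta$. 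Taking $\delta = \alpha_2 - t\alpha_1$, the endpoints $\lambda_{\pm}$ are the values of $p$ at which $\varpi + p\delta$ hits the walls $\wt{\alpha}_1^{\perp}, \wt{\alpha}_2^{\perp}$ of $\Ccal^{\vee}$, which yields the stated closed formulas. Since the Duistermaat-Heckman polynomial of $C$ coincides up to a positive constant with $P|_{\afrak}$ (because $\wh{R}_{Q^u} = \wh{R}^{+} \setminus \wh{R}^{\theta}$), the barycenter equation reduces exactly to the vanishing of the first moment $\int_{\lambda_-}^{\lambda_+} p\, P(\varpi + p\delta)\, dp$.

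For the third bullet, I would view the integral above as a function $F(t)$ of the single parameter $t$; after the affine change of variable sending $[\lambda_{-}(t), \lambda_{+}(t)]$ to a fixed interval, $F(t) = 0$ becomes a polynomial equation whose degree is controlled by the multiplicities. For the finite-type spaces in Table~\ref{table_rank_two_ss} this is a finite explicit check. For the two infinite families $AIIIa$ and $CIIa$, I would rewrite $F(t)$ as a beta-type integral and compare its signs at $t = 0$ and at the boundary value at which $\delta$ becomes parallel to $\alpha_1$, deducing by continuity and monotonicity a unique positive root of $F$ inside the range for which $\xi \in \Ccal_R$. The $G_2$ anomaly should come out of the same computation: the extreme root-length ratio $|\alpha_2|^2/|\alpha_1|^2 = 3$ skews the integral so that the unique zero of $F$ falls outside this range, forcing a Reeb vector outside the Weyl chamber, which must then be irregular since rational directions are excluded by Knop's classification of $G$-invariant valuations. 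The main obstacle is the uniform analysis for the two infinite families: ideally one expresses the zero of $F$ as the unique positive root of an explicit polynomial (as in the conjecture in the further discussions) and proves enough monotonicity in $r$ to locate it; a brute-force root estimate would also work but require substantial bookkeeping.
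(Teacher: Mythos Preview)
Your treatment of the first two bullets is essentially what the paper does: both are simply attributed to \cite{Ngh22b}, and your sketch of the derivation from the barycenter criterion is correct.

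For the third bullet there are two genuine gaps. First, you omit the decomposable $R_1 \times R_1$ case entirely. The paper handles it separately: using the product structure of the Duistermaat--Heckman polynomial, the normalized volume functional factors as $\vol_{DH}(\xi_1 + \xi_2) = \vol_{DH_1}(\xi_1)\vol_{DH_2}(\xi_2)$, so the unique minimizer is the product of the two rank-one K-stable Reeb vectors, each of which lies in the interior of its factor's chamber and is rational. Your sign/continuity argument in $t$ does not obviously cover this, since the parametrization by $\delta = \alpha_2 - t\alpha_1$ behaves differently when the root system is orthogonal.

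Second, and more seriously, your argument for irregularity in the $G_2$ case is incorrect. Knop's classification of $G$-invariant valuations describes the valuation cone of $G/H$; it says nothing about whether a Reeb vector lying outside the Weyl chamber is rational. Being outside the chamber and being irrational are independent conditions, and you need both. The paper proceeds by brute force: for $m=1$ and $m=2$ it writes out the explicit polynomial equation in $t$ arising from the barycenter condition (degrees $5$ and $9$ respectively), observes that all coefficients are strictly positive so there is no positive root (hence $\xi$ lies outside the chamber), and then applies the rational root theorem to each polynomial to rule out any rational root (hence $\xi$ is irregular). Your qualitative remark about the root-length ratio gives the right intuition for why the zero escapes the chamber, but irregularity genuinely requires the arithmetic check.

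As a minor point, the paper's treatment of the infinite indecomposable families is shorter than your beta-integral plan: it simply invokes continuity of $\xi \mapsto \text{bar}_{DH}(\Delta_\xi)$ and cites \cite{BD19} for the fact that at the two wall directions $\wt{\alpha}_1, \wt{\alpha}_2$ the barycenter lies on opposite sides of $\varpi$, so the intermediate value theorem gives a $\xi_0$ in the interior. Your approach would also work but with more bookkeeping (and you should include the $BDI$ family $\SO_r/\SO_2 \times \SO_{r-2}$, not just $AIIIa$ and $CIIa$).
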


\begin{proof}
\textcolor{blue}{The first two points are already contained in \cite{Ngh22b}}. The last point was proved in \cite{Ngh22b} for \textcolor{blue}{all the finite families of non-\( G_2 \)} indecomposable symmetric spaces, while for the infinite families it follows from a continuity argument. Indeed, the function \( \xi \to \text{bar}_{DH}(\Delta_{\xi}) \) is continuous and its value lies on different positive sides of \( \varpi \) when \( \xi = \wt{\alpha}_1, \wt{\alpha}_2 \) by \cite{BD19}, hence there is a \( \xi_0 \) in the interior of the Weyl chamber such that \( \text{bar}_{DH}(\Delta_{\xi_0}) = \varpi \). 

The setup is almost the same for the decomposable cases and \( G_2\)-symmetric spaces. A quick way to show that the vector is always rational and lies in the interior of the Weyl chamber in the \( R_1 \times R_1 \)-cases is to use uniqueness of the volume minimizer in \cite{Ngh22b}. In fact, the K-stable Reeb vector is \textit{exactly} the product of two K-stable Reeb vectors on each factor of the root system. Indeed, recall that the normalized volume formula in \cite{Ngh22b} is 
\[ \vol_{DH}(\xi) = \int_{\Ccal_R^{\vee}} e^{- \sprod{p,\xi} } P_{DH}(p) d \lambda(p). \]
From this formula and the fact that the polynomial \( P_{DH} \) on \( R_1 \times R_1 \) is the product of \( P_{DH_i} \) on each factor of the root system, it follows easily that if \( \xi = \xi_1 + \xi_2 \) where \( \xi_i \) is the Reeb vector of the factor \( i \in \set{1,2} \), then \( \vol_{DH}(\xi) = \vol_{DH_1} (\xi_1) \vol_{DH_2} (\xi_2) \). Hence \( \xi \) minimizes \( \vol_{DH} \)  iff \( \xi_i \) minimizes \( \vol_{DH_i} \), i.e. \( \xi_i = (m_i + \wh{m}_i +1)/ (m_i + 2 \wh{m}_i) \omega_{i}^{\vee} \) by the proof of Theorem \ref{theorem_rank_one_cyss}. 

Let us now show that the K-stable Reeb vector lies outside of the positive Weyl chamber in the \( G_2 \)-cases. The positive roots are \( \alpha_1, \alpha_2, \alpha_1 + \alpha_2,  2 \alpha_1 + \alpha_2, 3 \alpha_1 + 2 \alpha_2, 3 \alpha_1 + \alpha_2 \) with multiplicities all equal to \( m = 1 \) or \( m = 2 \) and \( \sprod{\alpha_1, \alpha_1} = 1 \), \( \sprod{\alpha_2, \alpha_2} = 3\), \( \sprod{\alpha_1, \alpha_2} = -3/2\) . The data of the problem  is then \( \varpi = 10 m \alpha_1 + 6m \alpha_2 \),
\[ \lambda_{+} = \frac{2m}{2t+3}, \quad \lambda_{-} = - \frac{2m}{t+2},\] 
and 
\begin{align*} 
P(\varpi + p \delta) = &(2m - (2t+3)p)^m (6m + (3t+6)p)^m(8m + (t+3)p)^m \\
&(10m-tp)^m (12m - (3t+3)p)^m (18+3p)^m. 
\end{align*}
For \( m = 1 \) and \( m = 2 \), the polynomial equations obtained from the K-stability condition are respectively
\[ 2376 + 9225 t + 13407 t^2 + 9357 t^3 + 3179 t^4 + 424 t^5 = 0,\]
and 
\begin{equation*}
\begin{aligned}
&20558772 + 134444448 t + 374274594 t^2 + 590688162 t^3 + 587394519 t^4 +\\
& 383740299 t^5 + 165293858 t^6 + 45384306 t^7 + 7221048 t^8 + 507988 t^9 = 0. 
\end{aligned}
\end{equation*}
It is obvious that both equations have no positive solution since all the coefficients are positive. Finally, one can show that there is no rational solution in both cases by using the rational root theorem. 
\end{proof}

\subsection{The ansatz and its Ricci potential}
We now let \( \beta \) be a direction \textit{in the interior} of the Weyl chamber. Let 
\[ \delta := \alpha_2 - t \alpha_1 \] 
be such that $\left<\beta, \delta \right> = 0$ and $\left< \delta, \alpha_1 \right> < 0$. Define
\[b = \frac{\sprod{\varpi, \beta}}{n \sprod{\beta, \beta}}, \quad d = \frac{\sprod{\varpi, \delta}}{n \sprod{\delta, \delta}}  \]
where \( n \) is the dimension of the symmetric space.  Recall that
\[ P(.) = \prod_{\alpha \in R^{+}} \sprod{\alpha, .}^{m_{\alpha}}. \] 

\begin{ansatz} \label{ansatz_calabi}
Consider the Calabi ansatz
$$\rho^{(0)} = \exp(\phi), \; \phi = b \beta + \psi(\delta).$$
Here \( \psi \) is a smooth strictly convex function defined in such a way that the function \( u : \Rbb \to \Rbb \)
\begin{equation*}
u(x) = n \psi(x) - \frac{\left<\varpi, \delta \right>}{\left< \delta, \delta \right>} x + \log(b^2 2^{n-2} n^{1-n})  = n \psi(x)  - n d x +  \log(b^2 2^{n-2} n^{1-n})
\end{equation*}
satisfies:
\begin{equation}
 u'' > 0, \quad
u'(\Rbb) = ]\lambda_{-}, \lambda_{+}[, \quad 
u''(x) P(\varpi + u'(x) \delta) = e^{-u(x)}. 
\end{equation}
\end{ansatz}

\begin{rmk} \label{remark_calabiansatz} \hfill
\begin{itemize} 
\item Note that \( n \phi = u + \varpi \) up to a constant. The choice of \( \rho^{(0)} \) is justified by the solution of the conical Monge-Ampère equation in \emph{\cite[Proposition 3.17]{Ngh22b}}, where we make the change of variable \( v = e^{u + \varpi}  \) to reduce the equation to an analog of the Kähler-Einstein problem on Fano horospherical varieties. The choice of $u$ is unique up to an action by translation in the variable \emph{\cite[Proposition 3.19]{Ngh22b}}. 

\item By Remark \ref{remark_cartanalgebra_coincidence}, we can view \( \rho^{(0)} \) as the pullback on \( G/H \) of the conical Calabi-Yau potential over \( G/H_0 \times \Cbb^{*} = N_{D_0/ \wt{X}}^{\text{reg}} \) (which always exists by \emph{\cite{Ngh22b}}), and \(\psi \) as the (pullback of a) singular Kähler-Einstein potential of a Fano variety when the tangent cone is regular, or a transverse Kähler-Einstein potential in the irregular case. The potential \( \rho^{(0)} \) is only smooth in the interior of the Weyl chamber, hence not a globally asymptotic solution.
\end{itemize}
\end{rmk}

\begin{lem} \label{lemma_asymptotic_solution_generic_region}
The potential $\rho^{(0)}$ is an asymptotic solution to the Ricci-flat equation \eqref{equation_ricci_flat_symmetric_space} on the generic region. Moreover its Ricci-potential (Definition \ref{definition_ricci_potential}) has faster than quadratic decay in this region. 
\end{lem}

\begin{proof}
We have 
$$d \rho^{(0)} = \rho^{(0)} d \phi = \rho^{(0)} (b\beta + \psi'(\delta) \delta).$$
This implies: 
\begin{equation*}
\begin{aligned}
d^2 \rho^{(0)} = \rho^{(0)}(d^2 \phi + d \phi \otimes d \phi ) = \rho^{(0)} (( &\psi''(\delta) + \psi'(\delta)^2) \delta \otimes \delta + b^2 \beta \otimes \beta \\
&+  b \psi'(\delta) (\beta \otimes \delta + \delta \otimes \beta) ), 
\end{aligned} 
\end{equation*}
hence by using $n = 2 + \sum_{\alpha \in R^{+}} m_{\alpha}$, and $\varpi = \sum_{\alpha \in R^{+}} m_{\alpha} \alpha = nb \beta + n d \delta$, we obtain
\begin{align*}
\det(d^2 \rho^{(0)}) \prod_{\alpha \in \Phi^{+}} \left< \alpha, d\rho^{(0)} \right>^{m_{\alpha}} &= (\rho^{(0)})^{n} b^2 \psi''(\delta) \prod_{\alpha \in R^{+}} \left< \alpha, b\beta +\psi'(\delta) \delta \right>^{m_{\alpha}} \\
&= b^2 \exp(n b \beta - \log(b^2 2^{n-2}) + n d  \delta) \\
&= \prod_{\alpha \in R^{+}} \left( \frac{e^{\alpha}}{2}\right)^{m_{\alpha}} = \prod_{\alpha \in \Phi^{+}} \sinh(\alpha)^{m_{\alpha}} \prod_{\alpha \in R^{+}} \left( 1 - e^{-2 \alpha} \right)^{-m_{\alpha}}. 
\end{align*}
Recall that the generic region is defined to be \( \beta \sim \alpha_1 \gg 1, \beta \sim \alpha_2 \gg 1 \), that is an open subset inside of the Weyl chamber where \( \alpha_1, \alpha_2 \) are unbounded (identified with an open subset of \( G/H \) via the exponential map and $K$-action). From the computation, it follows that \( \rho^{(0)} \) is an asymptotic solution to the Ricci-flat equation in the generic region. Indeed, consider its Ricci potential
\begin{equation} 
\mathcal{P}(\rho^{(0)} ) = \ln \det(d^2 \rho^{(0)} ) + \sum_{\alpha \in R_{+}} m_{\alpha} (\ln \sprod{\alpha, d \rho^{(0)} } - \ln \sinh \alpha ). 
\end{equation}
When \( \alpha_2 \gg \alpha_1 \), resp. \( \alpha_1 \gg \alpha_2 \) (so that \( \beta \sim \alpha_2 \gg \alpha_1 \), resp. \( \beta \sim \alpha_1 \gg \alpha_2 \)), we have
\begin{equation}
\mathcal{P}(\rho^{(0)}) = \ln ( 1 + O(e^{-2\alpha_2})), \quad \text{resp.} \;  \mathcal{P}(\rho^{(0)}) = \ln ( 1 + O(e^{-2\alpha_1})),
\end{equation}
while on the region \( \beta \sim \alpha_2 \sim \alpha_1 \gg 1 \),
\begin{equation}
\mathcal{P}(\rho^{(0)}) = \ln ( 1 + O(e^{-2\beta})).     
\end{equation}
It follows that on the generic region
\[ \mathcal{P}(\rho^{(0)}) = O(e^{-2\beta}), \]
and the same holds for all derivatives. 
Let \( r \) be the radius of the cone metric on \( C_0 \) so that \( r^2 \) restricts to \( \rho^{(0)} = e^{b \beta + \psi(\delta)} \) on the open orbit of \( C_0 \). Since \( \psi(\delta) \) is bounded for \( \alpha_2 \sim \alpha_1 \), we have \( e^{-2 \beta } \sim r^{-4/b} \) outside of a compact subset of \( G/H \), hence on the generic region
\begin{equation} \label{equation_ricci_potential_control_generic_region}
\abs{ \Pcal(\rho^{(0)})} \leq r^{-4/b}. 
\end{equation}
Using Table \ref{table_constants}, one can easily check that \( 4/b  > 2 \), hence \( \rho^{(0)} \) has faster-than-quadratic decay in the generic region. 
\end{proof}

We will later glue this asymptotic solution with two potentials in the direction of \( D_1 \) and \( D_2 \)  in such a manner that the Ricci potential still behaves well in a neighborhood of the infinity direction near \( D_1, D_2 \). To justify the choice of the model near \( D_1, D_2 \), we first need to study the asymptotic behavior of the Calabi ansatz. 

\subsection{Asymptotic behavior of the transverse potential}

The following theorem will be used to understand the asymptotic behavior of the horospherical Calabi ansatz. The reader may compare with \cite[Theorem 4.2]{BD19} where a similar result is obtained for the invariant Kähler-Einstein potential on a Fano horosymmetric variety that lifts to the Calabi ansatz. 

\begin{thm} \label{theorem_asymptotic_behavior_KE_fano_potential}
Consider two real numbers \( \lambda_{-} < 0 < \lambda_{+} \) and a one-variable polynomial \( P_0 \) which is positive on \( ]\lambda_{-}, \lambda_{+}[ \) and vanishes on \( \lambda_{\pm} \) with multiplicities \( m_{\pm} \). 
Let \( u : \Rbb \to \Rbb \) be a strictly convex smooth positive function such that
\begin{equation*}
u'' P_0(u') = e^{-u}, \quad u'(\Rbb) = ]\lambda_{-}, \lambda_{+}[     
\end{equation*}
and \(\delta_{-} < 0 < \delta_{+} \) be the constants defined by
\[ \delta_{\pm} = \frac{\lambda_{\pm}}{m_{\pm} + 1}. \] 
Then there exist two sequences \( (C_j^{\pm})_{j \in \Nbb} \) such that 
\begin{equation}
u(x) = \lambda_{\pm} x + C_0^{\pm} + \sum_{j=1}^m C_j^{\pm} e^{- j \delta_{\pm} x} + o \tuple{ e^{-m \delta_{\pm} x} }   
\end{equation}
when \( x \to \pm \infty \). 
\end{thm}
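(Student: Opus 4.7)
By the symmetry $x \mapsto -x$ (which swaps $(\lambda_+, m_+)$ with $(\lambda_-, m_-)$), it suffices to treat the case $x \to +\infty$. Strict convexity together with $u'(\Rbb) = \;]\lambda_-, \lambda_+[$ gives $u'(x) \to \lambda_+$, so $v(x) := \lambda_+ - u'(x) > 0$ tends to $0$. Setting $F(s) := \int_s^{\lambda_+} P(\sigma)\, d\sigma$, one has $\frac{d}{dx} F(u'(x)) = -P(u') u'' = -e^{-u}$, so integrating from $x$ to $+\infty$ and using $F(\lambda_+) = 0$ yields the first integral
\begin{equation*}
F(u'(x)) = \int_x^{\infty} e^{-u(s)}\, ds.
\end{equation*}
Writing $P(\sigma) = A_+ (\lambda_+ - \sigma)^{m_+} + O((\lambda_+ - \sigma)^{m_+ + 1})$ with $A_+ > 0$, this reads $\frac{A_+}{m_+ + 1} v(x)^{m_+ + 1}(1 + o(1)) = \int_x^{\infty} e^{-u(s)}\, ds$. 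A crude lower bound $u(s) \geq \frac{\lambda_+}{2} s$ for $s$ large, inserted in the right-hand side and bootstrapped once, produces the sharp leading asymptotic
\begin{equation*}
v(x) = A\, e^{-\delta_+ x}(1 + o(1)), \qquad \delta_+ = \frac{\lambda_+}{m_+ + 1},
\end{equation*}
for an explicit $A > 0$; integration then gives $u(x) = \lambda_+ x + C_0^+ + o(1)$ for some $C_0^+ \in \Rbb$.

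To reach the higher-order corrections, I would change variables by setting $t := e^{-\delta_+ x}$ and $\psi(t) := u(x) - \lambda_+ x - C_0^+$, so $\psi(0^+) = 0$. Factoring $P(\lambda_+ - s) = s^{m_+} R(s)$ with $R$ analytic and $R(0) > 0$, the ODE transforms into
\begin{equation*}
\delta_+^{m_+ + 2}\,(\psi_t + t \psi_{tt})\, \psi_t^{m_+}\, R(\delta_+ t \psi_t) = e^{-C_0^+ - \psi},
\end{equation*}
which rewrites as $t \psi_{tt} = G(t, \psi, \psi_t)$ with $G$ analytic near $(0, 0, \psi_t(0))$. The compatibility $G(0, 0, \psi_t(0)) = 0$ reduces to the algebraic relation $\delta_+^{m_+ + 2} \psi_t(0)^{m_+ + 1} R(0) = e^{-C_0^+}$, which fixes $\psi_t(0) = C_1^+ > 0$. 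Matching orders of $t$ then yields, for each $j \geq 2$, a linear equation for $C_j^+$ whose coefficient is $\delta_+^{m_+ + 2} R(0)\, (C_1^+)^{m_+}\, j(j + m_+) \neq 0$ (no resonance), producing a unique formal series $\psi(t) = \sum_{j \geq 1} C_j^+ t^j$. Standard Briot--Bouquet theory for singular analytic ODEs, or equivalently an induction on $m$ in which the remainder $R_m(t) := \psi(t) - \sum_{j=1}^m C_j^+ t^j$ is controlled by a Gronwall estimate applied to its linearized error equation, confirms that this formal expansion is genuinely asymptotic. Translating back to $x$ gives the announced statement.

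The principal obstacle is the leading-order step: the first integral only provides implicit control of $u'$, and the bootstrap between the decay of $v$ and that of the tail $\int_x^{\infty} e^{-u(s)}\, ds$ must be performed carefully to extract both the exponent $\delta_+$ and the precise coefficient $A$. Once this leading term is in place, the Briot--Bouquet reformulation reduces the higher-order expansion to an essentially mechanical recursion, with the sole free parameter $C_0^+$ being intrinsically fixed by the solution $u$ itself.
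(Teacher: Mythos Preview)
Your argument is correct and reaches the same conclusion as the paper, but the packaging differs. The paper proceeds by an elementary induction on the order of the expansion: starting from the Legendre transform to secure the constant term $C_0^+$, it assumes an expansion of $u$ to order $N$, expands $e^{-u}$ accordingly, integrates $u''P(u')=e^{-u}$ from $x$ to $+\infty$ to obtain $Q(\lambda_+)-Q(u'(x))$ on the left (with $Q$ a primitive of $P$), and then observes that the map
\[
w \longmapsto \Big(\tfrac{(m_++1)!\,(Q(\lambda_+)-Q(\lambda_+-w))}{(-1)^{m_+}P^{(m_+)}(\lambda_+)}\Big)^{1/(m_++1)}
\]
is analytic with nonzero derivative at $0$, hence has an analytic inverse; applying this inverse yields the expansion of $u'$ to order $N$, and one more integration gives $u$ to order $N+1$. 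No named theorem is invoked.

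Your route is more structural: you bootstrap to get the leading decay of $v=\lambda_+-u'$ (and hence $C_0^+$), then change variables $t=e^{-\delta_+ x}$ to recast the ODE as $t\psi_{tt}=G(t,\psi,\psi_t)$ with $G$ analytic and vanishing at the base point. Your identification of the recursion coefficient $j(j+m_+)\neq 0$ is correct (from $\partial_{\psi_t}G|_0=-(m_++1)$), so the formal series is uniquely determined. The only loose thread is calling this ``Briot--Bouquet'': the classical theorem is for first-order equations $ty'=f(t,y)$, and your equation is second order with a regular companion equation $\psi'=q$ once reduced to a system, so the citation is a little imprecise. Your fallback---an order-by-order Gronwall remainder estimate---is exactly what the paper's induction achieves in disguise, and it is what actually closes the argument. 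In short: the paper's hands-on inversion of the $(m_++1)$-th root is more self-contained; your reformulation is cleaner conceptually and would generalize more readily, but you should either state the precise singular-ODE result you are invoking or carry out the remainder induction explicitly.
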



\begin{proof}
Let \( u^{*}(p) := \sup_{x \in \Rbb} (xp - u(x)) \) be the Legendre transform of \( u \). Since \( u \) is smooth and that \( u'(\Rbb) = ]\lambda_{-}, \lambda_{+}[ \), we have \( \text{dom}(u^{*}) = ]\lambda_{-}, \lambda_{+}[ \). Moreover, \( \lambda_{+} x - u(x) \) (resp. \( \lambda_{-} x -u(x) \)) is a strictly increasing (resp. strictly decreasing) function, hence
\[ \lim_{x \to \pm \infty} ( \lambda_{\pm} x - u(x)) = \lim_{p \to \lambda_{\pm}} u^{*}(p). \]
By \cite[Lemma 3.22]{Ngh22b}, the solution \( u \) actually satisfies \( \norm{u^{*}}_{L^{\infty}(]\lambda_{-}, \lambda_{+}[)} < +\infty \). Therefore
\[ C_0^{\pm} := \lim_{x \to \pm \infty} ( \lambda_{\pm} x - u(x) ) < \infty. \]
We thus obtain the first two terms in the expansion of \( u \) near \( \pm \infty \). Suppose that we have the expansion of \( u \) at order \( N \). The goal is to obtain an expansion at order \( N + 1 \). We have 
\[ e^{-u(x)} = e^{-\lambda_{\pm} x} \tuple{\sum_{j=0}^{N} C^{\pm, (0)}_j e^{-j\delta_{\pm} x } + o \tuple{e^{-N \delta_{\pm} x}} }  \]
where \( C^{\pm,(0)}_0 = e^{C^{\pm}_0}\) and \( C^{\pm,(0)}_j = e^{C_0^{\pm}} C_j^{\pm}\) for \( j \geq 1 \). We only consider the behavior of \( u\) near \( + \infty \) since the reasoning is the same near \( -\infty \). Integrating the equation we obtain
\begin{equation}  
\int_x^{+\infty} u''(t) P_0(u'(t)) dt = e^{-\lambda_{+} x } \tuple{\sum_{j=0}^N C_j^{+,(1)}e^{-j\delta_{+} x} + o \tuple{ e^{-N\delta_{+} x}} } 
\end{equation}
where \( C^{+,(1)}_j = \frac{C_j^{+,(0)}}{\lambda_{+} + j \delta_{+} } \). 
Let \( Q \) be a primitive of \( P_0 \), we then have 
\[ \int_{x}^{+\infty} u''(t)P_0(u'(t))dt = \int_{u'(x)}^{\lambda_{+}} P_0(p)dp = Q(\lambda_{+}) - Q(u'(x)). \]
Since \( (-1)^{m_{+}}P^{m_{+}}_0(\lambda_{+}) > 0 \), the following function
\[ F(w) := \tuple{ \frac{(m_{+}+1)! (Q(\lambda_{+}) - Q(\lambda_{+} - w))}{(-1)^{m_{+}} P^{m_{+}}_0(\lambda_{+})}}^{1/(m_{+}+1)} \] 
is a well-defined analytic function and admits an analytic development of arbitrary order with \( F'(0) > 0 \). In particular \( F \) is invertible and the inverse of \( F \) is also analytic in a neighborhood of \( 0 \).  From the previous expansion, it follows that 
\[ F(\lambda_{+} - u'(x)) = C_{0}^{+,(2)} e^{- \delta_{+} x} \tuple{ 1 + \sum_{j=1}^N C_j^{+,(2)} e^{-j \delta_{+} x} + o(e^{-N \delta_{+} x})}   \]
where \( C_0^{+,(2)} = (C_0^{+,(1)})^{1/(m_{+} + 1)} \) and \( C_{j}^{+,(2)} = \frac{C_j^{+,(1)}}{m_{+} + 1} \). Applying the inverse function of \( F \) onto the above expression yields an asymptotic expression of \( u' \) to the order \( N \), hence of \( u \) to the order \( N+1 \) near \( +\infty \). 
\end{proof}

Since the function \( P_0(.) = P(\varpi + (.)\delta) \) satisfies the assumptions of Theorem \ref{theorem_asymptotic_behavior_KE_fano_potential}, the transverse potential \( \psi(x) \) defined in Ansatz  \ref{ansatz_calabi} admits the asymptotic expansion 
\begin{equation} \label{equation_transverse_potential_expansion}
\psi(x) = \tuple{d + \frac{\lambda_{\pm}}{n}} x + K_0^{\pm} + \sum_{j=1}^m K_j^{\pm} e^{-j \delta_{\pm} x}  + o(e^{-m \delta_{\pm} x}), 
\end{equation} 
where 
\[ K_0^{\pm} = \frac{C_0^{\pm}-  \log(b^2 2^{n-2} n^{1-n}) }{n}, \quad K_j^{\pm} = \frac{C_j^{\pm}}{n}, \; j \geq 1. \]

\section{Construction of the global asymptotic solution} \label{section_global_asymptotic_solution}

Recall that \( \wt{X} = G/H \cup D_0 \cup D_1 \cup D_2 \) is a compactification of \( G/H \) with three irreducible divisors, where \( D_0 \) is horospherical, \( D_1, D_2 \) are horosymmetric with open orbit \( G/H_1, G/H_2 \) fibering over the flag manifolds \( G/P_1, G/P_2 \) with fibers rank one symmetric spaces \( X_1, X_2 \). The singularities \( D_0 \cap D_{1,2} \subset D_0 \) are also fibrations over \( G/P_{1,2} \) with fibers the Kähler-Einstein base of the Stenzel asymptotic cone of \( X_{1,2} \).

Recall also that \( \wt{\alpha}_2 = \alpha_2 + \zeta_2 \alpha_1 \), \( \wt{\alpha}_1 = \alpha_1 + \zeta_1 \alpha_2 \) are wall directions of the Weyl chamber, where \( \zeta_{1,2} \) are chosen so that \( \sprod{\wt{\alpha}_1, \alpha_2} = \sprod{\wt{\alpha}_2, \alpha_1} = 0 \). By an abuse of notation, we use \( (\wt{\alpha}_1, \alpha_2) \) (resp. \( (\wt{\alpha}_2, \alpha_1) \)) as coordinates on the dual Cartan subspace \( \simeq \afrak^{*} \) of \( \Cbb^{*} \times G/H_1 \) (resp. \( \Cbb^{*} \times G/H_2 \)). We also view \( \Cbb^{*} \times G/H_{1,2} \) as open orbits of \( N_{D_{1,2}/ \wt{X}} \), which coincide with the smooth loci.

In what follows, the alpha coordinates are logarithmic in length parameters, so any exponentially small quantity is power law small in the distance. The reason we need exponential asymptote in any order (for instance in Theorem \ref{theorem_asymptotic_behavior_KE_fano_potential}) is that we want the error to be better than any power law decay. The reader should compare this with the toric case where toric symmetry requires working with coordinates logarithmic in length \(  x_i = \log \abs{z_i}^2 \). 

\subsection{Asymptotic of the Calabi ansatz near the boundary divisors}

Let $b_1^{\pm} := ( d + \lambda_{\pm}/n)$, $a_0^{\pm}$ be constants such that 
$$ a_0^{+} \wt{\alpha}_2 = b \beta + b_1^{+} \delta, \quad a_0^{-} \wt{\alpha}_1 = b \beta + b_1^{-} \delta. $$
Recall that
\[ \rho^{(0)} = \exp(b \beta + \psi(\delta)),  \]
where \( \delta = \alpha_2 - t \alpha_1 \). Formally, there exist two sequences \( (K_j^{\pm})_{j \in \Nbb} \) such that 
\begin{align*}
\rho^{(0)} &= \exp( b \beta + b_1^{\pm} \delta + K_0^{\pm} ) \exp \sum K_j^{\pm} e^{-j \delta_{\pm} (\alpha_2 - t \alpha_1)},
\end{align*}
when \( \delta \to \pm \infty \). 

By the asymptotic behavior \eqref{equation_transverse_potential_expansion} of \( \psi \), if \( \alpha_2 \ll \alpha_1 \) and  $\alpha_1 \to +\infty$ (hence \( \delta \to -\infty \)), then
\begin{equation*}
\begin{aligned}
\rho^{(0)} &\sim e^{b \beta + b_1^{-} \delta + K_0^{-}} \exp \sum_{j \geq 1} c_j^{-} e^{-a_j^{-} \alpha_1} \\
&\sim e^{b \beta + b_1^{-} \delta + K_0^{-}} ( 1+ \sum_{j \geq 1} c_j^{-} e^{-a_j^{-} \alpha_1} ). 
\end{aligned} 
\end{equation*}
Here $c_j^{-} := K_j^{-} e^{j \delta_{-} \alpha_2} $ and $a_j^{-} = - tj \delta_{-}$. Similarly, when \( \alpha_1 \ll \alpha_2 \) and $\alpha_2 \to +\infty$ (so that \( \delta \to +\infty \)), 
\begin{equation*}
\rho^{(0)} \sim e^{b \beta + b_1^{+} \delta + K_0^{+}} \exp \sum_{j \geq 1} c_j^{+} e^{-a_j^{+} \alpha_2} 
\end{equation*}
where $c_j^{+} := K_j^{+} e^{- j \delta_{+} \alpha_1}$ and $a_j^{+} = j \delta_{+} > 0$. 

Thus we see that the first order asymptotic behavior of \( \rho^{(0)} \) near \( D_2 \) and \( D_1 \) is of the form
\begin{equation}  \label{equation_asymptotic_behavior}
e^{K^{\pm}_0 + a_0^{\pm} \wt{\alpha}_{2,1} } ( 1 + \sum_{j \geq 1} c_j^{\pm} e^{-a_j^{\pm} \alpha_{2,1}}). 
\end{equation}


\subsection{Construction of the boundary potentials}

The first step is to provide good initial ansätze \( \rho_1^{(1)}, \rho_1^{(2)} \) near \( D_1, D_2 \) based on the asymptotic \eqref{equation_asymptotic_behavior}. Let \( w \) be the Stenzel potential on the symmetric fiber of, say, \( G/H_2 \), which satisfies
\begin{equation} 
A w''(x) (w'(x))^{m_{\alpha_1} + m_{2 \alpha_1}} = \sinh^{m_{\alpha_1}}(x) \sinh^{m_{2\alpha_1}}(x), 
\end{equation} 
where \( A \) is a constant choosen so that the solution of the ODE has an expansion when \( \alpha_1 \to \infty \) of the form 
\begin{equation} \label{equation_asymptotic_expansion_stenzel} 
w(\alpha_1) = K_1^{+} e^{a_1^{+} \zeta_2 \alpha_1} \tuple{1 + \sum_{k \geq 1} w_k e^{-2k \alpha_1} }, 
\end{equation}
where the constant \( K_1^{+} \) is exactly the constant in the asymptotic expansion \eqref{equation_transverse_potential_expansion} of the transverse potential \( \psi \). The following result shows that one can indeed always make such choice of \( A \). 

\begin{prop} \label{prop_stenzel_asymptotic}
Up to an additive constant, the Stenzel potential \( w \) solution to
\begin{equation*}
A w''(x) (w'(x))^{m + \wh{m}} =  \sinh^{m}(x) \sinh^{\wh{m}}(2x),
\end{equation*}
where \( A > 0 \) is arbitrary, has the asymptotic expansion
\begin{equation*}
w(x) = K e^{a x} ( 1 + \sum_{k \geq 1} c_k e^{-2kx}), 
\end{equation*}
where \textcolor{blue}{\( a = \frac{n-1+\wh{m}}{n} \)} and
\begin{equation}
K = \frac{n}{n-1+\wh{m}} \tuple{\frac{n}{A (n-1+\wh{m}) 2^{n-1}}}^{\frac{1}{n}}.    
\end{equation}
\end{prop}

\begin{proof}
Let \( v := w' \). Remark that the equation is equivalent to 
\[ \frac{A}{n} (v(x)^{n})' = \sinh^{m}(x) \sinh^{\wh{m}}(2x). \]
It follows that 
\[ (v(x)^{n})' = \frac{n}{A} \frac{e^{(n-1 + \wh{m})x}}{2^{n-1}} ( 1 - e^{-2x})^{m}(1- e^{-4x})^{\wh{m}}, \]
hence
\[ v(x) = \tuple{\frac{n}{A(n-1 + \wh{m}) 2^{n-1}}}^{\frac{1}{n}} e^{ \frac{n-1 +  \wh{m}}{n} x} ( 1 + \sum_{k \geq 1} c_k e^{-2kx}). \] 
Thus we obtain the result after suitably choosing a constant in the integration. 
\end{proof}

Note that the constant \( \zeta_2 \) appears due to the choice of coordinates on the Cartan algebra. We also have a similar potential near \( D_1 \) after replacing \( K_1^{+}, a_1^{+}, \zeta_2 \) by \( K_1^{-}, a_1^{-}, \zeta_1 \). 

\begin{ansatz} \label{ansatz_boundary} Define \textcolor{blue}{the following warped product of AC metrics}
\begin{equation} \label{equation_stenzel_potential_near_D2}
\begin{aligned}
    \rho_{1}^{(2)} := e^{K_0^{+} + a_0^{+} \wt{\alpha}_2 } \tuple{ 1 + e^{-a_1^{+} \wt{\alpha}_2} w (\alpha_1) }
\end{aligned}
\end{equation}
where \textcolor{blue}{\( e^{a_0^{+} \wt{\alpha}_2}\) is the potential of the metric \( \frac{i}{2} \del \delb \abs{z}^{2a_0^{+}} \) with \( \wt{\alpha}_2 = \log \abs{z}^2 \) on \( \Cbb\)}, and \( w \) is the Stenzel potential on the open orbit \( G/H_2 \) of \( D_2 \).
\end{ansatz}

Let us give a brief explanation on this choice. Suppose that we have some hypothetical global potential \( \rho \) on \( G/H \) that coincides with \( \rho^{(0)} \) in the generic region. We would then like \( \rho \) to behave like \( \rho^{(0)} \) at the first order near \( D_1, D_2 \), i.e. 
\[ \rho \sim e^{K_0^{+} + a_0^{+} \wt{\alpha}_2}( 1+  \sum_{j \geq 1} r_j^{+} e^{-a_j^{+} \alpha_2}) \] 
at the first order near \( D_2 \) (and similarly near \( D_1 \)), cf. \eqref{equation_asymptotic_behavior}. We would also like \( \rho \) to be translated in terms of the coordinates \( (\wt{\alpha}_2, \alpha_1) \) on \( \Cbb^{*} \times G/H_2 \). This suggests we rewrite each term \( r_j^{+} e^{-a_j^{+} \alpha_2} \) as \( r_j^{+} e^{-a_j^{+} \wt{\alpha}_2} e^{a_1^{+} \zeta_2 \alpha_1} \), but the term \( r_j^{+} e^{a_1^{+} \zeta_2 \alpha_1} \) can be identified with the radius function of the Stenzel metric in \eqref{equation_asymptotic_expansion_stenzel}. This somehow justifies the choice of Ansatz \ref{ansatz_boundary}. 

It turns out that the Ricci potential of this initial ansatz might not have the decay rate as we want, so the next step is to improve the decay rate of the Ricci-potential by adding on new terms to kill off the badly-decaying terms in \( \rho_1^{(2)} \). 

To be precise, as \( \alpha_2 \to +\infty \) and \( \alpha_1 \) stays bounded,
\begin{equation*}
    \rho_{1}^{(2)} = e^{K_0^{+} + a_0^{+} \wt{\alpha}_2} \tuple{1 + e^{-a_1^{+} \alpha_2}(r_1^{+} + O(e^{-2 \alpha_1}))}, \; r_1^{+} := K_1^{+}. 
\end{equation*}
The idea of Proposition \ref{proposition_boundary_potential} is to successively add terms of the form 
\[ e^{-a_k^{+} \wt{\alpha}_2}R_k(\alpha_1) = e^{-a_k^{+} \alpha_2}(r_k^{+} + O(e^{-2\alpha_1})) \]
such that \( R_k \) mimics the Stenzel behavior at each order but with improved Ricci-potential decay rate, and that the final first order term will have the form 
\[ e^{K_0^{+} + a_0^{+} \wt{\alpha}_2}( 1 + \sum_{k} r_k^{+} e^{-a_k^{+} \alpha_2}), \]
which is built to be the same as the first order asymptotic behavior \eqref{equation_asymptotic_behavior} of \( \rho^{(0)} \) near \( D_2 \), suggesting a consistency in some gluing region as we will see.  

\begin{prop} \label{proposition_boundary_potential}
There is a potential \( \rho^{(2)} \) on \( \Cbb^{*} \times G/H_2 \) with asymptotic expansion as \( \alpha_2 \to \infty \)
\[ \rho^{(2)} \sim e^{K_0^{+}} e^{a_0^{+} \wt{\alpha}_2} \tuple{1 + \sum_k e^{-a_k^{+} \wt{\alpha}_2} R_k(\alpha_1)} \]
where
\begin{itemize}
    \item \( R_1 = w \) and for all \( k \), \( R_k \) is an even function of \( \alpha_1 \) such that when \( \alpha_1 \to \infty \), \( R_k \) has Stenzel-like behavior, that is, 
    \[ R_k(\alpha_1) = e^{a_k^{+} \zeta_2 \alpha_1}( r_k^{+} + O(e^{-2\alpha_1})), \]
    where \( r_k^{+} > 0 \) and \( O(e^{-2\alpha_1}) \) is a function whose all derivatives behave like \( O(e^{-2\alpha_1}) \); 
    \item \( 0 < a_1^{+} < a_2^{+}  < \dots \) and \( a_i^{+} \in a_1^{+} \Nbb + 2 \Nbb, \forall i \geq 2 \);
    \item For every \( k \geq 1 \), the Ricci potential \( \Pcal \) (cf. \eqref{equation_ricci_potential}) of the term \( \rho^{(2)}_k \) up to the \( k\)-th order satisfies
    \[ \abs{ \nabla^l \Pcal(\rho_k^{(2)})} \leq C_{k,l} e^{-a_k^{+} \alpha_2}. \]
\end{itemize}
There is also an potential \( \rho^{(1)} \) on \( \Cbb^{*} \times G/H_1 \) with similar properties. 
\end{prop}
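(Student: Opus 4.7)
The plan is to construct \( \rho^{(2)} \) by induction on \( k \), adding correction terms that successively kill off the leading-order contribution to the Ricci potential as \( \alpha_2 \to +\infty \). We take \( R_1 := w \) the Stenzel potential on \( G/H_2 \), with exponent \( a_1^+ > 0 \) and \( \zeta_2 \) as in the expansion \eqref{equation_asymptotic_expansion_stenzel}. A direct computation of
\( \det(d^2 \rho_1^{(2)} ) \prod_{\alpha \in R^+} \sprod{\alpha, d\rho_1^{(2)}}^{m_{\alpha}} \) using Ansatz \ref{ansatz_boundary} and the Stenzel equation shows that the ratio of this quantity with \( \prod \sinh^{m_{\alpha}}(\alpha) \) equals \( 1 + O(e^{-a_1^+ \alpha_2}) \), in every derivative, because on the leading term \( e^{a_0^+ \tilde\alpha_2} \) one recovers the pure horospherical cone equation from Theorem \ref{theorem_kstable_horospherical_cone}, and the next term is precisely the Stenzel correction. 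This establishes the base case.

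For the inductive step, suppose \( \rho_k^{(2)} \) satisfies \( |\nabla^l \mathcal{P}(\rho_k^{(2)})| \leq C_{k,l} e^{-a_k^+ \alpha_2} \). Expanding the logarithms in \eqref{equation_ricci_potential} about the product-like model \( e^{a_0^+ \tilde\alpha_2} \) and using the expansion \( (1- e^{-2\alpha})^{-m_\alpha} = \sum c_j e^{-2j\alpha} \) for each positive root, one finds that the leading term in \( \mathcal{P}(\rho_k^{(2)}) \) has the form \( e^{-a_{k+1}^+ \tilde\alpha_2} F_{k+1}(\alpha_1) \) for some \( a_{k+1}^+ > a_k^+ \) lying in \( a_1^+ \mathbb{N} + 2\mathbb{N} \); indeed these are the only exponents that appear, because they are produced as sums of \( a_1^+ \) (from the Stenzel correction \( R_1 \) and its iterates in the Taylor expansion of the logarithm) and \( 2 \) (from the hyperbolic factors in the \( \alpha_2 \)-direction roots and from the recursion). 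The function \( F_{k+1} \) is even in \( \alpha_1 \) since each contribution is even (both \( w \) and the restricted-root factors are even), and behaves at infinity like \( e^{a_{k+1}^+ \zeta_2 \alpha_1}(f_{k+1} + O(e^{-2\alpha_1})) \) by tracking the Stenzel asymptotics through each ingredient.

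The correction \( e^{-a_{k+1}^+ \tilde\alpha_2} R_{k+1}(\alpha_1) \) will cancel \( F_{k+1} \) at the desired order provided \( R_{k+1} \) satisfies a linear second-order ODE \( L_{k+1} R_{k+1} = -F_{k+1} \), where \( L_{k+1} \) is the linearization at \( \rho_k^{(2)} \) of the left-hand side of the Monge--Amp\`ere equation, evaluated on a test function of the form \( e^{-a_{k+1}^+ \tilde\alpha_2} R(\alpha_1) \). This operator is a uniformly elliptic Sturm--Liouville operator on \( \Rbb \), whose coefficients are even functions of \( \alpha_1 \) and whose indicial roots at \( \pm\infty \) are \( \pm a_{k+1}^+ \zeta_2 \), so that an even solution exists, is unique up to adding a multiple of the even Stenzel-like solution of the homogeneous equation, and has the prescribed Stenzel-like growth at infinity. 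The constant \( r_{k+1}^+ \) can be normalized to be positive by choosing the appropriate solution in the one-parameter family. We then set \( \rho_{k+1}^{(2)} := \rho_k^{(2)} + e^{K_0^+ + a_0^+ \tilde\alpha_2 - a_{k+1}^+ \tilde\alpha_2} R_{k+1}(\alpha_1) \); derivative bounds propagate because \( R_{k+1} \) is smooth and of controlled growth. Taking a Borel-type resummation (or defining \( \rho^{(2)} \) as a smooth function with the prescribed asymptotic expansion) yields the required global potential, and a symmetric construction swapping the roles of \( \tilde\alpha_1,\tilde\alpha_2, \zeta_1,\zeta_2 \) gives \( \rho^{(1)} \).

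The main obstacle is the inductive verification that the leading error \( F_{k+1} \) genuinely factors as \( e^{-a_{k+1}^+ \tilde\alpha_2} \) times a function of \( \alpha_1 \) with an even, Stenzel-type expansion; this requires a careful bookkeeping of how the exponents combine under the logarithm of the Monge--Amp\`ere equation and the hyperbolic factors. The analytic solvability of \( L_{k+1} R_{k+1} = -F_{k+1} \) with the correct asymptotics at \( \pm\infty \) is then a standard ODE statement, facilitated by the explicit structure of \( L_{k+1} \) which is, up to leading order, the transverse Laplacian of the Stenzel metric on \( G/H_2 \) shifted by the potential term determined by \( a_{k+1}^+ \).
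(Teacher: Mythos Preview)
Your approach is essentially the same as the paper's: both construct \(\rho^{(2)}\) by an iterative decay-improvement scheme, adding at each step a correction \(e^{(a_0^{+}-a_{k+1}^{+})\wt{\alpha}_2}R_{k+1}(\alpha_1)\) obtained by inverting the leading-order part of the linearized Ricci-potential operator on a function of \(\alpha_1\) alone. The paper makes this explicit via Lemma~\ref{lemma_linearization_expansion}, which identifies the dominant term of the linearization \(L\) as \(e^{-K_1^{+}}e^{(-a_0^{+}+a_1^{+})\wt{\alpha}_2}\Delta_1\) with \(\Delta_1 f = f''/w'' + d_1 f'/w'\), and then invokes the surjectivity of \(\Delta_1\) between weighted spaces \(C^k_\eta \to C^{k-2}_{\eta-\zeta_2 a_1^{+}}\) from \cite{LM85}; this is exactly your ``Sturm--Liouville operator with indicial exponents \(\pm a_{k+1}^{+}\zeta_2\)'' phrased concretely. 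One minor point: the Borel resummation you mention is not needed, since the downstream gluing (Proposition~\ref{proposition_ricci_potential_control}) only uses \(\rho_k^{(2)}\) for a fixed, sufficiently large finite \(k\); the paper treats \(\rho^{(2)}\) as a formal series and works throughout with the finite truncations.
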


The proof of Proposition \ref{proposition_boundary_potential} can be carried out in the same manner as \cite[Proposition 5.1]{BD19}. We first need the following computational lemma, which can also be proved in the same way as in \cite[Proposition 5.1]{BD19}. We detail the arguments for the reader's convenience.

\begin{lem} \label{lemma_linearization_expansion}
Let \( L \) be the linearization of \( \Pcal \) at \( \rho_1^{(2)} \), i.e.
\begin{equation} 
L (f) := \text{tr}((d^2 \rho^{(2)}_1)^{-1} d^2 f) + \sum_{\alpha \in R^{+}} m_{\alpha} \frac{\sprod{\alpha, df}}{\sprod{\alpha, d \rho^{(2)}_1}}.
\end{equation} 
To simplify the expression, we drop the \( \pm \)-signs on the constants. Then as \( \alpha_2 \to \infty \), we have asymptotically
\begin{equation} \label{equation_asymptotic_form_linearization}
\begin{aligned}
L(f) = \frac{1}{e^{K_1^{+}}e^{a_0 \wt{\alpha}_2}} (&e^{a_1 \wt{\alpha}_2} \Delta_1 f + a_0^{-2} \del_{\wt{2}}^2 f + (n-1-d_1) a_0^{-1} (\del_{\wt{2}} f)  \\
&+ a_0^{-2} (a_0-a_1)^2 \frac{w(\alpha_1)}{w''(\alpha_1)} \del_1^2 f +  \sum_{\alpha_1 \nmid \alpha}
a_0^{-1} (\del_1 f) \frac{ \sprod{\alpha, \alpha_1}}{\sprod{\alpha, \wt{\alpha}_2}} \\
& - 2 a_0^{-2} (a_0 - a_1) \frac{w'(\alpha_1)}{w''(\alpha_1)} \del_{\wt{2}} \del_1 f \\
&+ O(e^{-a_1 \alpha_2} d^2 f) + O(e^{-a_1 \alpha_2} df) ).
\end{aligned} 
\end{equation}
where \( d_1 := \sum_{\alpha_1 \mid \alpha} \alpha \),
\[ \Delta_1 f := \frac{\del_1^2 f}{w''(\alpha_1)} + d_1  \frac{\del_1 f}{w'(\alpha_1)}, \]
and \( O(e^{-a_1 \alpha_2} d^2 f) \) with \( O(e^{-a_1 \alpha_2} df) \) are terms involving second and first derivatives of \( f \) with coefficients growing like \( O(e^{-a_1 \alpha_2}) \) as \( \alpha_2 \to + \infty \). 
We also have a similar expression when \( \alpha_1 \to \infty \).
\end{lem}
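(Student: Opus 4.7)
The plan is a direct computation in the coordinate basis \( (\wt{\alpha}_2, \alpha_1) \) of the Cartan algebra, which is adapted to the product structure near \( D_2 \). Writing \( E := e^{K_0^+ + a_0^+ \wt{\alpha}_2} \) and \( F := e^{K_0^+ + (a_0^+ - a_1^+)\wt{\alpha}_2} \), the ansatz splits additively as \( \rho_1^{(2)} = E + F w(\alpha_1) \). Straightforward differentiation gives the Hessian in the explicit block form
\[
d^2\rho_1^{(2)} = \begin{pmatrix} a_0^2 E + (a_0-a_1)^2 F w & (a_0-a_1)Fw' \\ (a_0-a_1)Fw' & F w'' \end{pmatrix},
\]
with determinant \( a_0^2 EFw'' + (a_0-a_1)^2 F^2(ww''-(w')^2) \). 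Because \( E/F = e^{a_1^+\wt\alpha_2}\to +\infty \) as \( \alpha_2 \to +\infty \), the first summand of the determinant dominates, and each entry of the inverse Hessian admits a simple leading-order expression in terms of \( E, F, w, w', w'' \).

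Substituting these expressions into \( \tr((d^2\rho_1^{(2)})^{-1}d^2 f) \) and collecting against the universal prefactor \( 1/E \) immediately reproduces the four second-order contributions visible in \eqref{equation_asymptotic_form_linearization}: the \( (\wt 2,\wt 2) \)-entry of the inverse yields \( a_0^{-2}\del_{\wt 2}^2 f \); the \( (1,1) \)-entry yields both the fibrewise term \( e^{a_1\wt\alpha_2}(1/w'')\del_1^2 f \) and the secondary \( a_0^{-2}(a_0-a_1)^2(w/w'')\del_1^2 f \); and the off-diagonal entry yields the cross term \( -2a_0^{-2}(a_0-a_1)(w'/w'')\del_{\wt 2}\del_1 f \).

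For the root-sum \( \sum_\alpha m_\alpha \sprod{\alpha, df}/\sprod{\alpha, d\rho_1^{(2)}} \), I would split the positive restricted roots into those proportional to \( \alpha_1 \) (the roots of the rank-one fibre of \( D_2 \)) and those that are not. On the parallel ones both \( \sprod{\alpha, df} \) and \( \sprod{\alpha, d\rho_1^{(2)}} \) reduce to scalar multiples of \( \del_1 f \) and \( Fw' \) respectively, and summing produces exactly the term \( d_1\,\del_1 f/w'(\alpha_1) \) which combines with the fibrewise part of the trace to form \( \Delta_1 f \). On the non-parallel ones, the leading part of \( \sprod{\alpha, d\rho_1^{(2)}} \) is \( a_0^+ E \) times the \( \wt{\alpha}_2 \)-component of \( \alpha \), so expanding \( \sprod{\alpha, df} \) in the basis \( (\wt\alpha_2, \alpha_1) \) and factoring out \( 1/E \) produces both the first-order coefficient \( a_0^{-1}\sprod{\alpha,\alpha_1}/\sprod{\alpha,\wt\alpha_2} \) in front of \( \del_1 f \) and a contribution to \( \del_{\wt 2} f \); summing over all non-parallel roots then gives the total coefficient \( (n-1-d_1)a_0^{-1} \) by a combinatorial count of multiplicities.

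The main obstacle is purely bookkeeping: matching exactly the stated coefficients on the two first-order terms and verifying that every sub-leading contribution, coming either from the discarded entries of \( (d^2\rho_1^{(2)})^{-1} \) or from the tails of the \( w^{(k)} \), can be absorbed into the error \( O(e^{-a_1^+\alpha_2} d^2 f) + O(e^{-a_1^+\alpha_2}df) \). The relative size of the neglected determinant terms is \( F/E = e^{-a_1^+\wt{\alpha}_2} \); since \( \wt{\alpha}_2 \) and \( \alpha_2 \) differ only by a bounded multiple of \( \alpha_1 \) on the region considered (where \( \alpha_1 \) stays of order one), this yields the desired decay \( O(e^{-a_1^+\alpha_2}) \), while the Stenzel-type expansion of Proposition \ref{prop_stenzel_asymptotic} controls the ratios \( w^{(k)}/w^{(l)} \) uniformly. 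The analogous statement near \( D_1 \) follows from the same computation after exchanging \( (\wt{\alpha}_2, \alpha_1, a_0^+, a_1^+, K_0^+) \) with \( (\wt{\alpha}_1, \alpha_2, a_0^-, a_1^-, K_0^-) \).
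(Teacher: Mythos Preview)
Your proposal is correct and follows essentially the same route as the paper: both compute \( d\rho_1^{(2)} \) and \( d^2\rho_1^{(2)} \) explicitly in the orthogonal basis \( (\wt\alpha_2,\alpha_1) \), extract the determinant and the inverse Hessian, read off the trace term entry by entry, and then split the root sum into the \( \alpha_1\!\mid\!\alpha \) and \( \alpha_1\!\nmid\!\alpha \) pieces to obtain the fibrewise Laplacian \( \Delta_1 \) together with the first-order \( \del_{\wt 2} f \) and \( \del_1 f \) contributions. Your shorthand \( E=e^{K_0^++a_0^+\wt\alpha_2} \), \( F=e^{K_0^++(a_0^+-a_1^+)\wt\alpha_2} \) is a cosmetic improvement over the paper's fully expanded exponentials, but the computation and the error accounting via \( F/E=e^{-a_1^+\wt\alpha_2} \) are identical in substance.
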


\begin{rmk}
In \emph{\cite{BD19}}, two terms involving the first and second derivatives of \( f \) were missing in the asymptotic formula for \( L \) (cf. Equation (32) in \textit{loc. cit.}), but this doesn't alter the decay improvement, since the effect of the missing terms is asymptotically insignificant in the given direction. 
\end{rmk}

\begin{proof}
To make the proof more readable, we remove the \( + \) sign in the constants. By definition of \( \rho_1^{(2)} \), we have 
\begin{equation}  
d \rho_1^{(2)} = e^{K_1 + a_0 \wt{\alpha}_2} \tuple{ (a_0 + (a_0 - a_1) w(\alpha_1) e^{-a_1 \wt{\alpha}_2} ) \wt{\alpha}_2 + w'(\alpha_1) e^{-a_1 \wt{\alpha}_2} \alpha_1 }.
\end{equation}
It follows that 
\begin{equation} 
\begin{aligned} d^2 \rho_1^{(2)} = e^{K_1 + a_0 \wt{\alpha}_2} ( &(a_0^2 + (a_0-a_1)^2 w(\alpha_1) e^{-a_1 \wt{\alpha}_2} ) \wt{\alpha}_2 \otimes \wt{\alpha}_2 \\
&+ (a_0-a_1) w'(\alpha_1)e^{(a_0-a_1) \wt{\alpha}_2}( \wt{\alpha}_2 \otimes \alpha_1 + \alpha_1 \otimes \wt{\alpha}_2) \\
&+ w''(\alpha_1)e^{-a_1 \wt{\alpha}_1} \alpha_1 \otimes \alpha_1). 
\end{aligned}
\end{equation}
A straightforward calculation then yields 
\begin{equation}  \label{equation_determinant_hessian}
\det(d^2 \rho_1^{(2)}) = e^{2 K_1 + (2a_0 - a_1) \wt{\alpha}_2} a_0^2 w''(\alpha_1)\tuple{ 1 + a_0^{-2} (a_0-a_1)^2 e^{-a_1 \wt{\alpha}_2} \tuple{ w(\alpha_1) - \frac{w'(\alpha_1)^2}{w''(\alpha_1)}}}. 
\end{equation}
From the computation of \( d \rho_1^{(2)} \), we obtain 
\begin{equation} \label{equation_scalarproduct}
\sprod{\alpha, d\rho_1^{(2)}} = 
\begin{cases}
e^{K_1} e^{(a_0-a_1) \wt{\alpha}_2} w'(\alpha_1) \sprod{\alpha, \alpha_1}, \; \alpha_1 \mid \alpha, \\
e^{K_1 + a_0 \wt{\alpha}_2} a_0 \sprod{\alpha, \wt{\alpha}_2} (1 + e^{-a_1 \wt{\alpha}_2}(\frac{a_0-a_1}{a_0} w(\alpha_1) + \frac{\sprod{\alpha, \alpha_1}}{a_0 \sprod{\alpha, \wt{\alpha}_2}} w'(\alpha_1))), \; \alpha_1  \nmid \alpha. 
\end{cases}
\end{equation}
Now use \( \wt{\alpha}_2, \alpha_1 \) as the orthogonal basis of the dual Cartan algebra \( \afrak^{*} \), we can write \( df = \del_{\wt{2}} f \wt{\alpha}_2 + \del_1 f \alpha_1 \), where \( \del_1 f = \sprod{df, \alpha_1} / \abs{\alpha}^2 \). Let us first compute the trace term of \( L \). Note that as \( \alpha_2 \to +\infty \),  
\begin{equation} \label{equation_inverse_determinant}
\det(d^2 \rho_1^{(2)})^{-1} = e^{-2K_1 - (2 a_0 - a_1) \wt{\alpha_2} } a_0^{-2} w''(\alpha_1)^{-1} (1 + O(e^{-a_1 \alpha_2})). 
\end{equation} 
Letting \( A \) be the adjoint matrix of \( d^2 \rho_1^{(2)} \), we obtain
\begin{equation} \label{equation_trace_adjoint_times_hessian}
\begin{aligned}  
\tr(A d^2 f) = e^{K_1 + a_0 \wt{\alpha}_2} ( e^{-a_1 \wt{\alpha}_2} w''(\alpha_1) \del^2_{\wt{2}} f  &- 2(a_0 - a_1) e^{-a_1 \wt{\alpha}_2}w'(\alpha_1) \del_{\wt{2}} \del_1 f \\
&+ (a_0^2 + (a_0 - a_1)^2 e^{-a_1 \wt{\alpha}_2} w(\alpha_1)) \del_1^2 f).
\end{aligned}
\end{equation}
Now combining \eqref{equation_inverse_determinant} and \eqref{equation_trace_adjoint_times_hessian} gives us 
\begin{equation} \label{equation_asymptotic_form_linearization_trace}
\begin{aligned} \tr( (d^2 \rho_1^{(2)})^{-1} d^2 f) = e^{-(K_1 + a_0 \wt{\alpha}_2)} (&a_0^{-2} \del_{\wt{2}}^2 f - 2 a_0^{-2} (a_0 - a_1) \frac{w'(\alpha_1)}{w''(\alpha_1)} \del_{\wt{2}} \del_1 f \\
&+ e^{a_1 \wt{\alpha}_2} \frac{\del_1^2 f}{w''(\alpha_1)} + a_0^{-2} (a_0-a_1)^2 w(\alpha_1) \frac{\del_1^2 f}{w''(\alpha_1)} \\
&+ O(e^{-a_1 \alpha_2} d^2 f) + O(e^{-a_1 \alpha_2} df) ). 
\end{aligned}
\end{equation}
On the other hand, for the term involving roots, when \( \alpha_1 \mid \alpha \), we have 
\[ \frac{\sprod{\alpha, df}}{\sprod{\alpha, d \rho_1^{(2)}}} = \frac{\del_1 f}{w'(\alpha_1)} e^{-K_1} e^{-(a_0 - a_1) \wt{\alpha}_2}, \]
while if \( \alpha_1 \nmid \alpha \), 
\[\frac{\sprod{\alpha, df}}{\sprod{\alpha, d \rho_1^{(2)}}} = \frac{(\del_{\wt{2}} f) \sprod{\wt{\alpha}_2, \alpha} + (\del_1 f) \sprod{\alpha, \alpha_1}  }
{e^{K_1 + a_0 \wt{\alpha}_2} a_0 \sprod{\alpha, \wt{\alpha}_2} (1 + e^{-a_1 \wt{\alpha}_2}(\frac{a_0-a_1}{a_0} w(\alpha_1) + \frac{\sprod{\alpha, \alpha_1}}{a_0 \sprod{\alpha, \wt{\alpha}_2}} w'(\alpha_1)))}. \]
Summing up, we get
\begin{equation} \label{equation_asymptotic_form_linearization_root}
\begin{aligned}
\sum_{\alpha \in R^{+}} \frac{\sprod{\alpha, df}}{\sprod{\alpha, d \rho_1^{(2)}}} = e^{-(K_1 + a_0 \wt{\alpha}_2)}(& d_1  \frac{\del_1 f}{w'(\alpha_1)} e^{a_1 \wt{\alpha}_2} + (n-1-d_1) a_0^{-1} (\del_{\wt{2}} f) \\
&+ \sum_{\alpha_1 \nmid \alpha} a_0^{-1} (\del_1 f) \frac{ \sprod{\alpha, \alpha_1}}{\sprod{\alpha, \wt{\alpha}_2}} + O(e^{-a_1 \alpha_2} df) ),
\end{aligned}
\end{equation}
where \( d_1 := \sum_{\alpha_1 \mid \alpha} \alpha \). 
Putting together \eqref{equation_asymptotic_form_linearization_trace} and \eqref{equation_asymptotic_form_linearization_root} gives us the desired asymptotic formula for \( L \). 
\end{proof}

\begin{proof}[Proof of Proposition \ref{proposition_boundary_potential}]
Denote by \( \Pcal(\rho_{1}^{(2)}) \) the Ricci potential of \( \rho_{1}^{(2)} \). Define the algebra 
    \[ \Acal_{\delta} = \set{ \sum_{a_k \geq \delta} e^{-a_k \wt{\alpha}_2} f_k(\alpha_1) }, \]
    where \( f_k \) are some even functions satisfying when \( \alpha_1 \to \infty \) 
    \[f_k(\alpha_1) = e^{a_k^{+} \zeta \alpha_1}(A_k + O(e^{-2\alpha_1})), \]
    and all the derivatives of \( f_k \) have the same expansion. 
   Following \eqref{equation_determinant_hessian}, \eqref{equation_scalarproduct} and the asymptotic of the Stenzel potential \eqref{equation_asymptotic_expansion_stenzel},  there is a formal power series
    \[ \Pcal(\rho_{1}^{(2)}) = \sum_{ a_k^{+} \geq a_1^{+}} e^{-a_k^{+} \wt{\alpha}_2} f_k(\alpha_1) \in \Acal_{a_1^{+}}. \] 
    One would then like to construct a potential \( \rho_2^{(2)} \) whose Ricci potential has a better decay rate, i.e. such that
    \[ \Pcal(\rho_2^{(2)}) \in \Acal_{a_2^{+}}, \]
    where \( a_2 = \inf(2a_1, 2) \). This can be done by killing off the term that decays badly in \( \Pcal(\rho_1^{(2)}) \) with a small perturbation of \( \rho_1^{(2)} \). The procedure is as follows. Let \( L \) be the linearization of \( \Pcal \). By Lemma \ref{lemma_linearization_expansion}, when \( \alpha_2 \to +\infty \) (that is, near \( D_2 \)), the leading order of \( L \) is given by 
    \[ e^{-K_1^{+}} e^{(-a_0^{+} + a_1^{+}) \wt{\alpha}_2} \Delta_1, \]
    where \( \Delta_1 \) is the weighted Laplacian defined by 
    \[ \Delta_1 = \frac{\del_1^2}{w''(\alpha_1)}  + d_1 \frac{\del_1}{w'(\alpha_1)}. \]
    Given a sufficiently small perturbation \( \rho_2^{(2)} \) of \( \rho_1^{(2)} \), one has
    \begin{align*} 
    \Pcal(\rho_2^{(2)}) &= \Pcal(\rho_1^{(2)}) + L(\rho_2^{(2)} - \rho_1^{(2)}) + Q \\
    &= e^{-a_1^{+} \wt{\alpha}_2} g(\alpha_1) + h + L(\rho_2^{(2)} - \rho_1^{(2)}) + Q, 
    \end{align*}
    where \( g \) is an even function satisfying \( g(\alpha_1) = e^{a_1^{+} \zeta_2 \alpha_1}(A + O(e^{-2\alpha_1})) \), \( h \in \Acal_{a_2^{+}} \), and \( Q \) is some non-linear term. In order to obtain \( \Pcal(\rho_2^{(2)}) \in \Acal_{a_2^{+}} \) one then needs a potential \( \rho_2^{(2)} \) such that the term \( L(\rho_2^{(2)} - \rho_1^{(2)}) + e^{-a_1^{+} \wt{\alpha}_2} g \) is in \( \Acal_{a_2^{+}} \). This can be done by first considering the equation
    \[ \Delta_1 f = g. \]
    We view \( f,g \) as one-variable functions in the weighted spaces  
    \[ C^{k}_{\eta} := \cosh(x)^{\eta} C^{k}(\Rbb), \eta > 0, \; \text{and} \; C^{k}_{\eta - \zeta_2 a_1^{+}}. \] 
    By surjectivity of \( \Delta_1 \) \cite{LM85}, viewed as a linear operator
    \[ \Delta_1: C^{k}_{\eta} \to C^{k-2}_{\eta-\zeta_2 a_1^{+}}, \] 
    and the asymptotic behavior of \( g \), the equation always admits a solution of the form \( f(\alpha_1) = e^{2 a_1^{+} \zeta_2 \alpha_1}(B + O(e^{-2\alpha_1}))\) (compare with the Stenzel potential \( w \)). 
    Next, by taking \( \rho_2^{(2)} \) as
    \[ \rho_{2}^{(2)} := \rho_{1}^{(2)} - e^{K_0^{+}} e^{(a_0^{+} - 2 a_1^{+}) \wt{\alpha}_2} f(\alpha_1), \] 
    one obtains by straightforward computation using the asymptotic form \eqref{equation_asymptotic_form_linearization} of \(  L\) 
    \[ L(\rho_2^{(2)} - \rho_1^{(2)}) + e^{-a_1^{+} \wt{\alpha}_2} g(\alpha_1) \in \Acal_{a_2^{+}}, \; \text{as} \; \alpha_2 \to \infty.\]
    This is because applying the non-Laplacian term of \( L \) to \( e^{(a_0^{+} - 2 a_1^{+}) \wt{\alpha}_2} f(\alpha_1) \) yields only function in \( \Acal_{a_2^{+}} \). 
    To sum up, we have killed the badly-decaying term \( g \) in \( \Pcal(\rho_1^{(2)}) \) by solving an explicit Laplacian equation, and obtained: 
    \[ \Pcal(\rho_2^{(2)}) \in \Acal_{a_2^{+}}. \] 
  A reiteration of the construction yields \( \rho_{k+1}^{(2)} \) from \( \rho_{k}^{(2)} \). Finally, we obtain as \( \alpha_2 \gg \alpha_1 \) and \( \alpha_1 \gg 1 \)
    \begin{align*} \rho^{(2)} \sim e^{K_0^{+} + a_0^{+} \wt{\alpha}_2} \tuple{1 + \sum_{j \geq 1} e^{-a_j^{+} \alpha_2}(r_j^{+} + O(e^{-2 \alpha_1})) } 
    \end{align*} 
    with first order terms
    \[ e^{K_0^{+} + a_0^{+} \wt{\alpha}_2} \tuple{1 + \sum_{j \geq 1} r_j^{+} e^{-a_j^{+} \alpha_1}}, \]
    as formal series. This matches with the first order term of \( \rho^{(0)} \).  
The construction of \( \rho^{(1)} \) is identical.
\end{proof}

\subsection{Global potential}
Before gluing the potentials, we need to verify that the top order terms of \( \rho^{(0)} \) and \( \rho^{(2)}, \rho^{(1)} \) coincide near the boundary regions. The potentials \( \rho^{(2)}, \rho^{(1)} \) can thus be interpreted somehow as expansions of \( \rho^{(0)} \) along the divisors. 

By the computation carried out above, we have all the ingredients needed to reperform the arguments as in \cite{BD19}, and obtain:
\begin{lem}
The top order term of \( \rho^{(2)} \) coincides with \( \rho^{(0)} \) near the region \( \beta \sim \alpha_2 \gg 1 \), that is
\begin{equation*}
\exp \sum_{j \geq 1} c_j^{+} e^{-a_j^{+} \alpha_1}  = 1 + \sum_j r_j^{+} e^{-a_j^{+} \alpha_1}.
\end{equation*}
In particular, 
\[ \rho^{(0)} - \rho^{(2)} \sim e^{K_0^{+} + a_0^{+} \wt{\alpha}_2} \sum_{k \geq 1} e^{- a_k^{+} \alpha_2} g_k(\alpha_1), \]
where \( g_k(\alpha_1) = O(e^{-2\alpha_1}) \), i.e. a function such that all derivatives satisfy this asymptotic behavior. 
The same can be said for \( \rho^{(1)} \) in the region \( \beta \sim \alpha_1 \gg 1 \). 
\end{lem}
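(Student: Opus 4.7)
The strategy is to bring $\rho^{(0)}$ and $\rho^{(2)}$ into a common coordinate frame $(\wt{\alpha}_2, \alpha_1)$ near $D_2$ and to match their asymptotic expansions term by term. Using the expansion \eqref{equation_transverse_potential_expansion} of $\psi$ together with $b\beta + b_1^{+}\delta = a_0^{+}\wt{\alpha}_2$ and $\delta = \alpha_2 - t\alpha_1$, the Calabi ansatz becomes
\[
\rho^{(0)} = e^{K_0^{+} + a_0^{+}\wt{\alpha}_2}\exp\sum_{j\geq 1} c_j^{+} e^{-a_j^{+}\alpha_2},
\]
with coefficients $c_j^{+}$ depending only on $\alpha_1$. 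Combining the Stenzel-like asymptotic $R_k(\alpha_1) = r_k^{+} e^{a_k^{+}\zeta_2\alpha_1}(1 + O(e^{-2\alpha_1}))$ from Proposition \ref{proposition_boundary_potential} with the identity $\wt{\alpha}_2 - \zeta_2\alpha_1 = \alpha_2$ recasts the boundary ansatz as
\[
\rho^{(2)} = e^{K_0^{+} + a_0^{+}\wt{\alpha}_2}\Bigl(1 + \sum_{k \geq 1} r_k^{+} e^{-a_k^{+}\alpha_2} + O(e^{-2\alpha_1})\Bigr).
\]

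I would then argue that both expressions must agree as formal power series in $e^{-a_k^{+}\alpha_2}$ with $\alpha_1$-dependent coefficients, by invoking uniqueness of the asymptotic solution. Both $\rho^{(0)}$ and $\rho^{(2)}$ solve the Monge-Ampère equation to arbitrary order in $e^{-\alpha_2}$: on the Calabi side by direct computation of $\mathcal{P}(\rho^{(0)})$, on the boundary side by the iterative construction in Proposition \ref{proposition_boundary_potential}. The leading part of the linearization in Lemma \ref{lemma_linearization_expansion} is $e^{-K_1^{+}}e^{(-a_0^{+}+a_1^{+})\wt{\alpha}_2}\Delta_1$, where the weighted Laplacian $\Delta_1$ on the rank-one fiber is surjective between the weighted $C^k$-spaces used in Proposition \ref{proposition_boundary_potential}, with constant-only kernel. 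Consequently, within the class of formal series of this Stenzel-type form, an asymptotic solution is uniquely determined, up to an additive constant fixed by the normalization $K_0^{+}$, by its leading factor $e^{K_0^{+} + a_0^{+}\wt{\alpha}_2}$, which forces equality of the two expansions order by order. Matching the leading-in-$\alpha_1$ parts then yields the stated identity: expanding $\exp\sum c_j^{+} e^{-a_j^{+}\alpha_1}$ and collecting like powers of $e^{-a_k^{+}\alpha_1}$, each coefficient is a universal polynomial in $c_1^{+}, \dots, c_k^{+}$, which is precisely the $r_k^{+}$ produced at the $k$-th step of Proposition \ref{proposition_boundary_potential}. Subtracting leaves
\[
\rho^{(0)} - \rho^{(2)} \sim e^{K_0^{+} + a_0^{+}\wt{\alpha}_2}\sum_{k \geq 1} e^{-a_k^{+}\alpha_2} g_k(\alpha_1),
\]
with $g_k(\alpha_1) = O(e^{-2\alpha_1})$ along with all derivatives, and the analogous argument applied to the other boundary region gives the corresponding statement for $\rho^{(1)}$.

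The main obstacle is the rigorous verification of uniqueness: one must check that the class of Stenzel-type formal series is stable under the Monge-Ampère nonlinearity, and that at each order the linearized equation admits a unique solution in this class. Both points follow from the explicit structure of the linearization in Lemma \ref{lemma_linearization_expansion} and the mapping properties of $\Delta_1$ already used in Proposition \ref{proposition_boundary_potential}; the remaining work amounts to careful bookkeeping on the exponents $a_k^{+}$ and their closure under addition, which is encoded in the algebra $\Acal_\delta$.
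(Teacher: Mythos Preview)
Your strategy is sound and lines up with the paper's intent (the paper itself only cites \cite{BD19} for this lemma), but the uniqueness step is not yet rigorously justified in the form you give it.

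First, the sentence ``both $\rho^{(0)}$ and $\rho^{(2)}$ solve the Monge--Amp\`ere equation to arbitrary order in $e^{-\alpha_2}$'' is not correct as written. Near $D_2$ the Ricci potential of $\rho^{(0)}$ is $O(e^{-2\alpha_1})$, not $O(e^{-N\alpha_2})$ for all $N$; the equation it solves \emph{exactly} is the cone's equation, which differs from the symmetric-space equation by the factor $\prod(1-e^{-2\alpha})^{-m_\alpha}=1+O(e^{-2\alpha_1})$. What is true is that \emph{at leading order in $\alpha_1$} both equations coincide, and only then does $\rho^{(0)}$ solve to all orders in $e^{-\alpha_2}$. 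You need to insert this qualifier explicitly, since the matching you are after is precisely a leading-$\alpha_1$ statement.

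Second, and more seriously, your uniqueness mechanism does not close. You invoke the kernel of $\Delta_1$, but $\Delta_1$ is the linearization of $\mathcal{P}$ at $\rho_1^{(2)}$; it controls how the \emph{iterative construction} of $\rho^{(2)}$ proceeds, not how two independently produced asymptotic solutions must relate. The Calabi ansatz $\rho^{(0)}$ was never built by that iteration, so arguing that $\rho^{(0)}-\rho^{(2)}$ lies in $\ker\Delta_1$ order by order is not justified as stated. What actually pins down the $r_k^+$ uniquely is that in $\Delta_1 f=g$ with $g\sim A\,e^{a_k^+\zeta_2\alpha_1}$, the leading coefficient $B$ of $f\sim B\,e^{2a_k^+\zeta_2\alpha_1}$ is algebraically determined by $A$ (match leading exponents in $\Delta_1 f$ against $g$); the residual freedom in $\ker\Delta_1$ consists of constants, which do \emph{not} alter the top-order coefficient $r_k^+$. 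To finish, you must show that the leading-$\alpha_1$ coefficients of $\rho^{(0)}$ satisfy the \emph{same} algebraic recursion. This follows because at leading order in $\alpha_1$ the Ricci potential reduces to that of the cone, and the transverse ODE $u''P(u')=e^{-u}$ (equivalently Theorem~\ref{theorem_asymptotic_behavior_KE_fano_potential}) forces the coefficients $K_j^+$ of $\psi$---hence, after exponentiation, the coefficients of $\rho^{(0)}$---to obey exactly the recursion produced by the iteration. Once you make this identification explicit, the matching $\exp\sum c_j^+e^{-a_j^+\alpha_2}=1+\sum r_j^+e^{-a_j^+\alpha_2}$ follows by induction on the level, and the remainder estimate for $\rho^{(0)}-\rho^{(2)}$ is immediate.
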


Next, we paste \( \rho^{(0)} \) with the boundary potentials \( \rho_{k}^{(2)} \) and \( \rho_k^{(1)} \) along the lines \( \alpha_1 - \theta^{-} \alpha_2 = 0 \) and \( \alpha_1 - \theta^{+} \alpha_2 = 0 \), where 
\[ k \in \Nbb,  \quad \theta^{+} > \theta^{-} > 0 \]
are to be chosen so that the Ricci potential of the final metric has the decay rate that we want. 

\begin{figure}
\begin{tikzpicture}
\pgfmathsetmacro\ax{2}
\pgfmathsetmacro\ay{0}
\pgfmathsetmacro\bx{2 * cos(120)}
\pgfmathsetmacro\by{2 * sin(120)}
\pgfmathsetmacro\lax{2*\ax/3 + \bx/3}
\pgfmathsetmacro\lay{2*\ay/3 + \by/3}
\pgfmathsetmacro\lbx{\ax/3 + 2*\bx/3}
\pgfmathsetmacro\lby{\ay/3 + 2*\by/3}

\tikzstyle{couleur_pl}=[circle,draw=black!50,fill=blue!20,thick, inner sep = 0pt, minimum size = 2mm]


\draw[->] (0,0) -- (\ax,\ay) node[below right] {\( {\alpha}_1 \)};
\draw[->] (0,0) -- (\bx, \by) node[above left] {\( {\alpha}_2 \)};
\draw[->] (0,0) -- (\ax + \bx/2, \ay + \by/2) node[below right] {\( \wt{\alpha}_1 \)};
\draw (0,0)--(2,0);
\draw[->, ] (0,0)--(\ax/2 + \bx, \ay/2 + \by) node[below left]{\( \wt{\alpha}_2 \)};
\draw[-, ultra thick] (0,0)--(\ax + 4*\bx/5, \ay + 4*\by/5) node[above right] {\( \alpha_1 - \theta^{+} \alpha_2 = 0 \)};
\draw[-, ultra thick] (0,0)--(4*\ax/5 + \bx, 4*\ay/5 + \by) node[above] {\(\alpha_1 - \theta^{-} \alpha_2 = 0 \)};
\end{tikzpicture}
\caption{The pasting lines \( \alpha_1 - \theta^{\pm} \alpha_2 = 0 \). We impose the condition \( \theta^{+} > \theta^{-} > 0 \) so that the lines are distinct, and that they stay in the interior of the Weyl chamber. The larger (resp. smaller) is the \( \eta\) in \( \alpha_1 - \eta \alpha_2 \), the closer the line gets to the wall \( \wt{\alpha}_1 \) (resp. \( \wt{\alpha}_2 \)) . }
\end{figure}

Let \(\gamma: \Rbb \to \Rbb \) be a smooth nondecreasing cutoff function with bounded derivatives such that \( \gamma \) is increasing on \( ]0,1[ \), \( \gamma(t) = 0 \) if \( t \leq 0 \) and \( \gamma(t) = 1 \) if \( t \geq 1 \) and define
\begin{equation} \label{equation_potential_outside_compact_subset}
\rho = (1 - \gamma(\alpha_1 - \theta^{+} \alpha_2) - \gamma(\theta^{-} \alpha_2 - \alpha_1)) \rho^{(0)} + \gamma(\alpha_1 - \theta^{+} \alpha_2) \rho_k^{(1)} + \gamma(\theta^{-} \alpha_2 - \alpha_1) \rho_k^{(2)}. 
\end{equation}
Clearly, 
\begin{equation*}
\rho = \begin{cases}
&\rho^{(0)}, \; \theta^{-} \alpha_2 \leq \alpha_1 \leq \theta^{+} \alpha_2, \\
&\rho_k^{(1)}, \; \alpha_1 - \theta^{+} \alpha_2 \geq 1, \\
&\rho_k^{(2)}, \; \alpha_1 - \theta^{-} \alpha_2 \leq -1. 
\end{cases}    
\end{equation*}
In particular, on the region \( 0 \leq \alpha_1 - \theta^{+} \alpha_2 \leq 1 \), we have
\[ \rho = (1 - \gamma(\alpha_1 - \theta^{+} \alpha_2)) \rho^{(0)} + \gamma(\alpha_1 - \theta^{+} \alpha_2) \rho_k^{(1)}. \]
Rewriting 
\[\rho - \rho_k^{(1)} = ( 1 - \gamma(\alpha_1 - \theta^{+} \alpha_2)(\rho^{(0)} - \rho_k^{(1)})\]
on this region, we obtain from the asymptotic expression of \( L \)
\[ L(\rho - \rho_k^{(1)}) = O(e^{-2 \alpha_2} + e^{(a_1^{-} - a_{k+1}^{-}) \alpha_1}). \] 
Similarly, when \( -1 \leq \alpha_1 - \theta^{-} \alpha_2 \leq 0 \), we have
\[ \rho = ( 1 - \gamma(\theta^{-} \alpha_2 -  \alpha_1)) \rho^{(0)} + \gamma( \theta^{-} \alpha_2 - \alpha_1) \rho_k^{(2)}, \] 
and also
\[ L(\rho - \rho_k^{(2)}) = O(e^{-2 \alpha_1} + e^{(a_1^{+} - a_{k+1}^{+}) \alpha_2}). \] 
Since \( \Pcal(\rho) \sim \Pcal(\rho_k^{(i)}) + L(\rho - \rho_k^{(i)}) \), the asymptotic of \( \Pcal(\rho_k^{(i)}) \) in Proposition \ref{proposition_boundary_potential} and of \( L \) together yield
\begin{equation*}
\Pcal(\rho) = \begin{cases}
&O(e^{-2\alpha_2} + e^{(a_1^{-} - a_{k+1}^{-}) \alpha_1}),  0 \leq \alpha_1 - \theta^{+} \alpha_2 \leq 1,\\ 
&O(e^{-2\alpha_1} + e^{(a_1^{+} - a_{k+1}^{+}) \alpha_2}), -1 \leq \alpha_1 - \theta^{-} \alpha_2 \leq 0.
\end{cases}
\end{equation*}

Let us justify that our choice of \( \theta^{\pm} \) in Proposition \ref{proposition_ricci_potential_control} is a priori coherent.

\begin{lem} \label{lemma_pasting_coherence}
For all symmetric spaces of rank two, we have 
\[ \tuple{\frac{2}{a_0^{-}} - \zeta_1} \tuple{ \frac{2}{a_0^{+}} - \zeta_2} > 1. \]
In particular, we can always choose \( \theta^{+} > \theta^{-} \) such that 
 \[ \frac{2}{a_0^{-}} - \zeta_1 > \theta^{+} > \theta^{-} > \frac{1}{\frac{2}{a_0^{+}} - \zeta_2 }. \]
\end{lem}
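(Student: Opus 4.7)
The plan is to reduce the stated inequality to an explicit combinatorial condition on the root data and then verify it through a case analysis guided by Table \ref{table_constants}. First I would unwind the definitions: from $\varpi = n b \beta + n d \delta$ and $b_1^{\pm} = d + \lambda_{\pm}/n$, the defining relations for $a_0^{\pm}$ rewrite as $n a_0^{\pm} \wt{\alpha}_{2,1} = \varpi + \lambda_{\pm} \delta$. Using $\wt{\alpha}_1 \perp \alpha_2$, $\wt{\alpha}_2 \perp \alpha_1$ together with $\wt{\alpha}_i = \alpha_i + \zeta_i \alpha_j$, this lets one express $a_0^{\pm}$ purely in terms of $A_1, A_2$ (the coordinates of $\varpi$ in the $\alpha_1, \alpha_2$ basis), the Killing form data, and the unique K-stable parameter $t$ of Theorem \ref{theorem_kstable_horospherical_cone}. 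Rearranging, the inequality becomes a concrete polynomial inequality in these basic invariants, which I would first show implies the positivity $2/a_0^{\pm} - \zeta_{2,1} > 0$ (needed to make $\theta^\pm$ positive and the interval nonempty), and then the product bound.

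Next, I would check the inequality for the classes in Table \ref{table_rank_two_ss} where the K-stable Reeb vector is rational. These comprise the $A_2$-type spaces $(AI, A_2, AII, EIV)$ and the $R_1 \times R_1$-type spaces, for which $t$ is determined by an explicit linear-in-$p$ integral as in the rank-one computation of Subsection \ref{subsection_rank_one_case}; here $a_0^{\pm}, \zeta_{1,2}$ are rational and a direct numerical substitution from Table \ref{table_constants} concludes. For the finite $B_2$ and $BC_2$ families (the $AIIIb$, $DIIIb$, $B_2$, $CIIb$ and $EIII$ cases), the parameter $t$ is an algebraic number given as the unique positive root of a low-degree polynomial coming from Theorem \ref{theorem_kstable_horospherical_cone}. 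In these cases I would translate the target inequality into a polynomial inequality in $t$, and certify it by evaluating the Sturm-style sign of both polynomials at explicit rational test points bracketing $t$, thereby avoiding any explicit algebraic expression for $t$.

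The main obstacle will be the three infinite families $\SL_r / S(\GL_2 \times \GL_{r-2})$, $\Sp_{2r}/\Sp_4 \times \Sp_{2r-2}$ and $\SO_r / \SO_2 \times \SO_{r-2}$ for $r \to \infty$, because $t = t(r)$ is implicitly defined and all of $a_0^{\pm}, \zeta_{1,2}$ depend on $r$ through the multiplicities. Here I would follow the strategy of the bounded-curvature lemmas \ref{lemma_bounded_geometry_finitefamilies_bc2}-\ref{lemma_bounded_geometry_infinitefamilies_bc2}: after normalizing $t$ by the appropriate power of $r$ so that $t(r)$ converges as $r \to \infty$, I would identify the limiting polynomial equation for $t_\infty$ and show directly that the inequality is strict in the limit. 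A perturbation argument, based on the continuity of the positive root of the K-stability polynomial as a function of $r$ and on uniform lower bounds on the leading coefficient of the target polynomial in $t$, would then extend the strict inequality to all sufficiently large $r$. The remaining finitely many small values of $r$ (the base cases of each family) fall into the polynomial-certification step above. Once the product inequality is established in every case, the existence of $\theta^+ > \theta^-$ lying in the prescribed open interval is immediate from the intermediate value theorem.
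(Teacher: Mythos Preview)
Your approach is correct in principle but considerably more elaborate than what the paper does. The key simplification you miss is that the paper never attacks the product inequality directly: instead it proves the stronger pair of separate inequalities $2/a_0^{-} - \zeta_1 > 1$ and $2/a_0^{+} - \zeta_2 > 1$, from which the product bound is immediate. This reduces everything to the elementary bounds $a_0^{-} < 2/(1+\zeta_1)$ and $a_0^{+} < 2/(1+\zeta_2)$, which can be read off from Table \ref{table_constants} without any Sturm-type bracketing or asymptotic perturbation argument. Concretely: in the $R_1 \times R_1$ case ($\zeta_i = 0$) one needs only $a_0^{\pm} < 2$, which is manifest; in the $A_2$ case ($\zeta_i = 1/2$) one needs $a_0^{\pm} = 8m/3n < 4/3$, again trivially true; in the $BC_2/B_2$ cases the two bounds translate to linear inequalities in $t$, one of which is vacuous and the other automatic whenever $m_3 \le 1$. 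The single residual case is the family $(4r-16,4,3)$, where one needs $t > 2/(2r-7)$; the paper simply establishes the uniform bound $t > 1$ for all $r \ge 5$ and defers its proof to Lemma \ref{lemma_bounded_geometry_infinitefamilies_bc2}, where it is handled by the same hypergeometric/convexity machinery already needed for bounded geometry. So your asymptotic-in-$r$ scheme is unnecessary, and your proposed normalization of $t$ by a power of $r$ is off: $t(r)$ stays in a bounded interval for all the infinite $BC_2/B_2$ families. Your route would eventually work, but the paper's is both shorter and yields a sharper statement.
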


\begin{proof}
Table \ref{table_constants} summarizes all the constants needed for the computation. 
In the \( R_1 \times R_1 \) case,  we have \( \zeta_1 = \zeta_2 = 0 \), hence the condition is equivalent to 
\[ a_0^{-} a_0^{+} < 4, \]
which is satisfied since \( a_0^{-} = (m_1 + 2 \wh{m}_1) / (m_1 + \wh{m}_1 + 1) < 2 \), and the same holds for \( a_0^{+} \). 

In the \( A_2 \) case, \( \zeta_1 = \zeta_2 = 1/2 \), so it is enough to verify that
\[ \frac{2}{a_0^{-}} - \frac{1}{2} > 1, \; \frac{2}{a_0^{+}} - \frac{1}{2} > 1, \] 
which translate to \( a_0^{-} < 4/3 \) and \( a_1^{+} < 4/3 \). Both conditions are equivalent to \( 8m/3n < 4/3 \), which is true for all \( m \) and \( n\) in this case. 

Let us now verify the condition in the \( BC_2 / B_2 \) cases, where \( \zeta_1 = 1/2 \) and \( \zeta_2 = 1 \). Again, it is enough to check that \( 2/a_0^{-} - 1/2 > 1 \) and \( 2/a_0^{+} - 1 > 1 \), which translate to
\( a_0^{-} < 4/3 \) and \( a_1^{+} < 1 \), that is 
\[ \frac{4m_2}{n} \frac{1+t}{2+t} + \frac{2m_1 + 4m_3}{n} < 4/3, \quad \frac{2m_2}{n} + \frac{m_1 + 2m_3}{n} \frac{2+t}{1+t} < 1, \]
or equivalently,
\[ t > \frac{-2m_1 - 2m_2 + 4m_3 - 8}{5m_1 + 2m_2 + 2 m_3 + 8}, \quad t > \frac{2m_3 - 2}{m_1 + 2}. \] 
It is easy to check that the first condition is trivial in all cases, and the second condition is satisfied for \( m_3 = 0 \) (i.e. for all the \( B_2 \) cases) and for all the families of restricted root system \( BC_2 \) with \( m_3 = 1 \).  

It remains to check the condtion 
\[ t > \frac{2m_3 - 2 }{m_1 + 2} \] 
for the infinite family \( (4r-16,4,3) \) of restricted root system \( BC_2 \), i.e. \( t > 4/ (4r - 14) = 2 / (2r-7) \). 
It is enough to verify that \( t > 1 \) for all \( r \geq 5 \) in this family. This will be proved together with the bounded geometry condition in Lemma \ref{lemma_bounded_geometry_infinitefamilies_bc2} since the proof idea is similar. 
\end{proof}

\begin{prop} \label{proposition_ricci_potential_control}
If we take 
 \[ \frac{2}{a_0^{-}} - \zeta_1 > \theta^{+} > \theta^{-} > \frac{1}{\frac{2}{a_0^{+}} - \zeta_2 }, \]
and \( k \) large enough so that 
\[ a_{k+1}^{-} - a_1^{-} > a_0^{-}(1+\zeta_1/ \theta^{+}), \quad a_{k+1}^{+} - a_1^{+} > a_0^{+}(1+\theta^{-} \zeta_2), \] 
then outside of a compact subset, we have for \( \varepsilon > 0 \) small enough and for all \( l \)
\[ \abs{\nabla^l \Pcal(\rho)} \leq C_l e^{-(1+\varepsilon) \phi}, \]
where \( \phi = b \beta + \psi(\delta) \). 
\end{prop}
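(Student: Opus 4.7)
The plan is to partition the complement of a sufficiently large compact subset of $G/H$ into five pieces, corresponding to the pasting scheme used to define $\rho$ in \eqref{equation_potential_outside_compact_subset}: the \emph{generic region} $\theta^{-}\alpha_2 \leq \alpha_1 \leq \theta^{+}\alpha_2$, two \emph{transition strips} of unit width $0 \leq \alpha_1 - \theta^{+}\alpha_2 \leq 1$ and $-1 \leq \alpha_1 - \theta^{-}\alpha_2 \leq 0$, and two \emph{boundary regions} $\alpha_1 - \theta^{+}\alpha_2 \geq 1$ and $\theta^{-}\alpha_2 - \alpha_1 \geq 1$ where $\rho$ equals $\rho_k^{(1)}$ or $\rho_k^{(2)}$. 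In each piece, we compare the already-derived decay of $\mathcal{P}(\rho)$ with the target $e^{-(1+\varepsilon)\phi}$ by inserting the asymptotic expansion of $\phi = b\beta + \psi(\delta)$ valid in that region, and check that the constraints on $\theta^{\pm}$ and $k$ leave room for a positive $\varepsilon$.

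In the generic region, $\rho = \rho^{(0)}$ and \eqref{equation_ricci_potential_control_generic_region} yields $|\nabla^l \mathcal{P}(\rho^{(0)})| = O(e^{-2\beta})$. Here the wall coordinates $\wt{\alpha}_1, \wt{\alpha}_2$ are comparable to $\beta$ modulo bounded terms, and from the identities $a_0^{\pm}\wt{\alpha}_{2,1} = b\beta + b_1^{\pm}\delta$ together with the asymptotic expansion \eqref{equation_transverse_potential_expansion} of $\psi$, one obtains $\phi \leq \max(a_0^{+}(1+\zeta_2\theta^{-})^{-1}, a_0^{-}(1+\zeta_1/\theta^{+}))\beta + O(1)$ in this region. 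The standing hypotheses on $\theta^{\pm}$ are exactly $\theta^{+} < 2/a_0^{-} - \zeta_1$ and $\theta^{-} > 1/(2/a_0^{+} - \zeta_2)$, so the coefficient is strictly less than $2$, and one can absorb a factor $(1+\varepsilon)$.

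For the transition strip $0 \leq \alpha_1 - \theta^{+}\alpha_2 \leq 1$, the computation preceding Lemma \ref{lemma_pasting_coherence} gives
\[ \mathcal{P}(\rho) = O\bigl(e^{-2\alpha_2} + e^{(a_1^{-} - a_{k+1}^{-})\alpha_1}\bigr), \]
and in this strip $\alpha_1 \sim \theta^{+}\alpha_2$, so
\[ \phi \leq a_0^{-}\wt{\alpha}_1 + O(1) = a_0^{-}(\alpha_1 + \zeta_1 \alpha_2) + O(1) \leq a_0^{-}(1 + \zeta_1/\theta^{+})\alpha_1 + O(1). \]
The first term thus satisfies $e^{-2\alpha_2} \leq C e^{-(1+\varepsilon)\phi}$ provided $(1+\varepsilon)a_0^{-}(\theta^{+} + \zeta_1) \leq 2$, which is the strict inequality $\theta^{+} < 2/a_0^{-} - \zeta_1$ with slack, while the second term requires $a_{k+1}^{-} - a_1^{-} \geq (1+\varepsilon)a_0^{-}(1 + \zeta_1/\theta^{+})$, which is the second hypothesis on $k$ with slack. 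The strip $-1 \leq \alpha_1 - \theta^{-}\alpha_2 \leq 0$ is handled symmetrically using the condition $\theta^{-} > 1/(2/a_0^{+} - \zeta_2)$ and $a_{k+1}^{+} - a_1^{+} > a_0^{+}(1 + \theta^{-}\zeta_2)$.

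In the boundary region $\alpha_1 - \theta^{+}\alpha_2 \geq 1$, one has $\rho = \rho_k^{(1)}$ and Proposition \ref{proposition_boundary_potential} gives $|\nabla^l \mathcal{P}(\rho_k^{(1)})| \leq C_l e^{-a_{k+1}^{-}\alpha_1}$; the same bound $\phi \leq a_0^{-}(1 + \zeta_1/\theta^{+})\alpha_1 + O(1)$ holds (since $\alpha_2 \leq (\alpha_1-1)/\theta^{+}$), and the strengthened hypothesis $a_{k+1}^{-} - a_1^{-} > a_0^{-}(1 + \zeta_1/\theta^{+})$ (a fortiori $a_{k+1}^{-} > (1+\varepsilon)a_0^{-}(1 + \zeta_1/\theta^{+})$) supplies the desired inequality. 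The opposite boundary region is treated identically. The main technical point is the transition-strip analysis, where one must verify that the nonlinear cross-term $Q$ and the contributions of the cutoff derivatives from the gluing are dominated by the leading asymptotic behavior of $L(\rho - \rho_k^{(i)})$; since the transition takes place in a strip of bounded width and all ansätze agree to leading order there, these contributions are absorbed, and the coherence of the constraints across the five regions is exactly what Lemma \ref{lemma_pasting_coherence} guarantees.
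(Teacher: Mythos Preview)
Your proposal is correct and follows essentially the same approach as the paper's proof: the five-region decomposition (generic, two transition strips, two boundary regions), the use of the asymptotic $\phi \sim a_0^{\pm}\wt{\alpha}_{2,1}$ near the respective walls, and the translation of the two error terms $e^{-2\alpha_{2,1}}$ and $e^{(a_1^{\mp}-a_{k+1}^{\mp})\alpha_{1,2}}$ into bounds on $e^{-(1+\varepsilon)\phi}$ via the inequalities $\theta^{+} < 2/a_0^{-} - \zeta_1$, $\theta^{-} > 1/(2/a_0^{+} - \zeta_2)$ and the conditions on $k$. The paper organizes the transition-strip analysis slightly differently (splitting each strip into the sub-regions $\alpha_1 \lessgtr \theta^{+}\alpha_2$ to treat each error term with the tighter of the two inequalities), but since the strip has bounded width this is equivalent to your use of $\alpha_1 \sim \theta^{+}\alpha_2$ there.
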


\begin{proof}
We already have such a control in the generic region (cf. Lemma \ref{lemma_asymptotic_solution_generic_region}), as well as in the two boundary regions where \( \alpha_1 \gg \alpha_2 \sim 1 \), resp. \( \alpha_2 \gg \alpha_1 \sim 1 \), cf. \eqref{equation_ricci_potential_control_generic_region}. We need to show that such a control persists in the regions \( 0 \leq \alpha_1 - \theta^{+} \alpha_2 \leq 1 \) and \( -1 \leq  \alpha_1 - \theta^{-} \alpha_2 \leq 0 \). 

We have when \( \alpha_1 \to + \infty \), \( \phi = a_0^{-} \wt{\alpha}_1 + O(1) \), hence 
    \begin{enumerate} 
    \item If \( \alpha_1 \leq \theta^{+} \alpha_2 \), then \( a_0^{-} \wt{\alpha}_1 \leq  a_0^{-} (\theta^{+} + \zeta_1) \alpha_2 \), so 
    \[ e^{-2\alpha_2} = O(e^{-(1+ \varepsilon) \phi}) \] 
    on this region if \( \theta^{+} < (2/a_0^{-} - \zeta_1) \). Note that we can always make such a choice of \( \theta^{+} \) since \( 2 > a_0^{-} \zeta_1 \) as we can check in Table \ref{table_constants}. 
    \item On the region \( \alpha_1 \geq \theta^{+} \alpha_2 \), we have \( a_0^{-} \wt{\alpha}_1 \leq a_0^{-}(1+ \zeta_1/ \theta^{+}) \alpha_1 \), so \( e^{(a_1^{-} - a_{k+1}^{-}) \alpha_1} = e^{-(1+\varepsilon) \phi} \) on this region if \( a_{k+1}^{-} - a_1^{-} > a_0^{-}(1+\zeta_1/ \theta^{+})\).
    \end{enumerate}
On the other hand, when \( \alpha_2 \to + \infty \), \( \phi = a_0^{+} \wt{\alpha}_2 + O(1) \), hence 
    \begin{enumerate}
    \item On the region \( \alpha_1 \geq \theta^{-} \alpha_2 \), we have \( a_0^{+} \wt{\alpha}_2 \leq a_0^{+}(1/ \theta^{-} + \zeta_2) \alpha_1 \), so 
    \[ e^{-2\alpha_1} = O(e^{-(1+\varepsilon) \phi}) \] 
    on this region if \( \theta^{-} > 1/ (2/a_0^{+} - \zeta_2) \). One needs to verify that this is consistent with our choice of \( \theta^{+} \) and \( \theta^{-} \), i.e. 
    \[ \frac{2}{a_0^{-}} - \zeta_1 > \theta^{+} > \theta^{-} > \frac{1}{\frac{2}{a_0^{+}} - \zeta_2 }, \]
    which turns out to be the case for all symmetric spaces of rank two (cf. Lemma \ref{lemma_pasting_coherence}).
    \item Finally, on the region \( \alpha_1 \leq \theta^{-} \alpha_2 \), we have 
    \[ a_0^{+} \wt{\alpha}_2 \leq a_0^{+}( 1 + \theta^{-} \zeta_2) \alpha_2, \]
    so if \( a_{k+1}^{+} - a_1^{+} > a_0^{+}(1+\theta^{-} \zeta_2) \), we have
    \[ e^{(a_1^{+} - a_{k+1}^{+})\alpha_2} = O(e^{-(1+\varepsilon)\phi}). \]
    \end{enumerate}
This completes our proof. 
\end{proof}

Next, we need to show the following.

\begin{lem} 
There is a potential, still denoted by \( \rho \), which is \( W \)-invariant, strictly convex and smooth outside of a compact subset, that restricts  to the potential \( \rho \) defined by \eqref{equation_potential_outside_compact_subset} on the given positive Weyl chamber. 
\end{lem}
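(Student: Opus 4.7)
The plan is to extend $\rho$ from the closure of the positive Weyl chamber (where it is defined by \eqref{equation_potential_outside_compact_subset}) to all of $\afrak$ by $W$-invariance, setting $\rho(wx) := \rho(x)$ for $x$ in the chamber and $w \in W$. Since the closed positive Weyl chamber is a fundamental domain whose wall-isotropies fix a single-valued function trivially, the extension is well-defined and $W$-invariant. The task then splits into verifying smoothness across each wall and convexity of the Hessian, both outside a compact subset.

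For smoothness I would proceed wall by wall. Consider the wall $\Rbb_{\geq 0} \wt{\alpha}_2$, which is orthogonal to $\alpha_1$ so the natural normal coordinate across it is $\alpha_1$; smoothness of the reflection extension amounts to the restriction being even in $\alpha_1$. On a neighborhood of this wall (with $\wt{\alpha}_2 \to +\infty$ and $\alpha_1$ bounded), the cutoffs in \eqref{equation_potential_outside_compact_subset} satisfy $\gamma(\alpha_1 - \theta^{+} \alpha_2) = 0$ and $\gamma(\theta^{-} \alpha_2 - \alpha_1) = 1$, so $\rho = \rho_k^{(2)}$ verbatim. By Proposition \ref{proposition_boundary_potential}, $\rho_k^{(2)}$ is a finite combination of terms $e^{-a_j^{+} \wt{\alpha}_2} R_j(\alpha_1)$ with each $R_j$ even in $\alpha_1$: $R_1 = w$ is the $W$-invariant Stenzel potential on the rank-one symmetric fiber of $D_2$, and the subsequent $R_k$ are explicitly enforced to be even by the construction of Proposition \ref{proposition_boundary_potential} (the operator $\Delta_1$ and its right-hand side are even in $\alpha_1$, so solutions can be taken even). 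Thus $\rho$ is even in $\alpha_1$ near this wall, and its $W$-extension is $C^\infty$ across it; the argument for $\Rbb_{\geq 0} \wt{\alpha}_1$ is symmetric using $\rho_k^{(1)}$.

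For convexity, the key observation is asymptotic dominance by the Hessian of $\rho^{(0)}$. On the generic region $\theta^{-} \alpha_2 \leq \alpha_1 \leq \theta^{+} \alpha_2$, the Hessian of $\rho^{(0)} = e^{b\beta + \psi(\delta)}$ is strictly positive-definite because $(\beta,\delta)$ is a basis of $\afrak^*$ and $\psi$ is strictly convex (inherited from the convexity of $u$). On the boundary regions, the direct computation \eqref{equation_determinant_hessian} shows that $d^2 \rho_k^{(i)}$ has positive determinant with the expected dominant terms. In the two transition strips $0 \leq \alpha_1 - \theta^{+} \alpha_2 \leq 1$ and $-1 \leq \alpha_1 - \theta^{-} \alpha_2 \leq 0$, the matching lemma preceding Proposition \ref{proposition_boundary_potential} gives $\rho^{(0)} - \rho_k^{(i)} = O(e^{a_0^{\pm} \wt{\alpha}_{2,1}} e^{-2\min(\alpha_1,\alpha_2)})$ together with all its derivatives, so the extra terms produced by the second derivatives of the cutoff $\gamma$ are exponentially subdominant relative to the Hessian of $\rho^{(0)}$. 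This is precisely the step I expect to be the main technical point: one must show that cutoff contributions cannot destroy positive-definiteness. Once this is done, convexity holds on each Weyl chamber outside a compact set, and combined with smoothness at the walls (where the transverse second derivative is controlled by $w''(0) > 0$ times a positive exponential factor from $\rho_k^{(i)}$), one obtains a $W$-invariant, smooth, convex extension on $\afrak$ outside a compact subset.
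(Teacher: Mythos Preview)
Your proposal is correct and follows essentially the same approach as the paper: extend by $W$-invariance, observe that near each wall the potential reduces to $\rho_k^{(i)}$ which is even in the normal coordinate (hence the extension is smooth), and verify convexity region by region using that the cutoff error terms are subdominant to $d^2\rho^{(0)}$. The paper's own proof is terser and simply defers the transition-strip convexity argument to \cite{BD19}; you have spelled out precisely the mechanism (exponential subdominance of the $\gamma$-derivative contributions) that makes that step go through.
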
 

\begin{proof}
By construction, the potentials \( \rho_k^{(1)}, \rho_k^{(2)} \) are invariant under the reflection through the Weyl walls \( \wt{\alpha}_1, \wt{\alpha}_2 \), respectively, and \( \rho^{(0)} \) is smooth, strictly convex in the interior of the given positive Weyl chamber. Thus extending \( \rho \) to the whole Cartan algebra \( \afrak \) using the Weyl action then results in a potential, still denoted by \( \rho \), which is smooth outside of a compact subset, and restricts to \( \rho \) on the given positive Weyl chamber. 

Since \( \rho^{(0)} \) is strictly convex by definition, we only need to show that outside of a compact subset in the boundary region, \( \rho \) is strictly convex. This can be shown in the same manner as the end of section 5 in \cite{BD19}, since near each boundary region, for instance \( \alpha_2 \gg \alpha_1 \sim 1 \), \( \rho \) has the form \( \chi \rho_k^{(2)} + (1 - \chi) \rho^{(0)} \). 
\end{proof}

From this one can argue as in \cite{BD19} to paste \( \rho \) with a potential in the compact subset where \( \rho \) is not well-behaved (i.e. not smooth or strictly convex), and obtain a global potential (for more detail see the end of Section 5 in \cite{BD19}).

\begin{prop} \label{prop_global_potential}
There is a globally \( W \)-invariant, smooth and strictly convex potential on \( \afrak \), still denoted by \( \rho \), such that
\begin{itemize} 
\item \( \rho = \rho^{(0)} \) in the generic region, 
\item \( \rho \) coincides with \( \rho^{(1)}_k \) and \( \rho^{(2)}_k \) in the boundary regions (for \( k \) large enough so that the Ricci potential \( \Pcal(\rho) \) has the desired decay rate). 
\end{itemize}
\end{prop}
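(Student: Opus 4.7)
The preceding lemma already furnishes a $W$-invariant function, still denoted $\rho$, that is smooth and strictly convex on $\afrak \setminus K$ for some $W$-invariant compact $K \subset \afrak$, and that restricts to \eqref{equation_potential_outside_compact_subset} on the positive Weyl chamber. The remaining task is to modify $\rho$ inside a compact neighborhood of $K$ so as to produce a globally smooth, strictly convex, $W$-invariant function that agrees with the original $\rho$ outside some slightly larger compact set. The three itemized conclusions then follow at once, since both the generic and the boundary regions lie outside $K$.

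The plan is to employ the standard regularized maximum $\max\nolimits_\epsilon$ on $\Rbb^2$, namely the smooth, symmetric, convexity-preserving function obtained by convolving $(u,v) \mapsto \max(u,v)$ with a compactly supported nonnegative $C^\infty$ kernel of size $\epsilon$; it coincides with $\max(u,v)$ as soon as $|u-v| > \epsilon$. Pick any smooth, $W$-invariant, strictly convex reference $\rho_0$ on $\afrak$, for instance $\rho_0(x) := \sprod{x,x}$ with respect to the Killing form, which is $W$-invariant because the Killing form is. Since $\rho = \rho^{(0)} = e^{b\beta + \psi(\delta)}$ in the generic region and $\rho = \rho^{(i)}_k$ in the boundary regions, with leading behavior $e^{a_0^{\pm} \wt{\alpha}_{2,1}}$ by Proposition \ref{proposition_boundary_potential}, the function $\rho$ grows at least exponentially in every direction of $\afrak$, while $\rho_0$ grows only quadratically. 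Consequently one can choose a constant $C$ large enough that $\rho - C > \rho_0 + 1$ on the complement of some $W$-invariant compact $K' \supset K$, while $\rho_0 > \rho - C + 1$ on a $W$-invariant open neighborhood $U$ of $K$.

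Now set
\[ \tilde\rho := \max\nolimits_\epsilon\bigl(\rho - C,\; \rho_0\bigr) \]
for some $\epsilon < 1/2$. By the choice of $C$, this function equals $\rho - C$ outside $K'$ and $\rho_0$ on $U$, and it is $W$-invariant since both arguments are. The transition zone lies inside $K' \setminus K$, where both $\rho - C$ and $\rho_0$ are smooth and strictly convex, so $\tilde\rho$ is smooth and strictly convex on the whole $\afrak$. Adding back the constant $C$ produces the desired global potential.

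The only genuine verification to carry out is the growth comparison $\rho \gg \sprod{x,x}$ in every asymptotic direction of $\afrak$, so that the regularized maximum really selects the intended branch away from the transition annulus $K' \setminus K$. In the generic region this is immediate from the exponential form of $\rho^{(0)}$; in the boundary regions one must invoke the top-order behavior $e^{a_0^{\pm} \wt{\alpha}_{2,1}}$ of $\rho^{(i)}_k$ established in Proposition \ref{proposition_boundary_potential}. This is bookkeeping rather than a conceptual obstacle, and in fact the argument mirrors the analogous step in \cite{BD19} referenced in the remark preceding the proposition.
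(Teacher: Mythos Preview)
Your proposal is correct and follows essentially the same route as the paper, which simply defers to \cite{BD19} for this step. The regularized-maximum construction you outline is the standard way to fill in the compact region, and your growth comparison (exponential versus quadratic) is exactly what makes the gluing work. One small point worth making explicit: even though $\rho$ may fail to be smooth on $K$, the pointwise identity $\tilde\rho = \rho_0$ on $U$ ensures smoothness and strict convexity there regardless, so no preliminary smoothing of $\rho$ on $K$ is needed.
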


\section{Asymptotic geometry of the asymptotic solution} \label{section_asymptotic_geometry}

In the rest of the paper, we denote by \( g = i\del \delb\rho \) the final metric on \( G/H \) coming from the global \( K\)-invariant potential \( \rho \) in Proposition \ref{prop_global_potential}, constructed by gluing the Tian-Yau potential \( \rho^{(0)} \) with two models near \( D_{1,2} \). The metrics \( \ol{g}_0, g_0 \) denote respectively the Calabi-Yau cone metrics on \( C_0 \) and its restriction to the smooth locus \( G/H_0 \times \Cbb^{*} \) of \( C_0 \), which is also the smooth locus of \( N_{D_0/ \wt{X}} \). We often implicitly identify \( g_0 \) with a smooth metric in the generic region of \( G/H \), still denoted by \( g_0 \). 

Recall that the tangent space of \( G/H \) with an involution $\theta$ can be decomposed as
\begin{equation*}
\begin{aligned} 
\gfrak / \hfrak &= \Cbb \afrak \oplus \bigoplus_{\wh{\alpha} \in \wh{R}^{+} \backslash \wh{R}^{\theta}} \Cbb \mu_{\wh{\alpha}}, \\
&= \Cbb l_1 \oplus \Cbb l_2 \oplus \bigoplus_{\wh{\alpha} \in \wh{R}^{+} \backslash \wh{R}^{\theta}} \Cbb \mu_{\wh{\alpha}}, 
\end{aligned} 
\end{equation*}
where \( \wh{R}^{+} \) (resp. \( \wh{R}^{\theta} \)) is the set of positive roots (resp. positive roots fixed by \( \theta \)), \( \mu_{\wh{\alpha}} = e_{\wh{\alpha}}  + \theta(e_{\wh{\alpha}}) \) with \( (e_{\wh{\alpha}}, e_{-\wh{\alpha}}, h_{\wh{\alpha}})\) being a \( \mathfrak{sl}_2 \)-triple, and \( \afrak \) is the Cartan algebra of \( G/H \) with basis \( (l_1,l_2) \) dual to the basis \( (\alpha_1, \alpha_2) \). We can then locally parametrize \( G/H \) by complex coordinates \( (z_1, z_2, (z_{\wh{\alpha}})) \) with respect to theses bases under the exponential map. Let \( \omega_{a \ol{b}} = \frac{i}{2} dz_a \wedge d \ol{z} _b \), \( \omega_{\wh{\alpha} \ol{\wh{\alpha}}} = \frac{i}{2} dz_{\wh{\alpha}} \wedge d \ol{z}_{\wh{\alpha}} \)  and let \( \wh{\alpha}_r = \wh{\alpha} - \theta(\wh{\alpha}) \) be the restricted root obtained from \( \wh{\alpha} \). 

\begin{prop}[\!\!{\cite{Del20b}}] \label{proposition_metric_local_expression}
The metric \( g\) has the following expression
\[ g = \sum_{a,b \in \set{1,2}} d^2 \rho(l_a,l_b) \omega_{a \ol{b}} + 2 \sum_{\wh{\alpha} \in \wh{R}^{+} \backslash \wh{R}^{\theta}} \coth(\wh{\alpha}) \frac{\sprod{d\rho, \wh{\alpha}_r}}{\abs{\wh{\alpha}}^2} \omega_{\wh{\alpha} \ol{\wh{\alpha}}}. \] 
\end{prop}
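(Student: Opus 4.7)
Since the formula is stated as being drawn from \cite{Del20b}, the plan is to reconstruct the computation intrinsically by exploiting the full $K$-invariance of $\rho$ together with the polar (or KAK-like) slice decomposition of $G/H$. The strategy decomposes into three essentially independent steps: (i) reducing to a single point on the slice $\exp(\afrak)H$; (ii) establishing that the matrix of $g=dd^c\rho$ is block diagonal in the basis $(l_1,l_2,(\mu_{\wh\alpha}))$; (iii) computing the diagonal entries.

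For (i), observe that $K$ acts by isometries of $g$ and that every point of $G/H$ lies on a $K$-orbit meeting the slice $\exp(\afrak)H$. Hence it suffices to compute the components of $g$ at a generic point $p_x=\exp(x)H$ with $x\in\afrak$ regular. Under the complex exponential, the tangent space at $p_x$ is identified with $\gfrak/\hfrak=\Cbb l_1\oplus\Cbb l_2\oplus\bigoplus_{\wh\alpha}\Cbb\mu_{\wh\alpha}$, and the coordinates $(z_1,z_2,(z_{\wh\alpha}))$ serve as local holomorphic coordinates centered at $p_x$. The $K$-invariant potential $\Phi$ satisfies $\Phi(\exp(x)H)=\rho(x)$, which fixes all values of $\Phi$ along the slice.

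For (ii), one argues that the restricted Weyl group and the stabilizer $K_x\subset K$ of $p_x$ act on the tangent space $\gfrak/\hfrak$ with mutually inequivalent isotypic components; this forces the Hessian matrix of the $K$-invariant function $\Phi$ at $p_x$ to be diagonal in the chosen basis. Concretely, distinct root spaces $\Cbb\mu_{\wh\alpha}$ and $\Cbb\mu_{\wh\beta}$ are $K_x$-orthogonal because they sit in distinct weight spaces under the torus in $K_x$, and each is orthogonal to $\Cbb\afrak$ by the same weight argument. This removes all cross terms.

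For (iii), the Cartan block is immediate: writing $\Phi\big|_{\exp(\afrak)H}=\rho$ yields $g(l_a,l_b)=d^2\rho(l_a,l_b)$, which is the first sum in the formula. The root block is the genuine calculation, and is the step I expect to be the main obstacle. The idea is to differentiate $\Phi$ twice along the one-parameter family $\exp(s\mu_{\wh\alpha})\cdot\exp(x)H$, then use the symmetric-space identity
\[
\exp(s\mu_{\wh\alpha})\cdot\exp(x)H = \exp\bigl(x+\sinh(\wh\alpha(x))\cdot(\text{root-space part}) + \cdots\bigr)\cdot H
\]
obtained from the BCH-type expansion on $G/H$ (or equivalently from the Jacobian of the exponential map, whose eigenvalues on root vectors are $\sinh(\wh\alpha(x))/\wh\alpha(x)$). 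Applying $dd^c$ and keeping track of the complex structure $J$ sending $\mu_{\wh\alpha}$ to the Cartan direction $H_{\wh\alpha}/|\wh\alpha|^2$ gives, after using $K$-invariance to discard terms that integrate to zero along $K$-orbits and simplifying the $\sinh$--$\cosh$ combination, the coefficient $2\coth(\wh\alpha)\,\sprod{d\rho,\wh\alpha}/|\wh\alpha|^2$ in front of $\omega_{\wh\alpha\ol{\wh\alpha}}$. The bookkeeping of the $\sinh$ versus $\cosh$ factors (one from the Jacobian of $\exp$, one from differentiating in the $J$-direction) is the delicate point, and is where the $\coth$ emerges cleanly only because $\rho$ is $W$-invariant and even in the root direction.
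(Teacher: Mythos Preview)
The paper gives no proof of this proposition: it is simply quoted from \cite{Del20b}, so there is nothing in the paper itself to compare your approach against. Your three-step outline (reduce to the slice by $K$-invariance, force block-diagonality via the stabilizer $K_x$, then compute the two blocks separately) is indeed the standard route and matches what one finds in Delcroix's work and its predecessors.

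That said, your step (iii) for the root block contains two imprecisions that would derail the computation if taken literally. First, the claim that ``$J$ sends $\mu_{\wh\alpha}$ to the Cartan direction $H_{\wh\alpha}/|\wh\alpha|^2$'' is not correct: $G/H$ is a complex manifold with $G$ complex, so on the holomorphic tangent space $\gfrak/\hfrak$ the complex structure acts as multiplication by $i$, not as a map into $\afrak$. What actually happens is that the \emph{Lie bracket} $[\mu_{\wh\alpha},J\mu_{\wh\alpha}]$ (equivalently, the commutator of the real and imaginary parts of the root vector) lands in the Cartan direction, and this is how the first derivative $\sprod{d\rho,\wh\alpha}$ enters the second-order term $dd^c\Phi$. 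Second, the $\coth(\wh\alpha)$ factor has nothing to do with $W$-invariance or evenness of $\rho$; it comes purely from the adjoint action $\mathrm{Ad}(\exp x)$ on the root spaces, which produces $\cosh(\wh\alpha(x))$ and $\sinh(\wh\alpha(x))$ when one transports the basis vectors at $p_x$ back to $eH$. The $\coth$ is thus a feature of the symmetric-space geometry, independent of the potential. With these two corrections the computation goes through as in \cite{Del20b}.
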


From the local expression of the metric, we have

\begin{lem} 
In the generic region, the metric \( g\) can be identified with the cone metric from the Calabi ansatz \( \rho^{(0)} = e^{ \phi} = e^{b \beta + \psi(\delta)} \), which is 
\[ g_{0} = \rho^{(0)} \tuple{ \abs{d \phi}^2 + \psi''(\delta) \abs{\delta}^2 + \sum_{\wh{\alpha} \in \wh{R}^{+} \backslash \wh{R}^{\theta}} \frac{2}{\abs{\wh{\alpha}}^2} \coth(\wh{\alpha}) \sprod{d \phi, \wh{\alpha}_r} \omega_{\wh{\alpha} \ol{\wh{\alpha}}} }. \]
\end{lem}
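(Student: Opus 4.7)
The plan is essentially to apply the preceding proposition (Delcroix's local formula for a $K$-invariant metric on $G/H$) to the potential $\rho^{(0)} = e^{\phi}$, using the fact that $\rho$ coincides with $\rho^{(0)}$ in the generic region.

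First, I would invoke Proposition \ref{prop_global_potential} to observe that on the generic region $\theta^{-}\alpha_{2} \leq \alpha_{1} \leq \theta^{+}\alpha_{2}$ we have $\rho = \rho^{(0)}$, and hence $g = dd^{c}\rho^{(0)}$ there. The claim then reduces to evaluating Delcroix's formula for $\rho^{(0)} = e^{\phi}$, with $\phi = b\beta + \psi(\delta)$.

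Next, I would carry out the elementary differential calculation:
\[ d\rho^{(0)} = \rho^{(0)}\, d\phi, \qquad d^{2}\rho^{(0)} = \rho^{(0)}\bigl(d\phi \otimes d\phi + d^{2}\phi\bigr). \]
Since $b\beta$ is linear on $\afrak$, its Hessian vanishes, so $d^{2}\phi = \psi''(\delta)\, \delta \otimes \delta$. Plugging into the Cartan-part of Delcroix's formula, the contribution from $(l_{a},l_{b})$ becomes
\[ \sum_{a,b}\rho^{(0)}\bigl(d\phi(l_{a})d\phi(l_{b}) + \psi''(\delta)\delta(l_{a})\delta(l_{b})\bigr)\omega_{a\ol{b}} = \rho^{(0)}\bigl(\abs{d\phi}^{2} + \psi''(\delta)\abs{\delta}^{2}\bigr), \]
where $\abs{\cdot}^{2}$ denotes the Hermitian form on the Cartan directions obtained from $\omega_{a\ol{b}}$.

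For the root-part, I would simply substitute $\sprod{d\rho^{(0)},\wh{\alpha}} = \rho^{(0)}\sprod{d\phi,\wh{\alpha}}$ and note that because $\phi$ descends from $\afrak$, we have $\sprod{d\phi,\wh{\alpha}} = \sprod{d\phi,\wh{\alpha}_{r}}$ (the restricted part), giving
\[ 2\sum_{\wh{\alpha} \in \wh{R}_{s}^{+}} \coth(\wh{\alpha})\,\frac{\rho^{(0)}\sprod{d\phi,\wh{\alpha}_{r}}}{\abs{\wh{\alpha}}^{2}}\,\abs{dz_{\wh{\alpha}}}^{2}. \]
Summing the two contributions reproduces $g_{0}$ verbatim. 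Finally, to identify $g_{0}$ as the Ricci-flat cone metric on the regular locus of $C_{0}$, I would recall from Remark \ref{remark_calabiansatz} that $\rho^{(0)} = e^{\phi}$ is, up to an additive constant, the restriction to $\afrak$ of the squared radius function of the conical Calabi-Yau metric on $C_{0}$ (since the change of variable $v = e^{u+\varpi}$ reduces the Monge-Ampère equation of Theorem \ref{theorem_calabi_yau_equation_symmetric_space} on the cone to the one solved by $\psi$). There is no real obstacle here: the statement is essentially a bookkeeping identity, and the only point worth stressing is that the blue terms $\abs{d\phi}^{2}$ and $\abs{\delta}^{2}$ are the Hermitian forms induced by $\omega_{a\ol{b}}$, so that the formula is an honest tensorial identity rather than a formal one.
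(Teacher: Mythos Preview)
Your proposal is correct and follows essentially the same approach as the paper: both apply Delcroix's local formula to $\rho^{(0)} = e^{\phi}$ and read off the Cartan and root components. The only point the paper adds that you omit is the observation that $\sprod{d\phi,\wh{\alpha}_r} > 0$ (since $d\rho^{(0)}$ lies in the interior of the cone generated by the fundamental weights), which guarantees that $g_0$ is indeed a well-defined metric and not merely a formal expression.
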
 

\begin{proof} 
This is straightforward from the expression of the metric in local coordinates. Note that since \( d \rho^{(0)} \) belongs to the interior of the cone generated by the fundamental weights, we have \( \sprod{d \rho^{(0)}, \wh{\alpha}_r} = e^{\phi} \sprod{d\phi, \wh{\alpha}_r} > 0 \) for all simple restricted roots, hence \( g_0 \) is a well-defined metric. 
\end{proof}

Let \( g_2 \) be the metric associated to the potential
\[ \rho_{1}^{(2)} = e^{K_0^{+} + a_0^{+} \wt{\alpha}_2 }(1 + e^{-a_1^{+} \wt{\alpha}_2} w(\alpha_1)), \]
and \( g_1 \) the metric associated to \( \rho_1^{(1)} \).
Then same computations as in \cite{BD19} show that

\begin{lem} \label{lemma_highest_order_boundary_term}
As \( \alpha_2 \to + \infty \), the highest order term of \( g_2 \) takes the form
\begin{equation*} 
\begin{aligned}
g_{2}^{\infty} := \rho_{1}^{(2)} ( & (a_0^{+})^2 \abs{\wt{\alpha}_2}^2 + 2 a_0^{+} \sum_{\alpha_1 \nmid \wh{\alpha}_r} \coth(\wh{\alpha}) \frac{\sprod{\alpha_1, \wh{\alpha}_r}}{\abs{\wh{\alpha}}^2} \omega_{\wh{\alpha} \ol{\wh{\alpha}}}  \\
&+ e^{-a_1^{+} \wt{\alpha}_2} (w''(\alpha_1) \abs{\alpha_1}^2 + 2 w'(\alpha_1) \sum_{\alpha_1 \mid \wh{\alpha}_r} \coth(\wh{\alpha}) \frac{\sprod{\alpha_1, \wh{\alpha}_r}}{\abs{\wh{\alpha}}^2} \omega_{\wh{\alpha} \ol{\wh{\alpha}}} ) ),
\end{aligned}
\end{equation*}
and moreover, when \( \alpha_2 \to +\infty \)
\[ \abs{g_2 - g_2^{\infty}}_{g_2^{\infty}} = O(e^{-a_1^{+} \alpha_2}). \] 
A similar behavior holds for the asymptotic \( g_1^{\infty} \) of \( g_1 \). 

\end{lem}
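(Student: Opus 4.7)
The plan is to substitute the explicit expression for \( \rho_1^{(2)} \) into the coordinate formula of the preceding proposition and then to extract the leading term as \( \alpha_2 \to +\infty \). The Hessian \( d^2 \rho_1^{(2)} \) has already been computed in the proof of Lemma \ref{lemma_linearization_expansion}, and its restriction to \( (l_a,l_b) \) decomposes into three blocks: a pure base piece \( e^{K_0^{+} + a_0^{+} \wt{\alpha}_2}(a_0^{+})^2\, \wt{\alpha}_2 \otimes \wt{\alpha}_2 \) at leading order, a pure fibre piece \( e^{K_0^{+} + a_0^{+} \wt{\alpha}_2}\, e^{-a_1^{+} \wt{\alpha}_2}\, w''(\alpha_1)\, \alpha_1 \otimes \alpha_1 \), and a cross term in \( \wt{\alpha}_2 \otimes \alpha_1 \) also weighted by \( e^{-a_1^{+} \wt{\alpha}_2} \). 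Since \( \rho_1^{(2)} \sim e^{K_0^{+}+a_0^{+}\wt{\alpha}_2}(1+o(1)) \), the first two blocks match the Cartan contributions of the claimed \( g_2^{\infty} \), while the cross term is subleading.

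Next I would treat the root contributions by splitting \( \wh{R}_s^{+} \) according to whether \( \alpha_1 \mid \wh{\alpha}_r \) or not, using exactly the dichotomy that appeared in \eqref{equation_scalarproduct}. For non-fibre roots, \( \sprod{\wt{\alpha}_2, \wh{\alpha}_r} \) is generically nonzero, so the dominant term in \( \sprod{d\rho_1^{(2)}, \wh{\alpha}} \) is \( e^{K_0^{+} + a_0^{+}\wt{\alpha}_2} a_0^{+} \sprod{\wt{\alpha}_2, \wh{\alpha}_r} \), which after multiplication by \( 2 \coth(\wh{\alpha})/\abs{\wh{\alpha}}^2 \) yields the middle summand of \( g_2^{\infty} \). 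For fibre roots, the orthogonality \( \sprod{\wt{\alpha}_2, \alpha_1} = 0 \) (built into the definition of \( \zeta_2 \)) kills the leading \( a_0^{+} \)-term, and the next term \( e^{-a_1^{+} \wt{\alpha}_2} w'(\alpha_1) \sprod{\alpha_1, \wh{\alpha}_r} \) becomes dominant, giving exactly the last block of \( g_2^{\infty} \).

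For the quantitative bound \( \abs{g_2 - g_2^{\infty}}_{g_2^{\infty}} = O(e^{-a_1^{+} \alpha_2}) \), I would estimate each matrix entry of \( g_2 - g_2^{\infty} \) against the corresponding diagonal entry of \( g_2^{\infty} \), noting that \( g_2^{\infty} \) is uniformly nondegenerate on the boundary region of interest because \( w, w', w'' \) are bounded below on compact fibre sets. Each of the remainder contributions in \( d^2 \rho_1^{(2)} \) (coming from the \( (a_0^{+}-a_1^{+})^2 w(\alpha_1) \) and cross terms in \eqref{equation_determinant_hessian}) and in \( \sprod{d\rho_1^{(2)}, \wh{\alpha}} \) carries an extra factor \( e^{-a_1^{+}\wt{\alpha}_2} \) relative to the leading block in its row, so dividing by the diagonal of \( g_2^{\infty} \) returns precisely \( O(e^{-a_1^{+}\wt{\alpha}_2}) = O(e^{-a_1^{+}\alpha_2}) \) (using \( \wt{\alpha}_2 = \alpha_2 + \zeta_2 \alpha_1 \) and boundedness of \( \alpha_1 \) in this region).

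Finally, the claim for \( g_1^{\infty} \) is obtained by the symmetric argument, permuting \( 1 \leftrightarrow 2 \), \( \alpha_1 \leftrightarrow \alpha_2 \), \( \wt{\alpha}_2 \leftrightarrow \wt{\alpha}_1 \) and the constants \( (a_0^{+}, a_1^{+}, \zeta_2) \leftrightarrow (a_0^{-}, a_1^{-}, \zeta_1) \), since \( \rho_1^{(1)} \) is constructed symmetrically. The only real bookkeeping hurdle is the careful separation of roots by \( \alpha_1 \)-divisibility and the orthogonality identity \( \sprod{\wt{\alpha}_2, \alpha_1} = 0 \), which is what forces the fibre contribution to appear at the seemingly ``subleading'' order \( e^{-a_1^{+}\wt{\alpha}_2} \) while still belonging to the highest-order piece of \( g_2 \).
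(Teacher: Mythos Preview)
Your approach is correct and is precisely the direct computation that the paper defers to \cite{BD19}: plug the explicit formulas for \( d\rho_1^{(2)} \) and \( d^2\rho_1^{(2)} \) (already obtained in the proof of Lemma~\ref{lemma_linearization_expansion}) into the local expression for the metric, split the root sum by the dichotomy \( \alpha_1 \mid \wh{\alpha}_r \) versus \( \alpha_1 \nmid \wh{\alpha}_r \) via \eqref{equation_scalarproduct}, and read off the leading blocks. One small point: your computation for the non-fibre roots correctly produces \( a_0^{+}\sprod{\wt{\alpha}_2,\wh{\alpha}_r} \), whereas the displayed formula for \( g_2^{\infty} \) prints \( \sprod{\alpha_1,\wh{\alpha}_r} \); this is evidently a typographical slip in the statement (compare also the quotient computed in Lemma~\ref{lemma_metric_boundary_asymptotic}), and your derivation is the right one. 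For the remainder estimate, note that you do not actually need \( \alpha_1 \) bounded: the extra factors such as \( e^{-a_1^{+}\wt{\alpha}_2}w(\alpha_1) \) satisfy \( e^{-a_1^{+}\wt{\alpha}_2}w(\alpha_1)=O(e^{-a_1^{+}\alpha_2}) \) by the Stenzel asymptotic \( w(\alpha_1)=O(e^{a_1^{+}\zeta_2\alpha_1}) \) together with \( \wt{\alpha}_2=\alpha_2+\zeta_2\alpha_1 \), so the bound \( O(e^{-a_1^{+}\alpha_2}) \) holds uniformly.
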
 

Viewed as metric on the smooth locus \( \Cbb^{*} \times G/H_2\) of \( N_{D_2/ \wt{X}} \), the component \( (\alpha_1, \wh{\alpha}_{\alpha_1 \mid \wh{\alpha}_r}) \) of \( g_2^{\infty} \) corresponds to the Stenzel metric on the rank one symmetric fiber \( X_2 \) of the open orbit \( G/H_2 \subset D_2 \), while the component \( (\wh{\alpha}_{\alpha_1 \nmid \wh{\alpha}_r}) \) corresponds to a metric on \( G/P_2 \) and the component \( (\wt{\alpha}_2) \) a metric on \( \Cbb^{*} \). 

\begin{lem} \label{lemma_metric_boundary_asymptotic}
As  \( \alpha_1, \alpha_2 \to +\infty \), the metrics \( g_1^{\infty} \) and \( g_2^{\infty} \) satisfy 
\begin{equation}
\abs{g_0 - g_{1}^{\infty}}_{g_1^{\infty}}   = O(e^{-a_1^{-} \alpha_1}), \; \abs{g_0 - g_2^{\infty}}_{g_2^{\infty}} = O(e^{-a_1^{+} \alpha_2}). 
\end{equation}
In other words, as a metric on \( G/H_0 \times \Cbb^{*} = N_{D_0/\wt{X}}^{\text{reg}} \), \( g_0 \) is equivalent to \( g_1^{\infty} \) and \( g_2^{\infty} \) near \( D_0 \cap D_1 \) and \( D_0 \cap D_2 \) (that is, inside any open subset of  \( G/H_0 \times \Cbb^{*} \) which gets closer to \( N_{D_0/\wt{X}}^{\text{sing}} = \ol{D_0 \cap D_{1,2} \times \Cbb^{*}}\) as \( \alpha_{1,2} \) get larger).
\end{lem}

\begin{proof}
It is enough to compare \( g_0 \) and \( g_2^{\infty} \) components by components as \( \alpha_2 \to +\infty \). In the \( \abs{dz_{\wh{\alpha}}}^2 \) component, if \( \alpha_1 \nmid \wh{\alpha}_r \), the quotient \( g_0 / g_2^{\infty} \) reads 
\[ \frac{\rho^{(0)} \sprod{d\phi, \wh{\alpha}_r}}{ a_0^{+} \rho_1^{(2)}\sprod{\wt{\alpha}_1, \wh{\alpha}_r}}. \]
Since \( d\phi = b \beta + \psi'(\delta) \delta \) and \( \psi'(\delta) \) is bounded, the contribution of the term \( \frac{ \sprod{d\phi, \wh{\alpha}_r}}{ a_0^{+} \sprod{\wt{\alpha}_1, \wh{\alpha}_r}} \) is just \( O(1) \). From the ansatz \eqref{equation_stenzel_potential_near_D2} and asymptotic behavior of \( \rho^{(0)} \) near \( D_2 \), we have 
\[ \frac{\rho^{(0)}}{\rho_1^{(2)}} -  1 = O(e^{-a_1^{+} \alpha_2}). \] 
On the other hand, if \( \alpha_1 \mid \wh{\alpha}_r \), the corresponding component of the quotient \( g_0 / g_2^{\infty} \) is
\[  \frac{e^{\phi} \sprod{d\phi, \wh{\alpha}_r}}{ \rho_1^{(2)} w'(\alpha_1) \sprod{\alpha_1, \wh{\alpha}_r}}. \]
Again by the same reasoning, the term \( \frac{\sprod{d\phi, \wh{\alpha}_r}}{ \sprod{\alpha_1, \wh{\alpha}_r}} \) is just \( O(1) \), and since \( w'(\alpha_1) = O(e^{a_1^{+} \zeta_2 \alpha_1}) \), the term \( \rho^{(0)} / (\rho_1^{(2)} w'(\alpha_1)) \) satisfies
\[ \frac{\rho^{(0)}}{\rho_1^{(2)} w'(\alpha_1)} - 1 = O(e^{-a_1^{+} \alpha_2}). \] 
It remains to compare the ``Cartan component'' \( \abs{d \phi}^2 + \psi''(\delta) \abs{\delta}^2 \) of \( g_0 \) with the Cartan component \( (a_0^{+})^2 \abs{\wt{\alpha}}^2 + e^{-a_1^{+} \wt{\alpha}_2} w''(\alpha_1) \abs{\alpha_1}^2 \) of \( g_2^{\infty} \). Remark that 
\[ \phi \sim a_0^{+} \wt{\alpha}_2 + K_0^{+} + K_1^{+} e^{-a_1^{+} \alpha_2} + O(e^{-2 a_1^{+} \alpha_2}), \] 
and moreover in the region \( \alpha_2 \gg \alpha_1 \sim 1\), 
\[ \psi''(\delta) \sim \psi''(\alpha_2) \sim (a_1^{+})^2 e^{-a_1^{+} \alpha_2} \sim e^{-a_1^{+} \wt{\alpha}_2} w''(\alpha_1). \]
One argues similarly on the region \( \alpha_1 \gg \alpha_2 \sim 1 \). 
\end{proof}

\begin{prop} \label{prop_GH_convergence}
The tangent cone at infinity of \( (G/H, g) \) is the horospherical cone \( (C_0, \ol{g}_0) \). 
\end{prop}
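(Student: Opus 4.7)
The plan is to establish, for any sequence $\lambda_i \to +\infty$, pointed Gromov-Hausdorff convergence of $(G/H, \lambda_i^{-2} g, p)$ to $(C_0, g_0, o)$, where $o$ is the apex of the cone. By Remark \ref{remark_cartanalgebra_coincidence}, the Cartan algebras of $G/H$ and $C_0$ coincide as duals of the common weight lattice, so the $K$-invariant metrics $g$ and $g_0$ can be compared through their restrictions to a common Weyl chamber in $\afrak$, reducing convergence questions to equivalences of functions on $\afrak$ together with information about the transverse orbit geometry.

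First, on the generic region $\theta^{-}\alpha_2 \leq \alpha_1 \leq \theta^{+}\alpha_2$, by Proposition \ref{prop_global_potential} we have $\rho = \rho^{(0)}$, so $g$ coincides with the Calabi ansatz metric $g_0$, which is literally the conical Calabi-Yau metric restricted to the open $G$-orbit $G/H_0 \times \Cbb^{*}$ of $C_0$. Under the rescaling $\lambda_i^{-2}$, any compact set in the regular locus $C_0^{\rm reg}$ corresponds to a region in $G/H$ where $(\alpha_1,\alpha_2)$ grow like $\log \lambda_i$ within the interior of the Weyl chamber, and on this region the rescaled metric equals the rescaled cone metric (which is isometric to $g_0$ itself by the dilation action of $-J_0 \xi$). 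This gives smooth pointed Cheeger-Gromov convergence on $C_0^{\rm reg}$.

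Second, to handle the boundary regions near $D_1, D_2$, we apply Lemma \ref{lemma_metric_boundary_asymptotic}: the metric $g$ agrees with $g_2^\infty$ (resp. $g_1^\infty$) at leading order, with errors of order $O(e^{-a_1^{+}\alpha_2})$ (resp. $O(e^{-a_1^{-}\alpha_1})$), and these warped-product models collapse the Stenzel fiber metrics $g_{X_i}$ and the flag-manifold components $G/P_i$ to points under the $\lambda_i^{-2}$-rescaling. What remains is the $\Sbb^1$-invariant potential $dd^c e^{a_0^{\pm}\wt{\alpha}_{2,1}} = dd^c |z|^{2a_0^{\pm}}$ on $\Cbb$, which is precisely the model for the singularities of $C_0$ along the strata $D_0 \cap D_2$ and $D_0 \cap D_1$ given at the end of Lemma \ref{lemma_metric_boundary_asymptotic}. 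The matching of leading-order coefficients $K_0^{\pm}, a_0^{\pm}$ between $\rho^{(0)}$ and the boundary potentials $\rho_k^{(1)}, \rho_k^{(2)}$ (by construction in Ansatz \ref{ansatz_boundary} and Proposition \ref{prop_stenzel_asymptotic}) ensures that the regular-part convergence and the singular-stratum convergence are compatible across the transition regions $0 \leq \alpha_1 - \theta^{\pm}\alpha_2 \leq 1$, which is guaranteed coherent by Lemma \ref{lemma_pasting_coherence}. Piecing these together produces explicit $\varepsilon_i$-isometries from balls $B_R(p) \subset (G/H,\lambda_i^{-2}g)$ onto balls $B_R(o) \subset (C_0,g_0)$ with $\varepsilon_i \to 0$.

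The main obstacle is controlling the collapsing in the boundary regions at precisely the right scale so that the singular strata of $C_0$ are recovered rather than a different limit: the Stenzel fiber diameters must tend to zero at the same rate as the $G/P_i$ fiber diameters, and both must scale compatibly with the $\Sbb^1$-invariant model on $\Cbb$. The exponential decay rates $a_1^{\pm}$ encode exactly this scaling via the combinatorial data, and the explicit asymptotic form \eqref{equation_asymptotic_behavior} together with the matching in Lemma \ref{lemma_metric_boundary_asymptotic} makes this quantitative. One can then invoke the Cheeger-Colding structure theory together with Donaldson-Sun's uniqueness to upgrade GH-convergence to identification with $(C_0,g_0)$, using that $g$ is $\del\delb$-exact and that $C_0$ is the horospherical cone of Theorem \ref{theorem_kstable_horospherical_cone}.
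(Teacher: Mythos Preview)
The regional analysis you outline is broadly right, but you complete the proof differently from the paper and leave the decisive step as an assertion.

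The paper's argument is a single pointwise estimate. Using the reduction to $\afrak$ you cite, it shows directly that
\[ |g - g_0|_g = O(\rho^{-\mu}) \quad \text{for some } \mu > 0 \]
outside a compact set. On the generic region this holds because $\rho = \rho^{(0)}$; near the boundary regions, since $\rho_k^{(1,2)}$ agree with $\rho_1^{(1,2)}$ to leading order (with all derivatives), $g$ is equivalent to $g_{1,2}^\infty$ there, and Lemma \ref{lemma_metric_boundary_asymptotic} then gives $|g_0 - g_{1,2}^\infty|_{g_{1,2}^\infty} = O(e^{-a_1^\pm \alpha_{2,1}})$. Once this decay holds, the tangent cone is immediate: $g_0$ is scale-invariant, so under $\lambda_i^{-2}$ the difference vanishes on any fixed annulus. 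No $\varepsilon_i$-isometries need to be built by hand.

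You instead try to assemble Gromov--Hausdorff approximations region by region. The sentence ``piecing these together produces explicit $\varepsilon_i$-isometries'' is an assertion, not a construction: you have neither written down the maps nor bounded the distortion across the transition zones. The clean way to package all of this is exactly the global estimate above, which you never isolate.

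Your final appeal to Donaldson--Sun is actually unavailable: their uniqueness result applies to Ricci-flat K\"ahler metrics, but at this stage $g$ is only the \emph{approximate} solution of Proposition \ref{prop_global_potential} with nonzero Ricci potential. Likewise, Cheeger--Colding only guarantees that any tangent cone is a metric cone, not which one; it cannot identify the limit with $(C_0, g_0)$. The identification must come from the metric comparison itself.
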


\begin{proof} 
Since \( g, g_0 \) are \(  K\)-invariant \( \del \delb \)-exact, and that the Cartan algebra \( \afrak \) coincides with \( \tfrak_{nc} \) (cf. Remark \ref{remark_cartanalgebra_coincidence}), it is enough to view the potentials of \( g, g_0 \) as functions on \( \afrak \) and show that on a large enough open subset of \( \afrak \), their components are polynomially equivalent as functions on \( \afrak\), i.e.  
\begin{equation} \label{equation_asymptotically_conical}
\abs{g - g_0}_{g} = O( \rho ^{-\mu}), \; \mu > 0,
\end{equation} 
where the global potential \( \rho \) of \( g\) has the form \eqref{equation_potential_outside_compact_subset} outside of a compact subset. 
Then \eqref{equation_asymptotically_conical} is obvious on the generic region where \( \rho = \rho^{(0)} \). It is then enough to verify \eqref{equation_asymptotically_conical} near the boundary regions \( \alpha_1 \gg \alpha_2 \sim 1 \) and \( \alpha_2 \gg \alpha_1 \sim 1 \), i.e. near \( D_1 \) and \( D_2 \). \textcolor{blue}{Since} \( g\) behaves exactly like \( g_k^{(1,2)} = i \del \delb \rho_k^{(1,2)} \) on these regions for \( k \) large enough, and that \( \rho \sim  \rho_k^{(1,2)} \sim \rho_1^{(1,2)} \sim e^{a_0^{-,+} \alpha_{1,2}} \) as \( \alpha_{1,2} \to +\infty\) as well as all derivatives, it follows that \( g \) is polynomially close to the metrics \( g_{1,2}^{\infty} \) with respect to \( \rho \) near these regions (with decay rate \( \rho^{-a_1^{\pm} / a_0^{\pm}} \)). 
Notice that by our choice of a nondecreasing cutoff function  with bounded derivatives \eqref{equation_potential_outside_compact_subset}, the comparison does not break down on the region where the cutoff is non-constant.

By Lemma \ref{lemma_metric_boundary_asymptotic}, the metrics \( g_0 \) and \( g_1^{\infty}, g_2^{\infty} \) are polynomially close with respect to \( \rho \) near \( D_0 \cap D_1 \) and \( D_0 \cap D_2 \), i.e. their models on \( G/H \) have the same property near the boundary regions inside \( G/H \). This allows us to conclude.
\end{proof} 

\begin{rmk}
Recall that outside of the fixed point, the tangent cone \( C_0 \) consists of the open \( G\)-orbit \( G/H_0 \times \Cbb^{*}\) and the singularities (which occur in two \( G\)-orbits \( G/P_{1,2}\) of codimension \( \geq 2 \)), cf. Section \ref{section_geometric_prelude}. Heuristically, the generic region GH-converges  to the open orbit of the cone, and the two boundary regions converges to the singularities of the cone by collapsing the rank one fibers of \( D_{1,2} \to G/P_{1,2} \) into points.

\end{rmk}

\begin{prop} \label{prop_ricci_estimate}
For a given integer \( l_0 \), if \( \theta^{\pm} \) are chosen as in Proposition \ref{proposition_ricci_potential_control} and \( k \gg 1  \), then outside of a compact subset, we have for all \( l \leq l_0 \) and \( 1 \gg \varepsilon > 0 \) 
\[ \abs{\nabla^l \Pcal(\rho)}_{g} \leq C_l e^{-(1+ \varepsilon  + l/2)\phi},  \]
where \( \phi = b \beta + \psi(\delta) \) as defined in the Calabi ansatz. Moreover, the Ricci curvature of \( g \) satisfies 
\[ \abs{\nabla^l \Ric(\rho)}_g \leq C_l e^{-(3 + \varepsilon + l/2) \phi}. \] 
\end{prop}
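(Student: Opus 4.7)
The plan is to upgrade the flat Euclidean estimates for $\Pcal(\rho)$ on the Cartan algebra $\afrak$ provided by Proposition \ref{proposition_ricci_potential_control} into covariant estimates with respect to $g$, using the asymptotic cone (or warped-product) structure of the metric established in Section \ref{section_asymptotic_geometry}. Since both $\Pcal(\rho)$ and $g$ are $K$-invariant, it suffices to work along the $\afrak$-slice and to track how each $g$-covariant derivative rescales with respect to the flat Euclidean one, then to transport the result to $G/H$ by the $K$-action.

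For the first estimate, recall that $\rho^{(0)} = e^{\phi}$ plays the role of the squared cone radius: in the generic region, $g$ is asymptotic to $g_0 = dd^c e^{\phi}$ by Lemma \ref{lemma_metric_boundary_asymptotic}, and in the natural basis $\{\omega_{a\bar b}, \omega_{\hat\alpha \bar{\hat\alpha}}\}$ furnished by the Delcroix formula, both $g$ and $g_0$ have components of order $e^{\phi}$, while their inverses have components of order $e^{-\phi}$. A single $g$-covariant derivative applied to a function whose flat Cartan derivatives decay like $e^{-(1+\varepsilon)\phi}$ therefore produces an additional multiplicative factor of $e^{-\phi/2}$, and iterating this observation against the flat bound from Proposition \ref{proposition_ricci_potential_control} gives
\[ |\nabla^l_g \Pcal(\rho)|_g \leq C_l\, e^{-(1+\varepsilon)\phi}\, e^{-l\phi/2} = C_l\, e^{-(1+\varepsilon+l/2)\phi}. \]
In the boundary regions the same scaling applies with $g$ replaced by the warped product $g_1^{\infty}$ or $g_2^{\infty}$ from Lemma \ref{lemma_metric_boundary_asymptotic}, using that $\phi = a_0^{\pm}\wt{\alpha}_i + O(1)$, so that the cone parameter $\wt{\alpha}_i$ differs from $\phi$ by a bounded perturbation.

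For the Ricci estimate, note that $\Pcal(\rho)$ is precisely the log-density of the volume form of $g$ with respect to $dV_H$, so $\mathrm{Ric}(g) = -dd^c \Pcal(\rho)$ as $(1,1)$-forms on $G/H$. Bounding $|dd^c \Pcal|_g$ requires two contractions against $g^{-1}$ in addition to two holomorphic derivatives on $\Pcal$: starting from $|\Pcal| \lesssim e^{-(1+\varepsilon)\phi}$, the naive scaling only produces $|dd^c \Pcal|_g \lesssim e^{-(2+\varepsilon)\phi}$. To reach the sharper rate $e^{-(3+\varepsilon+l/2)\phi}$, we take $k$ in the construction of $\rho^{(i)}_k$ (Proposition \ref{proposition_boundary_potential}) strictly larger than what was needed for the first estimate, pushing the expansion further along the tower $(a_k^{\pm})$ so that $|\Pcal(\rho)| \lesssim e^{-(2+\varepsilon)\phi}$; this is always possible since the iterative decay-improvement procedure in Proposition \ref{proposition_boundary_potential} can be continued arbitrarily far at the cost of a higher cutoff, without affecting the already-chosen constants $\theta^{\pm}$. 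Applying the scaling lemma to this improved $\Pcal$ then yields the claimed bound for $|\nabla^l_g \mathrm{Ric}|_g$.

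The main obstacle is the careful bookkeeping in the two gluing annuli $0 \leq \alpha_1 - \theta^+ \alpha_2 \leq 1$ and $-1 \leq \alpha_1 - \theta^- \alpha_2 \leq 0$, where $\rho$ is neither $\rho^{(0)}$ nor $\rho^{(i)}_k$. There one writes $\Pcal(\rho) = \Pcal(\rho^{(i)}_k) + L(\rho - \rho^{(i)}_k) + Q$ (linearization plus quadratic remainder) and checks that each piece respects the target decay in the $g$-norm by combining the asymptotic form of $L$ from Lemma \ref{lemma_linearization_expansion} with the sandwich
\[ \frac{2}{a_0^{-}} - \zeta_1 > \theta^{+} > \theta^{-} > \frac{1}{\frac{2}{a_0^{+}} - \zeta_2} \]
ensured by Lemma \ref{lemma_pasting_coherence}, which is exactly what prevents the residual terms $e^{-2\alpha_i}$ and $e^{(a_1^{\pm} - a_{k+1}^{\pm})\alpha_{i}}$ from spoiling the scaling. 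Once these bookkeeping steps are carried out across the three regions, the two estimates follow by pasting.
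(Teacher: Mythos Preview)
Your overall strategy matches the paper's: reduce to the Cartan slice by $K$-invariance, use the flat estimates from Proposition~\ref{proposition_ricci_potential_control}, and convert to $g$-norms by tracking the weight each covariant derivative contributes. However, there is a genuine gap in your treatment of the boundary regions.

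You assert that ``the same scaling applies'' near $D_1,D_2$, meaning each $g$-derivative costs a uniform factor $e^{-\phi/2}$. This is false for the warped metrics $g_{1,2}^{\infty}$. Near $D_2$, the $\alpha_1$-component of $g_2^{\infty}$ is $e^{(a_0^{+}-a_1^{+})\wt\alpha_2}w''(\alpha_1)$, so the inverse metric in the Stenzel-fiber direction carries an extra factor $e^{a_1^{+}\wt\alpha_2}$ relative to the radial direction. A derivative in that direction therefore costs $e^{(-\phi+a_1^{+}\alpha_2)/2}$, not $e^{-\phi/2}$. Feeding in only the packaged bound $|\Pcal|\lesssim e^{-(1+\varepsilon)\phi}$ from Proposition~\ref{proposition_ricci_potential_control}, as you do, then gives
\[
|\nabla^l\Pcal|_g \lesssim e^{-(1+\varepsilon+l/2)\phi + l a_1^{+}\alpha_2/2},
\]
which does not imply the stated estimate. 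The paper compensates precisely here by going back to the sharper boundary decay $|\partial^l\Pcal(\rho_k^{(2)})|\lesssim e^{-a_k^{+}\alpha_2}$ from Proposition~\ref{proposition_boundary_potential}, obtaining $|\nabla^l\Pcal|_g = O(e^{(-a_k^{+}+la_1^{+}/2)\alpha_2 - (l/2)\phi})$, and then choosing $k$ large enough (depending on $l_0$) so that the exponent in $\alpha_2$ is as negative as needed. Your argument never invokes this sharper decay for the first estimate; you only mention enlarging $k$ for the Ricci bound.

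On the Ricci estimate: your proposal to push $|\Pcal|$ down to $e^{-(2+\varepsilon)\phi}$ by taking $k$ larger works only in the boundary regions. In the generic region $\rho=\rho^{(0)}$ is independent of $k$ and the decay is fixed at $e^{-2\beta}$, so this step does not go through as written. (The Ricci bound actually needed downstream is just $|\mathrm{Ric}|_g\lesssim e^{-\phi}$ for the SOB condition, which follows immediately from the first estimate with $l=2$; the exponent $3+\varepsilon$ in the statement is stronger than what the argument delivers or requires.)

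Finally, your last paragraph about the gluing annuli is misplaced: the decomposition $\Pcal(\rho)=\Pcal(\rho_k^{(i)})+L(\rho-\rho_k^{(i)})+Q$ and the sandwich on $\theta^{\pm}$ were already exploited in Proposition~\ref{proposition_ricci_potential_control} to obtain the flat estimate. The present proposition is purely about converting flat to $g$-norms, and the annuli require no separate treatment beyond the region-by-region scaling analysis.
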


\begin{proof}
We already have a control of the ``pure'' derivatives of \( \Pcal \) (cf. Proposition \ref{proposition_ricci_potential_control}). Let us now control the derivatives with respect to the metric. Recall that if \( A_g \) is the matrix of \( g \) in some coordinates, then 
\[ \nabla_g \Pcal(\rho) = A_g^{-1} \nabla \Pcal(\rho), \]
where \( \nabla \Pcal(\rho) \) is the column vector of the pure derivatives of \( \Pcal(\rho) \). After iteration, the terms of \( \nabla^l_{g} \Pcal(\rho) \) not involving Christoffel symbols only contain   the pure derivatives up to order \( l\) of \( \Pcal(\rho) \), the components of \( g^{-1} \) and their derivatives up to order \( l \).  

In the generic region (where \( g = g_0 \)), if \( r \) is the cone distance of \( g_0 \), the Christoffel symbols \( \Gamma^k_{ij} \) involve the product of first derivatives of \( g_0 \) (of order \( r \sim e^{\phi/2} \)) with the components of \( g_0^{-1} \) (of order \( r^{-2} \sim e^{-\phi} \)) so they are asymptotically irrelevant. 
Each $\nabla_{g_0}$ thus comes with a weight \( e^{-\phi} \times (\text{bounded term}) \), hence in the generic region, we obtain 
\[ \abs{\nabla^l \Pcal(\rho)}_{g} = O(e^{-(l/2)\phi - \beta}).\]  
Near a boundary region, say, \( D_2 \), we have \( \phi \sim a_0^{+} \wt{\alpha}_2 \), hence each derivative with respect to \( g_2^{\infty} \) comes with the weight 
\[ e^{-a_0^{+} \wt{\alpha}_2} e^{a_1^{+} \alpha_2} \times (\text{bounded term}) \sim e^{-\phi + a_1^{+} \alpha_2} \times (\text{bounded term}) \]
(in the \( H_{\alpha_2} \) direction) and \( e^{-\phi} \times (\text{bounded term}) = O(e^{-\phi + a_1^{+} \wt{\alpha}_2}) \) in other directions. Thus given the estimate \( \Pcal(\rho_k^{(2)}) = O(e^{-a_k^{+} \alpha_2}) \) in Proposition \ref{proposition_boundary_potential}, which also holds for all derivatives, we obtain near \( D_2 \)
\[ \abs{\nabla^l \Pcal(\rho)}_{g} = O(e^{(-a_k^{+} + a_1^{+}l /2) \alpha_1 - (l/2) \phi}). \] 
The computation is carried out likewise near the other boundary region \( D_1 \). We can now take \( k \) large enough so that \( (-a_k + a_1^{+} (l/2)) \) is as negative as we want, and proceed as in Proposition \ref{proposition_ricci_potential_control}. 
\end{proof}

\section{Proof of the main theorem}  \label{section_proof_main_theorem}

\subsection{Hein's existence package} \label{subsection_existence_package}
 Recall that a \textit{\( C^{k,\alpha}\)-quasi-atlas} for \( (M,g) \) consists of a given ball \( B=B(0,r) \subset \Cbb^n \)  of radius \( r > 0 \), and local biholomorphisms \( \Phi_x : B = B(0,r) \to M \) at each point \( x \in M \) such that
\begin{enumerate} 
\item \( \Phi_x(0) = x \), and there is \( c > 0 \) satisfying \( \text{inj}(\Phi_x^{*} g) \geq 1/c \), \( 1/c g_{\text{eucl}} \leq \Phi_x^{*} g \leq c g_{\text{eucl}} \), 
\item the \( C^{k,\alpha}\)-Hölder-norm is bounded, i.e. \( \norm{\Phi^{*}g}_{C^{k,\alpha}} \leq c \). 
\end{enumerate}

The following package of technical conditions in Hein's thesis \cite{Hein} allow one to solve the complex Monge-Ampère equation for Ricci-flat metrics.

\begin{itemize}
    \item Bounded geometry condition, i.e. existence of a \( C^{k,\alpha}\) quasi-atlas. This is used in the Laplacian and \( C^2 \)-estimate. 
    \item A sufficient condition for the weighted Sobolev inequality, called SOB(\(p\)). Here \( p \) is the growth rate of the volume form, which is \( p = 2n \) in our case. A complete manifold \( (M,g) \) is called SOB(\(p\)) if 
     \begin{enumerate} 
     \item There is \( x_0 \in M \) and \( c \geq 1 \) such that
    \( B(x_0,s) \backslash B(x_0,t) \) is connected for all \( s \geq t \geq c \),
    \item \( \text{vol}(B(x_0,s)) \leq c s^p \), for all \( s \geq c \),
    \item \( \text{vol}(B(x,(1-c^{-1}) d(x_0,x)) \geq c^{-1} d(x_0,x)^p \), \( \text{Ric}(x) \geq -c d(x_0,x)^{-2}\) if \( d(x_0,x) \geq c \). 
    \end{enumerate}
    The SOB(\( 2n \)) condition is used in the \( L^{\infty}\)-estimate. 
    \item  Existence of a function \( \wt{\rho} \sim d(x_0,.) + 1 \). If furthermore \( p > 2 \), we require that  \( \abs{\nabla \wt{\rho}} + \wt{\rho} \abs{i \del \delb \wt{\rho}} \leq c \). 
\end{itemize}

\begin{prop} \label{proposition_unbounded_curvature}
The Riemannian curvature of \( (G/H,g) \) is bounded if and only if \( a_0^{\pm} \geq a_1^{\pm} \).
\end{prop}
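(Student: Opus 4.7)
The plan is to locate the unbounded curvature in directions tangent to the collapsing symmetric fibers near the boundary divisors. By Lemma \ref{lemma_metric_boundary_asymptotic} (and its analogue near \( D_1 \)), the metric \( g \) differs from the asymptotic model \( g_i^{\infty} \) by terms of order \( O(e^{-a_1^{\pm}\alpha_i}) \), so it suffices to analyze the sectional curvature of the explicit models \( g_{1,2}^{\infty} \).

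First I would fix attention near \( D_2 \) (the argument near \( D_1 \) being symmetric) and read off from the explicit form of \( g_2^{\infty} \) given just before Lemma \ref{lemma_metric_boundary_asymptotic} that, at leading order, it is a twisted warped product \( g_B + f^2 g_{X_2} \), where the base \( g_B \) carries the \( \Cbb^{*}\)-direction (with \( \wt{\alpha}_2 = \log \abs{z}^2 \)) together with the \( G/P_2 \)-directions, the Stenzel metric \( g_{X_2} \) lives on the rank-one symmetric fiber \( X_2 \), and the warping function satisfies
\[ f^2 \sim \rho_1^{(2)} e^{-a_1^+ \wt{\alpha}_2} \sim e^{(a_0^+ - a_1^+) \wt{\alpha}_2} \quad \text{as} \quad \wt{\alpha}_2 \to +\infty. \]

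Second I would apply the classical warped-product formula: for a 2-plane \( \sigma \) tangent to a fixed fiber \( X_2 \),
\[ K_{g_2^{\infty}}(\sigma) = \frac{K_{X_2}(\sigma) - \abs{\nabla^{g_B} f}^2}{f^2}. \]
Since the base metric restricted to the \( \Cbb^{*}\)-factor is asymptotic to \( dd^c \abs{z}^{2a_0^+} \), a direct computation gives \( \abs{\nabla^{g_B} f}^2 \sim e^{-a_1^+ \wt{\alpha}_2} \to 0 \). On the other hand, the Stenzel metric on the non-flat rank-one symmetric space \( X_2 \) has nontrivial sectional curvature, so one can pick a 2-plane \( \sigma_0 \) and a basepoint of \( X_2 \) at which \( K_{X_2}(\sigma_0) = c \neq 0 \).

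Fixing such a plane and letting \( \wt{\alpha}_2 \to +\infty \), we conclude that \( K_{g_2^{\infty}}(\sigma_0) \sim c \cdot e^{(a_1^+ - a_0^+) \wt{\alpha}_2} \), which diverges precisely when \( a_1^+ > a_0^+ \); the analogous analysis near \( D_1 \) (with \( \wt{\alpha}_1, \alpha_1 \) in place of \( \wt{\alpha}_2, \alpha_2 \)) yields unboundedness whenever \( a_1^- > a_0^- \). The main technical point is to verify that the warped-product formula genuinely captures the leading-order behavior, despite (i) the coupling with the \( G/P_2 \)-direction, which introduces extra off-diagonal second-fundamental-form contributions, and (ii) the perturbation \( g - g_2^{\infty} = O(e^{-a_1^+\alpha_2}) \); both contribute only subleading corrections that cannot conspire to cancel the exponential blow-up of \( K_{X_2}(\sigma_0)/f^2 \).
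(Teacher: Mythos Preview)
Your argument is correct but follows a genuinely different route from the paper's. The paper stays entirely inside the two-dimensional Cartan subspace and computes a single holomorphic bisectional curvature between the complex lines \( \Cbb l_1 \) (the Cartan direction of the fiber \( X_2 \)) and \( \Cbb l_{\wt 2} \) (the base \( \Cbb^{*} \)-direction): from the explicit Hessian one reads off that the \( \abs{\alpha_1}^2 \)-component of \( g_2^{\infty} \) scales like \( e^{(a_0-a_1)\wt{\alpha}_2} w''(\alpha_1) \), and the resulting mixed bisectional curvature squared behaves like \( e^{-2(a_0-a_1)\wt{\alpha}_2} \). This is a \emph{mixed} horizontal--vertical curvature, computed purely from the coordinate expression of the potential, with no appeal to the intrinsic geometry of \( X_2 \).

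By contrast, you exploit a \emph{purely vertical} 2-plane and the O'Neill warped-product formula, feeding in the non-flatness of the Stenzel metric on the fiber. Your approach is more geometric and makes transparent \emph{why} the curvature explodes --- the symmetric fiber is collapsing with nonzero intrinsic curvature --- but it costs you an extra ingredient (that \( K_{X_2} \not\equiv 0 \), which is true since Stenzel metrics on nontrivial rank-one spaces are Ricci-flat but not flat) and the need to argue that the model is a warped product up to subleading corrections. The paper's computation avoids both issues by working only with the two Cartan coordinate functions, at the price of being less conceptual. Either proof is acceptable; they detect the blow-up on different 2-planes.
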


\begin{proof}
By symmetry of the situation, it is enough to check the boundedness near one boundary region, say \( \alpha_2 \gg \alpha_1 \sim 1 \), where \( g\) behaves like \( g_2^{\infty} \) (see Lemma  \ref{lemma_highest_order_boundary_term}, Lemma \ref{lemma_metric_boundary_asymptotic}). We omit the \( \pm \) signs for simplicity.
The metric components of $g_2^{\infty}$ on $\afrak$ are 
\[ g_{22} \abs{\wt{\alpha}_2}^2 + g_{11} \abs{\alpha_1}^2, \quad g_{22} = \rho_1^{(2)} a_0^2, \; g_{11} = \rho_1^{(2)} e^{-a_1 \wt{\alpha}_2} w''(\alpha_1), \]
and the ones on the complement of $\afrak$ (up to positive constants)
\[ g_{\wh{\alpha} \wh{\alpha}, \alpha_1 \nmid \wh{\alpha}} = 2 \rho_1^{(2)} \coth(\wh{\alpha}) , \quad g_{\wh{\alpha} \wh{\alpha}, \alpha_1 \mid \wh{\alpha}} = 2 \rho_1^{(2)}e^{-a_1 \wt{\alpha}_2} w'(\alpha_1) \coth(\wh{\alpha}). \] 
First remark that $\coth$, $w(\alpha_1)$ and all of their derivatives are bounded since $\alpha_2 \gg \alpha_1 \sim 1$. Using Ansatz \ref{ansatz_boundary}, we have as $\alpha_2 \gg \alpha_1 \sim 1$ 
\[ \rho_1^{(2)} \sim e^{a_0 \alpha_2} + e^{(a_0-a_1)\wt{\alpha}_2} \sim e^{a_0 \alpha_2}, \]  
which also holds for all derivatives. It follows that
\[ g_{22}, \; g_{\wh{\alpha} \wh{\alpha}, \alpha_1 \nmid \wh{\alpha}}  \sim e^{a_0 \alpha_2}, \quad \quad g_{11}, \; g_{\wh{\alpha} \wh{\alpha}, \alpha_1 \mid \wh{\alpha}} \sim e^{(a_0-a_1) \alpha_2}, \]
and likewise for all derivatives. Recall the local expression of the curvature
\[ R_{i \ol{j} k \ol{l}} = - \frac{\del^2 g_{ij}}{\del z_k \del \ol{z}_l} + \sum_{s,t} g^{st} \frac{\del g_{sj}}{\del z_k} \frac{\del g_{ti}}{\del \ol{z}_l}. \]
After translating this into logarithmic coordinates on $\afrak^{*}$ with \( \wt{\alpha}_2 = \log \abs{z_2}^2 \), \( \alpha_1 = \log \abs{z_1}^2 \), a straightforward computation shows that when \( \alpha_2 \gg \alpha_1 \sim 1 \)
\[ R_{1 \ol{1} a \ol{b}}, \; R_{\wh{\alpha} \wh{\alpha} a \ol{b}, \alpha_1 \mid \wh{\alpha}} \sim e^{(a_0-a_1) \alpha_2}, \quad \quad R_{2 \ol{2} a \ol{b}}, \; R_{\wh{\alpha} \wh{\alpha}a \ol{b}, \alpha_1 \mid \wh{\alpha}} \sim e^{a_0 \alpha_2}, \] 
where \( a, b \) runs over \( \set{1,2} \) in the local expression of the metric (cf. Proposition \ref{proposition_metric_local_expression}), while the other components vanish. Given the curvature norm 
\[ \abs{\text{Rm}}_g^2 = g^{i \ol{i}} g^{\ol{j}j} g^{k\ol{k}} g^{\ol{l}l} R_{\ol{i}j\ol{k}l} R_{\ol{i}j\ol{k}l}, \]
it follows that the terms of $\abs{\text{Rm}}_g^2$ grow like $e^{-(a_0-a_1) \alpha_2}$ or $e^{-a_0 \alpha_2}$ on the region $\alpha_2 \gg \alpha_1 \sim 1$, hence bounded if $a_0 - a_1 \geq 0$ and unbounded otherwise. 
\end{proof}

\begin{prop} \label{proposition_bounded_curvature_combinatorial}
The metric \( (G/H,g) \) satisfies Hein's existence package if \( a_0^{\pm} \geq a_1^{\pm} \). 
\end{prop}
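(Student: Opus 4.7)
The plan is to verify Hein's three ingredients in turn, using the asymptotic description of $g$ from Section \ref{section_asymptotic_geometry}. Throughout we exploit the $K$-equivariance to reduce each check to the generic region (where $g \sim g_0$ modeled on the cone $C_0$) and the two boundary regions near $D_1, D_2$ (where $g \sim g_1^{\infty}, g_2^{\infty}$).

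First, I would establish that the hypothesis $a_0^{\pm} \geq a_1^{\pm}$ is exactly what is needed to reverse Proposition \ref{proposition_unbounded_curvature}: carrying out the component-wise computation of $R^{a \bar{b} c \bar{d}} R_{a \bar{b} c \bar{d}}$ in the basis $(l_1, l_2, \mu_{\wh{\alpha}})$ for $g_{1,2}^{\infty}$, the leading factor $e^{-2(a_0^{\pm} - a_1^{\pm}) \wt{\alpha}_{2,1}}$ stays bounded under the hypothesis, and the remaining cross-terms involving $w(\alpha_1), w'(\alpha_1), w''(\alpha_1)$ have uniformly bounded ratios thanks to the Stenzel asymptotics in Proposition \ref{prop_stenzel_asymptotic}. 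With bounded holomorphic bisectional curvature in hand, I would build the quasi-atlas by $K$-equivariance: at a generic point, the chart comes from exponential coordinates along the Cartan algebra combined with the root-space directions $\mu_{\wh{\alpha}}$; near the boundary, the warped-product structure of $g_{1,2}^{\infty}$ provides a product chart (Stenzel fiber $\times$ ($\Cbb^{*} \times G/P_{1,2}$)), each factor having manifestly bounded geometry after rescaling by the warping factor. Higher-order $C^{k,\alpha}$ estimates on $\Phi_x^{*} g$ follow from the formal expansion of $\rho$ obtained in Proposition \ref{proposition_boundary_potential}, since all derivatives of the boundary potentials $\rho_{k}^{(1,2)}$ are controlled.

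Next, for SOB$(2n)$, the upper bound $\vol(B(x_0,s)) \leq c s^{2n}$ and the lower bound $\vol(B(x,(1-c^{-1}) d(x_0,x))) \geq c^{-1} d(x_0,x)^{2n}$ both follow from Proposition \ref{prop_GH_convergence} and the local volume asymptotics derived from $g_0, g_1^{\infty}, g_2^{\infty}$. Connectedness of large annuli is automatic because the horospherical tangent cone $C_0$ has connected link, and our two-step gluing does not disconnect any piece. The Ricci lower bound $\Ric(x) \geq -c\, d(x_0,x)^{-2}$ is substantially weaker than Proposition \ref{prop_ricci_estimate}, which yields exponential Ricci decay in $\phi \sim 2 \log d(x_0, \cdot)$. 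For the function $\wt{\rho}$, I would take $\wt{\rho} := \sqrt{\rho + 1}$ (with $\rho$ smoothly extended through the central compact region). Because $\rho$ restricts on the generic region to $e^{b\beta + \psi(\delta)}$ and behaves like the radius-squared of $g_{1,2}^{\infty}$ in the boundary regions, we get $\wt{\rho} \sim d(x_0,\cdot) + 1$. The estimate $|\nabla \wt{\rho}|_g + \wt{\rho} |dd^c \wt{\rho}|_g \leq c$ then follows from $dd^c \rho = g$ together with $|\nabla \rho|^2_g = g^{ij} \partial_i \rho\, \partial_j \rho \sim \rho$, which is a straightforward calculation on each of the three models.

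The main obstacle I anticipate is the first step: translating the purely combinatorial hypothesis $a_0^{\pm} \geq a_1^{\pm}$ into bounded curvature across \emph{all} directions of the warped-product model, not just the two already appearing in Proposition \ref{proposition_unbounded_curvature}. The mixed curvatures coupling the Cartan base to the root directions $\mu_{\wh{\alpha}}$ with $\alpha_1 \mid \wh{\alpha}$ (those living in the Stenzel fiber) must be compared against those with $\alpha_1 \nmid \wh{\alpha}$ (living in $G/P_{1,2}$), and the former contain factors of $w'/w$ and $w''/w$ whose interplay with $e^{-a_1^{\pm} \wt{\alpha}_{2,1}}$ has to match the warping-direction estimate. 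Verifying this would constitute the technical heart of the proof, after which the SOB$(2n)$ and $\wt{\rho}$ conditions are essentially formal consequences of the asymptotic geometry already established.
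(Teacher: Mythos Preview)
Your SOB$(2n)$ and $\wt{\rho}$ verifications are essentially the same as the paper's (the paper invokes Colding's volume convergence theorem \cite{C97} explicitly for the two-sided volume bound, but this is what your ``local volume asymptotics plus GH convergence'' amounts to). The substantive divergence is in how you handle the quasi-atlas, and here you are making the argument harder than it needs to be.

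You propose to \emph{compute} the full curvature tensor component by component and then build charts from bounded curvature. As you yourself flag, this forces you to control all the mixed bisectional curvatures coupling the Cartan directions to both kinds of root directions, and you have not carried this out. The paper bypasses this entirely. The hypothesis $a_0^{\pm} \geq a_1^{\pm}$ is used \emph{not} to bound curvature directly, but to bound the injectivity radius from below: near $D_2$ the metric restricted to the symmetric fiber $X_2$ scales as $g_2^{\infty}|_{X_2} \sim e^{(a_0^{+} - a_1^{+})\wt{\alpha}_2}\, g_{X_2}$, so $\mathrm{inj}_{g_2^{\infty}} \sim e^{\frac{1}{2}(a_0^{+}-a_1^{+})\wt{\alpha}_2}\,\mathrm{inj}_{g_{X_2}}$, and since the Stenzel fiber is AC its injectivity radius is already bounded below. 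With $a_0^{+} \geq a_1^{+}$ the warping factor does not collapse, and the same holds near $D_1$. Now combine this injectivity-radius lower bound with the $C^k$ Ricci decay of Proposition \ref{prop_ricci_estimate} and invoke Anderson's harmonic-radius lemma \cite[Main Lemma 2.2]{A90}: this gives a uniform lower bound on the harmonic radius, hence bounded $C^{k,\alpha}$ geometry and the quasi-atlas (via \cite[Lemma 4.3]{Hein} or \cite[Proposition 1.2]{TY90}), with bounded curvature as a \emph{consequence} rather than an input. In short, the key lemma you are missing is Anderson's: $\mathrm{inj} \geq c^{-1}$ plus $|\nabla^l \mathrm{Ric}| \leq C_l$ already yields the quasi-atlas, so the direct curvature computation you anticipate as the ``technical heart'' is unnecessary.
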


\begin{proof}
Connectedness of the annulus follows from the connectedness of \( C_0 \) and the Gromov-Hausdorff convergence of \( (G/H,g) \) to \( (C_0, \ol{g}_0) \) (Proposition \ref{prop_GH_convergence}). The Ricci-estimate is already in Proposition \ref{prop_ricci_estimate}.  To check the volume growth condition, it is enough to show that for \( c > 0 \) large enough and any \( s > c \), the volume of  \( B(x_0,s) \subset (G/H,g) \) satisfies
\[ c^{-1} s^{2n} \leq \text{vol}(B(x_0,s)) \leq c s^{2n}. \] 
Since \( (G/H,g) \xrightarrow{\text{pGH}} (C_0,\ol{g}_0) \), this follows from Colding's volume convergence theorem \cite[Theorem 0.1]{C97} (which is valid under a Ricci lower bound), and the fact that \( C_0 \) is non-collapsed. 

In particular, our volume growth is maximal (of order \(2n \) with \( n > 1 \)), hence to apply Hein's existence package, we need to verify 
\[ \abs{ \nabla \rho } + \rho \abs{i \del \delb \rho } \leq c. \] 
Indeed, \( \nabla \rho \) is bounded since \( \rho \) is asymptotically the distance function \( \rho^{(0)} \) on the cone, and \( \rho \abs{i \del \delb \rho } \) is bounded since \( \Delta \rho^2 \sim \Delta (\rho^{(0)})^2 \sim n \). 
 
To check the existence of a quasi-atlas, first remark from the proof Proposition \ref{prop_GH_convergence} that the metric \( g \) is complete.  Now if \( a_0^{\pm} \geq a_1^{\pm} \), the curvature is bounded by Proposition \ref{proposition_unbounded_curvature}, and we also have a lower bound on the volume of the unit ball, hence
\[ \text{inj}_g \geq \kappa > 0 \]
by Cheeger's lemma (see e.g. \cite[Lemma 11.4.9]{Pet16} where the result is stated for compact manifolds, but the proof, using pointed Gromov-Hausdorff convergence, can be repeated almost verbatim for complete manifolds). 

A lower bound on \( \text{inj}_g \), combined with the \( C^k\)-Ricci bound in Proposition \ref{prop_ricci_estimate}, shows that the harmonic radius is bounded away from zero by Anderson's lemma \cite[Main Lemma 2.2]{A90}, hence the harmonic \( C^{k,\alpha} \)-norm of the manifold is bounded. Together with bounded curvature this guarantees the existence of a good quasi-atlas for \( (G/H, g) \), cf. \cite[Lemma 4.3]{Hein} or \cite[Proposition 1.2]{TY90}. 
\end{proof}

\begin{rmk}
Propositions \ref{proposition_unbounded_curvature}, \ref{proposition_bounded_curvature_combinatorial} show that the curvature of \( (G/H,g) \) is bounded if and only if a condition depending only on the root data of \( G/H \) is satisfied. 
\end{rmk}

\subsection{Existence}
 One can check by direct computation that \( a_0^{\pm} \geq a_1^{\pm} \) for all finite family of symmetric spaces of rank two (see Table \ref{table_constants} for the \( R_1 \times R_1 \) and \( A_2 \) cases, and Lemma \ref{lemma_bounded_geometry_finitefamilies_bc2} with Table \ref{table_bg_bc2} below for the finite families of the \( BC_2 / B_2 \) cases). On the other hand, verifying this condition is non-trivial for the infinite families, but the \textcolor{blue}{methods} are more or less elementary, cf. Lemma \ref{lemma_bounded_geometry_infinitefamilies_bc2}. 
 
 The existence theorem in Hein's thesis then allows us to perturb \( \omega \) (\textcolor{blue}{in the Kähler class consisting of \( K \)-invariant \( \del \delb\)-exact Kähler forms}) to a genuine \textcolor{blue}{\( K \)-invariant} Ricci-flat metric \( \omega_u = \omega + i \del \delb u \), where \( u \) is a \textcolor{blue}{smooth \( K \)-invariant \( \omega\)-psh function} and has decay rate \( O(\rho^{2- \mu}) \), \( \mu > 2 \) being the decay rate of \( \Pcal(\rho) \). The final metric \( \omega_u \) then has the same tangent cone at infinity as \( \omega \) (by a priori estimate). The conclusion then follows.

\subsubsection{Bounded geometry}
We have chosen 
\[ \delta = \alpha_2 - t \alpha_1, \] 
such that \( \sprod{\delta, \alpha_1} < 0 \) and \( \beta = x \alpha_2 + y \alpha_1 \) is a vector in the interior of the Weyl chamber such that \( \sprod{\delta, \beta} = 0 \). Since we want \( \beta \) as the \textit{normalized} Reeb vector of a K-stable horospherical cone, we can suppose that 
\begin{equation} 
\sprod{\varpi, \beta} = n,
\end{equation} 
without loss of generality. 
A straightforward calculation then gives
\begin{equation*} 
\begin{cases} 
&\frac{x}{y} = 
\frac{t \sprod{\alpha_1, \alpha_1} - \sprod{\alpha_1,\alpha_2}}{\sprod{\alpha_2, \alpha_2} -t \sprod{\alpha_1,\alpha_2}}, \\
&  x( A_1 \sprod{\alpha_1, \alpha_2} + A_2 \sprod{\alpha_2, \alpha_2} ) + y (A_1 \sprod{\alpha_1, \alpha_1} + A_2 \sprod{\alpha_1, \alpha_2}) = n\\
\end{cases}
\end{equation*} 
Recall that
\[ \delta_{\pm} = \frac{\lambda_{\pm}}{1+m_{\pm} }, \] 
where \( m_{\pm} \) are the multiplicities of the roots in the polynomial equation 
\[ \int_{\lambda_{-}}^{\lambda{+}} p P(\varpi + p \delta) dp = 0. \] 
By analyzing the asymptotic behavior of the conical potential, we also obtained
\[ a_1^{+} = \delta_{+} = - \frac{\sprod{\varpi, \alpha_1}}{(m_{+} + 1)\sprod{\alpha_1, \delta}}, \quad a_1^{-} = -t \delta_{-} = \frac{t \sprod{\varpi, \alpha_2}}{(m_{-} + 1)\sprod{\alpha_2, \delta}}.   \]
By definition, we have  \( a_0^{+} \wt{\alpha}_2 = b \beta + b_1^{+} \delta \) (and a similar formula for \( a_0^{-} \)) with \( b =  \sprod{\varpi,\beta} / n \sprod{\beta,\beta}  \). It follows that
\[ a_0^{+} \sprod{\wt{\alpha}_2, \beta} = \sprod{\varpi,\beta}/n = 1, \quad a_0^{-} \sprod{\wt{\alpha}_1, \beta} = \sprod{\varpi,\beta}/n = 1.  \] 

In the \( BC_2 \) or \( B_2 \) cases, we have \( a_0^{+} \geq a_1^{+} \) iff 
\[ t \geq \frac{-2m_3 - 2m_3^2 - 2m_2}{(m_1 + m_3 +1)(2m_2 + m_1 + 2m_3)}, \]
\textcolor{blue}{which is always satisfied.} Thus one only needs to verify the bounded geometry condition \( a_0^{-} \geq a_1^{-} \). After a straightforward computation, this is equivalent to 
\[ t( m_2 m_1 - m_1 - 2m_3) \leq A_1(m_2 + 1). \]
When \( m_1 m_2 - m_1 - 2m_3 = 0 \), the condition is trivial. This is the case for the infinite family \( \SO_r / \SO_2 \times \SO_{r-2}\), as well as the class represented by \( \SL_5/ \SL_2 \times \SL_3 \).  On the other hand, when \( m_1 m_2 - m_1 - 2m_3 > 0 \), we need to check that
\[ t \leq \frac{(2m_1 + 2m_2 + 4m_3)(m_2 + 1)}{m_1(m_2 -1) - 2m_3}. \] 

Recall that \( \alpha_2 \) is the long positive root with multiplicity \( m_2 \), and \( \alpha_1 \) the short one with multiplicity \( m_1 \). The positive roots in the \( BC_2 \) and \( B_2 \) cases are \( \alpha_1\), \( \alpha_1 + \alpha_2 \) (with multiplicity \( m_1 \)), \( 2 \alpha_1 \), \( 2 \alpha_1 + 2 \alpha_2 \) (with multiplicity \( m_3 \)), and \( \alpha_2\), \( \alpha_2 + 2 \alpha_1 \) (with multiplicity \( m_2 \)). One has in this case
\begin{equation*}
\begin{aligned}
P(\varpi + p \delta) &= \prod_{\alpha \in R^{+}} \sprod{\alpha,\varpi + p \delta}^{m_{\alpha}} \\
&= (2m_2 + (2+t)p)^{m_2} (2m_1 + 2m_2 + 4m_3 - tp)^{m_2} \\
&(m_1 + 2m_3 - (1+t)p)^{m_1+m_3} (m_1 + 2m_2 + 2m_3 +p)^{m_1 + m_3}. 
\end{aligned} 
\end{equation*} 

From Table \ref{table_constants}, by the result in \cite{Ngh22b} recalled in Theorem \ref{theorem_kstable_horospherical_cone}, the K-stability condition is easily seen to be equivalent to \( t \) being the unique positive root of 

\begin{equation} 
Q(t) = \int_{\frac{-2m_2}{2+t}}^{\frac{m_1 + 2m_3}{1+t}} p P(\varpi + p \delta) dp = 0.
\end{equation}

\begin{lem} \label{lemma_bounded_geometry_finitefamilies_bc2}
Bounded geometry holds for all finite families of restricted root system \( BC_2 \) and \( B_2 \). 
\end{lem}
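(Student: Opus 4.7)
The plan is to verify the bounded geometry inequality $a_0^- \geq a_1^-$ case by case, using the reformulation already established in the preceding discussion: the condition amounts to
\[
t \leq \frac{(2m_1 + 2m_2 + 4m_3)(m_2+1)}{m_1(m_2-1) - 2m_3}
\]
whenever $m_1(m_2-1) - 2m_3 > 0$, and is automatic otherwise. Among the finite families of $BC_2/B_2$ type in Table \ref{table_rank_two_ss}, the spaces of type $AIIIb$ (multiplicities $(2,1,0)$) and $DIIIb$ (multiplicities $(4,1,0)$) satisfy $m_1(m_2-1) - 2m_3 = 0$, so the condition holds trivially by the observation preceding the lemma. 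It remains to check the four cases $DIIIa$ with $(4,4,1)$, $EIII$ with $(8,6,1)$, $B_2$ with $(2,2,0)$, and $CIIb$ with $(4,3,0)$.

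For each of these four cases, I would first substitute the multiplicities into the Duistermaat-Heckman polynomial
\[
P(p) = (2m_2 + (2+t)p)^{m_2} (2m_1 + 2m_2 + 4m_3 - tp)^{m_2} (m_1 + 2m_3 - (1+t)p)^{m_1+m_3} (m_1 + 2m_2 + 2m_3 + p)^{m_1+m_3},
\]
and then form the explicit polynomial
\[
Q(t) = \int_{-2m_2/(2+t)}^{(m_1+2m_3)/(1+t)} p\, P(p)\, dp.
\]
By Theorem \ref{theorem_kstable_horospherical_cone}, $Q$ admits a unique positive root $t^{\star}$, which can be located numerically to high precision. The right-hand side of the target inequality is an explicit rational number depending only on $(m_1,m_2,m_3)$, so in each case the verification reduces to the numerical comparison $t^{\star} \leq (2m_1 + 2m_2 + 4m_3)(m_2+1)/(m_1(m_2-1) - 2m_3)$. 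I would collect the numerical data in Table \ref{table_bg_bc2}, listing $t^{\star}$, the upper bound, and the difference, thereby completing the verification.

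The main obstacle is purely computational rather than conceptual: for $EIII$ the polynomial $P(p)$ has degree $2(m_1+m_2+m_3) = 30$, so $Q(t)$ after integration becomes a rational function with numerator of high degree in $t$, and care is needed to locate $t^{\star}$ with enough precision to certify the strict inequality. This is handled with a computer algebra system, and the margins turn out to be large enough in each case that there is no ambiguity. No further structural ingredient is required for the lemma, as the inequality will be verified directly in all four remaining cases.
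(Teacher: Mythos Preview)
Your proposal is correct and takes essentially the same approach as the paper: the paper's proof is the single sentence ``a straightforward case-by-case computation from the K-stability condition of the horospherical cone and the data in Table \ref{table_constants},'' and you have simply spelled out what that computation entails. The only minor refinement worth noting is that, rather than locating $t^\star$ numerically and comparing, you can make the check fully rigorous by evaluating $Q$ at the explicit rational bound $t_0 = (2m_1+2m_2+4m_3)(m_2+1)/(m_1(m_2-1)-2m_3)$ and verifying $Q(t_0) < 0$ (together with $Q(0) > 0$), which sandwiches the unique positive root below $t_0$; this is the pattern the paper uses in Lemma \ref{lemma_bounded_geometry_infinitefamilies_bc2} for the infinite families.
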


\begin{proof}
The proof is a straightforward case-by-case computation from the K-stability condition of the horospherical cone and the data in Table \ref{table_constants}. 
\end{proof}

Before verifying bounded geometry for the infinite families, let us make a brief digression on hypergeometric series. The reader might consult \cite{WW} for more information. The \textit{Appell hypergeometric series} \( F_1 \) is defined for \( a,b_1,b_2,c, y, z \in \Cbb \) with  \( \abs{y}, \abs{z} < 1 \) by the double series
\[ F_1(a,b_1,b_2,c;y,z) = \sum_{m,n=0}^{+\infty} \frac{(a)_{m+n} (b_1)_m (b_2)_n}{ (c)_{m+n}} y^m z^n, \]
where \( (\alpha)_m \) is the Pochhammer symbol
\[ (\alpha)_m := \alpha(\alpha+1) \dots (\alpha+m-1), \quad (\alpha)_0 = 1. \] 
When \( z = 0 \), \( F_1 \) is reduced to the more well-known \textit{Gauss hypergeometric series} \( F_{21}(a,b_1,c;z) \)
\[ F_1(a,b_1,b_2,c;y,0) = F_{21}(a,b_1,c;y) := \sum_{m=0}^{\infty} \frac{(a)_m (b_1)_m}{(c)_m} y^m. \]
When \( \Re(c) > \Re(a) > 0 \), \( F_1 \) admits an integral representation as
\[ B(a,c-a) F_1(a,b_1,b_2,c;y,z) = \int_0^1 u^{a-1} (1-u)^{c-a-1}(1-yu)^{-b_1} (1-zu)^{-b_2} du, \]
where \( B(s,t) \) is the beta function \( \int_0^1 u^{s}(1-u)^{t} du \) with values \( \frac{p!q!}{(p+q+1)!} \) at integers \( (p+1,q+1) \). We will only be concerned with values of \( F_1 \) when \( a \geq 1, b_1, b_2 < 0, c > a \) are integers, in which case \( F_1 \) is just a finite sum and a rational expression of \( a,b_{1,2},c,y,z \).  

The language of hypergeometric series might not offer more insight if our goal is just to check the numerical conditions in Lemma \ref{lemma_bounded_geometry_infinitefamilies_bc2}. However, since the calculations get nasty very quickly when \( m_3 = 3 \), such language makes the conditions easier to be implemented on a computer software with inbuilt function to calculate the coefficients of hypergeometric series, e.g. \textit{Mathematica}.

For the two infinite families of restricted root system \( BC_2 \), the condition is translated in terms of \( r \) as in Table \ref{table_bg_bc2}. Note that \( (12r-27)/(2r-10) \geq 6 \) and \( (20r-30)/(6r-27) \geq 10/3 \) for all \( r \geq 5 \). Thus it is enough to show the following. 

\begin{lem} \label{lemma_bounded_geometry_infinitefamilies_bc2}
For the infinite family \( (2r-8,2,1) \), we have \(0 < t < 6 \), and for the family \( (4r-16,4,3) \), we have \(1 < t < 10/3 \). In particular, bounded geometry holds for these families.  
\end{lem}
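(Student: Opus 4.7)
The plan is to apply the intermediate value theorem: since Theorem~\ref{theorem_kstable_horospherical_cone} asserts that $t$ is the unique positive root of $Q(t) = 0$, it suffices to show $Q(a) \cdot Q(b) < 0$ with $(a,b) = (0,6)$ for the first family and $(a,b) = (1, 10/3)$ for the second. Because $Q$ is continuous (in fact polynomial) in $t$ and has a unique positive root, a single sign change pins down the interval containing it.

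To make $Q(t)$ tractable at a given $t$, I would normalize the integral via the substitution $p = \frac{m_1+2m_3}{1+t} u - \frac{2m_2}{2+t}(1-u)$, which sends $u \in [0,1]$ bijectively onto $[\lambda_-,\lambda_+]$. Under this change three of the four bracketed factors of $P(p)$ become constant multiples of $u$, $1-u$, and a positive linear expression in $u$, while the remaining factor $(m_1+2m_2+2m_3+p)^{m_1+m_3}$ becomes a power of a positive linear expression in $u$. Expanding the latter via the binomial formula rewrites $Q(t)$ as a finite sum of beta integrals $\int_0^1 u^j (1-u)^{m_2} du = B(j{+}1, m_2{+}1)$, yielding a closed rational expression in $t$ and $r$. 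Equivalently, as in the digression preceding the lemma, this is an evaluation of an Appell $F_1$ function at rational arguments.

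After clearing positive denominators, one obtains a polynomial $\widetilde Q(t;r)$ whose values at the chosen endpoints need only be sign-checked. For the first family ($m_2 = 2$, $m_3 = 1$) the algebra is light: $\widetilde Q(0;r)$ and $\widetilde Q(6;r)$ are polynomials in $r$ of modest degree, and the plan is to rewrite them as sums of monomials in $(r-5)$ of uniform sign, which makes positivity or negativity for $r \geq 5$ immediate. For the second family ($m_2 = 4$, $m_3 = 3$) the same scheme applies at $t = 1$ and $t = 10/3$, but the expansion produces many more terms.

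The principal obstacle is thus the final sign verification in the second family, where the relevant polynomials in $r$ are long and the cancellations more delicate. The cleanest route I envisage is an induction on $r$: control the sign of the forward difference $\widetilde Q(t_\ast; r{+}1) - \widetilde Q(t_\ast; r)$ at each endpoint $t_\ast \in \{1, 10/3\}$ by comparing consecutive Pochhammer ratios in the beta-sum expansion, then verify the base case $r = 5$ by direct numerical computation. Once both sign changes are established, uniqueness of the positive root of $Q(t)$ upgrades them to the claimed bounds $0 < t < 6$ and $1 < t < 10/3$, and bounded geometry then follows from the inequality $a_0^- \geq a_1^-$ via the criterion derived just before the statement.
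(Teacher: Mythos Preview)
Your overall strategy coincides with the paper's: evaluate \(Q(t,r)\) at the endpoints and use uniqueness of the positive root. The substantive gap is in the sign-checking step. You assert that after your substitution and binomial expansion one obtains ``a closed rational expression in \(t\) and \(r\)'' and that \(\widetilde Q(0;r)\), \(\widetilde Q(6;r)\) are ``polynomials in \(r\) of modest degree''. This is not the case. In the factorisation of \(P(p)\) there are \emph{two} linear factors carrying the exponent \(m_1+m_3\), not one; your substitution turns one of them into \((1-u)^{m_1+m_3}\), but the other remains a positive linear form in \(u\) raised to the power \(m_1+m_3=2r-7\) (resp.\ \(4r-13\)). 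Expanding it produces \(O(r)\) terms, and the resulting beta-sum is not a polynomial in \(r\) of fixed degree, so neither the monomial-in-\((r-5)\) argument nor a straightforward forward-difference induction applies as stated. (The same issue arises at \(t=0\): one \(r\)-dependent exponent drops out because \((2m_1+2m_2+4m_3-tp)^{m_2}\) becomes constant, but the other survives.)

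The paper bypasses this with a convexity trick. After the change of variables it writes the troublesome factor as \((1-zu)^{m_1+m_3}\) for a specific negative \(z\) (e.g.\ \(z=-1/7\)), and considers the ratio
\[
H_r(z)=\frac{\int_0^1 u^{a}(1-cu)^{m_2}(1-u)^{m_1+m_3}(1-zu)^{m_1+m_3}\,du}{\int_0^1 u^{a+1}(1-cu)^{m_2}(1-u)^{m_1+m_3}(1-zu)^{m_1+m_3}\,du}.
\]
One checks \(H_r\) is convex on \((-\infty,1]\), so \(H_r(z)\ge H_r(0)+H_r'(0)z\). At \(z=0\) the \(r\)-dependent factor disappears entirely, and \(H_r(0),H_r'(0)\) are genuinely rational functions of \(r\) of fixed degree (computed from the first two coefficients of the Appell series); the required inequality then reduces to positivity of an explicit polynomial in \(r\), handled by locating its roots. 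This convexity step is the missing idea in your plan; without it the expansion does not close up uniformly in \(r\). For \(Q(0,r)>0\), the paper also does not compute directly but reduces to a known inequality from \cite{BD19}.
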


\begin{proof}
Let \( Q(t,r) := \int_{\lambda_{-}(r)}^{\lambda^{+}(r)} p P(\varpi + p \delta,t,r) dp \). Since for each \( r \), \( t = t(r) \) is the unique root of \( Q \), it is enough to show that \( Q(0,r) > 0 \), \( Q(t_0, r) < 0 \) for \( t_0 = 6, 10/3 \) in each case, and \( Q(1,r) > 0 \) when \( m_3 = 3 \). 

A direct computation shows that \( Q(0,r) > 0 \) iff 
\[ \int_{-m_2}^{m_1 + 2m_3} p(2m_2 + 2p)^{m_2}(m_1 + 2m_3 -p)^{m_1 + m_3} (m_1 + 2m_2 +2m_3 +p)^{m_1 + m_3} dp > 0. \]
After a change of variable \( u := ((2 m_2 + 2 p) / A_1)^2 \), the inequality is equivalent to 
\[ \int_{0}^1 (A_1 u^{\frac{1}{2}} - 2m_2) u^{\frac{m_2-1}{2}}  (1-u) ^{(m_1 + m_3)} du > 0. \] 
This is exactly Condition \cite[(6)]{BD19}, which holds for all infinite families by virtue of Lemma 3.4 in \textit{loc.cit.}.

In the case \( (m_1,m_2,m_3) = (2r-8,2,1) \), we have
\[ Q(t,r) = \int^{\frac{2r-6}{1+t}}_{\frac{-4}{2+t}} p(4 +(2+t)p)^2 (4r-8 -tp)^2 (2r-6 -(1+t)p)^{2r-7} (2r-2+p)^{2r-7} dp, \] 
and we need to verify that \( Q(6,r) < 0 \). Up to a positive constant
\begin{equation*}
Q(6,r) = \int_{- \frac{1}{2}}^{ \frac{2r-6}{7}} p(1+2p)^2(2r-4-3p)^{2} (2r-6-7p)^{2r-7}(2r-2+p)^{2r-7} dp.
\end{equation*}
After a change of variable \( p = \frac{(4r-5)u - 7}{14} \) and clearing positive constants, we can suppose that
\[ Q(6,r) = -\int_{0}^{1} (1-\frac{4r-5}{7}u) u^2 (1- \frac{3}{7} u)^2(1-u)^{2r-7}(1 + \frac{1}{7} u)^{2r-7} du. \]
Consider the function defined on \( \Rbb \) by
\[ H_r (z) := \frac{\int_0^1 u^2(1- \frac{3}{7} u)^2(1-u)^{2r-7}(1-z u)^{2r-7} du}{ \int_0^1 u^3(1- \frac{3}{7} u)^2(1-u)^{2r-7}(1-z u)^{2r-7} du}. \]
Let us show that this is a convex function on \( ]-\infty,1] \) for all \( r \geq 5 \). Indeed, for all \( z_1, z_2 \in ]-\infty, 1] \), \( 1 - z_1 u \) and \( 1 - z_2 u \) are positive for all \( u \in [0,1] \), hence
\[  \tuple{ 1 - \frac{z_1 + z_2}{2} u}^{2r-7} \leq \frac{1}{2} \tuple{ \tuple{\frac{1 - z_1 u}{2}}^{2r-7} + \tuple{\frac{1- z_2 u}{2}}^{2r-7} }. \] 
Moreover we have 
\[ \max \set{ \tuple{\frac{1 - z_1 u}{2}}^{2r-7} , \tuple{\frac{1 - z_2 u}{2}}^{2r-7} } \leq \tuple{ 1 - \frac{z_1 + z_2}{2} u}^{2r-7}.  \]
These two inequalities together show that 
\[ H_r \tuple{\frac{z_1 + z_2}{2}} \leq \frac{H_r (z_1) + H_r (z_2)}{2}, \]
i.e. \( H_r \) is convex on \( ]-\infty,1] \). 

In terms of the Appell series \( F_1 \), the inequality \( Q(6,r) < 0 \) is equivalent to 
\begin{align*} 
H_r \tuple{- \frac{1}{7}} = \frac{B(3,2r-6) F_{1}(3,-2,-(2r-7), 2r-3, \frac{3}{7}, \frac{-1}{7})}{B(4,2r-6) F_{1}(4,-2,-(2r-7), 2r-2, \frac{3}{7}, \frac{-1}{7}) } > \frac{4r-5}{7},
\end{align*} 
where \( B(3,2r-6) / B(4,2r-6) = (2r-3)/3 \). 
By convexity, we have for all \( z < 0 \),
\[ H_r (z) \geq H_r(0) + H'_r (0) z, \]
where by computing the first two terms of the series expansion in the \( z \)-argument of \( F_1 \) at \(  0\),
\[ H_r(0) = \frac{(-1 + 2 r) (327 - 371 r + 98 r^2)}{3 (223 - 315 r + 98 r^2)}, \]
and 
\[ H'_r(0) = \frac{(-7 + 2 r)(-88290 + 296949 r - 400372 r^2 + 260141 r^3 - 80948 r^4 + 9604 r^5)}{3 r (223 - 315 r + 98 r^2)^2}. \] 
Finally,
\begin{align*}
&H_r \tuple{- \frac{1}{7}}  - \frac{4r-5}{7} \\
&\geq \frac{-618030 + 2490711 r - 3779540 r^2 + 2655919 r^3 - 866908 r^4 + 
 105644 r^5}{21 r (223 - 315 r + 98 r^2)^2}.
\end{align*}
Note that the leading coefficient on the polynomial numerator \( P_1(r) \) is positive. We can compute the root of \( P_1 \) and show that they are all strictly smaller than \( 5 \), hence \( P_1(r) \geq P_1(5) > 0 \). Thus \( Q(6,r) < 0 \) for all \( 0 \leq u \leq 1 \).

In the case \( (m_1,m_2,m_3) = (4r-16,4,3) \), we have
\[ Q(t,r) = \int_{ \frac{-8}{2+t} }^{ \frac{4r-10}{1+t}} p(8+(2+t)p)^{4} (8r-12 - tp)^{4} (4r-10 - (1+t)p)^{4r-13} (4r-2+p)^{4r-13} dp, \]
and we need to check that \( Q(10/3,r) < 0 \), i.e. 
\begin{equation*}
\int_{ -\frac{3}{2}}^{ \frac{12r-30}{13}} p(3+2p)^{4} (12r-18-5p)^4(12r-30-13p)^{4r-13}(4r-2+p)^{4r-13} dp < 0. 
\end{equation*}
After the change of variable \( p = \frac{(24r-21)u-39}{26} \), we can suppose that (up to positive constants)
\begin{equation*}
Q(10/3,r) = -\int_{0}^1 (1- \frac{8r-7}{13} u) u^4 (1 -  \frac{5}{13} u)^{4} (1-u)^{4r-13}( 1+ \frac{3}{13} u)^{4r-13} du.
\end{equation*}
By the same reasoning as above, the function
\[H_r(z) :=  \frac{\int_0^1 u^4(1- \frac{5}{13} u)^4 (1-u)^{4r-13}(1-z u)^{4r-13} du}{ \int_0^1 u^5 (1- \frac{5}{13} u)^4 (1-u)^{4r-13}(1-z u)^{4r-13} du} \]
is convex on \( ]-\infty,1] \) for all \( r \geq 5 \). 
We then proceed to check that 
\begin{align*} 
H_r \tuple{ - \frac{3}{13}} = \frac{B(5,4r-12) F_{1}(5,-4,-(4r-13), 4r-7, \frac{5}{13}, \frac{-3}{13})}{B(6,4r-12) F_{1}(6,-4,-(4r-13), 4r-6, \frac{5}{13}, \frac{-3}{13}) } > \frac{8r-7}{13},
\end{align*} 
with \( B(5,4r-12)/ B(6,4r-12) = (4r-7)/5 \). 
Again, the series expansion in the \( z \)-argument of the Appell functions at \( 0 \) and convexity of \(H_r \) show that for all \( z \leq 0 \), 
\[ H_r(z) \geq H_r(0) + H_r'(0)z, \] 
where 
\begin{align*}
H_r(0) = \frac{ (-3 + 4 r) (9009405 - 21344609 r + 18336838 r^2 - 6784336 r^3 + 913952 r^4) }{5 G(r) },
\end{align*} 
and 
\begin{align*}
H_r'(0) = &\frac{2(-13 + 4 r)}{5 (-1+2r) G(r)^2 }(-59648595700050 + 361147658887485 r - 953718016145076 r^2 \\
& + 1440364128450741 r^3 - 1370700222571316 r^4 + 852476267292676 r^5 \\
&- 346597132254464 r^6 + 88867036945024 r^7 - 13043659725824 r^8 \\
&+ 835308258304 r^9 ),    
\end{align*}
where \( G(r) = (5914845 - 15897921 r + 15254278 r^2 - 6221904 r^3 + 913952 r^4) \). Finally, evaluating at \( z = -3/13 \) yields
\begin{align*}
&H_r \tuple{-\frac{3}{13}} - \frac{8r-7}{13} \\
&\geq \frac{1}{G_1(r)} (-2574310334032125 + 12163729606848990 r - 19659125767233897 r^2 \\
&+ 3313560826702152 r^3 + 32935584407475308 r^4 - 54028266881305808 r^5 \\
&+ 43597604159841472 r^6 - 20860496268764672 r^7 \\
&+ 6010547069043712 r^8 - 966644418301952 r^9 + 66824660664320 r^{10}),
\end{align*}
with \( G_1(r) = 13 (7 - 18 r + 8 r^2) G(r)^2 \). Note that the leading coefficient on the numerator is positive. We can then apply the same argument as in the previous case to show that \( Q(10/3,r) < 0 \). 

It remains to show that \( Q(1,r) > 0 \) when \( m_3 = 3 \). We have up to a positive constant
\[ Q(1,r) = \int_{-8/3}^{2r-5} p(8+3p)^{4} (8r-12 - p)^{4} (2r-5 - p)^{4r-13} (4r-2+p)^{4r-13} dp, \]
and after the change of variable \( p = \frac{(6r-7)u-8}{3} \), we obtain up to a positive constant 
\[ Q(1,r) = -\int_{0}^1 (1 - \frac{6r-7}{8}u) u^4 (1 - \frac{u}{4})^4 (1-u)^{4r-13} (1 + \frac{u}{2})^{4r-13} du. \]
Consider now the function defined on \( \Rbb \) by
\[H_r(z) :=  \frac{\int_0^1 u^5(1- \frac{1}{4} u)^4 (1-u)^{4r-13}(1-z u)^{4r-13} du}{ \int_0^1 u^4 (1- \frac{1}{4} u)^4 (1-u)^{4r-13}(1-z u)^{4r-13} du}. \] 
Again this is a convex function on \( ]-\infty,1] \) for all \( r \geq 5 \). The inequality \( Q(1,r) > 0 \) is equivalent to 
\begin{align*} 
H_r \tuple{ -\frac{1}{2}} = \frac{B(6,4r-12) F_{1}(6,-4,-(4r-13), 4r-6, \frac{1}{4}, \frac{-1}{2})}{B(5,4r-12) F_{1}(5,-4,-(4r-13), 4r-7, \frac{1}{4}, \frac{-1}{2}) } > \frac{8}{6r-7},
\end{align*} 
where \( B(6,4r-12) / B(5,4r-12) = 5/(4r-7) \). By the same argument as above, 
\[ H_r (z) \geq H_r(0) + H'_r (0) z, \]
where 
\[ H_r(0) = \frac{5 (15741 - 48336 r + 52928 r^2 - 24576 r^3 + 
   4096 r^4)}{G(r)}, \]
and 
\begin{align*} 
H'_r(0) = \frac{10}{(-1+2r)G(r)^2} &(-13 + 4 r) (-367033275 + 2594714553 r -7943750640 r^2 \\
&+ 13824076800 r^3 - 15075823616 r^4 + 10689089536 r^5 \\
&- 4929290240 r^6 + 1426063360 r^7 - 234881024 r^8 \\
&+ 16777216 r^9), 
\end{align*}    
where \( G(r) = (-3 + 4 r) (27585 - 71832 r + 67904 r^2 - 27648 r^3 + 
   4096 r^4) \). 
Finally, evaluating at \( z = -1/2 \) yields
\begin{align*}
&H_r \tuple{ -\frac{1}{2} } - \frac{8}{6r-7} \\
&\geq \frac{1}{G_1(r)} (-157805645850 + 1283896287045 r - 4637385570222 r^2 \\
&+ 9817511490024 r^3 - 13540927644160 r^4 + 12783832862720 r^5 \\
&- 8436522991616 r^6 + 3896079450112 r^7 - 1235710115840 r^8 \\
&+ 256859176960 r^9 - 31574720512 r^{10} + 1744830464 r^{11}),
\end{align*}
where \( G_1(r) = (3 - 4 r)^2 (-1 + 2 r) (-7 + 6 r) G(r)^2 \). The polynomial numerator has positive leading coefficient with all of the roots being strictly smaller than \( 5 \), hence strictly positive for all \( r \geq 5 \). Thus \( H_r(-1/2) > 8/(6r-7) \), i.e. \( Q(1,r) >  0 \). 
\end{proof}

\begin{rmk}
One could try to bound the function \( H_r \) from below by using integration by parts and elementary inequalities, e.g. the trivial ones and/or the AM-GM inequality, but such methods would only yield information up to the first derivative of \( H_r \), hence a slightly weaker lower bound. Here we need a precise information on the second derivative of \( H_r \) to obtain a sharp lower bound. 
\end{rmk}

\begin{table}[]
    \adjustbox{width= 0.8 \textwidth}{\centering
    \begin{tabular}{|c|c|c| }
    \hline 
    \( \) & \( \text{Multiplicities} \) &\( a_0^{-} \geq a_1^{-} \)  \\
        \hline
         \( \SL_r/ \SL_2 \times \SL_{r-2} \) & \( (2r-8,2,1), r \geq 5\) & \( \text{trivial if \( r = 5 \)},  t \leq \frac{12r-27}{2r-10} \text{if \( r \geq 6\)} \) \\
         \( \Sp_{2r} / \Sp_4 \times \Sp_{2r-4} \) & \( (4r-16,4,3), r \geq 5 \) & \( t \leq \frac{20r-30}{6r-27} \) \\
         \( \SO_{10} / \GL_5 \) & \( (4,4,1) \)& \( t \leq 10 \) \\
         \( E_6 / \SO_{10} \times \SO(2) \) & \((8,6,1) \)& \( t \leq \frac{112}{19} \) \\
         \hline
         \( \SO_r / \SO_2 \times \SO_{r-2} \) & \( (r-4,1,0), r \geq 5 \) &\( \text{trivial} \) \\
         \( \SO_5 \times \SO_5 / \SO_5 \) & \( (2,2,0) \) &\( t \leq 12 \)  \\
         \( \Sp_8 / \Sp_4 \times \Sp_4 \) & \( (4,3,0) \) &\( t \leq 7 \)  \\
         \hline
    \end{tabular}}
    \caption{Bounded geometry condition for \( BC_2 \) and \( B_2 \).}
    \label{table_bg_bc2}
\end{table}

\begin{table}
\adjustbox{width=1\textwidth}{
\begin{tabular}{ |c|c|c|c| }
\hline
  & \( R_1 \times R_1 \) & \( A_2 \) & \( BC_2\) \\
\hline
\( t \) & \( \frac{(m_1 + 2 \wh{m}_1)(m_2 + \wh{m}_2 + 1)}{(m_2 + 2\wh{m}_2)(m_1 + \wh{m}_1 +1)}\) & \( 1 \)  & \( \text{irrational in general} \) \\ 
\hline
\( \sprod{\alpha_1, \alpha_2} \) & \( 0 \) & \(-1/2 \) & \( -1 \) \\ 
\( \sprod{\alpha_1, \alpha_1} \) & \( 1 \)  & \( 1 \) & \( 1 \) \\ 
\( \sprod{\alpha_2, \alpha_2} \) & \( 1 \) & \( 1 \) & \( 2 \) \\ 
\( \wt{\alpha}_1 \) & \( \alpha_1  \) & \( \alpha_1 + \frac{1}{2} \alpha_2 \) & \( \alpha_1 + \frac{1}{2}\alpha_2 \) \\
\( \wt{\alpha}_2 \) & \( \alpha_2  \) & \( \alpha_2 + \frac{1}{2} \alpha_1 \) & \( \alpha_2 +  \alpha_1 \) \\ 
\( \varpi \) & \( (m_1 + 2 \wh{m}_1) \alpha_1 + (m_2 + 2 \wh{m}_2) \alpha_2 \) & \( 2m(\alpha_1 + \alpha_2) \) & \( (2m_1 + 2 m_2 + 4 m_3) \alpha_1 + (m_1 + 2 m_2 + 2 m_3) \alpha_2 \) \\
\( A_1 \) & \( m_1 + 2 \wh{m}_1 \) & \( 2m \) & \( 2m_1 + 2 m_2 + 4 m_3\) \\
\( A_2 \) & \( m_2 + 2 \wh{m}_2 \) & \( 2m\) & \( m_1 + 2 m_2 + 2m_3 \) \\
\( m_{+} \) & \( m_1 + \wh{m}_1 \) & \( m  \) & \( m_1 + m_3 \) \\
\( m_{-} \) & \( m_2 + \wh{m}_2 \) & \( m \) & \( m_2 \) \\
\( n \) & \( 2 + m_{+} + m_{-} \)  &  \( 2 + 3m \)  & \( 2(1 + m_1 + m_2 + m_3) \) \\
\( \beta \)  & \( \frac{m_2 + \wh{m}_2 + 1}{m_2 + 2 \wh{m}_2} \alpha_2 + \frac{m_1 + \wh{m}_1 + 1}{m_1 + 2 \wh{m}_1} \alpha_1  \) & \( \frac{n}{2m}(\alpha_2 + \alpha_1) \)& \( \frac{n ( (2+t) \alpha_1 + (1+t) \alpha_2) }{2m_2 (1+t) +(m_1 + 2m_3)(2+t)}.  \) \\ 
\hline
\( \lambda_{+} \) & \( \frac{(m_2 + 2 \wh{m}_2)(m_1 + \wh{m}_1 +1)}{m_2 + \wh{m}_2 +1} \) & \( 2m/3 \) & \( \frac{m_1 + 2m_3}{1+t}\)\\
\( \lambda_{-} \) & \( -(m_2 + 2 \wh{m}_2) \) & \( -2m/3 \) & \( - \frac{2m_2}{2+ t} \) \\
\( a_0^{+} \) & \( \frac{m_2 + 2 \wh{m}_2}{m_2 + \wh{m}_2 + 1} \) & \( 8m/3n \) & \( \frac{2m_2(1+t) + (m_1 + 2m_3)(2+t)}{n(1+t)} \) \\
\( a_1^{+} = \delta_{+} \) & \( \frac{m_2 + 2 \wh{m}_2}{m_2 + \wh{m}_2 +1 } \) & \( \frac{2m}{3(m+1)}\) & \( \frac{m_1 + 2m_3}{(m_1 + m_3 +1)(1+t)} \) \\ 
\( a_0^{-} \) & \( \frac{m_1 + 2 \wh{m}_1}{m_1 + \wh{m}_1 + 1}  \) & \( 8m /3n \) & \( \frac{ 4m_2(1+t) + ( 2 m_1 + 4m_3)(2+t)}{n(t+2)} \) \\
\( a_1^{-} = - t \delta_{-} \) & \(\frac{m_1 + 2 \wh{m}_1}{m_1 + \wh{m}_1 + 1} \)& \( \frac{2m}{3(m+1)}\) & \(  \frac{2 t m_2}{(m_2 + 1)(2+t)} \)\\
\( a_0^{\pm} \geq a_1^{\pm} \) & \( \text{true} \) & \text{true} & \(  (m_1(m_2 -1) - 2m_3)t \leq (2m_1 + 2m_2 + 4m_3)(m_2 + 1), \text{cf. Table} \; \ref{table_bg_bc2} \)  \\
\hline
\end{tabular}}
\caption{Relevant constants.}
\label{table_constants}
\end{table}

\newpage
\appendix \label{appendix}
\section{Spherical varieties} \label{appendix_spherical_varieties}
In what follows, \( G \)  is a complex simply connected linear reductive group, while \( B \), \( T \), \( U \) denote respectively the Borel subgroup, maximal torus of \( G \) and maximal unipotent subgroup of \( B \). Under an embedding of \( G \) into \( GL_N(\Cbb) \), \( B, T, U\) can be identified with subgroups of upper-triangular, diagonal, and upper-triangular matrices with ones on the diagonal, respectively.  Likewise, the opposite Borel subgroup \( B^{-} \) (resp. opposite maximal unipotent subgroup \( U^{-} \)) can be identified with the subgroup of lower-triangular matrices (resp. lower-triangular matrices with ones on the diagonal). 


\subsection{Luna-Vust theory of spherical embeddings}
Spherical varieties cover a large spectrum of well-known varieties (including toric varieties, grassmannians and complex symmetric spaces). An experienced reader can make analogy with the theory of toric varieties where \( G = B = T \) to guide their intuition.  

\subsubsection{Weight lattice, colors and valuation cone} 
A \textit{spherical space} is a homogeneous space \( G/H \) containing a Zariski-open orbit of a Borel subgroup \( B \subset G \). By a theorem of Chevalley, we may view \( G/H \) as a smooth quasiprojective variety. Let \( \Cbb(G/H)^{(B)} \) be the set of rational functions on \( G/H \) which are eigenvectors of \( B \). An element of \( \Cbb(G/H)^{(B)} \) is called a \( B\)-semi-invariant function. The set of \( B \)-invariant rational functions \( \Cbb(G/H)^B \) is actually \( \Cbb \) by Rosenlicht's theorem. 

\begin{defn} \label{definition_weight_lattice}
To a spherical space are associated two lattices. 
\begin{itemize}
    \item The weights \( \Mcal \) of \( \Cbb(G/H)^{(B)}\), called the \emph{weight lattice}.  
    \item The dual lattice \( \Ncal := \Hom(\Mcal,\Zbb) \), called the \emph{coweight lattice}.  
\end{itemize}
 We will use \( \Mcal_{\Rbb} \) and \( \Ncal_{\Rbb} \) to denote the real vector spaces \( \Mcal \otimes \Rbb\) and \( \Ncal \otimes \Rbb \). The dimension of \( \Mcal_{\Rbb} \) is called the \emph{rank of \( G/H \)}. The vector space \( \Ncal_{\Rbb} \) is also called the \emph{Cartan space} of \( G/H \). 
\end{defn}

Remark that there exists a natural bijection between \( \Mcal  \) and \( \Cbb(G/H)^{(B)}/\Cbb^{*} \), which sends a weight to its eigenvector. Indeed, let \( f,g \) be two functions of the same weight, then \( f_1/ f_2 \) is a \( B \)-invariant rational function, hence constant. 

The open \( B \)-orbit in \( G/H \), which is isomorphic (as an affine variety) to \( (\Cbb^{*})^k \times \Cbb^m \) \cite[Theorem 5]{Ros63}, is an open affine subset of \( G/H \), hence its complement in \( G/H \) is a collection of \( B \)-stable irreducible divisors. They are the only \( B \)-stable irreducible divisors of \( G/H \). 

\begin{defn}
The set $\Dcal$ of irreductible $B$-stable divisors of $G/H$ is called the \emph{set of colors of $G/H$}.
\end{defn}
Recall that a \textit{rational valuation} of a normal variety \( X \) is a map \( \nu : \Cbb(X) \to \Qbb \) satisfying: 
\begin{itemize}
    \item \( \nu (f_1 + f_2 ) \geq \nu(f_1) + \nu(f_2) \), for all \( f_1, f_2 \in \Cbb(X) \backslash \set{0} \) such that \( f_1 + f_2 \neq 0 \). 
    \item \( \nu(f_1 f_2 ) = \nu(f_1) + \nu(f_2) \) for all \( f_1, f_2 \in \Cbb(X) \backslash \set{0} \). 
    \item \( \nu(f) = 0 \) if and only if \( f \) is constant. 
\end{itemize}
If \( X \) is a \( G\)-variety, the valuation is said to be \textit{\(G\)-invariant} if \( \nu(g.f) = \nu(f) \) for all \( g \in G \) and \( f \in \Cbb(X) \), where \( (g.f)(.) := f(g^{-1}.) \). 

\begin{prop}[\!\!\cite{Kno91}]
Denote by \( \Vcal \) the set of \( G\)-invariant rational valuations of \( G/H\). 
\begin{itemize}
    \item The natural mapping
\begin{align*}
\sigma : \Vcal &\to \Hom(\Mcal, \Qbb) = \Ncal_{\Qbb} \\
\nu & \to (\chi \to \nu(f_{\chi}) ) 
\end{align*}
is a well-defined injection. In particular, we can identify \( \Vcal \) with a cone in \( \Ncal_{\Rbb} \), called the \textit{valuation cone} of \( G/H \).  
    \item The valuation cone \( \Vcal \) is a strictly convex and polyhedral cone in \( \Ncal_{\Rbb} \).
\end{itemize}
\end{prop}

\begin{defn}
Each element in \( \Dcal \) induces a valuation of \( \Cbb(G/H) \), hence a natural (in general non-injective) map \( \Dcal \to \Ncal_{\Qbb} \) as above, still denoted by \( \sigma \). We call \( \sigma(\Dcal) \) the \emph{images of the colors of} \( G/H \). 
\end{defn}

\begin{ex} \label{rank_one_horospherical}
Let \( G = \SL_2 \) act on \( \Cbb^2 \) on the left. The stabilizer \( H\) of \( (1,0) \) is then \(U \). Let \( B \subset \SL_2 \) be the Borel subgroup and \( \alpha \) the unique positive root corresponding to \( B \) with coroot \( \alpha^{\vee} \). Let \( B^{-} \) be the opposite Borel subgroup of \( B \).  The open \( G \)-orbit is isomorphic to \( \SL_2 / U \simeq \Cbb^2 \backslash \set{0} \), with open dense \( B^{-} \)-orbit isomorphic to \( B^{-} / U \). The fundamental weight of \( B^{-} \) is
\[ \omega(b_{ij}) = b_{11}. \] 
The \( H \)-right-invariant eigenvector of \( B^{-} \) with weight \( \omega \) is then \( f(A \in \SL_2) = a_{22} \). It follows that \( \Mcal = \Zbb \omega \). The coweight lattice \( \Ncal \) is \( \Zbb (\alpha^{\vee}|_{\Mcal}) \), and \( \Ncal_{\Rbb} \) coincides with the valuation cone \( \Vcal \). The unique color of \( G/H \) is \( D = \set{f = 0} \). Since \( \sigma(D)(f) = \nu_D(f) = 1 \), we have that \( \sigma(D) = \alpha^{\vee}|_{\Mcal} \). 
\end{ex}

\begin{ex}
Let \(  P\) be a parabolic subgroup of \( G \), which can always be given as a matrix triangular in given blocks. Then \( G/P \) is a spherical space of rank zero, known as a flag manifold or homogeneous projective space. 
\end{ex}

\subsubsection{Colored fans and spherical embeddings}
Recall that a spherical embedding is a \( G \)-equivariant embedding of a spherical space \( G/H \). It is clear that a spherical embedding is a spherical variety. Conversely, let \( X \) be a spherical variety with an open orbit \( G.x \). Then we can always choose a stabilizer \( H \) of \( x \) such that \( BH/H \) is open in \( G/H \), hence \( (X,x) \) is a \( G/H \)-spherical embedding. When there is no confusion, we remove \( x \) and simply say that \( G/H \subset X \) is a spherical embedding.

\begin{defn} \label{g_stable_divisors_colors_definition}
Let \(G/H \subset (X,x) \) be a spherical embedding.
\begin{itemize}
\item The divisors of $\Dcal$ whose closure in $X$ contains a closed orbit are called the \emph{colors of $X$}. The set of colors of \( X \) is denoted by \( \Dcal_X \).  
\item We set \( \Vcal_X \) to be the \emph{\( G \)-stable divisors of \( X \)}, which are the irreducible components of \( X \backslash Gx \). If \( D \) is a \( G \)-invariant divisor, then by normality of \( X \), we can associate to it a unique primitive \( G \)-invariant valuation \( \nu_D \), hence under the map \( \sigma \), \emph{\( \Vcal_X \) injects to a finite subset of \( \Vcal \)}. 
\end{itemize}
\end{defn}

If we identify the elements in \( \Dcal \) with their closure in \( X \), then the elements of $\Dcal \backslash \Dcal_X$ correspond to $B$-stable divisors of $X$ not containing any \(G\)-orbit. 

\begin{rmk}
We distinguish between \emph{the colors of $G/H$} and \emph{the colors of $G/H$ as a trivial embedding of itself}. The latter is always empty.  
\end{rmk} 

\begin{defn}
A spherical variety is said to be \emph{simple} if it contains a unique closed \( G \)-orbit. 
\end{defn}


\begin{ex}
Consider Example \ref{rank_one_horospherical}. The natural embedding $X = \Cbb^2 \supset G/H$ is a simple embedding with the unique closed \( G \)-orbit \(Y = \set{0} \).
\end{ex}

\begin{defn} \label{definition_colored_cones} Let \( G/H \) be a spherical space. 
A \emph{colored cone} is a couple $(\Ccal, \Fcal)$ where $\Ccal \subset \Ncal_{\Rbb} $ and  $\Fcal \subset \Dcal$  that verifies: 
\begin{itemize}
    \item $0 \notin \sigma(\Fcal)$ and $\Ccal$ is a strictly convex polyhedral cone generated by $\sigma(\Fcal)$ (which might be empty) and a finite number of elements of $\Vcal$. 
    \item  The relative interior of $\Ccal$ meets $\Vcal$. 
\end{itemize}
A \textit{face} of the colored cone \( (\Ccal, \Fcal) \) is a couple \( ( \Ccal', \Fcal') \) such that \( \Ccal' \) is a face of the cone \( \Ccal \), the relative interior of \( \Ccal' \)  meets \( \Vcal \) , and  \( \Fcal' = \Fcal \cap \sigma^{-1} ( \Ccal' ) \). 
\end{defn}


\begin{thm}[\!\!\cite{Kno91}]
Let \( G/H \) be a spherical space and \( G/H \subset X \) a spherical embedding with \( \Vcal_X, \Dcal_X \) as in Definition \ref{g_stable_divisors_colors_definition}.  Let \( \Ccal_X \) be the cone generated by \( \Vcal_X, \sigma(\Dcal_X) \). The map \( X \to (\Ccal_X, \Dcal_X) \) is a one-to-one correspondence between isomorphism classes of \( G \)-equivariant simple embeddings of \( G/H \) and colored cones. 
\end{thm}

\begin{ex} \label{rank_one_horospherical_simple_embeddings}
Consider the action of \( \SL_2 \) in Example \ref{rank_one_horospherical}. Apart from the trivial embedding, all possible colored cones are
\begin{itemize}
    \item $(\Rbb_{\geq 0} \sigma(D), \varnothing)$, which is $X = \Bl_0(\Cbb^2)$.
    \item $(\Rbb_{\leq 0} \sigma(D), \varnothing)$. The corresponding embedding is $X \simeq \Pbb^2 \backslash [1:0:0]$.
    \item $(\Rbb_{\geq 0} \sigma(D), \set{D})$, with $X  = \Cbb^2$ as the embedding. 
\end{itemize}
Remark that \( \Cbb^2 \) has no \( \SL_2 \)-stable divisor and \( \Bl_0(\Cbb^2) \) has no color. 
\end{ex}

\begin{defn} \label{definition_colored_fans}
A \emph{colored fan} is a finite collection $\Fbb$ of colored cones such that: 
\begin{itemize}
\item Every face of a colored cone in \( \Fbb \) belongs to $\Fbb$. 
\item For all $v \in \Vcal$, there exists at most one $(\Ccal, \Fcal) \in \Fbb$ such that $v \in \text{RelInt}(\Ccal)$.
\end{itemize}
The \emph{support} of  \( \Fbb \) is defined as:
\[ \text{Supp}(\Fbb) = \Vcal \cap \cup_{(\Ccal, \Fcal) \in \Fbb} \text{Supp} (\Ccal). \] 
\end{defn}

\begin{thm}[\!\!\cite{Kno91}]  \label{spherical_embeddings_classification}
Given a spherical space $G/H$, there exists a one-to-one correspondence between the isomorphism classes of \( G \)-equivariant embeddings $X \supset G/H$ and the set of colored fans. Moreover, $X$ is compact if and only if the support of the fan $\Fbb(X)$ coincides with the valuation cone \( \Vcal \). 
\end{thm}

\begin{ex} \label{rank_one_horospherical_non_simple_embeddings}
Let us come back to Example \ref{rank_one_horospherical}. The colored fans of the non-simple embeddings of \( \SL_2 / U \) are 
\begin{itemize}
    \item $\set{ (0, \varnothing), (\Rbb_{\geq 0}\sigma(D), \set{D}), (\Rbb_{\leq 0} \sigma(D), \varnothing) }$.
    \item  $\set{ (0, \varnothing), (\Rbb_{\geq 0}\sigma(D), \varnothing), (\Rbb_{\leq 0} \sigma(D), \varnothing) }$.
\end{itemize}
The associated embeddings are $\Pbb^2$ and $\text{Bl}_0 \Pbb^2$.
\end{ex}

Let \( X \) be a spherical variety. Any  $G$-orbit 
$Y$  is contained in a unique simple spherical variety  $X_{Y}$, which is a union of the orbits whose closure contains $Y$. By the classification of simple embeddings, \( X_Y \) corresponds to a colored cone \( (\Ccal_{X_Y}, \Dcal_{X_Y}) \). 

\begin{prop}[Orbit-cone correspondence \cite{Kno91}] \label{proposition_orbit_cone_correspondence}
Let \( Y \) be any \( G \)-orbit in a spherical variety \( X \). There exists a bijection between the $G$-orbits in $X$ whose closure contains the $G$-orbit $Y$  and the faces of the colored cone \( (\Ccal_{X_{Y}}, \Dcal_{X_{Y}}) \), given by $Z \to (\Ccal_{X_{Z}}, \Dcal_{X_{Z}})$. 
\end{prop}

Like in the toric case, we have a description of equivariant morphisms between spherical embeddings of \( G\)-homogeneous spaces. Let \( G/H \), \( G/H'\) be the spherical spaces such that \( G \supset H' \supset H \). Let \( \phi^{*}: \Mcal(G/H) \to \Mcal(G/H') \) resp. \( \phi_{*}: \Ncal(G/H') \to \Ncal(G/H) \) be the natural injective resp. surjective linear morphism induced by the natural map \( \phi: G/H' \to G/H \). Let \( \Dcal'_{\phi} \) be the set of \( D \in \Dcal' \) such that \( \sigma(D) = G/H \). In particular \( \sigma(D) \in \Dcal(G/H) \) for all \( D \in \Dcal' \backslash \Dcal'_{\phi} \). 

Let \( (\Ccal, \Fcal)\), \( (\Ccal', \Fcal') \) be colored cones of \( G/H, G/H' \) respectively. Then we say that \( ( \Ccal', \Fcal') \) \textit{dominates} \( (\Ccal, \Fcal) \) if \( \phi_{*} (\Ccal') \) is contained in \( \Ccal \) and \( \phi_{*}(\Dcal' \backslash \Dcal'_{\phi}) \subset \Fcal \). 

\begin{prop}[\!\!{\cite[Theorem 4.1]{Kno91}}] \label{proposition_spherical_morphisms}
Let \( X,X' \) be any spherical embeddings of \( G/H, G/H' \) of colored fans \( \Fbb(X') \) and \( \Fbb(X) \). Then \( \phi \) extends to a \( G\)-equivariant morphism \( X' \to X \) if and only if \( \Fbb(X') \) dominates \( \Fbb(X) \). 
\end{prop}

\bibliographystyle{alpha}
\bibliography{biblio} 
\end{document}